\documentclass[
11pt,  reqno]{amsart}
\usepackage[margin=1.2in,marginparwidth=1.5cm, marginparsep=0.5cm]{geometry}

\usepackage{amsmath,amssymb,amscd,amsthm,amsxtra, esint}

\usepackage{booktabs} 
\usepackage{microtype}
\usepackage{amssymb}
\usepackage{mathrsfs}

\usepackage{upgreek} 

\usepackage{color}
\usepackage[implicit=true]{hyperref}

\usepackage{xsavebox}

\usepackage{cases}

\allowdisplaybreaks[2]

\sloppy

\hfuzz  = 0.5cm 


\setlength{\pdfpagewidth}{8.50in}
\setlength{\pdfpageheight}{11.00in}


\newcommand{\rhoo}{\vec{\rho}}
\newcommand{\muu}{\vec{\mu}}

\newcommand{\fN}{\mathfrak{N}}

\newtheorem{theorem}{Theorem} [section]

\newtheorem{lemma}[theorem]{Lemma}

\newtheorem{remark}[theorem]{Remark}

\newtheorem{definition}[theorem]{Definition}


\DeclareMathOperator*{\supp}{supp}
\DeclareMathOperator{\med}{med}

\DeclareMathOperator{\Law}{Law}
\DeclareMathOperator{\Id}{Id}


\newcommand{\II}{\text{I \hspace{-2.8mm} I} }

\newcommand{\W}{\mathcal{W}}
\newcommand{\U}{\mathcal{U}}

\newcommand{\dr}{\theta}
\newcommand{\Dr}{\Theta}

\newcommand{\noi}{\noindent}
\newcommand{\Z}{\mathbb{Z}}
\newcommand{\R}{\mathbb{R}}
\newcommand{\C}{\mathbb{C}}
\newcommand{\T}{\mathbb{T}}
\newcommand{\N}{\mathbb{N}}

\let\Re=\undefined\DeclareMathOperator*{\Re}{Re}
\let\Im=\undefined\DeclareMathOperator*{\Im}{Im}

\let\P= \undefined
\newcommand{\P}{\mathbf{P}}
\newcommand{\PP}{\mathbb{P}}

\newcommand{\F}{\mathcal{F}}
\newcommand{\NN}{\mathcal{N}}
\newcommand{\RR}{\mathcal{R}}

\newcommand{\Gf}{\mathfrak{G}}

\newcommand{\nb}{\nabla}

\newcommand{\al}{\alpha}
\newcommand{\be}{\beta}
\newcommand{\dl}{\delta}
\newcommand{\Dl}{\Delta}
\newcommand{\eps}{\varepsilon}

\newcommand{\g}{\gamma}
\newcommand{\G}{\Gamma}
\newcommand{\ld}{\lambda}
\newcommand{\Ld}{\Lambda}
\newcommand{\s}{\sigma}
\newcommand{\Si}{\Sigma}
\newcommand{\ft}{\widehat}
\newcommand{\wt}{\widetilde}
\newcommand{\cj}{\overline}

\newcommand{\dt}{\partial_t}

\newcommand{\jb}[1]
{\langle #1 \rangle}

\renewcommand{\l}{\ell}

\newcommand{\les}{\lesssim}
\newcommand{\ges}{\gtrsim}

\newcommand{\ind}{\mathbf 1}

\newcommand{\E}{\mathbb{E}}

\renewcommand{\o}{\omega}
\renewcommand{\O}{\Omega}

\numberwithin{equation}{section}
\numberwithin{theorem}{section}

\newcommand{\too}{\longrightarrow}

\newtheorem*{ackno}{Acknowledgements}

\begin{document}

\baselineskip = 14pt

\title[Invariant Gibbs measure dynamics for the 2-$d$ Zakharov-Yukawa system]
{Invariant Gibbs dynamics for 
the two-dimensional Zakharov-Yukawa system}

\author[K.~Seong]
{Kihoon Seong}

\address{Kihoon Seong\\
Department of Mathematics\\
Cornell University\\ 
310 Malott Hall\\ 
Cornell University\\
Ithaca\\ New York 14853\\ 
USA }

\email{kihoonseong@cornell.edu}

\subjclass[2010]{35Q55, 35L71, 60H30,  }

\keywords{Gibbs measure; invariant measure; random tensor; Zakharov-Yukawa system}



\begin{abstract}
We study the Gibbs dynamics  for the Zakharov-Yukawa system
 on the two-dimensional torus $\T^2$, 
namely a Schr\"odinger-wave system
 with a Zakharov-type coupling $(-\Dl)^\g$.
We first construct 
the Gibbs measure in the weakly nonlinear coupling case ($0 \le \g<1$).
Combined  with the non-construction 
 of the Gibbs measure in the strongly nonlinear coupling case ($\g=1$)
 by Oh, Tolomeo, and the author (2020), this exhibits
 a phase transition at $\g = 1$.
We also study the dynamical problem and prove almost sure global
well-posedness of the Zakharov-Yukawa system and invariance of the
Gibbs measure under the resulting dynamics for the range $ 0 \le \g <
\frac 13$. 
In this dynamical part, the main step is to prove local well-posedness.
Our argument is based on 
the first order expansion 
and the operator norm approach
via  the random matrix/tensor estimate
from a recent work  Deng, Nahmod, and Yue (2020). 
In the appendix, we briefly discuss the Hilbert-Schmidt norm approach
and compare it with the operator norm approach.

\end{abstract}


\maketitle


\tableofcontents

\section{Introduction}
\label{SEC:1}

\subsection{Invariant Gibbs dynamics for the Zakharov-Yukawa system}
\label{SUBSEC:intro}
In this paper, our main goal is to construct invariant Gibbs
dynamics for the Schr\"odinger-wave systems on $\T^2 = (\R/2\pi\Z)^2$ with a Zakharov-type coupling\footnote{The flow of \eqref{Zak1} preserves the $L^2$-norm of the Schr\"odinger component $u$ but not the $L^2$-norm of the wave component $w$. Hence, to avoid a problem at the zeroth frequency in the Gibbs measure construction, we work with the massive linear part $\dt^2w-\Dl w+w$ and accordingly consider $\jb{\nb}^{2\g}$ instead of $(-\Dl)^\g$ as a coupling to complete the Hamiltonian formulation.}
\begin{align}
\begin{cases}
i \dt u +\Dl u = uw\\
\dt^2 w +  (1-\Dl) w = -\jb{\nb}^{2\g}(|u|^2)\\
(u,w,\dt w)|_{t=0}=(u_0, w_0, w_1),
\end{cases}
\label{Zak1}
\end{align}

\noi
where $ 0\le \g \le 1$ and $\jb{\nb}:=(1-\Dl)^{\frac 12}$. The case $\g=1$ corresponds to the well-known Zakharov system, while the case $\g=0$ corresponds to the Yukawa system.
The Zakharov-Yukawa system \eqref{Zak1} is a special case of the models introduced in \cite[Section 3]{SZ2} 
\begin{equation}
\begin{cases}
i \partial_t u + L_1 u =  u w \\
L_2 w = L_3 {|u|}^2 \, ,
\end{cases}
\label{DS}
\end{equation}

\noi
where $L_1,L_2$ and $L_3$ are constant coefficient differential operators.  
This class of systems is referred to as Davey-Stewartson (DS) systems in the work of Zakahrov-Schulman \cite[Section 3]{SZ2}.
In particular, (DS) systems is associated with a specific 2 dimensional system of the form \eqref{DS}, modeling the evolution of weakly nonlinear water waves travelling predominantly in one direction, in which the wave amplitude is modulated slowly in two horizontal directions. See, for example, \cite{GhS1, GhS2}. As for the global dynamics of \eqref{Zak1}, see \cite{BPSW}.

Notice that the Zakharov-Yukawa system \eqref{Zak1} is a Hamiltonian system 
associated with the Hamiltonian \eqref{Ham}:
\begin{align}
&H(u,w,\dt w) \notag \\
&=\frac 12 \int_{\T^2}|\nb u|^2 \, dx+\frac 12 \int_{\T^2} |u|^2w \,dx
+\frac 14 \int_{\T^2}| \jb{\nb}^{1-\g} w|^2 \, dx+\frac 14\int_{\T^2}|  \jb{\nb}^{-\g} \dt w |^2 \,dx 
\label{Ham}.
\end{align}

\noi
Moreover, the wave energy, namely, the $L^2$-norm of the Schr\"odinger component 
\begin{align*}
M(u)  = \int_{\T^2} |u|^2 dx
\end{align*}

\noi
is known to be conserved. See \cite{CCS}.
Then, the corresponding grand-canonical (Gibbs) measure $d\vec \rho_\g$ for the Hamiltonian system \eqref{Zak1} is formally given by\footnote{
A typical function $u$ in the support of $d\mu_1$ (on $\T^2$) is merely a distribution (not a function) and so a proper renormalization procedure is required.	
In this introduction, we keep our discussion at a formal level and  do not worry about renormalizations.} 
\begin{align}
\begin{split}
d \vec \rho_\g 
& = Z^{-1} e^{- H(u, w, \dt w)-M(u)} du \otimes dw \otimes d (\dt w)\\
& = Z^{-1} e^{- \frac{1}{2} \int_{\T^2}
|u|^2 w \, dx }d(\mu_1 \otimes \mu_{1-\g} \otimes \mu_{-\g} )(u,w,\dt w),
\end{split}
\label{Gibbs}
\end{align}

\noi
where for any given $ s \in \R$, $d\mu_s$ denote
a Gaussian field, formally defined by
\begin{align}
d \mu_s 
= Z_s^{-1} e^{-\frac 12 \| u\|_{{H}^{s}}^2} du
& =  Z_s^{-1} \prod_{n \in \Z^2} 
e^{-\frac 12 \jb{n}^{2s} |\ft u(n)|^2}   
d\ft u(n), 
\label{gauss0}
\end{align}

\noi
where 
$\jb{\,\cdot\,} = \big(1+|\,\cdot\,|^2\big)^\frac{1}{2}$
and $\ft u(n)$  denotes the Fourier transforms of $u$.
Note that 
$d\mu_s$ corresponds to 
the massive Gaussian free field $d\mu_1$ when  $s  = 1$
and to the white noise measure $d\mu_0$ when $s = 0$. 
Notice that the Gibbs measure $ \vec \rho_\g$ on
the vector  $(u,w, \dt w )$, formally defined in \eqref{Gibbs}, decouples as the Gibbs measure $\rho_\g$ on the vector $(u,w)$ and the Gaussian measure $\mu_{-\g}$ on the third component $\dt w$:
\begin{align}
 \vec \rho_{\g}= \rho_\g \otimes \mu_{-\g}.
\label{Gibbs3}
\end{align} 

\noi
In terms of the conservation of the Hamiltonian $H(u, w, \dt w)$ and the wave energy $M(u)$, the Gibbs measures $d\vec \rho_\g$ in \eqref{Gibbs} are expected to be invariant under the Zakharov-Yukawa  dynamics \eqref{Zak1}.


The main issue in constructing
the Gibbs measure $d\rhoo_\g$ in~\eqref{Gibbs} comes
from the focusing nature of the potential, 
i.e.~the interaction potential $ \int_{\T^2} |u|^2 w \,dx$ is unbounded\footnote{
In this paper, by ``focusing'', we mean ``non-defocusing'' (non-repulsive).
Namely, the interaction potential  (for example, $\int_{\T^2} |u|^2w$ in \eqref{Gibbs} or $\int_\T |u|^k$ \eqref{fGibbs3} ) is unbounded from above. }.
In the seminal work \cite{LRS}, Lebowitz, Rose, and Speer initiated
the study of focusing Gibbs measures in the one-dimensional setting. In this work, they
constructed the one-dimensional (focusing) Gibbs measures with an $L^2$-cutoff 
\begin{align}
d\rho_1(u,w,\dt w)=Z^{-1} \ind_{\{\int_\T |u|^2 dx \le K \}} e^{-\frac 12\int_\T  |u|^2w \, dx} d(\mu_1 \otimes \mu_{0} \otimes \mu_{-1} )(u,w,\dt w)
\label{fGibbs1}
\end{align}

\noi
and also the focusing $\Phi_1^k$-measure (Gibbs measures) in the $L^2$-(sub)critical setting (i.e. $2< k \le 6$)\footnote{Here, we consider the case where $k$ is an integer. In particular, $k$ is an even integer when $d\mu_1$ is the complex Gaussian free field.} with an $L^2$-cutoff
\begin{align}
d\rho(u) = Z^{-1} 
\ind_{\{\int_\T |u|^2 dx \le K \}} e^{\frac{1}{k} \int_{\T} |u|^k \, dx} d\mu_1(u)
\label{fGibbs3}
\end{align}

\noi
where $d\mu_1$ and $d\mu_0$ denote the periodic Wiener measure and the white noise on $\T$ , respectively.
The (focusing) Gibbs measures \eqref{fGibbs1} and \eqref{fGibbs3} were then proved to be invariant under the Zakharov system ($\g=1$ in \eqref{Zak1}) and cubic NLS on $\T$ by Bourgain \cite{BO94, BO94b}, respectively.
See Remark \ref{REM:phi} (i) for more explanations about the focusing $\Phi_1^k$-measure.

In the two-dimensional setting $\T^2$, Oh, Tolomeo, and the author \cite{OST} continued the study on the (focusing) Gibbs measures \eqref{fGibbs1}  and proved that the Gibbs
measure \eqref{fGibbs1}  (even with proper renormalization on the potential energy $\int_{\T^2} |u|^2w$ and on the $L^2$
-cutoff) is not normalizable as a probability measure:
\begin{align*}
\E_{\mu_1 \otimes \mu_0\otimes \mu_{-1}} \Big[   \ind_{\{\int_{\T^2} \; :|u|^2: \;  dx \, \leq K\}}   e^{ -\frac 12 \int_{\T^2}   \; :  |u|^2 : \;  w \,dx } \Big]=\infty
\end{align*}

\noi
for any $K>0$, where $d\mu_1$ and $d\mu_0$ denote the massive Gaussian free field and the white noise on $\T^2$ , respectively. 
As for the focusing $\Phi_2^k$-measure (Gibbs measure) \eqref{fGibbs3} on $\T^2$, Brydges and Slade \cite{BS} proved that the focusing $\Phi_2^4$-measure (i.e. the quartic interaction $k=4$ ) (even with proper renormalization on the potential energy $\frac 14\int_{\T^2} |u|^4$ and on the $L^2$
-cutoff) is not normalizable as a probability measure:
\begin{align*}
\E_{\mu_1} \Big[   \ind_{\{\int_{\T^2} \; :|u|^2: \; dx \, \leq K\}}   e^{ \frac 14\int_{\T^2}  \;  :  |u|^4  : \;  \, dx} \Big]=\infty
\end{align*}

\noi
for any $K>0$. An alternative proof was also given by Oh, Tolomeo, and the author \cite{OST}. 
We also notice that with the cubic interaction ($k=3$),
Jaffe constructed a (renormalized) $\Phi_2^3$-measure with a Wick-ordered $L^2$--cutoff. See \cite{BO95, OST}. See Remark \ref{REM:phi} (ii) for more explanations about the focusing $\Phi_2^k$-measure.

In this paper, our first goal is to establish the following phase transition (Theorem \ref{THM:1}) at the critical value $\g=1$:

\begin{itemize}
\item[\textup{(i)}] \textup{(weakly nonlinear coupling).}
Let $0\le\g<1$. Then, we have the normalizability of the (focusing) Gibbs measure 
\begin{align*}
\E_{\mu_1 \otimes \mu_{1-\g}\otimes \mu_{-\g}} \Big[   \ind_{\{\int_{\T^2} \; :|u|^2: \;  dx \, \leq K\}}   e^{ -\frac 12 \int_{\T^2}  \;  :  |u|^2   :  \; w \,dx } \Big]<\infty
\end{align*}

\noi
for any $K>0$.	
	
\smallskip
	
\item[\textup{(ii)}] \textup{(strongly nonlinear coupling).}
Let $\g=1$. Then, we have the non-normalizability of the (focusing) Gibbs measure 
\begin{align*}
\E_{\mu_1 \otimes \mu_0\otimes \mu_{-1}} \Big[   \ind_{\{\int_{\T^2} \; :|u|^2: \;  dx \, \leq K\}}   e^{ -\frac 12 \int_{\T^2}  \;   :  |u|^2 :  \; w \, dx } \Big]=\infty
\end{align*}

\noi
for any $K>0$.
 
\end{itemize}

\noi
Therefore, the two-dimensional Zakharov system $(\g=1)$ turns out to be critical, exhibiting the phase transition in terms of the measure construction.

We then study the dynamical problem \eqref{Zak1} 
i.e. construct invariant Gibbs dynamics ($d\rhoo_\g$-almost sure global well-posedness and invariance of the Gibbs measure; see Theorems \ref{THM:2} and \ref{THM:3}).

\begin{remark}\rm
In a recent work \cite{Seo23} by the author, the phase transition phenomenon in the three-dimensional setting $\T^3$ was explored for the Gibbs measure with $\g=0$, formally expressed as
\begin{align}
d\rhoo(u,w, \dt w) = Z^{-1}  
\exp\bigg( -\frac \ld2 \int_{\T^3}   \; :  |u|^2w : \;   \,dx -\infty \bigg) \ind_{ \{|\int_{\T^3} : | u|^2 :   dx| \le K\}}  d \muu_1(u,w,\dt w)
\label{SingGibbs3}
\end{align}

\noi
where the coupling constant\footnote{Since $|u|^2w$ is not sign definite, the sign of $\ld$ does not play any role.} $\ld \in \R\setminus \{0\}$ measures the strength
of the interaction potential and $\muu_1=\mu_1\otimes \mu_1 \otimes \mu_0$.  In the three-dimensional scenario, the  Gaussian free field $\mu_1$ is supported on a significantly rougher space, specifically $\mathcal{C}^{s}(\T^3)\setminus \mathcal{C}^{-\frac 12}(\T^3)$ for any $s<-\frac 12$, where $\mathcal{C}^s (\T^d)$ denotes the H\"older-Besov space. To accommodate this difference from the two-dimensional case, an additional (non-Wick) renormalization denoted by $-\infty$ is necessary for constructing the measure. While the focusing Gibbs measure on $\T^2$ in \eqref{Gibbs} can be constructed regardless of the coupling size $|\ld|$ (see Theorem \ref{THM:1}), in \cite{Seo23} the author demonstrated a phase transition for the Gibbs measure \eqref{SingGibbs3} in the three-dimensional setting. Specifically, normalizability ($Z<\infty$) was established in the weak coupling regime ($0<|\ld| \ll 1$) for every $K>0$, while non-normalizability ($Z=\infty$) was proven in the strong coupling case ($|\ld|\gg 1$) for every $K>0$. Similar phase transitions at critical exponents between strong and weak coupling regimes have been observed in other focusing models (see \cite{OOT1, OOT2}), but not with the Wick-ordered $L^2$-cutoff (i.e.~the generalized grand-canonical formulation). Therefore, we point out that the size of the coupling constant $\ld$ plays a crucial role in the analysis of the focusing Gibbs measure \eqref{SingGibbs3}, particularly concerning the phase transition with respect to $|\ld|$, marking a significant departure from focusing Gibbs measures on $\T^2$. In particular, in the weak coupling regime ($0<|\ld|\ll 1$) the Gibbs measure \eqref{SingGibbs3} is singular with respect to the base Gaussian field $\muu_1$ while the Gibbs measure \eqref{Gibbs} on $\T^2$ is absolutely continuous with respect the base Gaussian field. This singularity of the Gibbs measure introduced additional difficulties, compared to the Gibbs measures on $\T^2$, studied in this paper for the measure (non-)construction part.

\end{remark}

\begin{remark}\rm
\label{REM:phi}	
(i) In the one-dimensional setting $\T$,
in \cite{LRS}, Lebowitz, Rose, and Speer
also proved non-normalizability of the focusing $\Phi_1^k$-measure \eqref{fGibbs3} 
\begin{align*}
\E_{\mu_1}\Big[\ind_{\{\int_\T |u|^2 dx \le K \}} e^{\frac{1}{k} \int_{\T} |u|^k \, dx} \Big]=\infty
\end{align*}

\noi
in (i) the $L^2$-supercritical case $( k>6)$ for any $K > 0$
and 
(ii) the  $L^2$-critical case ($ k=6$), 
provided that $K > \|Q\|_{L^2(\R)}^2$, 
where $Q$ is the (unique\footnote{Up to the symmetries.}) optimizer for the Gagliardo-Nirenberg-Sobolev inequality
on $\R$
such that $\|Q\|_{L^6(\R)}^6 = 3\|Q'\|_{L^2(\R)}^2$.
In a recent work \cite{OSoT}, Oh, Sosoe and Tolomeo completed the focusing Gibbs measure construction program in the one-dimensional setting, including
the critical case $(k = 6)$ at the optimal mass threshold $K = \|Q\|_{L^2(\R)}^2$. See \cite{OSoT} for more details on the (non-)construction of the focusing $\Phi_1^k$- measure in the one-dimensional setting.


\smallskip

\noi
(ii) In the two-dimensional setting $\T^2$,	
the non-normalizability of the focusing $\Phi^k_2$-measure also holds for the higher order interactions $k \geq 5$ (see \cite[Remark 1.4]{OST}) 
\begin{align*}
\E_{\mu_1} \Big[   \ind_{\{\int_{\T^2} \; :|u|^2: \; dx \, \leq K\}}   e^{ \frac 1k\int_{\T^2}    \; :  |u|^k  :   \; \, dx} \Big]=\infty.
\end{align*}

We point out that typical elements $u$ in the support of massive Gaussian free field $d\mu_1$ (on $\T^2$) is log-correlated, namely 
\begin{align*}
\E_{\mu_1} \big[u(x) \cj u(y)\big]  \sim 
\log|x-y|
\end{align*}

\noi
for any $x, y \in \T^2$ with $x\ne y$. 
See \cite{ORSW, OST} for a related discussion.
In particular, in \cite{OST} Oh, Tolomeo, and the author studied the non-normalizability of the Gibbs measure with log-correlated base Gaussian fields on $\T^d$ for any $d \ge 1$.

\smallskip

\noi
(iii) In the three-dimensional setting $\T^3$, more complicated phenomena appear.
In \cite{OOT1}, Oh, Okamoto and Tolomeo studied the (non-)construction of the focusing $\Phi_3^3$-measure.
More precisely, in the weakly nonlinear regime, they proved normalizability of the $\Phi_3^3$-measure and show that it is
singular with respect to the massive Gaussian free field on $\T^3$. Furthermore, they proved that there exists a shifted measure with respect to which $\Phi_3^3$-measure is absolutely continuous. In the
strongly nonlinear regime, they established non-normalizability
of $\Phi_3^3$-measure. 

In the case of a higher order focusing interaction $(k \ge 4)$,
the focusing nonlinear interaction is worse than the cubic interaction $(k=3)$
and so non-normalizability would be satisfied for the higher order interactions $(k \ge 4)$. See also \cite{OOT2} for the non-normalizability of the focusing Hartree $\Phi_3^4$-measure.

\end{remark}

\begin{remark}\rm
Removing the infrared cut-off (i.e. the finite volume $\mathbb{T}^d$) to investigate Gibbs measures on the infinite volume $\mathbb{R}^d$ poses a highly intricate challenge. The construction of these measures, initially achieved in finite volumes and subsequently extended to infinite volume, stands as a significant accomplishment in constructive quantum field theory. In the context of focusing Gibbs measures on the infinite volume,  in \cite{Rider, TW} it was observed that the focusing $\Phi^4_1$-measure, an invariant measure for the focusing cubic Schrödinger equation, collapses onto the unit mass on the trivial path. In other words, it converges weakly to $\delta_0$, placing unit mass on the zero path, when taking a large torus limit. Consequently, we anticipate a triviality phenomenon in the large torus limit of the Gibbs measure $\rhoo_\g$ \eqref{Gibbs}, analogous to the one-dimensional focusing case, because of its focusing nature.

\end{remark}

\begin{remark}\rm
The equation \eqref{Zak1} is also known as the Schr\"odinger-Klein-Gordon system
with a Zakharov-type coupling. In the following, however, we simply refer to \eqref{Zak1} as the Schr\"odinger-wave system.

We point out that for our first main result
(Theorem \ref{THM:1}), we need to work with the massive linear part $\dt^2w-\Dl w+w$ in order to avoid a problem\footnote{Since the zeroth frequency is not
controlled due to the lack of the conservation of the $L^2$-mass under the dynamics.}
at the zeroth frequency in the Gibbs measure construction. Hence, due to this reason, we
work with the massive case in this paper.

\end{remark}


\subsection{Phase transition of the Gibbs measure}
In this subsection, we explain a renormalization procedure
required to give a proper meaning to the Gibbs measure $d\rhoo_\g$ defined in \eqref{Gibbs} and present our first main theorem for the phase transition i.e. (non-)construction of the focusing Gibbs measure.

The  Gibbs measure for the Zakharov-Yukawa   system \eqref{Zak1}
is formally given by 
\begin{align}
\begin{split}
d \rhoo_\g 
& = Z^{-1} e^{-Q(u,w)} d(\mu_1 \otimes \mu_{1-\g} \otimes \mu_{-\g} )(u,w, \dt w),
\end{split}
\label{Gibbs1}
\end{align}

\noi
where the potential energy $Q(u, w)$ is given by 
\begin{align*} 
Q(u, w) = \frac{1}2 \int_{\T^2}
|u|^2 w \, dx.
\end{align*}

\noi
Recall that on $\T^2$, the Gaussian field $d\mu_s$ in \eqref{gauss0}
is a probability measure supported
on $W^{s - 1 - \eps, p}(\T^2)$ for any $\eps > 0$ and $1 \leq p \leq \infty$.
For simplicity, we set $d\mu = d\mu_1$.
We now go over the Fourier representation
of functions distributed by $\mu \otimes \mu_{1-\g} \otimes \mu_{-\g}$\footnote{Gaussian masure $Z^{-1}\exp\big(-\frac 12 \| (u,w, \dt w)  \|_{H^{1} \times H^{1-\g} \times H^{-\g} } \big) du \otimes dw \otimes  d(\dt w) $ for which $H^{1}(\T^2) \times H^{1-\g}(\T^2) \times H^{-\g}(\T^2) $ is the Cameron-Martin space.}.
We now define random distributions $u= u^\o$, $w_0 = w_0^\o$, and $w_1=w_1^\o$ by 
the following  Gaussian Fourier series:\footnote{By convention, 
we endow $\T^2$ with the normalized Lebesgue measure $dx_{\T^2}= (2\pi)^{-2} dx$.}
\begin{equation} 
u^\o = \sum_{n \in \Z^2 } \frac{ g_n(\o)}{\jb{n}} e^{i \jb{n,x} }, \;
w_0^\o = \sum_{n\in \Z^2}  \frac {h_n(\o)} {\jb{n}^{1-\g}} e^{i \jb{n,x}   },
\;
\text{and}
\;
w_1^\o = \sum_{n\in \Z^2}  \frac {\l_n(\o)} {\jb{n}^{-\g}} e^{i \jb{n,x}   },
\label{IV2}
\end{equation}

\noi
where $\{g_n, h_n, \l_n\}_{n \in \Z^2}$ is a sequence of “independent standard” complex-valued\footnote{This means that $h_0, \l_0 \sim\NN_\R(0,1)$
and $\Re g_0, \Im g_0, \Re g_n, \Im g_n, \Re h_n, \Im h_n, \Re \l_n, \Im \l_n \sim \NN_\R(0,\tfrac12)$
for $n \ne 0$.}
Gaussian random variables on 
a probability space $(\O,\F,\PP)$ conditioned that $h_{-n}:=\cj h_n$ and $\l_{-n}:=\cj \l_n$. 
More precisely, with the index set $\Ld$ defined by
\begin{align*}
\Ld:=(\Z \times \Z_{+}) \cup (\Z_{+}\times \{ 0\}) \cup \{(0,0) \},
\end{align*}  

\noi
We define $\{h_n, \l_n \}_{n \in \Ld}$ to 
be a sequence of independent standard complex-valued Gaussian random variables (with $h_0, \l_0$ real-valued) and set $h_{-n} := \cj{h_n}$ and $\l_{-n}:=\cj \l_n$ for $n \in \Ld$\footnote{As for the Gaussian free field $d\mu$ on the first component $u$, we mean the complex Gaussian free field. 
On the other hand, the Gaussian fields $d\mu_{1-\g}$, $d\mu_{-\g}$ on the second and third component $w, \dt w$ mean the real Gaussian fields.}.

Denoting the law of a random variable $X$ by $\Law(X)$, 
we then have
\begin{align*}
\Law((u, w_0, w_1 )) = \mu \otimes \mu_{1-\g} \otimes \mu_{-\g} 
\end{align*}

\noi
for $(u, w_0, w_1)$ in \eqref{IV2}.
Note that  $\Law((u,w_0, w_1)) = \mu \otimes \mu_{1-\g} \otimes \mu_{-\g}$ is supported on
\begin{align*}
H^{s_1}(\T^2)\times H^{s_2}(\T^2) \times H^{s_2-1}(\T^2)
\end{align*}

\noi
for $s_1 <0$ and $s_2<-\g$ but not for $s_1 \geq 0$ and $s_2 \geq -\g$, respectively; see \cite{BO96, Zhid, OTh}.

As we pointed out, the key issue in constructing
the Gibbs measure $d\rhoo$ in~\eqref{Gibbs} comes
from the focusing nature of the potential, 
i.e.~the potential $Q(u, w)$ is unbounded from above.
In a seminal paper \cite{LRS}, 
Lebowitz, Rose, and Speer constructed the Gibbs measure $d\rhoo_\g$ when $d = 1$ and $\g=1$, 
by inserting a cutoff in terms of the conserved  wave energy $M(u) = \|u\|_{L^2}^2$.
Then, a natural question is to consider the construction
of the Gibbs measure $d\rhoo_\g$ in the two-dimensional setting $\T^2$.
We point out that in \cite{OST} Oh, Tolomeo, and the author 
proved that the (renormalized) Gibbs measure
for the Zakharov system ($\g=1$) on $\T^2$ is not normalizable, even with a Wick-ordered $L^2$-cutoff
i.e. the Gibbs measure $d\rhoo_1$  \eqref{Gibbs} for the Zakharov system on $\T^2$
cannot be realized as a probability measure even with a Wick-ordered $L^2$-cutoff on the Schr\"odinger component $u$.
Despite this non-normalizability result,
in this paper we construct the Gibbs measure $d\rhoo_\g$ for all $\g<1$.

In view of \eqref{Gibbs1}, 
we can write the formal expression~\eqref{Gibbs1}
for the Gibbs measure $d\rhoo_\g$ as\footnote{Hereafter, 
	we simply use $Z$, $Z_N$, etc. to denote various normalization constants.}
\begin{align}
\text{``}\, d \rhoo_\g(u,w,\dt w) 
= Z^{-1} \exp \bigg( - \frac 1 2 \int_{\T^2} |u|^2 w  \, dx \bigg) 
d(\mu \otimes \mu_{1-\g} \otimes \mu_{-\g} )(u,w, \dt w)
\, \text{"}.
\label{Gibbs0a}
\end{align}

\noi
Since $u$ in the support of the Gaussian free field $d\mu$ (on $\T^2$) is not a function\footnote{A typical element $u$ in the support of $d\mu$  
does not belong to $L^2(\T^2)$.}, 
the potential energy in \eqref{Gibbs0a} is not well defined
and thus a proper renormalization is required to give a meaning to 
\eqref{Gibbs0a}.
In order to explain the renormalization process, we first study the regularized model.
Given $N \in \N$, we define the (spatial) frequency projector $\pi_N$  by 
\begin{align*}
\pi_N f = 
\sum_{ |n| \leq N}  \ft f (n)  e^{i\jb{n,x}}.
\end{align*}

\noi
Let   $u$  be as in \eqref{IV2}
and 
set $u_N = \pi_N u$.
Note that, for each fixed $x \in \T^3$, 
$u_N(x)$ is
a mean-zero real-valued Gaussian random variable with variance
\begin{align}
\s_N = \E\big[|u_N(x)|^2\big] = \sum _{|n| \le N} \frac1{\jb{n}^2}
\sim \log N \too \infty, 
\label{sigma1}
\end{align}

\noi
as $N \to \infty$.
We then define the Wick power $:\! |u_N|^2 \!: $ by 
\begin{align}
:\! | u_N|^2 \!:  \, =  | u_N|^2  -\s_N.
\label{Wick1}
\end{align}

\noi
We point out that the Wick renormalization \eqref{Wick1} removes certain singularities (i.e. subtract a divergent contribution; see \cite{BO96, OTh}). 
This suggests us to consider the renormalized potential energy:
\begin{align}
Q_N(u, w) =  \frac{1}{ 2} \int_{\T^2}
:\! | u_N|^2 \!: w \,dx
\label{poten}
\end{align}

\noi
where $u_N = \pi_N u $ as in Subsection \ref{SUBSEC:notation}.
Thanks to the presence of $\s_N$ in \eqref{Wick1}, 
we can show that $Q_N $ 
converges to some limit $Q$
in $L^p(\mu \otimes \mu_{1-\g} \otimes \mu_{-\g})$ if $ \g<1$.
We now define the truncated renormalized Gibbs measure $d\rhoo_{\g, N}$, endowed with  a Wick-ordered $L^2$-cutoff, by 
\begin{align}
\begin{split}
d \rhoo_{\g, N} 
& = Z_{N}^{-1} \ind_{ \{|\int_{\T^2} 
: | u_N|^2 :   dx| \le K\}}
e^{-Q_N(u, w)} d(\mu \otimes \mu_{1-\g} \otimes \mu_{-\g})(u,w, \dt w). 
\end{split}
\label{Gibbs2}
\end{align}

\noi
As we have already pointed out, it is well known that a typical function $u$ in the support of $d\mu$ (on $\T^2$) does not belong to $L^2(\T^2)$ and so 
we put a Wick-ordered $L^2$-cutoff instead of considering a $L^2$-cutoff.

In \cite{OST}, Oh, Tolomeo, and the author proved  the non-construction 
of the Gibbs measure in the strongly nonlinear coupling case ($\g=1$)
i.e. the Gibbs measure $d\rhoo_1$ \textup{(}even with proper renormalization on the potential energy $\int_{\T^2} |u|^2w$ and on the $L^2$-cutoff\textup{)} can not be defined as a probability measure. In particular, we proved 
\begin{align}
\begin{split}
\sup_{N \in \N}  \E_{\mu \otimes \mu_0 \otimes \mu_{-1}} \Big[ \ind_{\{ |\int_{\T^2} \; :|u_N|^2: \;  dx \, | \leq K\}}   e^{ -Q_N (u,w)} \Big]
=\infty. 
\label{nonnor}
\end{split}
\end{align}

We, however, notice that once $\g<1$ (i.e. the weakly nonlinear coupling case),
we obtain the following uniform exponential integrability of the density, 
which allows us to construct the limiting Gibbs measure $d\rhoo_\g$. We now state our first main result.

\begin{theorem}[Normalizability of the Gibbs measure]
\label{THM:1}
Let $0\le \g <1$. Then, given any finite $ p \ge 1$, 
$Q_N $  in \eqref{poten} converges to some limit $Q$ in $L^p(\mu \otimes \mu_{1-\g} \otimes \mu_{-\g} )$.
Moreover, there exists $C_{p} > 0$ such that 
\begin{equation}
\sup_{N\in \N} \Big\| \ind_{ \{|\int_{\T^2} : | u_N|^2 :   dx| \le K\}}    e^{-Q_N(u,w)}\Big\|_{L^p(\mu \otimes \mu_{1-\g} \otimes \mu_{-\g} )}
\leq C_{p}  < \infty. 
\label{unexp1}
\end{equation}

\noi
In particular,  we have
\begin{equation}
\lim_{N\rightarrow\infty} \ind_{ \{|\int_{\T^2} : | u_N|^2 :   dx| \le K\}}  e^{ -Q_N(u,w)}= \ind_{ \{|\int_{\T^2} : | u|^2 :   dx| \le K\}}  e^{-Q(u,w)}
\qquad \text{in } L^p(\mu \otimes \mu_{1-\g} \otimes \mu_{-\g}  ).
\label{c11}
\end{equation}

\noi
As a consequence, 
the truncated renormalized Gibbs measure $d\rhoo_{\g,N}$ defined in \eqref{Gibbs2} converges, in the sense of \eqref{c11}, 
to the Gibbs measure $d\rhoo_\g$ given by
\begin{align}
d\rhoo_\g(u,w, \dt w)= Z^{-1}  \ind_{ \{|\int_{\T^2} : | u|^2 :   dx| \le K\}} e^{-Q(u,w)}d(\mu \otimes \mu_{1-\g} \otimes \mu_{-\g} )(u,w,\dt w).
\label{Gibbs4}
\end{align}

\noi
Furthermore, the resulting Gibbs measure $d\rhoo_\g$ is absolutely continuous with respect to 
the Gaussian  field~d$(\mu \otimes \mu_{1-\g} \otimes \mu_{-\g})$.

\end{theorem}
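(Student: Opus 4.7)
The plan is to prove Theorem~\ref{THM:1} in three stages: (i) $L^p$-convergence of the renormalized potential $Q_N$, (ii) uniform-in-$N$ exponential integrability of the truncated density via a variational (Bou\'e--Dupuis) representation, and (iii) a soft closing argument producing the measure convergence and absolute continuity.

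For Stage~(i), the random variable $Q_N$ sits in an inhomogeneous Wiener chaos of total order at most three: $:\!|u_N|^2\!:$ belongs to the second chaos generated by $\{g_n\}$, while $w$ belongs to the first chaos generated by the independent family $\{h_n\}$. A direct covariance computation using the Wick isometry and the independence of $u$ and $w$ gives
\begin{align*}
\E\bigl[|Q_N|^2\bigr]
\sim \sum_{|n_1|, |n_2| \le N} \frac{1}{\jb{n_1}^2 \jb{n_2}^2 \jb{n_1 + n_2}^{2(1-\g)}}.
\end{align*}
Setting $k = n_1 + n_2$ and summing the convolution in $n_1$ yields (up to logarithms) $\sum_k \jb{k}^{-(4 - 2\g)}$, which is finite precisely when $\g < 1$ — the critical threshold of the theorem, and consistent with the logarithmic divergence at $\g = 1$ underlying the non-normalizability from \cite{OST}. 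The same computation applied to $Q_N - Q_M$ shows $\{Q_N\}$ is Cauchy in $L^2(\mu \otimes \mu_{1-\g} \otimes \mu_{-\g})$; Nelson's hypercontractive estimate on fixed-order chaos upgrades this to Cauchy in $L^p$ for every finite $p \ge 2$, producing the limit $Q$.

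For Stage~(ii), I would employ the Bou\'e--Dupuis variational formula in the form used by Barashkov--Gubinelli,
\begin{align*}
-\log \E\bigl[e^{-F(Y)}\bigr]
= \inf_{\theta} \E\Bigl[F\bigl(Y + I(\theta)\bigr) + \tfrac{1}{2}\|\theta\|^2\Bigr],
\end{align*}
where $Y = (u, w, \dt w)$ is realized via a Brownian representation of the Gaussian and $\theta = (\theta_1, \theta_2, \theta_3)$ is an adapted drift paying cost $\|\theta_1\|_{H^1}^2 + \|\theta_2\|_{H^{1-\g}}^2 + \|\theta_3\|_{H^{-\g}}^2$ after Itô integration. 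Applying this with $F = Q_N$ plus an indicator for the Wick $L^2$-cutoff reduces \eqref{unexp1} to a uniform lower bound on the drift functional. Expanding
\begin{align*}
Q_N(u+\Theta_1, w+\Theta_2)
= \tfrac{1}{2} \int :\!|u_N|^2\!: (w + \Theta_2)
+ \Re\! \int \cj{u_N}\, \Theta_{1,N} (w + \Theta_2)
+ \tfrac{1}{2} \int |\Theta_{1,N}|^2 (w + \Theta_2),
\end{align*}
the purely stochastic piece $\int :\!|u_N|^2\!:\, w$ is handled by Stage~(i); the mixed stochastic pieces $\int :\!|u_N|^2\!:\, \Theta_2$ and $\int \cj{u_N}\,\Theta_1 w$ are controlled through duality pairings of type $H^{-\g-\eps} \times H^{1-\g}$ (crucially using $\g<1$) and absorbed via Young's inequality into the quadratic drift cost. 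The remaining cubic-in-drift term $\int |\Theta_{1,N}|^2 \Theta_2$ is the most delicate: it is estimated through two-dimensional Sobolev embedding (e.g., $H^{\eps} \embeds L^{p}$ combined with interpolation) and the Wick $L^2$-cutoff, which controls $\|\Theta_{1,N}\|_{L^2}^2$ in terms of $\int :\!|u_N|^2\!:$ modulo drift cost; here the strict inequality $1 - \g > 0$ is essential for the absorption to close.

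Given Stages~(i) and (ii), Vitali's convergence theorem (uniform integrability from \eqref{unexp1} plus convergence in probability of the exponential factor inferred from $L^p$-convergence of $Q_N$) yields \eqref{c11}, while the uniform bound secures positivity of the limiting partition function; the absolute continuity of $\rhoo_\g$ with respect to $\mu \otimes \mu_{1-\g} \otimes \mu_{-\g}$ is then immediate from the density formulation \eqref{Gibbs4}. The hard part will be the variational lower bound in Stage~(ii), specifically the cubic drift term $\int |\Theta_1|^2 \Theta_2$: this is precisely where the non-normalizability at $\g=1$ manifests, so the argument must jointly exploit the Wick $L^2$-cutoff and the strict inequality $\g < 1$, and the endpoint $\g = 1$ breaks the absorption step exactly at this term.
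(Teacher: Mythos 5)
Your overall strategy matches the paper's: (i) a second-moment/Wick-isometry computation for the convergence of $Q_N$, upgraded by hypercontractivity; (ii) the Bou\'e--Dupuis variational formula for the uniform exponential bound; (iii) convergence in probability plus uniform integrability to conclude. Your Stage~(i) computation is correct and is exactly the ``standard computation'' the paper cites to~\cite{OTh}, including the correct identification of $\g<1$ as the threshold from $\sum_k \jb{k}^{-(4-2\g)}\log\jb{k}$. Your inclusion of a third drift component $\theta_3$ is superfluous (the integrand of $Q_N$ and the cutoff do not involve $\dt w$, so the paper cleanly reduces to a two-component drift via Lemma~\ref{LEM:uni2}), but this costs nothing. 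One small economy you miss: the zeroth term $\int :\!Y_{1,N}^2\!:\,Y_{2,N}$ does not need the $L^p$-convergence of $Q_N$ at all, since by independence of the Schr\"odinger and wave noises its expectation is simply zero (equation~\eqref{1}).

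The genuine gap is in Stage~(ii), precisely at the step you flag as the crux. You propose to apply Bou\'e--Dupuis with ``$F = Q_N$ plus an indicator for the Wick $L^2$-cutoff'' and then to use the cutoff to control $\|\Theta_{1,N}\|_{L^2}^2$. This is not operational as stated: the indicator function is not a real-valued integrand one can feed into the variational formula and extract a useful pointwise lower bound for $F(Y+\Theta) + \tfrac12\int\|\theta\|^2\,dt$, because the constraint $|\int :\!|u_N+\Theta_{1,N}|^2\!:\,|\le K$ acts only on the endpoint of the drift and gives no coercivity by itself. The device the paper uses---and which is the load-bearing idea---is the exponential Chebyshev-type bound $\ind_{\{|x|\le K\}} \le e^{AK^\al}\,e^{-A|x|^\al}$ with $\al\gg1$ (see~\eqref{H6}), applied \emph{before} Bou\'e--Dupuis, so that the functional becomes $\RR_N = \int :\!u_N^2\!:\,w_N + A\big|\int :\!u_N^2\!:\big|^\al$. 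Expanding the penalty and invoking the elementary inequality $|a+b+c|^\al\ge \tfrac12|c|^\al - C_\al(|a|^\al+|b|^\al)$ (equation~\eqref{OOT}) produces the coercive positive term $\tfrac{A}{4}\|\Theta_{1,N}\|_{L^2}^{2\al}$ in~\eqref{YY5}. That term---not the drift cost $\tfrac12\int\|\theta\|^2\,dt$ alone---is what absorbs all of $\int\Theta_{1,N}^2\Theta_{2,N}$, $\int Y_{1,N}\Theta_{1,N}\Theta_{2,N}$, and $\int\Theta_{1,N}^2 Y_{2,N}$ through Young's inequality with large $\al$ (Lemma~\ref{LEM:Dr2}, items~\eqref{YY2}--\eqref{Y3}). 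Your intuition that the Wick $L^2$-cutoff provides the missing compactness is exactly right, and you correctly isolate the cubic drift term as where $\g<1$ enters and $\g=1$ fails, but without the penalization device the absorption argument cannot be closed within the variational framework.
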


Theorem \ref{THM:1} shows that a phase transition occurs: (i) the (focusing) Gibbs measure is not constructible as a probability measure for $\g=1$ (see \eqref{nonnor}) and (ii) the (focusing) Gibbs measure is constructible as a probability measure for $\g<1$. 

The main task  in proving Theorem \ref{THM:1} 
is to show  the uniform exponential integrability \eqref{unexp1}.
We establish the bound \eqref{unexp1} by applying the variational formulation of the partition function introduced by Barashkov and Gubinelli \cite{BG} in the construction of
the $\Phi^4_3$-measure. See also \cite{GOTW, ORSW2, OOT1, OST, FS, OOT2}. We point out that the partition function $Z_N$ comes from the expectation with respect to the product measure $\mu \otimes \mu_{1-\g} \otimes \mu_{-\g}$ i.e. 
\begin{align*}
Z_N=\E_{\mu \otimes \mu_{1-\g} \otimes \mu_{-\g}  }\Big[  \ind_{ \{|\int_{\T^2} : | u_N|^2 :   dx| \le K\}} e^{-Q_N(u, w)}    \Big].
\end{align*}
   
\noi
Hence, it requires more careful analysis than dealing with  
one component measure in the variational formulation. We, however, notice that random field $u$ under $d\mu$ and random field $w$ under $d\mu_{1-\g}$ are independent and so we exploit some cancellation from the independence.  
Once the uniform bound \eqref{unexp1} is established, the $L^p$-convergence \eqref{c11} of the densities follows from (softer) convergence in measure of the densities. See \cite[Remark 3.8]{Tz08}.

\begin{remark}\rm
The phase transition from \eqref{nonnor} and Theorem \ref{THM:1} shows that the two-dimensional Zakharov system $(\g=1)$ is critical in terms of the Gibbs measure construction. 
\end{remark}


	
	



\subsection{Renormalized Zakharov-Yukawa system}
In this subsection, we now consider the following Zakharov-Yukawa system on $\T^2$ associated with the (renormalized) Hamiltonian
\begin{align}
\begin{cases}
i \dt u +\Dl u = uw\\
\dt^2 w +(1- \Dl) w = -\jb{\nb}^{2\g}(|u|^2-\fint |u|^2)
\label{Zak2}
\end{cases}
\end{align}

\noi
where $ \fint  f(x) dx := \frac{1}{(2\pi)^2} \int_{\T^2} f(x) dx$ 
denotes integration with respect to the normalized Lebesgue measure $(2\pi)^{-2} dx$ on $\T^2$
and the initial data $(u^\o, w_0^\o, w_1^\o)$ is distributed according to the Gibbs measure $d\rhoo_\g$ \eqref{Gibbs5}.
In view of the absolute continuity of $d\rhoo_\g$ with respect to the Gaussian field $d(  \mu \otimes \mu_{1-\g} \otimes d\mu_{-\g})$
(Theorem~\ref{THM:1}), we consider the random initial data $(u_0^\o, w_0^\o, w_1^\o)$
distributed according to $d(  \mu \otimes \mu_{1-\g} \otimes d\mu_{-\g})$ in the following discussion.


The important point is that  the renormalization  removes a certain singular component from the nonlinearity $|u|^2$
(see \eqref{nonlinear} below).
This allows us to study well-posedness
of the renormalized the Zakharov-Yukawa system \eqref{Zak2} on the support of the Gibbs measure $d\rhoo_\g$ in \eqref{Gibbs4}.
In \cite{Kishi2, Kishi1}, the Zakharov system was shown to be locally and globally well-posed in $H^s(\T^2) \times H^{\l}(\T^2) \times H^{\l-1}(\T^2)$
for some $s \ge 0$ and $\l \ge 0$.
It turns out, however, that Gibbs measures $d\rhoo_\g$ in \eqref{Gibbs4} are supported on 
$H^{-\eps}(\T^2) \times H^{-\g-\eps}(\T^2) \times H^{-\g-1-\eps}(\T^2)$ i.e. negative Sobolev spaces, which is beyond the scope of the known deterministic well-posedness results in~\cite{Kishi2, Kishi1}.
For this reason,  the main part of our analysis is devoted to the 
probabilistic construction of  local-in-time and global-in-time solutions to \eqref{Zak2} on the (low regularity) support of the Gibbs measure $d\rhoo_\g$.

Next, we interpret the nonlinearity $\NN(u)$:
\begin{align*}
\NN(u)=|u|^2 -\fint |u|^2.
\end{align*}

\noi
Namely, define bilinear operator $\NN(u_1,u_2)$ by setting
\begin{align}
& \mathcal{N}(u_1, u_2) (x, t) 
: = \sum_{n \in \Z^2}
\sum_{ \substack {n_1-n_2=n, \\n_1 \neq n_2  } }
\ft{u}_1(n_1, t)\cj{\ft{u} }_2(n_2, t)e^{i(n_1-n_2)x}
\label{nonlinear}
\end{align}

\noi
When all the arguments coincide, we simply  write $\NN(u) =\NN(u, u)$.
Notice that the most problematic interactions (the high-high interactions) are removed 
in the renormalized nonlinearity $\NN(u)$.
Note that this renormalization of the nonlinearity in \eqref{Zak2} comes from the Euclidean quantum field theory (see, for example, \cite{Simon}).\footnote{To be precise, it is an equivalent formulation  to the Wick renormalization in handling rough Gaussian initial data.}
This formulation first appeared  in the work of Bourgain \cite{BO96}
for studying  the invariant Gibbs measure for the defocusing cubic NLS on $\T^2$. See \cite{CO, OS, GO, OTh} for more discussion in the context of the (usual) nonlinear Schr\"odinger equations.

\begin{remark}\rm	
We briefly look into the relation between the renormalized Zakharov-Yukawa system
\begin{align}
\begin{cases}
i \dt u +\Dl u = uw\\
c^{-2}\dt^2 w+ (1-  \Dl) w = -\jb{\nb}^{2\g}(|u|^2-\fint |u|^2)
\label{Zak3}
\end{cases}
\end{align}

\noi
and the renormalized focusing Hartree NLS
\begin{align*}
i \dt u +\Dl u = - \big[(|u|^2- \fint |u|^2) * V \big]u
\end{align*}

\noi
where $V$ is a convolution potential with $\ft V(n)= \jb{n}^{-2+2\g}$. Notice that
by sending the wave speed $c$ in \eqref{Zak3} to $\infty$, 
the Zakharov-Yukawa system (Zakharov system) converges, at a formal level, to the renormalized focusing Hartree NLS (cubic NLS, respectively). As for the Zakharov system $(\g=1)$, see, for example, \cite{OzT, MN} for rigorous convergence results on $\R^d$.
We also refer to \cite{RSoh1, RSoh2} for a detailed explanation of how Gibbs measures, specifically in the context of focusing, can be microscopically derived from the perspective of many-body quantum mechanics.

\end{remark}

\subsection{Invariant dynamics for the Zakharov-Yukawa system}

In this subsection, we establish global-in-time flow on the support of the Gibbs measure $d\rhoo_\g$ \eqref{Gibbs5} and its invariance.
The main difficulty in studying these problems, even locally in time, comes from the
roughness of the support of the Gibbs measure. 
We first present $d\rhoo_\g$-almost sure local well-posedness result.
\begin{theorem}[Almost sure local well-posedness]\label{THM:2}
Let $0 \le \g <\frac 13$.
Then, the renormalized Zakharov-Yukawa system \eqref{Zak2} on $\T^2$ is $d\rhoo_\g$-almost surely locally well-posed. More precisely, for any $\eps>0$, there exists a set $\Si\subset H^{-\eps}(\T^2) \times H^{-\g-\eps}(\T^2) \times H^{-\g-1-\eps}(\T^2) $ of full $d\rhoo_\g$-measure such that for any $(u_0^\o,w_0^\o,w_1^\o)\in \Si$, there exists $\dl>0$ and a solution to the Cauchy problem for \eqref{Zak2}  on $[-\dl,\dl]$ with data $(u_0^\o,w_0^\o,w_1^\o)$, unique in the class 
\begin{align*}
z^{S,\o} +X^{s,b}_S(\dl)  \subset   & C\big([-\dl, \dl ]; H^{-\eps}(\T^2)  \big)  \\ 
z^{W,\o} +X^{\l,b}_{W}(\dl) \subset   & C\big([-\dl, \dl ]; H^{-\g-\eps}(\T^2)  \big) \cap C^1\big([-\dl,\dl]; H^{-\g-1-\eps}(\T^2) \big)
\end{align*} 

\noi
for some $b>\frac 12$ and
\begin{align*}
s-1<\l<1-2\g
\end{align*}

\noi
with $0<s<\frac 14-\frac \g2$ and $\l>0$, where $X_S^{s,b}(\dl)$, $X_W^{\l,b}(\dl)$,  $z^{S,\o}$, and $z^{W,\o}$ are defined in Subsection \ref{SUBSEC:Xsb} and \ref{SUBSEC:notation}, respectively.



\end{theorem}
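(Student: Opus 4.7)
The plan is to implement a first-order Da Prato--Debussche expansion. Write $u = z^{S,\o} + v$ and $w = z^{W,\o} + y$, where $z^{S,\o}$ is the free Schr\"odinger evolution of the random datum $u_0^\o$ and $z^{W,\o}$ is the free (massive) Klein--Gordon evolution of $(w_0^\o, w_1^\o)$. Substituting into \eqref{Zak2} and subtracting the linear flows, the remainders $(v,y)$ satisfy a perturbed system whose Duhamel forcing decomposes into three types of contributions: purely stochastic pieces built from $z^{S,\o}$ and $z^{W,\o}$ alone, mixed pieces that are linear in the unknown and linear in a random object, and purely deterministic pieces quadratic in $(v,y)$. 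The goal is to close a contraction in $X^{s,b}_S(\dl) \times X^{\ld,b}_W(\dl)$ for an almost surely positive random time $\dl$, with parameters $b>\tfrac12$, $0<s<\tfrac14-\tfrac{\g}{2}$, and $s-1<\ld<1-2\g$ with $\ld>0$.

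For the stochastic enhancement, I would first show that the Wick-renormalized square $\NN(z^{S,\o})$ appearing as the wave forcing is a second-order Gaussian chaos, and that hypercontractivity, together with the cancellation built into $\NN$, places it almost surely in $X_W^{\ld-1,b-1}$; the bilinear term $z^{S,\o}z^{W,\o}$ is handled analogously, exploiting independence of the two Gaussian families. The purely deterministic quadratic remainders $\NN(v,v)$ on the wave side and $vy$ on the Schr\"odinger side can be controlled by standard bilinear $X^{s,b}$ estimates for the Zakharov resonance relation on $\T^2$, in the spirit of Kishimoto and Bourgain, exploiting the positive Sobolev regularity $s,\ld>0$ of the remainders.

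The main obstacle is the pair of mixed terms $v\cdot z^{W,\o}$ in the Schr\"odinger equation and $\jb{\nb}^{2\g}\NN(z^{S,\o},v)$ on the wave side. Here $v$ is a deterministic element of $X^{s,b}_S$ carrying only a small positive Sobolev regularity, while the random factor has negative regularity $-\g-$, so a purely deterministic H\"older-type product estimate fails. To recover the missing smoothing, I would view multiplication by the rough random field as a random bilinear operator acting on $v$ and seek almost sure bounds on its operator norm from $X^{s,b}_S$ (respectively $X^{\ld,b}_W$) into the corresponding forcing space. This is precisely the framework of the random matrix/tensor estimates of Deng--Nahmod--Yue: the operator kernel is encoded as a random tensor indexed by the Zakharov frequency configuration, and the moment method controls its operator norm via a combinatorial counting estimate on the associated resonance surface. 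The $\jb{\nb}^{2\g}$ loss on the wave side competes with the gain from random cancellation, and balancing these opposing effects is what forces the threshold $\g<\tfrac13$ in the statement; the Hilbert--Schmidt version discussed in the appendix is a less sharp substitute.

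Granting these probabilistic and deterministic bounds, I would run a standard Picard iteration in a ball of $X^{s,b}_S(\dl)\times X^{\ld,b}_W(\dl)$ whose radius depends on the almost surely finite sizes of the random enhancements, choosing $\dl$ small so that a suitable power of $\dl$ gained from time localization absorbs the multilinear constants. The resulting solutions $(u,w)=(z^{S,\o}+v, z^{W,\o}+y)$ are unique in the indicated class, and $b>\tfrac12$ provides continuity in time via embedding. The full-measure set $\Si$ is the intersection of the full-measure events on which the stochastic objects and the random operator norms are finite, and this set has full $d\rhoo_\g$-measure by the absolute continuity of $d\rhoo_\g$ with respect to $\mu\otimes\mu_{1-\g}\otimes\mu_{-\g}$ established in Theorem~\ref{THM:1}.
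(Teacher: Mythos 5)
Your proposal follows essentially the same route as the paper: first-order (Da Prato--Debussche) expansion $u = z^{S,\o} + v$, $w = z^{W,\o} + y$, decomposition of the bilinear forcing into fully stochastic, mixed, and purely deterministic pieces, Deng--Nahmod--Yue random-tensor operator-norm estimates for the difficult cases, a Picard iteration in $X^{s,b}$ spaces on a random time interval, and a full-measure set obtained via absolute continuity of $d\rhoo_\g$ with respect to the Gaussian field. One correction in the assignment of methods: you attribute the fully random Schr\"odinger piece $z^{S,\o}z^{W,\o}$ to hypercontractivity and independence alone, but the paper (Lemma~\ref{LEM:zSzW}, Subsubcase~2.b.(ii), and Remark~\ref{RM:essen}) shows that this term \emph{also} requires the operator-norm/random-tensor approach in the low-modulation high--low regime $N_1 \sim N \gg N_2$, where the Hilbert--Schmidt (Wiener-chaos) bound already fails at $\g = 0$ because the summation loss in the external frequency $n$ is not compensated by $N_2^{-1+\g}$. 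The wave-side term $\NN_W(z^{S,\o},z^{S,\o})$, by contrast, really is closed by chaos estimates alone (Lemma~\ref{LEM:zSzS}), as you say. Apart from this and the paper's technical reduction of the wave equation to a first-order system via $w_\pm = w \pm i\jb{\nb}^{-1}\dt w$ before running the contraction, your plan matches the paper's argument.
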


The proof of Theorem~\ref{THM:2} is based on the first order expansion (McKean \cite{McKean}, Bourgain \cite{BO96}, Da Prato-Debussche \cite{DP1, DP2} type argument), which exploits the propagation of randomness under the corresponding linear flow. This method first decompose the solution by writing
\begin{align}
u=\text{random linear term+smoother term}
\label{decomp}
\end{align}

\noi
i.e. the decomposition of solution as the sum of the random linear evolution terms plus a smoother remainder
\footnote{In the field of stochastic PDEs, a well-posedness argument based on the decomposition~\eqref{decomp}
is usually referred to as the Da Prato-Debussche trick \cite{DP1, DP2}, where the random linear solution is replaced by the  solution to a linear stochastic PDE. It
is worthwhile to point out that the paper \cite{McKean, BO96} by McKean and Bourgain precede \cite{DP1, DP2}.}. 
Gaussian initial data in the first term of \eqref{decomp} are propagated linearly,
which preserves all the independence properties of the initial data $u^\o(0)$, $w^\o(0)$ in \eqref{Zak2} \footnote{This is no longer satisfied for nonlinear solutions $u(t), w(t)$ as soon as $t > 0$.} and in the second term of \eqref{decomp} the remainder term is treated as a perturbation.
The main idea is to use the propagation of the randomness in such a way that cancellations from the randomness happen and solve a fixed point problem
for the remainder term (see Subsection \ref{SUBSEC:firstor} for more explanations about Bourgain and Da Prato-Debussche trick).

To exploit better the independene strucutre of the random linear term involved, in \cite{BO96} Bourgain used the $TT^*$-argument with the Hilbert-Schmidt norm of the random matrices (kernel of $TT^*$ operator).
In Appendix \ref{SEC:A}, the estimates of random matrices are based on the Hilbert-Schmidt norm with the Wiener chaos estimate (Lemma~\ref{LEM:multigauss}). 
In order to attain a stronger coupling region, we need to go beyond the Bourgain's argument \cite{BO96}. 
We point out that a certain room exists between the operator norm and the Hilbert-Schmidt norm as follows:
\begin{align}
\| T \|^2_{\text{OP}}=\| TT^* \|_{\text{OP}} \le \| TT^* \|_{\text{HS}}. 
\label{HSS12}
\end{align}

\noi
Therefore, instead of using the Hilbert-Schmidt norm of the kernel matrix,
we use the operator norm approach based on the random tensor theory. This method rely on higher order versions of Bourgain’s $TT^*$ argument introduced by Deng, Nahmod, and Yue \cite{DNY}
\begin{align*}
\| T \|^{2m}_{\text{OP}}=\| (TT^*)^m \|_{\text{OP}}, 
\end{align*}

\noi
which makes us exploit better the independent structure.
More precisely, the essential difference between our analysis and that in \cite{BO96} is that the Hilbert-Schmidt norm of kernel matrices and the Wiener chaos estimate  will be replaced by the operator norm bound (Lemma \ref{LEM:tens}) coming from the random tensor theory in \cite{DNY} and the counting estimate will be also replaced by deterministic tensor estimates (see Subsection \ref{SUBSEC:dettens}), which allows us to reach a  stronger coupling region (see also Remark \ref{RM:essen}). In \cite{DNY}, Deng, Nahmod, and Yue recently developed a theory of random tensors, which forms a comprehensive framework for random dispersive equations. In recent years, the random tensor theory has played a crucial role in the well-posedness study of random dispersive equations; see \cite{DNY, Bring, OWZ}.

Notice that the low regularity nature of solutions in Theorem \ref{THM:1} comes from the random linear terms. We, however, point out that the random linear solutions also enjoy enhanced integrability than the smoother remainder terms thanks to the  independent structure involved.

\begin{remark}\rm
The decomposition \eqref{decomp} states that 
in the high frequency regime (i.e. at small spatial scales on the physical side), 
the dynamics is essentially governed by that of the random linear solution.
\end{remark}

\begin{remark}\rm
The extension of Theorem \ref{THM:2} to $\g <1$ would require more sophisticated arguments. We mention recent breakthrough works (random averaging operators/random
tensor) by Deng, Nahmod, and Yue \cite{DNY0, DNY}.

\end{remark}

\begin{remark}\rm\label{RM:essen}
We point out that it is essential to use the operator norm approach with the random matrix/tensor theory in proving Lemma \ref{LEM:zSzW} (in particular, Subsubcase 2.b.(ii)), Lemma \ref{LEM:zSRW} (in particular, Subcase 2.b), and Lemma \ref{LEM:RszW} (in particular, Subcase 2.b), where the cases cannot be proven by only using the Hilbert-Schmidt norm approach even when $\g=0$ (see also Lemma \ref{LEM:zSzW}'s Subsubcase 2.b.(i) and Appendix \ref{SEC:A} to compare the methods).   
\end{remark}

\begin{remark}\rm
In \cite{BO96, Richards, NS, GKO2, OOT1, OOT2}, the (random) operators with kernel (random) matrices appear and the Hilbert-Schmidt norm approach was used in dealing with them as in \eqref{HSS12}; see also Appendix \ref{SEC:A}. 
Hence, one can expect some improvements by using the operator norm with the random tensor theory as in this paper.	
\end{remark}

In constructing almost sure global-in-time dynamics, we exploit Bourgain’s invariant measure argument \cite{BO94, BO96, BT2, OTzW} to our setting.
More precisely, we use invariance of the Gibbs measure under the finite-dimensional
approximation of \eqref{Zak2} to obtain a uniform control on the solutions, and then
apply a PDE approximation argument to extend the local solutions to \eqref{Zak2} obtained from
Theorem \ref{THM:2} to global ones. As a consequence, we also obtain invariance of the Gibbs measure $d\rhoo_\g$
under the global flow of the renormalized Zakharov-Yukawa system \eqref{Zak2}.

\begin{theorem}[Almost sure global well-posedness and invariance of the Gibbs measure]
\label{THM:3}
Let $0\le \g <\frac 13$. Then, the Cauchy problem for \eqref{Zak2} is $d\rhoo_{\g}$-almost surely globally well-posed, and the Gibbs measure $d\rhoo_\g$ is invariant under the flow. More precisely, if $\Si$ is as in Theorem \ref{THM:2}, and if $ \vec\Phi(t) $ denotes the flow map of \eqref{Zak2} on $\Si$, then $d\rhoo_\g$ is invariant under $\vec \Phi(t)$ in the sense that for any $d\rho_\g$-measurable $A\subset \Si$, it holds 
\begin{align*}
\rhoo_\g\big(\Phi(t)(A)\big)=\rhoo_{\g}(A)
\end{align*}

\noi
for any $t \in \R$.
\end{theorem}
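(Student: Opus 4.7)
The plan is to carry out Bourgain's invariant measure argument \cite{BO94,BO96}, adapted to the Schr\"odinger-wave setting as in the Zakharov work \cite{Kishi2,OST}. The starting point is a Galerkin-type truncation of the renormalized system \eqref{Zak2}: for each dyadic $N$, replace the nonlinearities by frequency-projected versions to obtain
\begin{align*}
\begin{cases}
i\dt u_N+\Dl u_N=\pi_N(u_N\,\pi_N w_N)\\
\dt^2 w_N+(1-\Dl)w_N=-\jb{\nb}^{2\g}\pi_N\bigl(|\pi_N u_N|^2-\fint|\pi_N u_N|^2\bigr),
\end{cases}
\end{align*}
which splits into a finite-dimensional Hamiltonian ODE on low frequencies and a linear Schr\"odinger/wave system on high frequencies. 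I would first verify that the truncated Gibbs measure $d\rhoo_{\g,N}$ from \eqref{Gibbs2} is invariant under the truncated flow $\vec\Phi_N(t)$; this is a Liouville-type computation that uses the Hamiltonian structure \eqref{Ham}, the conservation of $\int:\!|\pi_N u_N|^2\!:$, and the fact that the Wick constant $\sigma_N$ commutes with the finite-dimensional symplectic flow, combined with independence of low and high modes under the Gaussian base measure.

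The second (and main) ingredient is a probabilistic stability estimate: for initial data drawn from $d(\mu\otimes\mu_{1-\g}\otimes\mu_{-\g})$, the truncated solutions $\vec\Phi_N(t)(u_0^\o,w_0^\o,w_1^\o)$ converge to the solution $\vec\Phi(t)(u_0^\o,w_0^\o,w_1^\o)$ produced by Theorem~\ref{THM:2} in the $X^{s,b}_S(\dl)\times X^{\l,b}_W(\dl)$-topology on the remainder, with a quantitative rate. This is obtained by running the first-order expansion \eqref{decomp} for the truncated system as well and redoing the random matrix/tensor estimates from the local theory uniformly in $N$, using that the truncated random linear data converge in the appropriate Fourier-restriction norms. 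I expect this stability step to be the main technical obstacle, since the multilinear estimates in Lemmas \ref{LEM:zSzW}, \ref{LEM:zSRW}, \ref{LEM:RszW} must be shown to enjoy a gain of a small negative power of $N$ when comparing the full and truncated nonlinearities; this requires redoing the Deng--Nahmod--Yue operator-norm bounds on frequency-restricted tensors rather than just establishing them.

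Given these two ingredients, Bourgain's globalization proceeds as follows. Fix a small local time $\dl>0$ and a large time $T>0$. For each $N$ and integer $j$ with $|j\dl|\le T$, the invariance of $d\rhoo_{\g,N}$ together with the uniform exponential integrability \eqref{unexp1} from Theorem~\ref{THM:1} converts a Chebyshev bound on the local Cauchy data at time $j\dl$ into a bound with the same constant, independent of $N$ and $j$. Choosing the probabilistic local theory threshold to be of order $R\sim(\log(T/\eps\dl))^{1/2}$, and summing the exceptional sets in $j$, produces a set $\Si_{T,\eps}^{(N)}$ of measure at least $1-\eps$ on which $\vec\Phi_N$ remains in the good set on $[-T,T]$; the truncated invariance also gives that this set is preserved by $\vec\Phi_N(t)$. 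Passing to the limit $N\to\infty$ using the stability estimate and the convergence $d\rhoo_{\g,N}\to d\rhoo_\g$ from Theorem~\ref{THM:1} yields a set $\Si_{T,\eps}$ of $d\rhoo_\g$-measure at least $1-\eps$ on which the full flow $\vec\Phi(t)$ extends to $[-T,T]$; then $\Si=\bigcap_\eps\bigcup_T\Si_{T,\eps}$ has full $d\rhoo_\g$-measure and is flow-invariant.

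Finally, for the invariance statement $\rhoo_\g(\vec\Phi(t)A)=\rhoo_\g(A)$, I would test against bounded continuous $F\colon H^{-\eps}\times H^{-\g-\eps}\times H^{-\g-1-\eps}\to\R$ and use the chain
\begin{align*}
\int F(\vec\Phi(t)\vec u)\,d\rhoo_\g
&=\lim_{N\to\infty}\int F\bigl(\vec\Phi_N(t)\vec u\bigr)\,d\rhoo_{\g,N}
=\lim_{N\to\infty}\int F(\vec u)\,d\rhoo_{\g,N}
=\int F(\vec u)\,d\rhoo_\g,
\end{align*}
where the first equality uses the stability estimate together with dominated convergence against the bound \eqref{unexp1}, the middle equality is the Liouville invariance of $d\rhoo_{\g,N}$ under $\vec\Phi_N(t)$, and the last is Theorem~\ref{THM:1}. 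A standard monotone-class argument then upgrades this to all Borel $A$, completing the proof.
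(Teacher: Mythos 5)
Your overall strategy coincides with the paper's in Section~\ref{SEC:INV}: truncated systems \eqref{truZak1}--\eqref{truZak2}, invariance of the truncated Gibbs measure (Lemma~\ref{LEM:truninva}), a quantitative approximation lemma between $\vec\Phi_N$ and $\vec\Phi$ (Lemma~\ref{LEM:truapprox}), Bourgain globalization via the Borel--Cantelli argument, and passage to the limit in the invariance identity by testing against bounded continuous $F$ as in \eqref{INVASOG}.

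Two points deserve correction or comment. First, your set $\Si=\bigcap_\eps\bigcup_T\Si_{T,\eps}$ does not consist of globally-extendable data: membership in $\bigcup_T\Si_{T,\eps}$ only requires a solution on \emph{some} finite time interval, and further intersecting in $\eps$ does not improve this. The quantifiers must be swapped. The paper sets $T_j=2^j$, $\kappa_j=\kappa 2^{-j}$, $\Si_\kappa:=\bigcap_{j}\Si_{\kappa_j,T_j}$ (so that $\vec\rho_\g(\Si_\kappa^c)\le\sum_j\kappa_j=\kappa$ while data in $\Si_\kappa$ extend to all $[-T_j,T_j]$), and finally $\Si:=\bigcup_{n}\Si_{1/n}$; the intersection over times sits inside at a fixed probability budget and the union over thresholds sits outside. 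Second, you anticipate having to re-derive the Deng--Nahmod--Yue tensor bounds on frequency-restricted data so as to extract an explicit $N^{-\beta}$ gain. The paper's treatment of Lemma~\ref{LEM:truapprox} is lighter: in the decomposition \eqref{decomp4} the $N^{-\beta}$ rate comes from the high-frequency pieces (the remainder has positive regularity but is measured in a negative Sobolev space, and the linear parts are projected to frequencies above $N$), while the low-frequency pieces converge by the same contraction argument as in Theorem~\ref{THM:2}; the rest of the globalization is deferred to \cite{BO94, BO96, BT2, Richards0, Richards} as routine. Your version is plausible but would be a heavier route than the paper actually follows, and is not needed: the tensor estimates are used as black boxes within the local theory and do not have to be re-proved on truncated data.
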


The proof of Theorem \ref{THM:3} follows
from a standard application of Bourgain’s invariant measure argument \cite{BO94, BO96, BT2, OTzW}, which is presented in Section \ref{SEC:INV}.


	

\subsection{Organization of the paper} 
In Section \ref{SEC:2}, we introduce some notations and preliminary (deterministic and probabilistic) lemmas. 
In Section \ref{SEC:conmea}, we present the variational formulation of the partition
function and prove the phase transition (Theorem \ref{THM:1}) by showing the uniform exponential integrability \eqref{unexp1}. In Section \ref{SEC:ga12}, we discuss probabilistic well-posedness with the random tensor theory. 
In Section \ref{SEC:rant}, we provide the basic definition and 
some lemmas on random tensors and present the proof of random tensor estimates.
In Section \ref{SEC:INV}, we prove the almost sure global well-posedness and invariance of the Gibbs measure by exploiting the Bourgain's invariant measure argument.
In Appendix \ref{SEC:A}, we implement the Hilbert-Schmidt norm approach with the Wiener chaos estimate and compare its result with when using the operator norm bound with the random tensor theory.


\section{Notations and preliminary lemmas}
\label{SEC:2}
In this section, we recall and prove basic lemmas to be used in this paper.

\subsection{Notations}
\label{SUBSEC:notation}
If a function $f$ is random, we may use the superscript $f^\o$ to show the dependence on $\o \in \O$.
Let $\eta \in C^\infty_c(\mathbb{R})$ be a smooth non-negative cutoff function supported on $[-2, 2]$ with $\eta \equiv 1$ on $[-1, 1]$ and set
\begin{align}
\eta_{_\dl}(t) =\eta(\dl^{-1}t)
\label{eta1}
\end{align}

\noi
for $\dl > 0$.
We also denote by   $\chi(t) $ another smooth non-negative cutoff function 
and let $\chi_{_\dl}(t) =\chi(\dl^{-1}t) $.

Let $\Z_{\geq 0} := \Z \, \cap\,  [0, \infty)$.
Given a dyadic number $N \in 2^{\Z_{\geq 0}}$, 
let $\P_N$
be the (non-homogeneous) Littlewood-Paley projector
onto the (spatial) frequencies $\{n \in \Z: |n|\sim N\}$
such that 
\[ f = \sum_{\substack{N\geq 1\\
\text{dyadic}}}^\infty \P_N f.\]

\noi
We also denote by $P_Q$ the Fourier projector onto $Q$ where $Q$ is a spatial frequency ball of radius $N$ (not necessarily centered at the origin). 
Given a non-negative integer $N \in \Z_{\geq 0}$, 
we also define the Dirichlet projector $\pi_N$ 
onto the frequencies $\{|n|\leq N\}$ by setting
\begin{align}
\pi_N f(x)=
\sum_{ |n| \leq N}  \ft f(n)    e^{in x}.
\label{Dir}
\end{align}

\noi
Moreover, we set
\begin{align*}
\pi_N^\perp  = \Id - \pi_N. 
\end{align*}

\noi
By convention, we also set $\pi_{-1}^\perp = \Id$.
By abuse of the notation, we also use the notation $\P_N$ to denote the operator on functions in $(t,x)$.
Also, define the operators $Q_L^S$, $Q_L^{W_\pm}$ on spacetime functions by
\begin{align*}
\F _{t,x}(Q_L^Su)(\tau ,k):=\eta _L(\tau +|k|^2)\F_{t,x}{u}(\tau ,k),\quad \F _{t,x}(Q_L^{W_\pm}w)(\tau ,k):=\eta _L(\tau \pm |k|)\F_{t,x}{w}(\tau ,k)
\end{align*}

\noi
for dyadic numbers $L\ge 1$.
We will write $P^S_{N,L}=P_NQ^S_L$, $P^{W_{\pm}}_{N,L}=P_NQ^{W_{\pm}}_L$ for brevity.
In what follows, capital letters $N$ and $L$ are always used to denote dyadic numbers $\ge 1$.
We will often use these capital letters with various subscripts, and also the notation
\begin{align*}
N_{\max}:=\max \big\{N_1,N_2, N    \big \} \qquad \text{and} \qquad L_{\max }:= \max \big\{ L_1, L_2, L  \big \}.
\end{align*}

We denote by $z^{S,\o}$ and $z^{W,\o}$ the linear solutions of Zakharov-Yukawa system  with initial data $(u_0^\o,w_0^\o,w_1^\o)$ as follows:
\begin{align*}
z^{S,\o}:&=e^{it\Dl}u_0^\o  \\
z^{W,\o}:&=\cos(t  \jb{\nb} )w_0^\o + \frac{\sin( t   \jb{\nb} )}{ \jb{\nb} }w_1^\o.
\end{align*}

We use $c,$ $ C$ to denote various constants, usually depending only on $\al$ and $s$. If a constant depends on other quantities, we will make it explicit. 
For two quantities $A$ and $B$,  we use $A\lesssim B$ to denote an estimate of the form $A\leq CB$, where $C$ is a universal constant,  independent of particular realization of $A$ or $B$. 
Similarly, we use $A\sim B$ to denote $A\lesssim B$ and $B\lesssim A$ . 
The notation $A\ll B$ means $A \leq cB$ for some sufficiently small constant $c$. 
We also use the notation $a+$ (and $a-$) to denote $a + \eps$ (and $a - \eps$, respectively)
for arbitrarily small  $\eps >0$ (this notation is often used 
when there is an implicit constant  which diverges in the limit $\eps\to 0$).

\subsection{Fourier restriction norm method}
\label{SUBSEC:Xsb}
In this subsection, we introduce the following Bourgain spaces for the Schr\"odinger and the wave equation. 
\begin{definition}[Bourgain spaces]
For $s,b\in \R$, define the Bourgain space for the Schr\"odinger equation $X^{s,b}_S$ and that for reduced wave equations $X^{s,b}_{W_{\pm}}$ by the completion of functions $C^\infty$ in space and Schwartz in time with respect to
\begin{align*}
\|u \|_{X_S^{s,b}(\R \times \T^2)}:&= \| \jb{n}^s \jb{\tau - |n|^2}^b \ft u(\tau,n ) \|_{\l_n^2 L^2_\tau(\Z^2\times \R ) }, \\
\|w \|_{X_{W_\pm}^{\l,b}  (\R \times \T^2) } :&= \| \jb{n}^\l \jb{\tau \mp \jb{n} }^b \ft w(\tau,n ) \|_{\l_n^2 L^2_\tau (\Z^2 \times \R^2) }.
\end{align*}

\noi
Here $\pm$ corresponds to the norm of $w_{\pm}$ in the system \eqref{reduced}.
We also define the Bourgain space for the wave equation $X^{s,b,p}_W$ by setting
\begin{align*}
\| w\|_{X^{s,b}_{W}}:=\| \jb{n}^\l \jb{|\tau| - \jb{n} }^b \ft w(\tau,n ) \|_{\l_n^2 L^2_\tau}
\end{align*}

\noi
i.e. replacing $W_\pm$ with $W$ in the above definition of $X^{s,b}_{W_{\pm}}$.
For $\dl>0$, define the restricted space $X^{s,b}_*(\dl)$ ($*=S$ or $W_{\pm}$ or $W$) by the restrictions of distributions in $X^{s,b}_*$ to $(-\dl,\dl)\times \T^2$, with the norm
\begin{align}
\| u\|_{X^{s,b}_*(\dl)}:=\inf \big\{ \| v \|_{X^{s,b}_*} : \;  v|_{(-\dl,\dl)}=u   \big\}.
\label{localtime}
\end{align}
\end{definition}

We note that for any $b>\frac 12$, we have $X_{*}^{s,b} \hookrightarrow C(\R; H^{s}(\T^2))$.
Next, we recall the linear estimates.  See \cite{BO93, GTV}.

\begin{lemma}\label{LEM:lin}
Let $s \in \R$, $\l \in \R$ and $0 < \dl \leq 1$.
	
\smallskip
	
\noi
\textup{(i)} For any $b \in \R$, we have
\begin{align*}
\|  e^{it\Dl} u_0 \|_{X_{S}^{s, b}(\dl)}
&\leq C_b \|u_0\|_{H^{s}},\\
\|  e^{it \jb{\nb} }w_0 \|_{X_{W_\pm}^{\l, b}(\dl)}
&\leq C_b \|w_0\|_{H^{\l}}.
\end{align*}

\smallskip
	
\noi
\textup{(ii)}
Let $ - \frac 12 < b' \leq 0 \leq b \leq b'+1$.
Then,  we have 
\begin{align*}
\bigg\|  \int_0^t e^{i(t-t')\Dl } F(t') dt'\bigg\|_{X_{S}^{s, b}(\dl)}
&\leq C_{b, b'} \dl^{1-b+b'} \|F\|_{X_{S}^{s, b'}(\dl)}\\
\bigg\|  \int_0^t e^{i(t-t')\jb{\nb} } F(t') dt'\bigg\|_{X_{W_\pm}^{\l, b}(\dl)}
&\leq C_{b, b'} \dl^{1-b+b'} \|F\|_{X_{W_\pm}^{\l, b'}(\dl)}.
\end{align*}
	
\end{lemma}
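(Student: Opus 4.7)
The plan is to prove both parts via the standard Fourier-analytic framework for $X^{s,b}$ spaces originating from Bourgain \cite{BO93}; see also Ginibre--Tzvetkov--Velo \cite{GTV}.

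For part (i), I would first fix a cutoff $\tilde\eta\in C_c^\infty(\R)$ with $\tilde\eta\equiv 1$ on $[-1,1]$, chosen once and for all (permissible since $\delta\leq 1$). Setting $v(t,x) := \tilde\eta(t)\,e^{it\Delta}u_0(x)$ produces an extension with $v = e^{it\Delta}u_0$ on $(-\delta,\delta)\times\T^2$, so the definition \eqref{localtime} immediately gives
\begin{align*}
\|e^{it\Delta}u_0\|_{X_S^{s,b}(\delta)} \leq \|v\|_{X_S^{s,b}}.
\end{align*}
A direct computation yields $\mathcal{F}_{t,x}[v](\tau,n) = \widehat{\tilde\eta}(\tau - |n|^2)\,\widehat{u}_0(n)$, and the change of variable $\sigma = \tau - |n|^2$ factorizes the $X_S^{s,b}$-norm as $\|v\|_{X_S^{s,b}} = \|\tilde\eta\|_{H^b(\R)}\,\|u_0\|_{H^s(\T^2)}$, which is finite because $\tilde\eta$ is Schwartz. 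The wave-propagator estimate would proceed identically, with the dispersion relation $|n|^2$ replaced by $\mp\langle n\rangle$.

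For part (ii), the approach is to multiply the Duhamel integral by $\tilde\eta(t/\delta)$ and insert the space-time Fourier expansion $F(t',x) = \sum_n \int e^{i(\tau t' + n\cdot x)}\widetilde{F}(\tau,n)\,d\tau$. A standard manipulation (cf.\ \cite[Sec.~2]{GTV}) produces the representation
\begin{align*}
\tilde\eta(t/\delta)\int_0^t e^{i(t-t')\Delta}F(t')\,dt' = \tilde\eta(t/\delta)\sum_n e^{in\cdot x + it|n|^2}\int \widetilde{F}(\tau,n)\,\frac{e^{it(\tau - |n|^2)} - 1}{i(\tau - |n|^2)}\,d\tau,
\end{align*}
and the $\tau$-integral would then be split according to the size of the modulation variable $\sigma := \tau - |n|^2$. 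In the high-modulation region $|\sigma|\delta \gtrsim 1$, the factor $\sigma^{-1}$ is absorbed by the Schwartz decay of $\widehat{\tilde\eta_\delta}$ and yields the scaling power $\delta^{1-b+b'}$. In the low-modulation region $|\sigma|\delta \lesssim 1$, Taylor-expanding $e^{it\sigma} - 1$ in $t\sigma$ and using $|t|\lesssim\delta$ on $\supp\tilde\eta(t/\delta)$ produces a convergent geometric series against the Schwartz profile of $\widehat{\tilde\eta}$, producing the same gain.

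The main technical obstacle would be tracking the endpoint restrictions $-\tfrac12 < b' \leq 0 \leq b \leq b'+1$: the upper bound $b \leq b'+1$ is precisely what guarantees $1-b+b' \geq 0$, so that the scaling power is nonnegative, while the strict lower bound $b' > -\tfrac12$ is the one-dimensional (in time) Sobolev trace threshold preventing a logarithmic divergence in the low-modulation piece. The wave-propagator estimate would follow identically after swapping the dispersion relation; everything else is routine Plancherel and Cauchy--Schwarz book-keeping, as detailed in \cite{BO93, GTV}.
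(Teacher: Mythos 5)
Your proof follows the standard Ginibre--Tsutsumi--Velo argument (localize with a fixed cutoff for part (i), then for part (ii) expand $\frac{e^{it\sigma}-1}{i\sigma}$ and split between high and low modulation), which is exactly what the paper invokes: it states Lemma \ref{LEM:lin} without proof and refers to \cite{BO93, GTV}. Your outline is correct; the only small imprecision is attributing $b' > -\tfrac12$ to a "Sobolev trace threshold," whereas what is actually needed is $L^2_\sigma$-integrability of $\jb{\sigma}^{-1-b'}$ (equivalently $\jb{\sigma}^{-b'}\ind_{|\sigma|\lesssim\delta^{-1}}$ after Cauchy--Schwarz in the low-modulation piece), but this does not affect the validity of the argument.
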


By restricting the $X^{s,b}$-spaces onto a small time interval $(-\dl,\dl)$, we can gain a small power of $\dl$ (at a slight loss in the modulation).
\begin{lemma}\label{LEM:dlpower}
Let $s\in \R$ and $b<\frac 12$. Then, there exists $C=C(b)>0$ such that
\begin{align*}
\| \eta_\dl(t) \cdot u  \|_{X^{s,b}}+\| \chi_\dl(t) \cdot u  \|_{X^{s,b}}\le C \dl^{\frac 12-b-}\| u\|_{X^{s,\frac 12-}}.
\end{align*}
\end{lemma}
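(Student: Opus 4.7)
The plan is to reduce this to a purely one-dimensional temporal-Sobolev multiplication estimate and then prove that estimate by a dyadic decomposition in the modulation variable. Since $\chi$ plays the same role as $\eta$ (just a generic smooth bump), it suffices to treat the $\eta_\dl$ term. The definition of $X^{s,b}$ (in any of the Schr\"odinger or wave flavors) is, up to undoing the linear propagator, a weighted $L^2$ in modulation: setting $v := e^{-it\Dl} u$ in the Schr\"odinger case (resp.\ $v := e^{\mp it\jb{\nabla}} w$ in the wave case) gives the Plancherel-style identity $\|u\|_{X^{s,b}_S} = \|v\|_{H^b_t H^s_x}$, and multiplication by the time cutoff $\eta_\dl$ commutes with the linear propagator. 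Thus, applying Plancherel in $x$, the desired inequality reduces to the scalar bound
\[
\|\eta_\dl f\|_{H^b_t(\R)} \le C\, \dl^{\frac 12 - b -} \|f\|_{H^{\frac 12 -}_t(\R)}, \qquad b < \tfrac 12,
\]
applied at each spatial Fourier mode $n \in \Z^2$ and summed against $\jb{n}^{2s}$.

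To prove this one-dimensional bound, decompose $f = \sum_L f_L$ with $\widehat{f_L}$ supported on the dyadic shell $|\tau|\sim L$. Since $\widehat{\eta_\dl}(\tau) = \dl\,\widehat{\eta}(\dl\tau)$ has its mass essentially concentrated at $|\tau| \lesssim \dl^{-1}$ (with rapid decay beyond), the Fourier support of $\eta_\dl f_L$ lies in $|\tau| \lesssim \max(L, \dl^{-1})$, whence $\|\eta_\dl f_L\|_{H^b_t} \lesssim \max(L,\dl^{-1})^b \|\eta_\dl f_L\|_{L^2_t}$. Split into two regimes: when $L \gtrsim \dl^{-1}$, using $\|\eta\|_{L^\infty}\le 1$ gives $\|\eta_\dl f_L\|_{L^2_t}\le \|f_L\|_{L^2_t}$, hence $\|\eta_\dl f_L\|_{H^b_t}\lesssim L^b\|f_L\|_{L^2_t}$; when $L \lesssim \dl^{-1}$, combining $\|\eta_\dl\|_{L^2_t} \sim \dl^{1/2}$ with Bernstein's inequality $\|f_L\|_{L^\infty_t} \lesssim L^{1/2}\|f_L\|_{L^2_t}$ yields $\|\eta_\dl f_L\|_{L^2_t} \lesssim \dl^{1/2} L^{1/2} \|f_L\|_{L^2_t}$, so $\|\eta_\dl f_L\|_{H^b_t}\lesssim \dl^{1/2-b}L^{1/2}\|f_L\|_{L^2_t}$.

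Now combine with the elementary estimate $\|f_L\|_{L^2_t}\lesssim L^{-(1/2-\eps)}\|f\|_{H^{1/2-\eps}_t}$ and sum over dyadic $L$ via the triangle inequality: the high-modulation contribution is $\sum_{L\gtrsim \dl^{-1}} L^{b-1/2+\eps}\lesssim \dl^{1/2-b-\eps}$ (a geometric sum since $b-1/2+\eps<0$ for $\eps$ sufficiently small), while the low-modulation contribution is $\dl^{1/2-b}\sum_{L\lesssim \dl^{-1}} L^{\eps}\lesssim \dl^{1/2-b-\eps}$. Both exponents match at the transition scale $L\sim \dl^{-1}$ and together produce the required $\dl^{1/2-b-}$ loss, after which re-summing against $\jb{n}^{2s}$ in the spatial Fourier mode yields the claim. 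The main (and only) delicate point is precisely this balance at $L\sim \dl^{-1}$: the bound degenerates as $b\to 1/2$ or $\eps\to 0$, explaining both the hypothesis $b<\tfrac 12$ and the infinitesimal $-$ loss in the source regularity exponent.
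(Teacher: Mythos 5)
Your reduction to a scalar temporal bound via conjugation by the linear propagator and Plancherel in $x$ is correct, and the subsequent dyadic decomposition in modulation with the split at $L\sim\dl^{-1}$ is a valid route to the estimate; the exponents in the two regimes do balance at the transition scale exactly as you say, and the $\eps$-loss at the threshold is genuinely necessary (degenerating as $b\to\frac12$). The paper itself defers the proof to the reference \cite{CO}, where the standard argument uses a single low/high split at frequency $\dl^{-1}$ combined with H\"older, Sobolev embedding $H^{\frac12-\frac1p}_t(\R)\hookrightarrow L^p_t(\R)$, and a product estimate in $H^{\frac12-}_t$ for the high piece. Your fully dyadic version is a slightly more granular packaging of the same idea; it does not buy anything extra here but is no less correct.

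One step in your write-up is stated more strongly than is literally true: you assert that ``the Fourier support of $\eta_\dl f_L$ lies in $|\tau|\lesssim\max(L,\dl^{-1})$,'' and then invoke a Bernstein-type inequality $\|\eta_\dl f_L\|_{H^b_t}\lesssim\max(L,\dl^{-1})^b\|\eta_\dl f_L\|_{L^2_t}$. Since $\ft{\eta_\dl}$ is Schwartz but not compactly supported, $\ft{\eta_\dl f_L}=\ft{\eta_\dl}*\ft{f_L}$ is supported on all of $\R$; the Bernstein inequality as you stated it is the one for compact Fourier support. To close this, one should estimate the tail: for $|\tau|\gg\max(L,\dl^{-1})$ and $|\sigma|\sim L$ one has $|\tau-\sigma|\sim|\tau|$ and $|\ft{\eta_\dl}(\tau-\sigma)|\lesssim_N\dl(\dl|\tau|)^{-N}$, which after a short computation shows the contribution of $|\tau|\gg\max(L,\dl^{-1})$ to $\|\eta_\dl f_L\|_{H^b_t}^2$ is bounded by the main term in both regimes (the condition $L\dl\gtrsim 1$ is exactly what makes the high-modulation tail harmless, and in the low-modulation regime the tail is of the same order as your main term). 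So the conclusion is correct, but you should record this tail estimate rather than asserting compact Fourier support. Finally, the Bernstein-type bound $\|g\|_{H^b_t}\lesssim M^b\|g\|_{L^2_t}$ is used in the direction that requires $b\ge 0$; for $b<0$ the lemma follows trivially from the $b=0$ case since $\dl^{\frac12-b}\ge\dl^{\frac12}$ for $0<\dl\le 1$, and it would be cleaner to note this explicitly.
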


For the proof of Lemma \ref{LEM:dlpower}, see \cite{CO}.

\subsection{Product estimates}
In this subsection, we present product estimates.
We first recall the following interpolation
and fractional Leibniz rule. As for the second estimate \eqref{bilinear+} below, 
see \cite[Lemma 3.4]{GKO}.

\begin{lemma}\label{LEM:prod}
The following estimates hold.

\noi
\textup{(i) (interpolation)} 
For  $0 < s_1  < s_2$, we have
\begin{equation*}
\| u \|_{H^{s_1}} \les \| u \|_{H^{s_2}}^{\frac{s_1}{s_2}} \| u \|_{L^2}^{\frac{s_2-s_1}{s_2}}.
\end{equation*}

\smallskip
	
\noi 
\textup{(ii) (fractional Leibniz rule)} Let $0\le s \le 1$. Suppose that 
$1<p_j,q_j,r < \infty$, $\frac1{p_j} + \frac1{q_j}= \frac1r$, $j = 1, 2$. 
Then, we have  
\begin{equation}  
\| \jb{\nb}^s (fg) \|_{L^r(\T^2)} 
\les \Big( \| f \|_{L^{p_1}(\T^2)} 
\| \jb{\nb}^s g \|_{L^{q_1}(\T^2)} + \| \jb{\nb}^s f \|_{L^{p_2}(\T^2)} 
\|  g \|_{L^{q_2}(\T^2)}\Big),
\label{bilinear+}
\end{equation}

\noi
where $\jb{\nb} = \sqrt{1 - \Dl}$.

\end{lemma}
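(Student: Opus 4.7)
For part (i), my plan is a one-line application of H\"older's inequality on the Fourier side. By Plancherel, $\|u\|_{H^{s_1}}^2 = \sum_{n \in \Z^2} \jb{n}^{2s_1} |\ft u(n)|^2$, and I split the summand as $\jb{n}^{2s_1} |\ft u(n)|^2 = \bigl(\jb{n}^{2s_2} |\ft u(n)|^2\bigr)^{s_1/s_2} \cdot \bigl(|\ft u(n)|^2\bigr)^{(s_2-s_1)/s_2}$. Applying H\"older with the conjugate exponents $s_2/s_1$ and $s_2/(s_2-s_1)$ (admissible since $0 < s_1 < s_2$) and taking square roots yields the claim. There is no subtle point here.

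For part (ii), I would argue by a Littlewood--Paley paraproduct decomposition. Writing $f = \sum_{N_1} \P_{N_1} f$ and $g = \sum_{N_2} \P_{N_2} g$, I split $fg$ into the three standard regimes: low-high $(N_1 \ll N_2)$, high-low $(N_1 \gg N_2)$, and comparable frequency $(N_1 \sim N_2)$. In the low-high regime, $\P_{N_1} f \, \P_{N_2} g$ is frequency-localized at scale $\sim N_2$, so $\jb{\nb}^s$ acts essentially as the scalar $N_2^s$ on the high-frequency factor. H\"older's inequality with exponents $(p_1, q_1)$ combined with the $L^p$-boundedness of the Littlewood--Paley square function on $\T^2$ for $1 < p < \infty$ then allows one to sum the dyadic blocks and produces the first term on the right-hand side of \eqref{bilinear+}. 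The high-low regime is completely symmetric and yields the second term with exponents $(p_2, q_2)$.

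The main obstacle is the comparable-frequency (high-high) piece, where the output frequency can sit at any scale $\lesssim N_1 \sim N_2$ and one cannot simply localize the output. Here I would move $\jb{\nb}^s$ onto one of the two factors (gaining a factor $\lesssim N_1^s$ since $0 \le s \le 1$ and $|n| \le |n_1| + |n_2|$), apply H\"older, and then use Cauchy--Schwarz in the dyadic indices to sum in $\ell^2$, absorbing the resulting square function into either factor via the Littlewood--Paley characterization of $L^p$. The Euclidean analogue of this step is the classical Kato--Ponce inequality via Coifman--Meyer multipliers; on $\T^2$ one can either transfer the $\R^2$ estimate by a periodization argument or carry out the dyadic computation directly. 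Since the precise form of \eqref{bilinear+} on $\T^2$ is already recorded as \cite[Lemma 3.4]{GKO}, I would invoke that reference to dispatch the high-high case once the paraproduct framework is in place.
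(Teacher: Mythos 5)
Your proof of part~(i) by H\"older's inequality on the Fourier side, splitting $\jb{n}^{2s_1}|\ft u(n)|^2 = \big(\jb{n}^{2s_2}|\ft u(n)|^2\big)^{s_1/s_2}\big(|\ft u(n)|^2\big)^{(s_2-s_1)/s_2}$ and applying H\"older with exponents $s_2/s_1$ and $s_2/(s_2-s_1)$, is the standard argument and is correct (indeed with implicit constant $1$). For part~(ii) the paper does not supply a proof at all and simply cites \cite[Lemma 3.4]{GKO}; your paraproduct sketch is the usual route, and since you also ultimately invoke that same reference for the delicate high-high interaction, your proposal matches the paper's treatment.
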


\subsection{Hilbert-Schmidt norm}
In this subsection, we present the Hilbert-Schmidt norm estimate of (random) matrices (kernel of $T^* T$ operator).
To exploit better the independene strucutre of the random variable involved, we use the following lemma.  
\begin{lemma}[Hilbert-Schmidt norm]
\label{LEM:matrix}
The matrix $\Gf$ is given by 
\begin{align}
\Gf \{b_{n_1} \} =\sum_{n_1 \in \Z^d} \s(n,n_1)b_{n_1}
\label{ss1}
\end{align}
	
\noi
for any $\{b_{n_1} \} \in \l^2$. Then, we have
\begin{align*}
\| \Gf \|_{\l^2 \to \l^2}^2&=\| \Gf^* \Gf \|_{\l^2 \to \l^2}\\
&\les \max_n \sum_{n_1 } |\s(n_1,n)|^2+\bigg(\sum_{n,n': n \neq n'} \Big| \sum_{n_1 } \s(n_1,n') \cj\s(n_1,n)  \Big|^2   \bigg)^\frac 12 
\end{align*}
	
\end{lemma}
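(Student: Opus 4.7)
The plan is to reduce the operator-norm bound to computing the norm of the self-adjoint matrix $\Gf^*\Gf$, and then split that matrix into its diagonal and off-diagonal pieces, treating each by the most natural tool.

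First, I would invoke the $C^*$-identity $\|\Gf\|^2_{\l^2 \to \l^2} = \|\Gf^*\Gf\|_{\l^2 \to \l^2}$, which gives the first equality in the statement. Unwinding \eqref{ss1}, the matrix $M := \Gf^*\Gf$ is self-adjoint with entries
\begin{align*}
M(n, n') = \sum_{n_1 \in \Z^d} \s(n_1, n') \cj{\s(n_1, n)},
\end{align*}
so in particular the diagonal entries are $M(n, n) = \sum_{n_1} |\s(n_1, n)|^2$, already matching the first term in the claimed bound. (The computation is just that the kernel of $\Gf^*$ is $\cj{\s(n_1, n)}$, and one multiplies it against the kernel of $\Gf$.)

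Next, I would decompose $M = D + O$, where $D$ is the diagonal part and $O$ is the strictly off-diagonal part, and apply the triangle inequality $\|M\|_{\l^2 \to \l^2} \leq \|D\|_{\l^2 \to \l^2} + \|O\|_{\l^2 \to \l^2}$. Since $D$ acts as a multiplication operator on $\l^2$, its operator norm is simply $\max_n |M(n,n)| = \max_n \sum_{n_1}|\s(n_1,n)|^2$, which yields the first term. For the off-diagonal piece, I would apply the elementary inequality $\|O\|_{\l^2 \to \l^2} \leq \|O\|_{\text{HS}}$, and observe that the Hilbert-Schmidt norm squared is $\sum_{n \neq n'} |M(n,n')|^2$, which matches the squared version of the second term.

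There is no genuine obstacle here; the whole argument is an elementary application of the $C^*$-identity plus a diagonal/off-diagonal split. The only conceptual point worth emphasizing is that this decomposition is strictly sharper than the crude bound $\|\Gf\|^2 \leq \|\Gf^*\Gf\|_{\text{HS}}$: the diagonal contribution (which would be the dominant piece of a pure Hilbert-Schmidt estimate) is controlled separately by a supremum rather than a sum, leaving the off-diagonal part—where cancellation from independence of the Gaussian coefficients can be exploited—to be estimated through the Hilbert-Schmidt norm.
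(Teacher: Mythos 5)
Your proof is correct and matches the paper's argument exactly: both compute $\Gf^*\Gf$, split into the diagonal multiplication operator (bounded by the $\sup$) and the off-diagonal part (bounded by its Hilbert–Schmidt norm, which is precisely what the paper's "Cauchy–Schwarz in $n'$" step produces). Your closing remark about why this beats the crude $\|\Gf^*\Gf\|_{\text{HS}}$ bound is a fair summary of the lemma's purpose, though not part of the proof itself.
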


\begin{proof}
From \eqref{ss1}, $\Gf^*$ is given by 	
\begin{align*}
\Gf^* \{b_{n_1}\}=\sum_{n_1 } \cj\s(n_1,n)b_{n_1}
\end{align*}
	
\noi
and so
\begin{align}
\Gf^*\Gf \{ b_{n'} \}= \sum_{n'} \bigg( \sum_{n_1} \s(n_1,n')\cj\s(n_1,n) \bigg)b_{n'}.
\label{GG0}
\end{align}
	
\noi
We split \eqref{GG0} into two cases as follows
\begin{align}
\Gf^*\Gf \{ b_{n'} \}=\underbrace{  \sum_{n_1} |\s(n_1,n) |^2 b_{n}  }_{\text{diagonal term}} +\underbrace{\sum_{n':n' \neq n} \bigg( \sum_{n_1} \s(n_1,n')\cj\s(n_1,n) \bigg)b_{n'} }_{\text{non-diagonal term} }
\label{GG00}
\end{align}
	
\noi
Hence, from \eqref{GG00} and the Cauchy-Schwarz inequality in $n'$, we have
\begin{align*}
\| \Gf^*\Gf \{ b_{n'} \} \|_{\l^2}&\les  \bigg[ \max_n \sum_{n_1 } |\s(n_1,n)|^2+\bigg(\sum_{n,n': n \neq n'} \Big| \sum_{n_1 } \s(n_1,n') \cj\s(n_1,n)  \Big|^2   \bigg)^\frac 12  \bigg] \|b_n \|_{\l^2}. 
\end{align*}
	
\noi
This completes the proof of Lemma \ref{LEM:matrix}.
\end{proof}

\subsection{On discrete convolutions}

Next, we recall the following basic lemma on a discrete convolution.

\begin{lemma}\label{LEM:SUM}
\textup{(i)}
Let $d \geq 1$ and $\al, \be \in \R$ satisfy
\[ \al+ \be > d  \qquad \text{and}\qquad \al, \be < d.\]
\noi
Then, we have
\[
\sum_{n = n_1 + n_2} \frac{1}{\jb{n_1}^\al \jb{n_2}^\be}
\les \jb{n}^{d - \al - \be}\]
	
\noi
for any $n \in \Z^d$.
	
\smallskip
	
\noi
\textup{(ii)}
Let $d \geq 1$ and $\al, \be \in \R$ satisfy $\al+ \be > d$.
\noi
Then, we have
\[
\sum_{\substack{n = n_1 + n_2\\|n_1|\sim|n_2|}} \frac{1}{\jb{n_1}^\al \jb{n_2}^\be}
\les \jb{n}^{d - \al - \be}\]
	
\noi
for any $n \in \Z^d$.
	
\end{lemma}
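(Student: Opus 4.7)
The plan is to handle both parts by a standard dyadic decomposition based on the relative sizes of $|n_1|$, $|n_2|$, and $|n|$, together with the elementary bound
\[
\sum_{|m| \le R} \frac{1}{\jb{m}^\theta} \lesssim \begin{cases} R^{d-\theta} & \theta < d, \\ 1 & \theta > d, \end{cases}
\]
and its tail counterpart $\sum_{|m| \ge R} \jb{m}^{-\theta} \lesssim R^{d-\theta}$ for $\theta > d$.

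\textbf{Part (i).} I would split the sum into three regions according to the frequency interaction: (a) $|n_1| \ll |n_2|$, (b) $|n_1| \gg |n_2|$, and (c) $|n_1| \sim |n_2|$. In case (a), the constraint $n = n_1 + n_2$ forces $|n_2| \sim |n|$, so
\[
\sum_{|n_1| \lesssim |n|} \frac{1}{\jb{n_1}^\alpha \jb{n}^\beta} \lesssim \jb{n}^{-\beta} \cdot \jb{n}^{d-\alpha} = \jb{n}^{d-\alpha-\beta},
\]
where the hypothesis $\alpha < d$ is used to estimate the $n_1$-sum. Case (b) is symmetric using $\beta < d$. For case (c), either $|n_1| \sim |n_2| \gtrsim |n|$, in which case
\[
\sum_{|n_1| \gtrsim |n|} \frac{1}{\jb{n_1}^{\alpha+\beta}} \lesssim \jb{n}^{d-\alpha-\beta}
\]
by the tail bound (valid since $\alpha+\beta > d$); or $|n_1| \sim |n_2| \ll |n|$, which is incompatible with $n_1 + n_2 = n$ and hence is empty.

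\textbf{Part (ii).} Under the restriction $|n_1| \sim |n_2|$, the triangle inequality applied to $n = n_1 + n_2$ forces $|n_1| \sim |n_2| \gtrsim |n|$, so only case (c) of the above analysis contributes, and one gets
\[
\sum_{\substack{n = n_1+n_2 \\ |n_1| \sim |n_2|}} \frac{1}{\jb{n_1}^\alpha \jb{n_2}^\beta} \lesssim \sum_{|n_1| \gtrsim |n|} \frac{1}{\jb{n_1}^{\alpha+\beta}} \lesssim \jb{n}^{d-\alpha-\beta},
\]
using only the single hypothesis $\alpha+\beta > d$. This explains why the additional conditions $\alpha, \beta < d$ in (i) are unnecessary in (ii): they were only needed to sum the low-frequency factor in cases (a) and (b), which do not occur when $|n_1| \sim |n_2|$.

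This argument is entirely routine; there is no real obstacle. The only mild subtlety is to correctly identify which of the three cases actually contribute under each set of hypotheses, and to check the sharpness of the exponents $d-\alpha-\beta$ by matching the dyadic estimate in each case.
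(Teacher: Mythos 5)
Your proof is correct. The paper does not supply its own argument for this lemma—it only cites Lemmas 4.1 and 4.2 of \cite{MWX}—but your dyadic decomposition into the regions $|n_1|\ll|n_2|$, $|n_1|\gg|n_2|$, $|n_1|\sim|n_2|$ is exactly the standard route, and your accounting of where the hypotheses $\al,\be<d$ versus $\al+\be>d$ are needed (the former for the low-frequency sums in the unbalanced cases, the latter for the tail sum in the balanced case) is the right way to see why (ii) gets away with the weaker hypothesis. One minor remark: in part (i) the conditions $\al+\be>d$ and $\al,\be<d$ already force $\al,\be>0$, so you never actually meet negative exponents there; in part (ii) they can occur, but your estimate $\jb{n_2}^{-\be}\sim\jb{n_1}^{-\be}$ under $|n_1|\sim|n_2|$ is valid for any fixed sign of $\be$ (with an implicit constant depending on $\be$), so the argument goes through unchanged.
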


Namely, in the resonant case (ii), we do not have the restriction $\al, \be < d$.
Lemma \ref{LEM:SUM} follows
from elementary  computations.
See, for example,  Lemmas 4.1 and 4.2 in \cite{MWX} for the proof.

\subsection{Strichartz estimates on $\T^2$ }
In this subsection, we record the $L^4_{t,x}$-Strichartz estimates on $\T^2$. We first recall (see \cite{BO93}) 
\begin{align}
\bigg\|  \sum_{n \in Q} a_n e^{i(\jb{n,x}+|n|^2 t) }  \bigg\|_{L^4_{t,x}([0,1]\times \T^2) } \les_\eps |Q|^\eps \bigg(\sum_{n \in Q } |a_n|^2 \bigg)^\frac 12
\label{linL4}
\end{align} 

\noi
where $Q$ is a spatial frequency ball of radius $N$ (not necessarily centered at the origin) and $|Q|=\#Q$. Then, from \eqref{linL4} and H\"older's inequality, we have
\begin{align}
\begin{split}
\|P_{Q} u\|_{L_{t,x}^4([0,1]\times \T^2 )} &\le \int_\R \bigg\|   \sum_{n \in Q} \ft u(\tau+|n|^2,n ) e^{ i(\jb{n,x}+|n|^2t)  } \bigg\|_{L^4_{t,x}([0,1] \times \T^2) } d \tau\\ 
&\les  |Q|^\eps \int_\R \bigg( \sum_{n \in Q} |\ft u(\tau+|n|^2,n) |^2   \bigg)^\frac 12 d\tau\\
&\les |Q|^\eps \bigg( \sum_{n \in Q} \int \jb{\tau+|n|^2}^{2b} |\ft u(\tau,n)|^2  \bigg)^\frac 12 \les N^{\eps } \| u\|_{X_S^{0,b}}
\label{L41}
\end{split}
\end{align}

\noi
for any $b> \frac 12$, where $P_Q$ is the Fourier projector onto $Q$. In the following lemma, we improve the $L^4_{t,x} $-Strichartz estimates \eqref{L41} by using the Hausdorff-Young inequality and an interpolation.
\begin{lemma}
\label{LEM:L4}
Let $\eps>0$. Then, we have
\begin{align*}
\| P_Q u \|_{L^4_{t,x}([0,1] \times \T^2 ) } \les_{\eps} N^\eps \|P_Q u \|_{X_S^{0,\frac 12-\frac \eps4}}, 
\end{align*}
	
\noi
where $P_Q$ is the Fourier projector onto $Q$  and $Q$ is a spatial frequency ball of radius $N$ (not necessarily centered at the origin). 
\end{lemma}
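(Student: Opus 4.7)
The plan is to decompose $u$ dyadically in modulation, establish two competing $L^4_{t,x}$ bounds on each piece, and interpolate. Write $u = \sum_{L\ge 1,\,\text{dyadic}} u_L$ with $u_L := Q^S_L u$, so the spacetime Fourier support of $u_L$ lies in $\{(\tau,n) : n\in Q,\; |\tau + |n|^2|\sim L\}$. The two claimed endpoint bounds are
\begin{align*}
\|P_Q u_L\|_{L^4_{t,x}([0,1]\times \T^2)} &\lesssim_{\epsilon_0} N^{\epsilon_0}\, L^{1/2}\, \|P_Q u_L\|_{L^2_{t,x}}, \\
\|P_Q u_L\|_{L^4_{t,x}([0,1]\times \T^2)} &\lesssim N^{1/2}\, L^{1/4}\, \|P_Q u_L\|_{L^2_{t,x}}.
\end{align*}
The first follows by revisiting the derivation of \eqref{L41}: the integrand coming from the basic Strichartz \eqref{linL4} is now supported on a modulation interval of length $\sim L$, so Cauchy--Schwarz over this interval produces the extra factor $L^{1/2}$ in place of the divergent weight that forced $b>1/2$ in \eqref{L41}. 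For the second bound, introduce $v := e^{-it\Delta}u_L$; its spacetime Fourier transform $\widetilde v(\sigma,n) = \widetilde{u_L}(\sigma - |n|^2,n)$ is supported in $|\sigma|\sim L$. Spatial Bernstein on $Q$ combined with $L^2_x$-unitarity of $e^{-it\Delta}$ gives $\|P_Q u_L\|_{L^4_{t,x}} \lesssim N^{1/2}\|P_Q v\|_{L^4_t L^2_x}$; Minkowski's inequality $\|\cdot\|_{L^4_t \ell^2_n} \le \|\cdot\|_{\ell^2_n L^4_t}$ (valid since $4\ge 2$), followed mode by mode by the one-dimensional Hausdorff--Young/Bernstein inequality $\|\widehat v(\cdot,n)\|_{L^4_t(\R)}\lesssim L^{1/4}\|\widehat v(\cdot,n)\|_{L^2_t(\R)}$ (which is legitimate thanks to the $|\sigma|\sim L$ support), completes the second endpoint.

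Taking the geometric mean of the two bounds with parameter $\theta\in[0,1]$ gives
$$\|P_Q u_L\|_{L^4_{t,x}} \lesssim N^{\epsilon_0(1-\theta) + \theta/2}\,L^{1/2-\theta/4}\,\|P_Q u_L\|_{L^2_{t,x}},$$
and summing dyadically via the triangle inequality and Cauchy--Schwarz against the weight $L^b$ produces
$$\|P_Q u\|_{L^4_{t,x}} \le N^{\epsilon_0(1-\theta)+\theta/2}\Big(\sum_{L\ge 1} L^{1-\theta/2-2b}\Big)^{1/2}\|P_Q u\|_{X^{0,b}_S}.$$
Setting $b = \tfrac{1}{2}-\tfrac{\epsilon}{4}$ and choosing $\theta\in(\epsilon,2\epsilon)$ so that $1-\theta/2-2b = (\epsilon-\theta)/2<0$ makes the $L$-sum converge; choosing $\epsilon_0>0$ small enough (for instance $\epsilon_0=\epsilon/8$) so that $\epsilon_0(1-\theta)+\theta/2\le\epsilon$ delivers the stated bound $\|P_Q u\|_{L^4_{t,x}} \lesssim_\epsilon N^{\epsilon}\|P_Q u\|_{X^{0,1/2-\epsilon/4}_S}$.

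The main obstacle is the temporal Hausdorff--Young/Bernstein step: the modulation cutoff on $u_L$ must first be transferred to a clean time-Fourier support constraint on the rotated field $v = e^{-it\Delta}u_L$, and Minkowski has to be applied in the correct direction (moving $\ell^2_n$ outside $L^4_t$) so that the one-dimensional Bernstein inequality can be invoked per mode. Once this second endpoint is in place, the interpolation and dyadic summation are routine bookkeeping; the feasibility window $\theta\in(\epsilon,2\epsilon)$ is precisely what pins down the admissible regularity loss $b = \tfrac{1}{2}-\tfrac{\epsilon}{4}$ and the $N^\epsilon$ factor.
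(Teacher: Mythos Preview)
Your proof is correct and follows essentially the same strategy as the paper: both combine the Strichartz endpoint \eqref{L41} with a second endpoint coming from Hausdorff--Young/Bernstein (the paper obtains $\|P_Q u\|_{L^4}\lesssim N^{1/2}\|P_Q u\|_{X_S^{0,b'}}$ for $b'>\tfrac14$ directly via spacetime Hausdorff--Young, while you recover the same bound by Bernstein in space and in time on each modulation block), and then interpolate. The only difference is cosmetic: the paper interpolates the two $X^{0,b}$ bounds abstractly, whereas you unfold the interpolation via an explicit dyadic modulation decomposition and Cauchy--Schwarz summation.
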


\begin{proof}
From the Hausdorff-Young inequality, we have
\begin{align}
\begin{split}
\|P_Q u \|_{L^4_{t,x} } &\le \bigg( \sum_{n\in Q} \int_\R |\ft u(\tau, n) |^\frac 43 d\tau   \bigg)^{\frac 43}\\
&\le \bigg( \sum_{n\in Q}  \Big(  \int_\R  \jb{\tau+|n|^2}^{2b'} |\ft u(\tau, n)|^2     \Big)^{\frac 23}     \bigg)^{\frac 34}\\
&\le N^{\frac 12} \bigg( \sum_{n \in Q} \int \jb{\tau+|n|^2}^{2b'} |\ft u(\tau,n)|^2  \bigg)^\frac 12=N^{\frac 12} \| P_Q u \|_{X_S^{0,b'}}.
\label{L42}
\end{split}
\end{align}

\noi
where $N$ is the size of $Q$ and $b'>\frac 14$.
By interpolating \eqref{L41} and \eqref{L42}, we obtain the desired result.

\end{proof}

For further use in the following sections, we also record another estimate \eqref{interop}. 
By interpolating the following two estimates coming from \eqref{L41} and the definition of $X_S^{s,b}$ 
\begin{align*}
\| P_Q u \|_{L^4_{t,x} ([0,1] \times \T^2) } &\les_{\eps} N^\eps \|P_Q u \|_{X_S^{0,\frac 12+\eps'}}, \\
\| P_Q u \|_{L_{t,x}^2([0,1] \times \T^2) } &\les \| P_Q u \|_{X_S^{0,0}},
\end{align*} 

\noi
we have
\begin{align}
\| P_Q u\|_{L^{2+}_{t,x} ([0,1] \times \T^2) } \les N^{0+} \| P_Q u\|_{X_S^{0,0+}}.
\label{interop}
\end{align}

\subsection{Counting estimates for lattice points and a key multilinear estimate}
In this subsection, we first record the following counting estimates.

	
		

\begin{lemma}[high-high interactions and low-modulation]	
\label{LEM:count1}	
Let
\begin{align*}
1\ll N \les N_1 \sim N_2 \quad \text{and} \quad    M \ll N_1.
\end{align*}

Then, for any fixed $n \in \Z^2$ with $|n| \sim N$, we have
\begin{align*}
\begin{split}
&\# \big\{n_1 \in \Z^2 :  |n_1|^2\pm |n|-|n_2|^2=O(M), \; n_1+n_2=n, \; |n_1| \sim N_1, \; \text{and} \; |n_2| \sim N_2  \big\} \\
& \hphantom{XX} \les   \Big(\frac MN +1\Big)N_1
\end{split}
\end{align*}

\end{lemma}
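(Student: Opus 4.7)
The plan is to reduce the counting problem to that of $\Z^2$-lattice points in a thin strip intersected with an annulus, and then to invoke an elementary B\'ezout-based count. First, I would use the constraint $n_1 + n_2 = n$ to eliminate $n_2 = n - n_1$, leaving $n_1$ as the only free variable. Expanding $|n_2|^2 = |n|^2 - 2 n \cdot n_1 + |n_1|^2$, the modulation condition $|n_1|^2 \pm |n| - |n_2|^2 = O(M)$ collapses to the \emph{linear} condition
\[
n \cdot n_1 = \tfrac{1}{2}\bigl(|n|^2 \mp |n|\bigr) + O(M),
\]
so $n \cdot n_1$ is confined to an interval $I \subset \R$ of length $O(M)$; the remaining constraint $|n_2| \sim N_2$ is automatic from $|n_1| \sim N_1 \sim N_2$ and $|n| \sim N \lesssim N_1$, and need not be used.

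Geometrically, this forces $n_1$ into a strip $\mathcal S$ perpendicular to $\hat n := n/|n|$ of transverse width $|I|/|n| = O(M/N)$. Combined with $|n_1| \sim N_1$, we are counting $\Z^2$-lattice points in $\mathcal S \cap \{|n_1| \sim N_1\}$, a region of length $\lesssim N_1$ along $\hat n^{\perp}$. Set $d := \gcd(n) \leq |n| \sim N$. By B\'ezout, the values $\{n \cdot n_1 : n_1 \in \Z^2\}$ coincide with $d\Z$, so the parallel lattice lines $\{n \cdot n_1 = dk\}$ lying inside $\mathcal S$ number at most $O(M/d + 1)$; on each such line the $\Z^2$-points are spaced by $|n|/d$, so each line contributes $O(N_1 d/N + 1)$ points inside the annulus. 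Multiplying and using $d \leq N$, the hypothesis $M \ll N_1$, and $N_1 \geq 1$ yields
\[
\bigl(\tfrac{M}{d}+1\bigr)\bigl(\tfrac{N_1 d}{N}+1\bigr) \;\lesssim\; \tfrac{M N_1}{N} + \tfrac{N_1 d}{N} + \tfrac{M}{d} + 1 \;\lesssim\; \Bigl(\tfrac{M}{N}+1\Bigr) N_1,
\]
which is the desired bound.

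The main subtlety lies in the gcd bookkeeping: the number of admissible lattice lines scales like $1/d$ while the density of lattice points on each line scales like $d$, so I need to check that these $d$-dependent factors cancel in a manner uniform over $n$. The cross terms $N_1 d/N$ and $M/d$ are absorbed using $d \leq N$ and $M \lesssim N_1$ respectively, and no further hypothesis is needed. A marginal technical point is to handle the degenerate regime $M/N \ll 1$, in which the strip $\mathcal S$ may contain at most one lattice line or none; in that case the factor $(M/d+1)$ reduces to $1$ and the count collapses to the $N_1$ term, consistent with the stated bound.
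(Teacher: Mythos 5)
Your proposal is correct and takes essentially the same route as the paper: both eliminate $n_2 = n - n_1$, expand $|n_2|^2$, reduce the modulation constraint to the linear condition that $n\cdot n_1$ lies in an interval of length $O(M)$, and then count lattice points $n_1$ in the resulting thin strip intersected with the annulus $\{|n_1|\sim N_1\}$. The one place you go beyond the paper is the final counting step: the paper simply asserts that a strip of transverse width $O(M/N)$ and length $O(N_1)$ contains $\lesssim (M/N+1)N_1$ lattice points, whereas you make this rigorous via the gcd/B\'ezout bookkeeping — identifying the image of $n_1\mapsto n\cdot n_1$ as $d\Z$ with $d=\gcd(n)$, counting $O(M/d+1)$ admissible lattice lines, and $O(N_1 d/N + 1)$ points per line, and then verifying that the $d$-dependence cancels using $d\le|n|\sim N$ and $M\ll N_1$. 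That verification is sound and is a genuine (if modest) tightening of the argument the paper leaves implicit. One phrasing quibble: the constraint $|n_2|\sim N_2$ is not ``automatic'' in the lower-bound direction (since $|n-n_1|$ can be small when $N\sim N_1$), but this is immaterial since dropping a constraint only inflates the count and you only need an upper bound.
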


\begin{proof}	
Note that $|n_1|^2\pm |n|-|n_2|^2=O(M)$ and $n_1+n_2=n$ imply 
\begin{align*}
-|n|^2+2\jb{n,n_1} \pm |n|=O(M).
\end{align*}

\noi
Hence, from $|n| \sim N $, we have
\begin{align*}
\frac n{|n|} \cdot n_1= \frac {|n|}2 \mp \frac 12 + O\Big(\frac M{N} \Big)
\end{align*}

\noi
i.e. the component of $n_1$ parallel  to $n$ is restricted in an interval of length $O\Big(\frac M{N} \Big)$. Hence, we have 
\begin{align*}
&\# \big\{n_1 \in \Z^2 :  |n_1|^2\pm |n|-|n_2|^2=O(M), \; n_1+n_2=n, \; |n_1| \sim N_1, \; \text{and} \; |n_2| \sim N_2  \big\}\\
& \hphantom{X} \les  \Big(\frac M{N} +1  \Big)N_1,
\end{align*}

\noi
which proves the desired result.

\end{proof}

Next, we state the following lattice point counting bound that will be used in the proof of multilinear estimates in Subsection \ref{SUBSEC:bis} and \ref{SUBSEC:biw}. For the proof, see Lemma 4.3 in \cite{DNY0}.
\begin{lemma}\label{LEM:circle}
Given $0 \neq m \in \Z[i]$, $a_0, b_0 \in \C$, and $M,N>0$, the number of tuples $(a,b)\in \Z[i]^2$ that satisfies 
\begin{align*}
ab=m,\; |a-a_0|\le M, \; |b-b_0| \le N
\end{align*}

\noi
is $O(M^\eps N^\eps)$ for any small $\eps>0$, where the constant depends only on $\eps>0$.
\end{lemma}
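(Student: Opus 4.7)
The plan is to reduce the counting problem to the classical divisor bound in the Gaussian integers $\Z[i]$. Since $m \neq 0$, the constraint $ab = m$ forces $a \neq 0$ and determines $b = m/a$ uniquely from $a$. Hence the number of admissible pairs equals the number of Gaussian divisors $a \mid m$ that simultaneously satisfy $a \in B(a_0, M)$ and $m/a \in B(b_0, N)$, where $B(z,r)$ denotes the closed disk of radius $r$ around $z$.

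The principal tool is the Gaussian divisor bound: for any $\eps > 0$ and any nonzero $m \in \Z[i]$,
\begin{align*}
d_{\Z[i]}(m) \;:=\; \#\{\,a \in \Z[i] : a \mid m\,\} \;=\; O_{\eps}\bigl(|m|^{\eps}\bigr).
\end{align*}
This is the Gaussian analogue of the classical divisor bound $d(n) = O_\eps(n^\eps)$ on $\Z$, and follows from the latter applied to the norm $N(m) = |m|^2 \in \Z_{>0}$ together with unique factorization in $\Z[i]$: each Gaussian divisor of $m$ descends, modulo the four-element unit group $\{\pm 1,\pm i\}$, to a divisor of $N(m)$ in $\Z$.

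In the centered regime where $|a_0| \lesssim M$ and $|b_0| \lesssim N$, the desired bound is immediate: any admissible solution satisfies $|a| \le |a_0| + M \lesssim M$ and $|b| \le |b_0| + N \lesssim N$, hence $|m| = |a|\cdot |b| \lesssim MN$, and the divisor bound directly yields the count $O_\eps((MN)^\eps) = O_\eps(M^\eps N^\eps)$.

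The main obstacle is the off-center regime where $|a_0| \gg M$ or $|b_0| \gg N$; there $|m|$ can be much larger than $MN$ and the raw divisor bound is wasteful. To handle it, I would exploit the multiplicative rigidity between distinct solutions: if $(a,b)$ and $(a',b')$ both satisfy the constraints, the identity
\begin{align*}
b - b' \;=\; \frac{m\,(a' - a)}{a\,a'},
\end{align*}
combined with $|a - a'| \le 2M$, $|b - b'| \le 2N$, and the integrality bounds $|a - a'|, |b - b'| \ge 1$, pins the product $|a|\cdot |a'|$ to the window $[|m|/(2N),\,2M|m|]$. Fixing one solution $a^*$ and letting $a$ vary, the admissible values of $|a|$ then lie in a range whose ratio of endpoints is $O(MN)$, i.e., $O(\log(MN))$ dyadic scales. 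On each such scale the cofactor $b$ is localized to a compatible scale and the divisor bound applied to $m$ truncated to that window gives $O_\eps(M^\eps N^\eps)$ divisors. Summing the $O(\log(MN))$ contributions is absorbed into another $\eps$-loss, yielding the uniform count $O_\eps(M^\eps N^\eps)$ independent of the location of the centers $a_0,b_0$.
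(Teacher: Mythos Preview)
The paper does not prove this lemma; it simply cites Lemma~4.3 of Deng--Nahmod--Yue \cite{DNY0}. So there is no in-paper argument to compare against, and your proposal must stand on its own.

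Your reduction to the Gaussian divisor bound and your handling of the centered regime ($|a_0|\lesssim M$, $|b_0|\lesssim N$, hence $|m|\lesssim MN$ and $d_{\Z[i]}(m)=O_\eps((MN)^\eps)$) are correct and standard.

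The off-center argument, however, has a genuine gap. You correctly use the identity $b-b'=m(a'-a)/(aa')$ together with $|a-a'|,|b-b'|\ge 1$ to confine $|a|\cdot|a'|$ to $[|m|/(2N),\,2M|m|]$, hence $|a|$ to a multiplicative window of width $O(MN)$. But the sentence ``the divisor bound applied to $m$ truncated to that window gives $O_\eps(M^\eps N^\eps)$ divisors'' is precisely the heart of the matter and is not justified. Two problems. First, when $|a_0|\gg M$ the ball constraint $|a-a_0|\le M$ already pins $|a|$ to a \emph{single} dyadic block, so your dyadic decomposition is vacuous there and you are effectively restating the conclusion. Second, the implicit auxiliary claim --- that the number of $a\mid m$ with $|a|$ in a fixed dyadic annulus (or with $|a|\le L$) is $O_\eps(L^\eps)$ uniformly in $m$ --- is \emph{false}: for $m$ the product of the first $k$ rational primes that are inert in $\Z[i]$ and $L$ suitably chosen, the number of divisors with $|a|\le L$ is of order $L/\sqrt{\log L}$, not $O_\eps(L^\eps)$. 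Since $|m|$ is not controlled by $MN$ in the off-center regime, neither the global bound $d_{\Z[i]}(m)=O_\eps(|m|^\eps)$ nor any per-window bound follows from what you have written.

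What is actually needed (and what \cite{DNY0} supplies) is a further algebraic reduction that produces an auxiliary Gaussian integer of modulus $O((MN)^{O(1)})$ --- rather than $|m|$ --- whose divisors parametrize the solutions. For example, fixing one solution $(a^*,b^*)$, one can check that $g:=\gcd(a,a^*)$ and $h:=\gcd(b,b^*)$ satisfy $|g|\le 2M$, $|h|\le 2N$ (since $g\mid a-a^*$ and $h\mid b-b^*$), that the pair $(g,h)$ determines $a$ via $a=g\,b^*/h$, and that $gh$ divides $(a-a^*)(b-b^*)$, an element of modulus $\le 4MN$. Turning this into the claimed $O_\eps((MN)^\eps)$ bound still requires care, but the point is that some such reduction is essential; the dyadic localization alone does not close the argument.
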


To introduce Lemma \ref{LEM:multi}, we define several dyadic frequency regions:
\begin{align*}
\mathfrak{P}_1:=\big\{(\tau ,k): |k|\le 2 \big\},\qquad &\mathfrak{P}_N:=\big\{(\tau ,k): \tfrac{N}{2}\le |k|\le 2N \big\},\qquad N\ge 2,\\
\mathfrak{S}_1:=\big \{(\tau ,k): |\tau +|k|^2|\le 2  \big \},\qquad &\mathfrak{S}_L:=\big \{(\tau ,k): \tfrac{L}{2}\le |\tau +|k|^2|\le 2L \big \},\qquad L\ge 2, \\
\mathfrak{W}^\pm _1:=\big \{(\tau ,k): |\tau \pm |k||\le 2 \big \},\qquad &\mathfrak{W}^\pm _L:=\big \{(\tau ,k): \tfrac{L}{2}\le |\tau \pm |k||\le 2L \big \}  ,\qquad L\ge 2.
\end{align*}

\noi
We now state the following multilinear estimate in \cite[Proposition 3.2]{Kishi1}.
This multilinear estimate will be used in Lemmas \ref{LEM:RSRW} and \ref{LEM:RSRW}. 
More precisely, if an interaction is the high-low interactions where one Schr\"odinger frequency is much greater than the other Schr\"odinger frequency (i.e. high-modulation cases $L_{\max} \ges N_{\max}^2$), then we can use the following lemma.
\begin{lemma}[\cite{Kishi1}]
\label{LEM:multi}
Let $N_j,L_j\ge 1$ be dyadic numbers and $f,g_1,g_2\in L^2(\R \times \Z^2 )$ be real-valued nonnegative functions with the support properties
$\supp {f} \subset {\mathfrak{P}_{N_0}\cap \mathfrak{W}_{L_0}^\pm},\; \supp{g_j} \subset \mathfrak{P}_{N_j}\cap \mathfrak{S}_{L_j},\; j=1,2 $.
Moreover, assume $N_1 \gg N_2$ or $N_2\gg N_1$. Then, we have
\begin{align*}
&\sum_{n,n_1: n_1+n_2=n}\int_{\tau, \tau_1: \tau=\tau_1+\tau_2} f(\tau,n) g_1(\tau_1,n_1)   g_2(\tau_2,n_2)\\
&\les  L_{\max}^{\frac 12} L_{\med}^{\frac 38} L_{\min}^{\frac 38}N_{\min}^{\frac 12} N_{\max}^{-1} \| f\|_{L^2_\tau \ell^2_n} \|g_1 \|_{L^2_\tau \ell^2_n} \|g_2 \|_{L^2_\tau \ell^2_n}.
\end{align*}

\end{lemma}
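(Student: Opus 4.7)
The plan is to reformulate the trilinear sum via Plancherel duality, reduce to a bilinear $L^2_{t,x}$-estimate for the two Schr\"odinger factors, and then interpolate between a sharp counting (resonance) bound and an $L^4_{t,x}$-Strichartz bound to produce the precise exponents.

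More concretely, let $F, G_1, G_2$ denote the space-time inverse Fourier transforms of $f, g_1, g_2$, respectively, so that $F$ is frequency-localized to $\mathfrak{P}_{N_0}\cap \mathfrak{W}^\pm_{L_0}$ and $G_j$ to $\mathfrak{P}_{N_j}\cap \mathfrak{S}_{L_j}$. By Plancherel the quantity in question equals $\int_{\R\times\T^2} F\cdot G_1\cdot G_2\,dx\,dt$, and Cauchy--Schwarz in $(t,x)$ gives
\[
\mathrm{LHS} \;\le\; \|F\|_{L^2_{t,x}}\|G_1G_2\|_{L^2_{t,x}} \;=\; \|f\|_{L^2_\tau\ell^2_n}\|G_1G_2\|_{L^2_{t,x}}.
\]
Thus it suffices to prove
\[
\|G_1G_2\|_{L^2_{t,x}}\;\lesssim\; L_{\max}^{1/2}L_{\med}^{3/8}L_{\min}^{3/8}N_{\min}^{1/2}N_{\max}^{-1}\|g_1\|_{L^2_\tau\ell^2_n}\|g_2\|_{L^2_\tau\ell^2_n}.
\]

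The first ingredient is a sharp counting bound. By Plancherel, $\|G_1G_2\|_{L^2_{t,x}}^2$ is $\|g_1*g_2\|_{L^2}^2$ restricted to the output support $\mathfrak{P}_{N_0}\cap\mathfrak{W}^\pm_{L_0}$, and Cauchy--Schwarz in the convolution variables gives
\[
\|g_1*g_2\|_{L^2_{\tau,n}}^2 \;\le\; \sup_{(\tau,n)\in\mathfrak{P}_{N_0}\cap\mathfrak{W}^\pm_{L_0}}|E(\tau,n)|\cdot\|g_1\|_{L^2}^2\|g_2\|_{L^2}^2,
\]
where $E(\tau,n)$ is the set of $(\tau_1,n_1)$ satisfying all support constraints. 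The resonance identity
\[
(\tau_1+|n_1|^2)+\big((\tau-\tau_1)+|n_2|^2\big)-(\tau\pm|n|) \;=\; |n_1|^2+|n_2|^2\mp|n|
\]
forces $\bigl||n_1|^2+|n_2|^2\mp|n|-c\bigr|\lesssim L_{\max}$ for some constant $c$. Using $N_1\gg N_2$ (so $|n|\sim N_1\sim N_{\max}$) and decomposing $n_2$ into components parallel and perpendicular to $n$, the derivative of $\Phi:=|n|^2\mp|n|-2|n|n_2^{\parallel}+2|n_2^{\perp}|^2+2(n_2^{\parallel})^2$ in $n_2^{\parallel}$ is of size $\sim N_{\max}$, so $n_2^{\parallel}$ is constrained to an interval of length $O(L_{\max}/N_{\max})$, while $n_2^{\perp}$ has $O(N_{\min})$ choices. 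Combined with the $L_{\min}$-length of the $\tau_1$-interval, this gives $|E|\lesssim L_{\min}N_{\min}\bigl(L_{\max}/N_{\max}+1\bigr)$, yielding
\[
\|G_1G_2\|_{L^2_{t,x}}\;\lesssim\; L_{\min}^{1/2}L_{\max}^{1/2}N_{\min}^{1/2}N_{\max}^{-1/2}\|g_1\|_{L^2}\|g_2\|_{L^2}.
\]

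The second ingredient is a modulation-rich but frequency-blind bound coming from the $L^4_{t,x}$-Strichartz estimate on $\T^2$ (Lemma \ref{LEM:L4}), via H\"older:
\[
\|G_1G_2\|_{L^2_{t,x}}\;\le\;\|G_1\|_{L^4_{t,x}}\|G_2\|_{L^4_{t,x}}\;\lesssim\;(N_1N_2)^{0+}L_1^{1/2}L_2^{1/2}\|g_1\|_{L^2}\|g_2\|_{L^2}.
\]
The desired exponents $L_{\max}^{1/2}L_{\med}^{3/8}L_{\min}^{3/8}N_{\min}^{1/2}N_{\max}^{-1}$ are then obtained by interpolating these two extremes (with weights $3/4$ for the counting bound and $1/4$ for the Strichartz bound), the frequency gain $N_{\max}^{-1}$ being preserved up to $\eps$-losses absorbed into the Strichartz estimate, and the modulation $L_0$ contributing to $L_{\max}$ through the geometric restriction.

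The main obstacle is the clean bookkeeping of the modulation exponents: the naive resonance counting alone overcharges the smaller modulations, while the pure Strichartz bound sacrifices the crucial $N_{\max}^{-1}$ gain from transversality. Balancing the two requires an explicit dyadic decomposition of the intermediate modulation level and a case analysis depending on whether $L_0$, $L_1$, or $L_2$ realizes $L_{\max}$, with the wave modulation $L_0$ playing a distinguished role through the resonance identity. This case analysis is the technical heart of the argument.
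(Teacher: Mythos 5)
The paper itself gives no proof of this lemma: it is a direct citation of Proposition~3.2 in Kishimoto's paper \cite{Kishi1}, so there is no in-paper argument to compare against. Your reconstruction is evaluated on its own terms below.

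The interpolation step does not close. Your two building blocks are the counting bound $\|G_1G_2\|_{L^2_{t,x}}\les L_{\min}^{1/2}L_{\max}^{1/2}N_{\min}^{1/2}N_{\max}^{-1/2}$ and the $L^4$-Strichartz bound $\|G_1G_2\|_{L^2_{t,x}}\les (N_1N_2)^{\eps}L_1^{1/2}L_2^{1/2}$; only the first carries any $N$-gain. A geometric mean with weight $\theta\in[0,1]$ on the counting bound yields an $N$-factor $N_{\min}^{\theta/2}N_{\max}^{-\theta/2}$, which can never reach the target $N_{\min}^{1/2}N_{\max}^{-1}$: matching the $N_{\min}$-exponent forces $\theta=1$, while matching the $N_{\max}$-exponent would force $\theta=2$. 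With your stated weights $(3/4,1/4)$ the interpolated bound is $N_{\min}^{3/8}N_{\max}^{-3/8}L_{\min}^{3/8}L_{\max}^{3/8}L_1^{1/8}L_2^{1/8}$, which exceeds the claimed right-hand side already in the regime $L_1=L_2=1$, $L_0=L_{\max}\sim N_{\max}^2$, $N_{\max}\gg N_{\min}$, even after trading $L_{\max}^{1/8}\ges N_{\max}^{1/4}$ for extra $N$-decay. The phrase ``the frequency gain $N_{\max}^{-1}$ being preserved'' has no antecedent---your counting step only produces $N_{\max}^{-1/2}$. To reach Kishimoto's exponents one must use the nonresonance $L_{\max}\ges N_{\max}^2$ systematically, split on which of $L_0,L_1,L_2$ realizes $L_{\max}$, and sharpen the bilinear step beyond a single Cauchy--Schwarz count; you identify this case analysis as the ``technical heart'' but do not carry it out, and the simple interpolation you do write down is demonstrably insufficient.
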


\subsection{Tools from stochastic analysis}
In this subsection, we present the probabilistic tools. 
We first recall the Wiener chaos estimate (Lemma~\ref{LEM:multigauss}).
For this purpose, we first recall 
basic definitions
from stochastic analysis;
see \cite{Bog, Shige}. 
Let $(H, B, \mu)$ be an abstract Wiener space.
Namely, $\mu$ is a Gaussian measure on a separable Banach space $B$
with $H \subset B$ as its Cameron-Martin space.
Given  a complete orthonormal system $\{e_j \}_{ j \in \N} \subset B^*$ of $H^* = H$, 
we  define a polynomial chaos of order
$k$ to be an element of the form $\prod_{j = 1}^\infty H_{k_j}(\jb{x, e_j})$, 
where $x \in B$, $k_j \ne 0$ for only finitely many $j$'s, $k= \sum_{j = 1}^\infty k_j$, 
$H_{k_j}$ is the Hermite polynomial of degree $k_j$, 
and $\jb{\cdot, \cdot} = \vphantom{|}_B \jb{\cdot, \cdot}_{B^*}$ denotes the $B$--$B^*$ duality pairing.
We then 
denote the closure  of 
polynomial chaoses of order $k$ 
under $L^2(B, \mu)$ by $\mathcal{H}_k$.
The elements in $\mathcal{H}_k$ 
are called homogeneous Wiener chaoses of order $k$.
We also set
\begin{align}
\mathcal{H}_{\leq k} = \bigoplus_{j = 0}^k \mathcal{H}_j
\notag
\end{align}

\noi
for $k \in \N$.

Let $L = \Dl -x \cdot \nabla$ be 
the Ornstein-Uhlenbeck operator.\footnote{For simplicity, 
	we write the definition of the Ornstein-Uhlenbeck operator $L$
	when $B = \R^d$.}
Then, 
it is known that 
any element in $\mathcal H_k$ 
is an eigenfunction of $L$ with eigenvalue $-k$.
Then, as a consequence
of the  hypercontractivity of the Ornstein-Uhlenbeck
semigroup $U(t) = e^{tL}$ due to Nelson \cite{Nelson}, 
we have the following Wiener chaos estimate
\cite[Theorem~I.22]{Simon}.

\begin{lemma}\label{LEM:multigauss}	
Let $k \in \N$.
Then, we have
\begin{equation*}
\|X \|_{L^p(\O)} \leq (p-1)^\frac{k}{2} \|X\|_{L^2(\O)}
\end{equation*}
	
\noi
for any $p \geq 2$ and any $X \in \mathcal{H}_{\leq k}$.
As a consequence, the multilinear Gaussian expression
\begin{align*}
F_k(\o):=\sum_{n_1,\dots,n_k}c_{n_1,\dots, n_k} g_{n_1}(\o)g_{n_2}(\o)\cdots g_{n_k}(\o)
\end{align*}	
	
\noi
for some $k \ge 1$ and $\{c_{n_1,\dots,n_k} \} \in \l^2 \big( (\Z^2)^k \big)$ satisfies
\begin{align*}
\|  F_k \|_{L^p(\O)} \leq \sqrt{k+1}(p-1)^{\frac k2} \| F_k\|_{L^2(\O)}
\end{align*}

\noi
for any $p \ge  2$.
Moreover, there exists $c>0$ such that for any $\ld>0$, we have
\begin{align*}
\PP \big\{ |F_k|  > \ld  \big \} \le \exp\big(  -c\ld^\frac 2k  \|F_k \|_{L^2(\O)}^{-\frac 2k}     \big).
\end{align*}

\end{lemma}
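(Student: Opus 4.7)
The plan is to treat the three assertions separately, starting with the hypercontractive estimate, and then deduce the multilinear and tail bounds from it. The heart of the matter is Nelson's hypercontractivity: the Ornstein--Uhlenbeck semigroup $U(t)=e^{tL}$ maps $L^2(B,\mu)$ to $L^{q(t)}(B,\mu)$ with operator norm $1$ as long as $q(t)\le 1+e^{2t}$. I would begin by recording this fact and its reformulation on chaos: since any $X\in\H_j$ is an eigenfunction of $L$ with eigenvalue $-j$, one has $U(t)X=e^{-jt}X$, so
\begin{equation*}
\|X\|_{L^p} = e^{jt}\|U(t)X\|_{L^p}\le e^{jt}\|X\|_{L^2}
\end{equation*}
whenever $p\le 1+e^{2t}$. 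Choosing $e^{2t}=p-1$ gives $\|X\|_{L^p}\le (p-1)^{j/2}\|X\|_{L^2}$ for $X\in\H_j$. For the inhomogeneous statement $X\in\H_{\le k}$, I would decompose $X=\sum_{j=0}^k X_j$ with $X_j\in\H_j$; because the $\H_j$ are mutually orthogonal and each is invariant under $U(t)$, applying the above to each component and using $e^{jt}\le e^{kt}=(p-1)^{k/2}$ together with orthogonality in $L^2$ delivers the bound claimed in the first half of the lemma.

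For the multilinear statement, the key observation is that a product $g_{n_1}\cdots g_{n_k}$ of independent (complex) standard Gaussians expands, via the Hermite product formula (or Wick's theorem), into a finite linear combination of Wiener chaoses of orders $j=0,1,\dots,k$ (with equal parity). Thus $F_k\in\H_{\le k}$, and writing $F_k=\sum_{j=0}^k F_k^{(j)}$ with $F_k^{(j)}\in\H_j$, one gets by Minkowski's inequality and the homogeneous hypercontractive bound
\begin{equation*}
\|F_k\|_{L^p}\le \sum_{j=0}^k \|F_k^{(j)}\|_{L^p}\le (p-1)^{k/2}\sum_{j=0}^k \|F_k^{(j)}\|_{L^2}\le \sqrt{k+1}\,(p-1)^{k/2}\|F_k\|_{L^2},
\end{equation*}
where the final step is Cauchy--Schwarz combined with the $L^2$-orthogonality $\|F_k\|_{L^2}^2=\sum_j \|F_k^{(j)}\|_{L^2}^2$. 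This recovers the constant $\sqrt{k+1}$ appearing in the statement.

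For the tail bound I would optimize Markov. By the multilinear estimate,
\begin{equation*}
\PP\{|F_k|>\ld\}\le \ld^{-p}\|F_k\|_{L^p}^p \le \ld^{-p}(k+1)^{p/2}(p-1)^{kp/2}\|F_k\|_{L^2}^p.
\end{equation*}
Setting $A:=\|F_k\|_{L^2}$ and choosing $p-1=c(\ld/A)^{2/k}$ for a small absolute constant $c>0$ makes $(p-1)^{k/2}A/\ld$ smaller than, say, $1/e$, whence the right-hand side is at most $\exp(-c' p)\le \exp(-c''\ld^{2/k}A^{-2/k})$; the regime $\ld\lesssim A$ is handled trivially by taking the constant so that the exponential is bounded by $1$. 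The main conceptual obstacle throughout is simply the invocation of Nelson's theorem, which I would cite as a black box; once that is in hand the remainder is bookkeeping with orthogonality and Markov's inequality.
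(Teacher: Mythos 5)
The paper does not prove this lemma; it simply cites \cite[Theorem I.22]{Simon} and attributes the underlying hypercontractivity to Nelson. Your derivation from Nelson's theorem is exactly the standard proof behind that citation, and the second and third paragraphs (the reduction of the multilinear bound to the chaos estimate via the Wick/Hermite expansion of $g_{n_1}\cdots g_{n_k}$, and the optimized Markov argument for the tail bound) are correct.

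The one place you should tighten is the first paragraph, where you pass from the homogeneous estimate on $\H_j$ to the inhomogeneous one on $\H_{\le k}$. As written, ``applying the above to each component'' and then invoking $e^{jt}\le e^{kt}$ can only be combined across the $j$'s via Minkowski in $L^p$, and that route produces precisely the parasitic $\sqrt{k+1}$ factor you use later for $F_k$ — it does \emph{not} give the clean bound $\|X\|_{L^p}\le (p-1)^{k/2}\|X\|_{L^2}$ asserted in the first display of the lemma. The correct argument absorbs the eigenvalue weights \emph{before} applying hypercontractivity: set $Y := U(-t)X = \sum_{j=0}^k e^{jt}X_j$, so that $U(t)Y = X$, and then $\|X\|_{L^p}=\|U(t)Y\|_{L^p}\le \|Y\|_{L^2}$ for $p\le 1+e^{2t}$, while orthogonality of the $\H_j$ in $L^2$ gives $\|Y\|_{L^2}^2=\sum_j e^{2jt}\|X_j\|_{L^2}^2\le e^{2kt}\|X\|_{L^2}^2$. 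Choosing $e^{2t}=p-1$ then yields the stated bound with no loss. Once you have this, the $\sqrt{k+1}$ in the $F_k$ estimate is actually superfluous (the first part applied to $F_k\in\H_{\le k}$ gives the sharper $(p-1)^{k/2}\|F_k\|_{L^2}$), although stating it with the extra constant as the paper does is harmless for the tail bound.
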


We next present a well known fact (see also for example \cite{OHDIE, CO}).

\begin{lemma}\label{LEM:prob}
Let $\eps>0$ and $Q$ be a lattice ball of radius $N$ in $\R^2$ (not necessarily centered at the origin). Then, given $\be>0$, there exists a constant $c>0$ such that we have
\begin{align*}
\PP  \Big(   \max_{n \in Q} |g_n| >\dl^{-\be} ( \#Q)^\eps    \Big) \les N^{0-}e^{-\frac 1{\dl^c}}
\end{align*} 

\noi
for any $\dl>0$.	
\end{lemma}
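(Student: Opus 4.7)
The plan is to combine the standard Gaussian tail bound with a union bound over $n \in Q$, and then to split the resulting exponent to extract simultaneously the $N^{0-}$ prefactor and the stretched-exponential decay in $\dl^{-1}$. Since each $g_n$ (with $g_0$ real standard Gaussian, $g_n$ complex standard Gaussian for $n\neq 0$) satisfies the uniform tail estimate $\PP(|g_n| > \ld) \le C e^{-c\ld^2}$ for absolute constants $c, C > 0$, a union bound gives
\begin{align*}
\PP\Big(\max_{n \in Q} |g_n| > \ld\Big) \le \sum_{n \in Q} \PP(|g_n| > \ld) \le C\, \#Q \cdot e^{-c \ld^2}.
\end{align*}
Substituting $\ld = \dl^{-\be}(\#Q)^\eps$ yields
\begin{align*}
\PP\Big(\max_{n \in Q} |g_n| > \dl^{-\be}(\#Q)^\eps\Big) \les \#Q \cdot \exp\bigl(-c\dl^{-2\be}(\#Q)^{2\eps}\bigr).
\end{align*}

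It suffices to treat $\dl \in (0, 1]$, which is the regime relevant in applications; for $\dl>1$ the factor $e^{-1/\dl^c}$ is bounded below by $e^{-1}$ and the argument can be adjusted by absorbing constants. Writing $A = \dl^{-2\be}$ and $B = (\#Q)^{2\eps}$, we have $A \ge 1$ (from $\dl \le 1$) and $B \ge 1$ (from $\#Q \ge 1$), so the elementary inequality $AB \ge \tfrac{1}{2}(A + B)$ applies. Consequently,
\begin{align*}
\exp\bigl(-cAB\bigr) \le \exp\Bigl(-\tfrac{c}{2}\dl^{-2\be}\Bigr) \cdot \exp\Bigl(-\tfrac{c}{2}(\#Q)^{2\eps}\Bigr),
\end{align*}
so the estimate factors cleanly into a $\dl$-piece and an $N$-piece.

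For the $N$-piece, since $x \mapsto x\, e^{-c x^{2\eps}/2}$ decays faster than any polynomial, for any $M > 0$ there is $C_M$ with $x\, e^{-c x^{2\eps}/2} \le C_M x^{-M}$ for $x \ge 1$. Using $\#Q \les N^2$ and choosing $M>0$ sufficiently small, we obtain $\#Q \cdot e^{-c(\#Q)^{2\eps}/2} \les N^{-2M} \les N^{0-}$. For the $\dl$-piece, $e^{-c\dl^{-2\be}/2} \le e^{-1/\dl^{c'}}$ with $c' = 2\be$, after absorbing the factor $c/2$ into $c'$. Combining these two bounds produces the claimed estimate $\les N^{0-}\, e^{-1/\dl^{c'}}$.

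The argument is essentially routine; the only technical point is the elementary splitting $AB \ge \tfrac12(A+B)$, which is what allows the single Gaussian Chernoff factor $e^{-c\dl^{-2\be}(\#Q)^{2\eps}}$ to be decoupled into one factor supplying the polynomial gain in $N$ (via exponential-over-polynomial domination) and another supplying the stretched-exponential decay in $\dl^{-1}$, without losing either.
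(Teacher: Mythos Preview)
Your proof is correct and follows the same strategy as the paper: union bound, Gaussian tail, then absorb the polynomial factor $\#Q$ into the exponential. Your decoupling via $AB \ge \tfrac12(A+B)$ is a slightly cleaner packaging of the last step than the paper's direct manipulation $N^2 e^{-cN^{2\eps}/\dl^c} = N^{0-}e^{-cN^{2\eps}/\dl^c + (2+)\log N}$, but the content is the same. One small slip: in the $N$-piece you invoke ``$\#Q \les N^2$'', but to pass from $(\#Q)^{-M}$ to $N^{-2M}$ you actually need the \emph{lower} bound $\#Q \gtrsim N^2$, which does hold for a lattice ball of radius $N$ in $\R^2$ (and for bounded $N$ the claimed estimate is trivial since $N^{0-}$ is then bounded below).
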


\begin{proof}
Note that
\begin{align*}
\PP  \Big(   \max_{n \in Q} |g_n| >\dl^{-\be} ( \#Q)^\eps    \Big) &\le \sum_{n\in Q} \PP \Big( |g_n| >\dl^{-\be}( \#Q)^\eps  \Big)\\
&=\sum_{n \in Q} \int_{|g_n| \ge \dl^{-\be}(\#Q)^\eps } e^{-\frac {|g_n|^2}{2} } dg_n\\
&\les \sum_{n \in Q} e^{-c\frac{(\#Q)^\eps }{\dl^c}}\\
&\les N^2 e^{-c\frac{(\#Q)^\eps }{\dl^c} } \les N^{0-} e^{-c\frac{N^{2\eps} }{\dl^c} +(2+)\log N  } \les N^{0-}e^{-\frac 1{\dl^c} }.
\end{align*}

\noi
Hence, we obtain the desired result.
\end{proof}

	

In probabilistic well-posedness theory, a probabilistic improvement of Strichartz estimates for random linear solutions plays an important role.
\begin{lemma}
\label{LEM:prstricha}
Let 
\begin{align*}
f^\o(t,x)=\sum_{n \in \Z^2} c_n g_n(\o) e^{i(\jb{n,x}-|n|^2t )} \qquad \text{or} \qquad \sum_{n \in \Z^2} c_n g_n(\o) e^{i(\jb{n,x} \pm |n|t )}. 
\end{align*}

\noi	
for $\{c_n \}_{n \in \Z^2} \in \l^2(\Z^2)$.	
Then, for $2\le p <\infty$,  there exists $\dl_0>0$ and $c>0$ such that 
\begin{align*}
\PP\big\{    \| f^\o \|_{L^p([-\dl,\dl] \times \T^2) } > \|c_n \|_{\ell_n^2}  \big\} \le e^{- \frac 1{\dl^c} }.
\end{align*}	
	
\noi
for $\dl \le \dl_0$.
\end{lemma}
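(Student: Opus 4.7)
The plan is to combine Minkowski's inequality, the Wiener chaos estimate (Lemma \ref{LEM:multigauss}) applied pointwise, and Chebyshev's inequality with an optimized choice of moment $q$. This is the standard route to stretched-exponential tail bounds for first-order Gaussian chaoses, and the argument is independent of whether the phase is the Schr\"odinger phase $-|n|^2 t$ or the wave phase $\pm|n| t$, since for each fixed $(t,x)$ the coefficients $c_n e^{i\theta_n(t,x)}$ have absolute value $|c_n|$.

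First, I would fix $(t,x)\in[-\dl,\dl]\times\T^2$ and observe that $f^\o(t,x)=\sum_{n\in\Z^2} c_n e^{i\theta_n(t,x)} g_n(\o)$ is a linear combination of independent standard complex Gaussians, hence an element of the first Wiener chaos $\mathcal{H}_1$. Its $L^2(\O)$ norm satisfies
\begin{equation*}
\|f^\o(t,x)\|_{L^2(\O)}^2 \;=\; \sum_{n\in\Z^2} |c_n|^2 \;=\; \|c_n\|_{\ell^2}^2,
\end{equation*}
so Lemma \ref{LEM:multigauss} yields $\|f^\o(t,x)\|_{L^q(\O)}\le (q-1)^{1/2}\|c_n\|_{\ell^2}$ for every $q\ge 2$, uniformly in $(t,x)$.

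Next, for $q\ge p$ I would apply Minkowski's inequality to exchange the spacetime $L^p$-norm and the probabilistic $L^q$-norm:
\begin{equation*}
\bigl\|\,\|f^\o\|_{L^p_{t,x}([-\dl,\dl]\times\T^2)}\,\bigr\|_{L^q(\O)}
\;\le\; \bigl\|\,\|f^\o(t,x)\|_{L^q(\O)}\,\bigr\|_{L^p_{t,x}([-\dl,\dl]\times\T^2)}
\;\le\; C\,\dl^{1/p}\,(q-1)^{1/2}\,\|c_n\|_{\ell^2}.
\end{equation*}
Chebyshev's inequality then gives, for every $q\ge p$,
\begin{equation*}
\PP\bigl(\|f^\o\|_{L^p_{t,x}}>\|c_n\|_{\ell^2}\bigr)
\;\le\; \|c_n\|_{\ell^2}^{-q}\,\E\|f^\o\|_{L^p_{t,x}}^{q}
\;\le\; \bigl(C\,\dl^{1/p}\,(q-1)^{1/2}\bigr)^{q}.
\end{equation*}

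Finally, I would optimize by choosing $q$ proportional to $\dl^{-2/p}$, concretely $q\sim (e^2C^2)^{-1}\dl^{-2/p}$, so that $C\dl^{1/p}(q-1)^{1/2}\le e^{-1}$ for $\dl\le \dl_0$ small. This produces
\begin{equation*}
\PP\bigl(\|f^\o\|_{L^p_{t,x}([-\dl,\dl]\times\T^2)}>\|c_n\|_{\ell^2}\bigr)
\;\le\; e^{-q}\;\le\; e^{-1/\dl^{c}}
\end{equation*}
for any $c<2/p$, which is the claimed bound. There is no serious obstacle: the statement is purely Gaussian and the only subtlety is matching the moment $q$ to the $\dl$-scale to convert moment bounds into the stretched-exponential tail. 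The same argument applies verbatim to the wave phase $\pm|n|t$ in place of $-|n|^2t$, since it uses only the pointwise second-moment identity and the independence of the Gaussians $\{g_n\}$.
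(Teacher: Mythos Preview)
Your proof is correct and follows exactly the approach the paper indicates: the paper does not give a self-contained proof but simply remarks that one may ``directly apply the Wiener chaos estimate'' (Lemma~\ref{LEM:multigauss}) and refers to \cite{CO}, which is precisely the Minkowski--hypercontractivity--Chebyshev argument with optimized moment $q\sim\dl^{-2/p}$ that you carried out.
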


One way to prove Lemma \ref{LEM:prstricha} would be to directly apply the Wiener chaos estimate \ref{LEM:multigauss}. For the proof, see \cite{CO}.


\section{The construction of the Gibbs measure for the Zakharov-Yukawa system}
\label{SEC:conmea}

In this section, we present the proof of Theorem \ref{THM:1}. 
The main step  in proving Theorem \ref{THM:1} 
is to prove the following lemma (uniform exponential integrability \eqref{unexp1}).
We establish the bound \eqref{unexp1} by applying the variational formulation of the partition function by Barashkov-Gubinelli \cite{BG}.

\begin{lemma}
\label{LEM:uni2}
Let $0\le \g <1$. Then, given any finite $ p \ge 1$, 
$Q_N $  in \eqref{poten} converges to some limit $Q$ in $L^p(\mu \otimes \mu_{1-\g})$.
Moreover, there exists $C_{p} > 0$ such that 
\begin{equation}
\sup_{N\in \N} \Big\| \ind_{ \{|\int_{\T^2} : | u_N|^2 :   dx| \le K\}} e^{-Q_N(u,w)}\Big\|_{L^p(\mu \otimes \mu_{1-\g} )}
\leq C_{p}  < \infty. 
\label{uni2}
\end{equation}

\noi
In particular,  we have
\begin{equation}
\lim_{N\rightarrow\infty} \ind_{ \{|\int_{\T^2} : | u_N|^2 :   dx| \le K\}}  e^{ -Q_N(u,w)}= \ind_{ \{|\int_{\T^2} : | u|^2 :   dx| \le K\}} e^{-Q(u,w)}
\qquad \text{in } L^p(\mu \otimes \mu_{1-\g} ).
\label{Lpp}
\end{equation}

\end{lemma}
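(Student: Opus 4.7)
The plan is to establish Lemma~\ref{LEM:uni2} via the Bou\'e--Dupuis variational formulation, following the approach pioneered for focusing Gibbs measures in \cite{BG} and adapted in \cite{ORSW2, OOT1, OST, OOT2}, while carefully tracking the two independent Gaussian fields $u\sim\mu$ and $w\sim\mu_{1-\g}$. Of the three assertions, the $L^p$-convergence of densities \eqref{Lpp} is a routine consequence of the $L^p$-convergence of $Q_N$ together with the uniform bound \eqref{uni2} (via convergence in measure plus uniform $L^p$-integrability; cf.~\cite[Remark~3.8]{Tz08}), so I focus on the other two.

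For the $L^p$-convergence of $Q_N$, I would observe that $Q_N$ is a third-order inhomogeneous Wiener chaos in the combined Gaussian system $\{g_n,h_n\}$: quadratic in $\{g_n\}$ through $:\!|u_N|^2\!:$, and linear in $\{h_n\}$ through $w$. By the hypercontractivity of Lemma~\ref{LEM:multigauss}, it suffices to verify Cauchy convergence in $L^2$. A direct Fourier calculation using \eqref{IV2}, the Wick relation \eqref{Wick1}, and independence yields
\[
\E\lvert Q_N-Q_M\rvert^2 \les \!\!\sum_{\substack{n_1\ne n_2 \\ \min(|n_1|,|n_2|)>\min(N,M)}}\!\!\frac{1}{\jb{n_1}^2\jb{n_2}^2\jb{n_1-n_2}^{2(1-\g)}},
\]
and a repeated application of Lemma~\ref{LEM:SUM} shows this tail vanishes as $N,M\to\infty$ precisely when $\g<1$. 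Thus $Q_N\to Q$ in $L^p(\mu\otimes\mu_{1-\g})$ for every finite $p$, and the threshold $\g=1$ appears already at the variance level.

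For the uniform bound \eqref{uni2}, using $\|\cdot\|_{L^p}^p=\E[\cdot]$ it suffices to bound $\E[\ind_{\{|A_N|\le K\}}e^{-pQ_N}]$ uniformly in $N$, where $A_N:=\int_{\T^2}\!:\!|u_N|^2\!:\,dx$. I would apply the Bou\'e--Dupuis formula to the product measure: representing $(u,w)$ as endpoints of independent cylindrical Brownian motions with Cameron--Martin spaces $H^1$ and $H^{1-\g}$, and writing $Y_i=\int_0^1\theta_i(s)\,ds$ for the drifts, one obtains
\[
-\log\E\bigl[\ind_{\{|A_N|\le K\}}e^{-pQ_N}\bigr] = \inf_{(\theta_1,\theta_2)}\E\!\left[pQ_N(u+Y_1,w+Y_2)+\tfrac{1}{2}\!\int_0^1\!\|\theta_1\|_{H^1}^2\,ds+\tfrac{1}{2}\!\int_0^1\!\|\theta_2\|_{H^{1-\g}}^2\,ds\right]\!,
\]
where the infimum runs over adapted drifts for which the shifted cutoff $|\int_{\T^2}\!:\!|u_N+Y_{1,N}|^2\!:\,dx|\le K$ holds almost surely. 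Expanding $:\!|u_N+Y_{1,N}|^2\!:\,=\,:\!|u_N|^2\!:\,+\,2\Re(\cj{u_N}Y_{1,N})+|Y_{1,N}|^2$ decomposes $Q_N(u+Y_1,w+Y_2)$ into six terms $\frac12\int X\,Y\,dx$ with $X\in\{:\!|u_N|^2\!:,\,\Re(\cj{u_N}Y_{1,N}),\,|Y_{1,N}|^2\}$ and $Y\in\{w,Y_2\}$. The task is to lower-bound each term by a random quantity with uniformly bounded expectation plus a small fraction of the drift cost, using the duality $|\int fg\,dx|\le\|f\|_{H^{-s}}\|g\|_{H^s}$, the fractional Leibniz rule (Lemma~\ref{LEM:prod}), and the two-dimensional Sobolev multiplication $H^1\cdot H^1\hookrightarrow H^{1-\eps}$ valid for every $\eps>0$; the exponent $-(1-\g)$ pairs exactly with $Y_2$ in its Cameron--Martin norm, and with the rough field $w\in H^{-\g-}$ provided $\g<1$.

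The hardest step will be controlling the two terms in which the rough Gaussian $w$ is multiplied by the drift $Y_1$, namely $\int\cj{u_N}Y_{1,N}\,w\,dx$ and $\int|Y_{1,N}|^2\,w\,dx$. For the first, I would exploit the independence of $u$ and $w$: the product $\cj{u}\cdot w$ is a second-chaos object whose $H^{-\g-\eps}$-norm has uniform $L^q$-moments (by Lemmas~\ref{LEM:multigauss} and~\ref{LEM:SUM}), so Cauchy--Schwarz yields a bound absorbable into $\frac14\|Y_1\|_{H^1}^2$. The second, genuinely quartic in the drift, is the critical term: the multiplication $\|\,|Y_{1,N}|^2\|_{H^{\g+\eps}}\les\|Y_1\|_{H^1}^2$ holds only when $\g+\eps<1$, and its constant blows up as $\g\to 1$, which is precisely the mechanism producing the critical transition at $\g=1$ and the non-normalizability of \cite{OST}. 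The a.s.\ $L^2$-cutoff on $u_N+Y_{1,N}$ provides the auxiliary bound $\|Y_{1,N}\|_{L^2}^2\les\s_N+K+|A_N|$, which, combined with Young's inequality and the uniform $L^q$-boundedness of $\|w\|_{H^{-\g-\eps}}$, closes the argument for $\g<1$. Assembling the term-by-term bounds produces a uniform lower bound of the form $-C_p+\frac{1}{4}\E\bigl(\|Y_1\|_{H^1}^2+\|Y_2\|_{H^{1-\g}}^2\bigr)\ge -C_p$, proving \eqref{uni2}.
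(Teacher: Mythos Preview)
Your overall strategy---Bou\'e--Dupuis variational formula plus term-by-term control of the expanded potential---matches the paper's, but there is a genuine gap in how you handle the indicator cutoff, and this gap propagates into the estimate of the critical term.

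The paper does \emph{not} apply Bou\'e--Dupuis directly with the indicator. Instead it first dominates $\ind_{\{|\,\cdot\,|\le K\}}(x)\le e^{-A|x|^\alpha}e^{AK^\alpha}$ for large $A,\alpha$ (display~\eqref{H6}), so that the functional fed to Lemma~\ref{LEM:var3} is $\RR_N(u,w)=Q_N(u,w)+A\bigl|\int_{\T^2}:\!|u_N|^2\!:\,dx\bigr|^\alpha$. This serves two purposes. First, the moment hypotheses of Lemma~\ref{LEM:var3} are then met, whereas your formulation---an infimum over drifts satisfying the shifted constraint almost surely---is not the statement of Lemma~\ref{LEM:var3} and would need separate justification. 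Second, and more importantly, the penalty produces the coercive term $\tfrac{A}{4}\|\Dr_{1,N}\|_{L^2}^{2\alpha}$ in the variational functional (see \eqref{YY5} and \eqref{v_N1}), which is indispensable for absorbing the cross terms.

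This is exactly where your treatment of $\int_{\T^2}|Y_{1,N}|^2\,w\,dx$ breaks down. Your bound $\|\,|Y_{1,N}|^2\|_{H^{\g+\eps}}\les\|Y_1\|_{H^1}^2$ gives a term $\|w\|_{H^{-\g-\eps}}\|Y_1\|_{H^1}^2$ that is quadratic in the $H^1$ drift norm with an $O(1)$ random prefactor; this cannot be absorbed into $\tfrac12\|Y_1\|_{H^1}^2$ by Young's inequality. The paper's estimate \eqref{YYY2} is sharper: via fractional Leibniz with unbalanced Lebesgue exponents one obtains $\|Y_{2,N}\|_{W^{-\g-\eps,\infty}}\|\Dr_{1,N}\|_{H^1}\|\Dr_{1,N}\|_{L^2}$, which Young's inequality then splits into $c(\dl)\|Y_{2,N}\|^c+\dl\bigl(\|\Dr_{1,N}\|_{H^1}^2+\|\Dr_{1,N}\|_{L^2}^{2\alpha}\bigr)$. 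The $L^2^{2\alpha}$ piece is harmless only because of the penalty term---without it there is nothing in the variational cost to absorb arbitrary powers of $\|\Dr_1\|_{L^2}$. Your proposed substitute, an a.s.\ bound $\|Y_{1,N}\|_{L^2}^2\les\s_N+K+|A_N|$, does not help: the Wick expansion gives $\int:\!|u_N+Y_{1,N}|^2\!:=A_N+2\Re\!\int\cj{u_N}Y_{1,N}+\|Y_{1,N}\|_{L^2}^2$ (no $\s_N$), so the constraint yields $\|Y_{1,N}\|_{L^2}^2\le K+|A_N|+2\bigl|\int\cj{u_N}Y_{1,N}\bigr|$, and the last term re-involves the drift, making the bound circular.

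In short, the exponential-penalty replacement \eqref{H6}--\eqref{K2} is not a cosmetic device but the mechanism that supplies the missing coercivity; once you insert it, the six-term expansion you describe goes through essentially as in Lemma~\ref{LEM:Dr2}, and the condition $\g<1$ enters exactly where you expect (in \eqref{Y2}, \eqref{YYY2}, \eqref{Y3}).
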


The convergence of $Q_N $  in \eqref{poten} follows
from a standard computation and thus we omit details.
See, for example, 
\cite[Lemma 2.5]{OTh}
for the related result.
As we pointed out, once the uniform bound \eqref{uni2} is established, the $L^p$-convergence \eqref{Lpp} of the densities follows from (softer) convergence in measure of the densities. See \cite[Remark 3.8]{Tz08}.

\begin{remark}\rm
We notice that the Gibbs measure $d \rhoo_\g$ on $ (u,w, \dt w)$, formally defined in \eqref{Gibbs}, decouples as the Gibbs measure $d \rho_\g$ \eqref{Gibbs3} on the component $(u,w)$ and the Gaussian measure $d \mu_{-\g}$ on the component $\dt w$.
Therefore, once the Gibbs measure $d\rho_\g$ \eqref{Gibbs3} on $(u,w)$ is established,  
we can construct the Gibbs measure $d\rhoo_\g$ on $(u,w,\dt w)$ by setting 
\begin{align}
d\rhoo_\g(u,w,\dt w) 
=  d ( \rho_\g \otimes \mu_{-\g})(u,w, \dt w). 
\label{Gibbs5}
\end{align}

\noi
Hence, in the following, we only discuss the construction 
of the (renormalized) Gibbs measure~$d\rho_\g$ on $(u,w)$, written in \eqref{Gibbs3}.
\end{remark}

\subsection{Stochastic variational formulation}
\label{SUBSEC:var}

We use a variational formula for the partition function
as in \cite{BG, OOT1, OST, OOT2, RSTW, Seo23}. The main idea is to write the partition function as a stochastic optimization problem over time-dependent processes.

We begin by representing the measure
$\mu_1 \otimes \mu_{1-\g} $ as the distribution of a pair of cylindrical processes at the time $1$.
Let $\vec{W}(t)$ be a cylindrical Brownian motion in $L^2(\T^2) \times L^2(\T^2)$.
Namely, we have
\begin{align*}
\vec{W}(t) = (W_1(t),W_2(t))= \bigg(\sum_{n \in \Z^2} B_1^n(t) e^{i\jb{n,x}}, \sum_{n \in \Z^2} B_2^n(t) e^{i\jb{n,x}} \bigg),
\end{align*}

\noi
where  
$\{B_1^n\}_{n \in \Z^2}$ and $\{B_2^n\}_{n \in \Z^2}$ are 
two sequences of mutually independent complex-valued\footnote{By convention, we normalize $B_n$ such that $\text{Var}(B_n(t)) = t$. In particular, $B_0$ is  a standard real-valued Brownian motion.} Brownian motions\footnote{While we keep the discussion only to the real-valued setting, 
the results also hold in the complex-valued setting. In the complex-valued setting, we use the Laguerre polynomial to define the Wick renormalization.} such that 
$\cj{B^n_j}= B_{j}^{-n}$, $n \in \Z^2$. 
Then, we define a centered Gaussian process $\vec{Y}(t)=(Y_1(t),Y_2(t))$
by 
\begin{align}
\vec{Y}(t)
= \Big(  \jb{\nb}^{-1}W_1(t), \jb{\nb}^{-1+\g} W_2(t) \Big)
\label{P2}
\end{align}

\noi
Note that we have 
\begin{align*}
\Law(\vec{Y}(1)) = \mu_1 \otimes \mu_{1-\g}, 
\end{align*}

\noi where $d\mu_1$ and $d\mu_{1-\g}$ are the (fractional) Gaussian fields in \eqref{gauss0}. 
By setting  $\vec{Y}_N = \pi_N \vec{Y} $, 
we have   $\Law(\vec{Y}_N(1)) = (\pi_N)_*(\mu_1 \otimes \mu_{1-\g})$, 
i.e.~the pushforward of $\mu_1 \otimes \mu_{1-\g}$ under $\pi_N$.
In particular, 
we have  $\E [Y_{1,N}^2(1)] = \s_N$,
where $\s_N$ is as in~\eqref{sigma1}.

Next, let $\vec{\mathbb{H}}_{a}$ denote the space of drifts, 
which are progressively measurable processes 
belonging to 
\begin{align*}
L^2([0,1]; L^2(\T^2) \times L^2(\T^2) ), \quad \text{$\PP$-almost surely.}
\end{align*}

\noi
We now state the  Bou\'e-Dupuis variational formula \cite{BD, Ust};
in particular, see Theorem 7 in \cite{Ust}.

\begin{lemma}\label{LEM:var3}
Let $\vec{Y}$ be as in \eqref{P2}.
Fix $N \in \N$.
Suppose that  $F:C^\infty(\T^2) \times C^\infty(\T^2) \to \R$
is measurable such that $\E\big[|F(\pi_N \vec{Y}(1))|^p\big] < \infty$
and $\E\big[|e^{-F(\pi_N \vec{Y}(1))}|^q \big] < \infty$ for some $1 < p, q < \infty$ with $\frac 1p + \frac 1q = 1$.
Then, we have
\begin{align*}
- \log \E\Big[e^{-F(\pi_N \vec{Y}(1))}\Big]
= \inf_{\vec{\dr} \in \vec{\mathbb{H}}_{a} }
\E\bigg[ F(\pi_N \vec{Y}(1) + \pi_N \vec{I}(\dr)(1)) + \frac{1}{2} \int_0^1 \| \vec{\dr}(t) \|_{L^2_x \times L^2_x}^2 dt \bigg], 
\end{align*}

\noi
where  $\vec{I}(\vec{\dr})=(I_1(\dr_1),I_2(\dr_2) )$ is  defined by 
\begin{align*}
\vec{I}(\vec{\dr})(t) =  \big(I_1(\dr_1)(t), I_2(\dr_2)(t) \big)   =\bigg( \int_0^t \jb{\nb}^{-1} \dr_1(t') dt', \int_0^t \jb{\nb}^{-1+\g} \dr_2(t') dt' \bigg)
\end{align*}

\noi
and the expectation $\E = \E_\PP$
is an 
expectation with respect to the underlying probability measure~$\PP$.

\end{lemma}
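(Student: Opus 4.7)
The plan is to deduce Lemma \ref{LEM:var3} from the combination of the Gibbs (Donsker--Varadhan) entropy duality with Girsanov's theorem for the two-component cylindrical Wiener process $\vec W$, which is the standard route to the Bou\'e--Dupuis formula. The first ingredient is the entropy duality
\begin{align*}
-\log \E_{\PP}\big[e^{-G}\big] = \inf_{\mathbb{Q}\ll \PP}\big\{\E_{\mathbb{Q}}[G] + H(\mathbb{Q}\,|\,\PP)\big\},
\end{align*}
applied to $G = F(\pi_N \vec Y(1))$. The two-sided integrability $\E[|F(\pi_N\vec Y(1))|^p] < \infty$ and $\E[e^{-qF(\pi_N\vec Y(1))}] < \infty$ with $\tfrac1p+\tfrac1q=1$ ensures that both sides are finite and that the optimizing tilted measure $d\mathbb{Q}^\star / d\PP = Z^{-1} e^{-G}$ satisfies $H(\mathbb{Q}^\star\,|\,\PP) < \infty$.

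The second ingredient parametrizes admissible $\mathbb{Q}$'s by drifts. By Girsanov's theorem, each $\mathbb{Q}\ll \PP$ with $H(\mathbb{Q}\,|\,\PP) < \infty$ admits, via its F\"ollmer drift, a representation
\begin{align*}
\frac{d\mathbb{Q}}{d\PP} = \exp\bigg(\int_0^1 \vec\dr(t)\cdot d\vec W(t) - \tfrac12\int_0^1 \|\vec\dr(t)\|_{L^2_x\times L^2_x}^2\, dt\bigg),
\end{align*}
for some $\vec\dr\in \vec{\mathbb{H}}_a$, and under $\mathbb{Q}$ the shifted process $\wt W(t) := \vec W(t) - \int_0^t \vec\dr(s)\,ds$ is again a cylindrical Brownian motion. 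Applying $\jb{\nb}^{-1}$ and $\jb{\nb}^{-1+\g}$ componentwise and evaluating at $t = 1$ yields that, under $\mathbb{Q}$, one has $\vec Y(1) = \vec Y^{\#}(1) + \vec I(\vec\dr)(1)$, where $\vec Y^{\#}$ is $\mathbb{Q}$-distributed as $\vec Y$ is $\PP$-distributed. A short It\^o computation, using that $\int \vec\dr\cdot d\wt W$ is a zero-mean $\mathbb{Q}$-martingale, gives in parallel the Csisz\'ar-type identity
\begin{align*}
H(\mathbb{Q}\,|\,\PP) = \tfrac12\, \E_{\mathbb{Q}}\bigg[\int_0^1 \|\vec\dr(t)\|_{L^2_x\times L^2_x}^2\, dt\bigg].
\end{align*}

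Combining these two ingredients turns the entropy duality into exactly the variational formula of the lemma. For the upper bound $(\leq)$, one plugs a given $\vec\dr \in \vec{\mathbb{H}}_a$ into Girsanov and rewrites $\E_{\mathbb{Q}}[G] = \E_{\PP}[F(\pi_N \vec Y(1) + \pi_N \vec I(\vec\dr)(1))]$ using the distributional identity above together with the fact that $\pi_N$ commutes with $\jb{\nb}^{-1},\jb{\nb}^{-1+\g}$, so that $\pi_N \vec I(\vec\dr) = \vec I(\pi_N\vec\dr)$; for the lower bound $(\geq)$, one takes $\mathbb{Q} = \mathbb{Q}^\star$ and lets $\vec\dr^\star$ be its F\"ollmer drift. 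Since $F$ depends only on $\pi_N \vec\dr$ while $\int_0^1 \|\vec\dr\|^2 \geq \int_0^1 \|\pi_N\vec\dr\|^2$, no loss of generality arises from restricting the infimum to drifts valued in $\pi_N L^2_x \times \pi_N L^2_x$, which reduces everything to a finite-dimensional Euclidean Bou\'e--Dupuis identity.

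The main technical point will be justifying Girsanov and the entropy identity for drifts that are only almost surely in $L^2([0,1];L^2_x\times L^2_x)$ rather than Novikov-bounded. I would handle this by truncating both $\vec\dr$ (at a large threshold $M$) and $F$ (to $(F\wedge M)\vee(-M)$), proving the formula in the truncated regime where Novikov's condition is automatic, and then passing to the limit $M\to \infty$ using the assumed $L^p$--$L^q$ integrability of $F$ to obtain uniform integrability of $d\mathbb{Q}/d\PP$ and of $F(\pi_N\vec Y(1))$ on both sides. After the finite-dimensional reduction described above this limiting step is routine; one may alternatively invoke the cylindrical Bou\'e--Dupuis formula in Theorem~7 of \"Ust\"unel indicated in the statement.
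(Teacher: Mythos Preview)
Your sketch is correct and follows the standard entropy--Girsanov route to the Bou\'e--Dupuis formula. The paper, however, does not prove Lemma~\ref{LEM:var3} at all: it is stated as a known result with a direct citation to Bou\'e--Dupuis \cite{BD} and in particular to Theorem~7 of \"Ust\"unel \cite{Ust}, without further argument. So you have supplied strictly more than the paper does; your final remark that one may simply invoke \cite{Ust} is in fact exactly what the paper does.
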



Before proceeding to the proof of Theorem \ref{THM:2}, 
we state a lemma on the  pathwise regularity bounds  of 
$\vec{Y}(1)$ and $\vec{I}(\vec{\dr})(1)$.

\begin{lemma}  \label{LEM:Dr}
\textup{(i)} 
Let $\eps > 0$. Then, given any finite $p \ge 1$, 
we have 
\begin{align*}
\begin{split}
\E \Big[ & \| \big( Y_{1,N}(1), Y_{2,N}(1) \big) \|_{W^{-\eps,\infty} \times W^{-\eps-\g,\infty} }^p
+ \|:\!  Y_{1,N}(1)^2\!:\|_{W^{-\eps,\infty}}^p
\Big]
\leq C_{\eps, p} <\infty,
\end{split}
\end{align*}

\noi
uniformly in $N \in \N$.
	
\smallskip
	
\noi
\textup{(ii)} For any $\vec{\dr} \in \vec{\mathbb{H}}_a$, we have
\begin{align}
\| \vec{I}(\vec{\dr})(1) \|_{H^1 \times H^{1-\g}}^2 \leq \int_0^1 \| \vec{\dr}(t) \|_{L^2_x \times L^2_x}^2dt.
\label{CM}
\end{align}

\noi
In particualr, we have
\begin{align*}
\| I_1( \dr_1)(1) \|_{H^1 }^2 \leq \int_0^1 \|  \dr_1(t) \|_{L^2_x}^2dt,\\
\| I_2( \dr_2)(1) \|_{H^{1-\g} }^2 \leq \int_0^1 \|  \dr_2(t) \|_{L^2_x}^2dt.
\end{align*}

\end{lemma}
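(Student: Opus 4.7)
The plan is to treat the two parts separately. Part (ii) reduces to a direct Cameron-Martin-type computation, while the substance lies in part (i).

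For part (i), I would first reduce the $W^{-\eps,\infty}$-in-$x$ bounds to $L^p_x$ bounds via the Sobolev embedding $W^{2/p,p}(\T^2)\hookrightarrow L^\infty(\T^2)$ (applied to each Littlewood-Paley block, with $p$ taken large depending on $\eps$), and then invoke the Wiener chaos estimate (Lemma \ref{LEM:multigauss}) at order $1$ for $Y_{j,N}(1)$ and order $2$ for $:\!Y_{1,N}(1)^2\!:$ to reduce to $L^2_\omega$ moments. The linear variance computations
\[
\E\bigl[|P_M Y_{1,N}(1)(x)|^2\bigr] = \sum_{\substack{|n|\sim M \\ |n|\le N}} \frac{1}{\jb{n}^2} \les 1, \qquad \E\bigl[|P_M Y_{2,N}(1)(x)|^2\bigr] \les M^{2\g}
\]
(uniform in $N$, by translation invariance of the Gaussian fields) then yield $\|P_M Y_{1,N}(1)\|_{L^p_\omega L^\infty_x}\les_p M^{0+}$ and $\|P_M Y_{2,N}(1)\|_{L^p_\omega L^\infty_x}\les_p M^{\g+}$, and dyadic summation against $M^{-\eps}$ and $M^{-\g-\eps}$ respectively closes these two estimates.

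For the Wick square, I would expand in Fourier,
\[
\widehat{:\!Y_{1,N}(1)^2\!:}\,(n) = \sum_{\substack{n_1+n_2=n \\ |n_1|,|n_2|\le N}} \frac{B_1^{n_1}(1)\,B_1^{n_2}(1)}{\jb{n_1}\,\jb{n_2}} \qquad (n\ne 0),
\]
the renormalization constant $\s_N$ absorbing exactly the diagonal $n_1=-n_2$ contribution at $n=0$. A straightforward Wick contraction, combined with Lemma \ref{LEM:SUM} at the two-dimensional borderline ($\al=\be=d=2$), gives $\E[|\widehat{:\!Y_{1,N}(1)^2\!:}(n)|^2]\les \jb{n}^{-2+}$, hence $\E[|P_M\!:\!Y_{1,N}(1)^2\!:(x)|^2]\les M^{0+}$, uniformly in $N$. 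The second-order Wiener chaos bound, Sobolev embedding, and dyadic summation then deliver the desired $L^p_\omega W^{-\eps,\infty}$ bound.

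For part (ii), I would simply note that by the definition of $\vec{I}$,
\[
\jb{\nb}\, I_1(\dr_1)(1) = \int_0^1 \dr_1(t)\,dt, \qquad \jb{\nb}^{1-\g}\, I_2(\dr_2)(1) = \int_0^1 \dr_2(t)\,dt,
\]
so Minkowski's integral inequality in $L^2_x$ followed by Cauchy-Schwarz in $t\in[0,1]$ immediately gives \eqref{CM}. The only genuine obstacle is bookkeeping in (i)---making sure the variance estimates and the Wick cancellation are uniform in the truncation $N$; this is automatic once one observes that the off-diagonal second-moment sums converge absolutely already for finite $N$, so that the $N\to\infty$ limit is harmless.
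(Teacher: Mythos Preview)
Your proposal is correct and matches the paper's own treatment: the paper does not give a proof but simply states that part (i) ``follows from a standard computation'' (citing \cite[Proposition 2.3]{OTh2} and \cite[Proposition 2.1]{GKO}) and that part (ii) follows from ``Minkowski's and Cauchy-Schwarz' inequalities'' (citing \cite[Lemma 4.7]{GOTW}). Your sketch is precisely this standard argument written out; the only minor caveat is that Lemma~\ref{LEM:SUM}\,(i) as stated requires $\al,\be<d$, so the borderline case $\al=\be=d=2$ needs the usual $\eps$-loss (e.g.\ splitting into the resonant regime handled by Lemma~\ref{LEM:SUM}\,(ii) and the non-resonant regime where the larger frequency is $\gtrsim|n|$), which you effectively acknowledge by writing $\jb{n}^{-2+}$.
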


Part (i) of Lemma \ref{LEM:Dr} follows
from a standard computation and thus we omit details.
See, for example, 
\cite[Proposition 2.3]{OTh2}
and 
\cite[Proposition 2.1]{GKO}
for related results when $d = 2$.
As for Part (ii), 
the estimate \eqref{CM}
follows  from Minkowski's and Cauchy-Schwarz' inequalities. 
See the proof of Lemma 4.7 in \cite{GOTW}.

\subsection{Uniform exponential integrability}

In this section, we present the proof of Lemma \ref{LEM:uni2}. 
Since the argument is identical for any finite $p \geq 1$, we only present details for the case $p =1$. 
Note that
\begin{align}
\ind_{\{|\,\cdot \,| \le K\}}(x) \le \exp\big( -  A |x|^\al \big) \exp(A K^\al )
\label{H6}
\end{align}

\noi
for any $K, A , \al > 0$.
Given $N \in \N$, $\al \gg 1$, and $A\gg 1$ sufficiently large, we define 
\begin{align}
\begin{split}
\RR_N (u,w)
&=  \int_{\T^2}   :\!  u_N ^2 :\!  w_N   dx 
+ A \,  \bigg| \int_{\T^2}  :\!  u_N^2 : \! dx \bigg|^\al.
\end{split}
\label{K2}
\end{align}

\noi 
Then, the following uniform exponential bound \eqref{u2} with \eqref{H6}
\begin{equation}
\sup_{N\in \NN} \Big\| 
e^{-\RR_N(u)}\Big\|_{L^p( d\muu_\g)}
\leq C_{p,A,\al}  < \infty
\label{u2}
\end{equation}

\noi
implies the uniform exponential integrability \eqref{uni2}. Hence, it remains to prove the uniform exponential integrability \eqref{u2}.
In view of the Bou\'e-Dupuis formula (Lemma \ref{LEM:var3}), 
it suffices to  establish a  lower bound on 
\begin{equation}
\W_N(\vec{\dr}) = \E
\bigg[\RR_N(\vec{Y}(1) + \vec{I}(\vec{\dr})(1)) + \frac{1}{2} \int_0^1 \| \vec{\dr}(t) \|_{L^2_x \times L^2_x}^2 dt \bigg], 
\label{v_N0}
\end{equation}

\noi 
uniformly in $N \in \N$ and  $\vec{\dr} \in \vec{\mathbb{H}}_a$.
We  set $\vec{Y}_N = \pi_N \vec{Y} = \pi_N \vec{Y}(1)$ and $\vec{\Dr}_N = \pi_N  \vec{\Dr} = \pi_N \vec{I}(\vec{\dr})(1)=( \Dr_{1,N}, \Dr_{2,N} )$.

From \eqref{K2}, we have
\begin{align}
\RR_N (\vec{Y} + \vec{\Dr})  & = 
\int_{\T^2}   :\! (Y_{1,N}+\Dr_{1,N})^2 :\!  (Y_{2,N}+\Dr_{2,N}) dx+ A\bigg( \int_{\T^2} :\! (Y_{1,N}+\Dr_{1,N})^2 :\! dx \bigg)^\al \notag \\
&=\int_{\T^2} :\! Y_{1,N}^2 :\! Y_{2,N} dx + \int_{\T^2} :\! Y_{1,N}^2 :\! \Dr_{2,N} dx + 2\int_{\T^2} Y_{1,N} \Dr_{1,N}Y_{2,N} dx \notag \\
&\hphantom{X}+ 2\int_{\T^2}   Y_{1,N} \Dr_{1,N} \Dr_{2,N} dx
+\int_{\T^2}  \Dr_{1,N}^2 Y_{2,N} dx + \int_{\T^2} \Dr_{1,N}^2 \Dr_{2,N}  dx \notag \\
&\hphantom{X}
+  A \bigg\{ \int_{\T^2} \Big(  :\! Y_{1,N}^2 :\!  + 2 Y_{1,N} \Dr_{1,N} + \Dr_{1,N}^2 \Big) dx \bigg\}^\al. 
\label{Y0}
\end{align}

\noi
Hence, from  \eqref{v_N0} and \eqref{Y0}, we have
\begin{align}
\W_N(\vec{\dr})
&=\E
\bigg[
\int_{\T^2} :\! Y_{1,N}^2 :\! Y_{2,N} dx +  \int_{\T^2} :\! Y_{1,N}^2 :\! \Dr_{2,N} dx + 2\int_{\T^2} Y_{1,N} \Dr_{1,N} Y_{2,N} dx \notag \\
&\hphantom{X}+ 2\int_{\T^2}   Y_{1,N} \Dr_{1,N}  \Dr_{2,N} dx
+ \int_{\T^2}  \Dr_{1,N}^2  Y_{2,N}  dx + \int_{\T^2}   \Dr_{1,N}^2  \Dr_{2,N} dx \notag \\
&\hphantom{X}
+  A \bigg\{ \int_{\T^2} \Big(  :\! Y_{1,N}^2 :\!  + 2 Y_{1,N} \Dr_{1,N} + \Dr_{1,N}^2 \Big) dx \bigg\}^\al
+ \frac{1}{2} \int_0^1 \| \vec{\dr}(t) \|_{L^2_x\times L^2_x}^2 dt 
\bigg].
\label{v_N0a}
\end{align}

We first state 
a lemma, controlling the terms appearing in \eqref{v_N0a}.
We present the proof of this lemma at the end of this section.

\begin{lemma} \label{LEM:Dr2}
	
Let $0\le\g<1$. Then, we have the following:

\noi
\textup{(i)} For any $\dl>0$, we have
\begin{align}
\E\bigg[ \int_{\T^2} :\! Y_{1,N}^2 :\! Y_{2,N}  dx \bigg]&=0 
\label{1} \\
\E \bigg[ \int_{\T^2} Y_{1,N} Y_{2,N}   \Dr_{1,N} dx \bigg] 
&\le c(\dl) + \dl  \E \Big[  \| \Dr_{1,N}\|_{ H^1}^2 \Big]  
\label{Y2} 
\end{align}

\noi
uniformly in $N \in \N$.

\smallskip
	
\noi
\textup{(ii)}
There exists a small $\eps>0$, a constant $c >0$ and $\al \gg 1$ such that
for any $\dl>0$, we have
\begin{align}
\bigg| \int_{\T^2}  :\! Y_{1,N}^2 \!:  \Dr_{2,N}dx  \bigg|
&\le c(\dl) \| :\! Y_{1,N}^2 \!: \|_{W^{-\eps,\infty}}^2  
+ \dl \| \Dr_{2,N} \|_{ H^{1-\g}}^2,   
\label{Y1} \\
\bigg| \int_{\T^2}   Y_{1,N} \Dr_{1,N}  \Dr_{2,N} dx \bigg| &\le c(\dl)\| Y_{1,N} \|_{W^{-\eps, \infty} }^{c}+\dl \Big( \|  \Dr_{1,N}  \|_{H^1}^2  +  \| \Dr_{1,N}\|_{L^2}^{2\al}   +\| \Dr_{2,N}  \|_{H^{1-\g}}^2  \Big)  \label{YY2}\\
\bigg| \int_{\T^2}   \Dr_{1,N}^2  Y_{2,N}  dx  \bigg| &\le  c(\dl) \|  Y_{2,N}  \|_{W^{-\g-\eps,\infty}}^c +\dl \Big( \|  \Dr_{1,N}  \|_{H^1}^2  +  \| \Dr_{1,N}\|_{L^2}^{2\al} \Big)   \label{YYY2}  \\
\bigg| \int_{\T^2}    \Dr_{1,N}^2 \Dr_{2,N}  dx \bigg| & \le  \frac A{100} \| \Dr_{1,N}  \|_{L^2}^{2\al} +\dl \Big( \| \Dr_{2,N} \|_{H^{1-\g}}^2 + \| \Dr_{2,N} \|_{L^2}^2+\| \Dr_{1,N} \|_{H^1}^2   \Big)
\label{Y3}
\end{align}

\noi  
for any sufficiently large $A>0$, uniformly in $N \in \N$.

\smallskip

\noi
\textup{(ii)}	
Let $A> 0$ and $\al>0$. Then,  
there exists $c = c(A,\al)>0$ such that
\begin{align}
\begin{split}
&A \bigg| \int_{\T^2} \Big(  :\! Y_{1,N}^2 :\!  + 2 Y_{1,N} \Dr_{1,N} + \Dr_{1,N}^2 \Big) dx \bigg|^\al\\
&\hphantom{XX}\ge \frac A4 \| \Dr_{1,N} \|_{L^2}^{2\al} 
- \frac 1{100} \|  \Dr_{1,N} \|_{H^1}^2   -c \bigg\{
\| Y_{1,N}  \|_{W^{-\eps,\infty}}^{\frac {2\al}{1-(\al-1)\eps}}  +\bigg| \int_{\T^2} :\!  Y_{1,N}^2 :\!  dx \bigg|^{\al} \bigg\}, 
\end{split}
\label{YY5}
\end{align}

\noi
uniformly in $N \in \N$.

\end{lemma}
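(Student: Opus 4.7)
\smallskip

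\noindent\textbf{Proof strategy.} The plan is to estimate each term in~\eqref{v_N0a} by the same recipe: pair a stochastic factor ($Y_{j,N}$ or $:\!Y_{1,N}^2\!:$) against a drift factor ($\Dr_{j,N}$) via Sobolev duality or H\"older, apply Young's inequality $ab\le c(\dl)a^{p'}+\dl b^p$ so that the drift appears at one of the tame scales $\|\Dr_{1,N}\|_{H^1}^2$, $\|\Dr_{2,N}\|_{H^{1-\g}}^2$, or $\|\Dr_{1,N}\|_{L^2}^{2\al}$, and invoke Gagliardo--Nirenberg on $\T^2$ to express intermediate $L^p$-norms of $\Dr_{1,N}$ as products of $\|\Dr_{1,N}\|_{H^1}^{\theta}$ and $\|\Dr_{1,N}\|_{L^2}^{1-\theta}$. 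The stochastic factors will be handled by Lemma~\ref{LEM:Dr}(i); the drift $H^1\times H^{1-\g}$ terms are reserved to be absorbed afterwards by the drift cost $\tfrac12\int_0^1\|\vec\dr\|_{L^2\times L^2}^2\,dt$ (via Lemma~\ref{LEM:Dr}(ii)); and the $\|\Dr_{1,N}\|_{L^2}^{2\al}$ terms are reserved for absorption by the focusing power supplied by~\eqref{YY5}.

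Equation~\eqref{1} is immediate from the mutual independence of the Brownian motions $W_1,W_2$: $:\!Y_{1,N}^2\!:$ is independent of $Y_{2,N}$ and $\E[Y_{2,N}]=0$. For~\eqref{Y2}, pathwise Cauchy--Schwarz yields $|\int Y_{1,N}Y_{2,N}\Dr_{1,N}\,dx|\le\|Y_{1,N}Y_{2,N}\|_{H^{-1}}\|\Dr_{1,N}\|_{H^1}$ and then Young; the expectation closes because, by independence, $\E\|Y_{1,N}Y_{2,N}\|_{H^{-1}}^2$ reduces to $\sum_n\jb{n}^{-2}\sum_{n_1+n_2=n}\jb{n_1}^{-2}\jb{n_2}^{-2+2\g}$, which is finite uniformly in $N$ exactly when $\g<1$ (Lemma~\ref{LEM:SUM}). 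For~\eqref{Y1}, dualize $|\int\!:\!Y_{1,N}^2\!:\Dr_{2,N}\,dx|\le\|:\!Y_{1,N}^2\!:\|_{W^{-\eps,\infty}}\|\Dr_{2,N}\|_{W^{\eps,1}}$, use the embedding $H^{1-\g}\hookrightarrow W^{\eps,1}$ on $\T^2$ (valid for $1-\g>\eps$), and apply Young. This is the only place where $\g<1$ is genuinely essential: the dual scale $H^{1-\g}$ must remain a positive-regularity Sobolev space to accommodate the Wick square.

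Estimates~\eqref{YY2} and~\eqref{YYY2} follow the same pairing principle combined with Gagliardo--Nirenberg interpolation of $\|\Dr_{1,N}\|_{L^p}$, the Young weights being tuned so that $\|\Dr_{1,N}\|_{L^2}$ appears at power~$2\al$ and $\|\Dr_{1,N}\|_{H^1}$ at power~$2$. The purely deterministic~\eqref{Y3} uses H\"older, $H^1\hookrightarrow L^4$ on $\T^2$, and three-weight Young. For the taming bound~\eqref{YY5}, set $X=\int\!:\!Y_{1,N}^2\!:\,dx+2\int Y_{1,N}\Dr_{1,N}\,dx+\|\Dr_{1,N}\|_{L^2}^2$ and use the reverse triangle inequality $|X|^\al\ge c_\al\|\Dr_{1,N}\|_{L^2}^{2\al}-C_\al\big|\!\int\!:\!Y_{1,N}^2\!:\big|^\al-C_\al\big|\!\int Y_{1,N}\Dr_{1,N}\big|^\al$; the cross term is dualized via $|\int Y_{1,N}\Dr_{1,N}|\les\|Y_{1,N}\|_{W^{-\eps,\infty}}\|\Dr_{1,N}\|_{W^{\eps,1}}\les\|Y_{1,N}\|_{W^{-\eps,\infty}}\|\Dr_{1,N}\|_{H^1}^{\eps}\|\Dr_{1,N}\|_{L^2}^{1-\eps}$, so that raising to the $\al$-th power and applying Young with $H^1$ held at exponent~$2$ produces exactly the advertised prefactor $\tfrac{2\al}{1-(\al-1)\eps}$ on $\|Y_{1,N}\|_{W^{-\eps,\infty}}$ and the $\tfrac1{100}\|\Dr_{1,N}\|_{H^1}^2$ to subtract. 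The main obstacle is bookkeeping: after summing all six estimates, the cumulative $\dl$-multiples of $\|\Dr_{1,N}\|_{H^1}^2$, $\|\Dr_{2,N}\|_{H^{1-\g}}^2$, and $\|\Dr_{1,N}\|_{L^2}^{2\al}$ together with the $\tfrac1{100}$ from~\eqref{YY5} must remain strictly below $\tfrac12$ and $\tfrac{A}{4}$, which forces $A$ to be chosen sufficiently large and $\dl$ sufficiently small at the end.
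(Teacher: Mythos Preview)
Your proposal is correct and follows essentially the same route as the paper: pair the stochastic factors against the drift via $W^{-\eps,\infty}$--$W^{\eps,1}$ or $H^{-1}$--$H^1$ duality, interpolate the drift norms (Lemma~\ref{LEM:prod}), and apply Young to land on the tame scales $\|\Dr_{1,N}\|_{H^1}^2$, $\|\Dr_{2,N}\|_{H^{1-\g}}^2$, $\|\Dr_{1,N}\|_{L^2}^{2\al}$; the treatment of \eqref{1}, \eqref{Y2}, \eqref{Y1}, \eqref{YY2}, \eqref{YYY2}, and \eqref{YY5} matches the paper's proof line by line.

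One small technical caveat on \eqref{Y3}: the endpoint embedding $H^1\hookrightarrow L^4$ in $\T^2$ gives $\|\Dr_{1,N}\|_{L^4}^2\les\|\Dr_{1,N}\|_{H^1}\|\Dr_{1,N}\|_{L^2}$, and then three-weight Young aimed at the targets $\|\Dr_{1,N}\|_{H^1}^2$, $\|\Dr_{2,N}\|_{L^2}^2$, $\|\Dr_{1,N}\|_{L^2}^{2\al}$ would require $\tfrac12+\tfrac12+\tfrac1{2\al}\le1$, which fails for finite $\al$. The paper gains the missing room by placing $\Dr_{2,N}$ in $L^{2+\dl_0}$ via $H^{1-\g}\hookrightarrow L^{2+\dl_0}$ (here $\g<1$ is again used) and correspondingly $\Dr_{1,N}$ in $L^{4-2\dl_1}$, so that the $H^1$-interpolation exponent on $\Dr_{1,N}$ drops strictly below $\tfrac12$ and Young closes. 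This is a standard $\eps$-perturbation you should incorporate; with it your sketch is complete. (Also, your remark that \eqref{Y1} is ``the only place where $\g<1$ is genuinely essential'' is not quite accurate: the convergence of the sum in \eqref{Y2} and the Sobolev steps in \eqref{YYY2}, \eqref{Y3} all use $\g<1$ as well.)
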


Then, as in \cite{BG, GOTW, ORSW2, OOT1, OST, OOT2}, 
the main strategy is 
to establish a pathwise lower bound on $\W_N(\vec{\dr})$ in~\eqref{v_N0a}, 
uniformly in $N \in \N$ and $ \vec{\dr} \in \vec{\mathbb{H}}_a$, 
by making use of the 
positive terms:
\begin{equation}
\U_N(\vec{\dr}) =
\E \bigg[\frac A 4  \| \Dr_{1,N} \|_{L^2}^{2\al} + \frac{1}{2} \int_0^1 \| \vec{\dr}(t) \|_{L^2_x\times L^2_x }^2 dt\bigg].
\label{v_N1}
\end{equation}

\noi
coming from \eqref{v_N0a} and \eqref{YY5}.
From \eqref{v_N0a} and \eqref{v_N1} together with Lemmas  \ref{LEM:Dr2} and \ref{LEM:Dr}, we obtain
\begin{align}
\inf_{N \in \mathbb{N}} \inf_{ \vec{\dr} \in \vec{\mathbb{H}}_a} \W_N(\vec{\dr}) 
\geq 
\inf_{N \in \mathbb{N}} \inf_{ \vec{\dr} \in \vec{\mathbb{H}}_a}
\Big\{ -C_0 + \frac{1}{10}\U_N(\vec{\dr})\Big\}
\geq - C_0 >-\infty.
\label{YYY5}
\end{align}

\noi
Then,  
the uniform exponential integrability \eqref{uni2} 
follows from 
\eqref{YYY5} and Lemma \ref{LEM:var3}.
This completes the proof of  
Lemma \ref{LEM:uni2}.

\medskip

We conclude this section by 
presenting the proof of Lemma \ref{LEM:Dr2}.

\begin{proof}[Proof of Lemma \ref{LEM:Dr2}]

(i) From the independence of $Y_{1,N}$ and $Y_{2,N}$, we have
\begin{align*}
\E\bigg[ \int_{\T^2} :\! Y_{1,N}^2 :\! Y_{2,N}  dx \bigg]&=0.
\end{align*}

\noi
This yields \eqref{1}.

From H\"older's inequality and Young's inequality, we have
\begin{align*}
\bigg| \int_{\T^2}  Y_{2,N}  Y_{1,N}  \Dr_{1,N} dx \bigg| &\le \| \jb{\nb}^{-1}(Y_{1,N} Y_{2,N} )   \|_{L^2} \|  \jb{\nb} \Dr_{1,N} \|_{L^2}\\
&\les  C(\dl)\| \jb{\nb}^{-1}(Y_{1,N} Y_{2,N} )   \|_{L^2}^2+ \dl \|  \Dr_{1,N} \|_{H^1}^2.
\end{align*}

\noi
We now consider $\|  \jb{\nb}^{-1} (Y_{1,N} Y_{2,N})  \|_{L^2}^2 $. Note that
\begin{align*}
\|  \jb{\nb}^{-1} (Y_{1,N} Y_{2,N})  \|_{L^2}^2  =  \sum_{\substack{ n \in \Z^2, \\ |n|\le 2N   } } \frac 1{\jb{n}^2}  \bigg|  \sum_{\substack{ n_1 \in \Z^2, \\ |n_1| \le N, |n_2| \le N, \\ n_1+n_2=n   } }    \frac{ B_1^{n_1}(1)  B_2^{n_2}(1) }{  \jb{n_1} \jb{n_2}^{1-\g} }     \bigg|^2.
\end{align*}

\noi
From taking the expectation, using the independence of $\{B_1^n\}_{n \in \Z^2}$ and $\{B_2^n\}_{n \in \Z^2}$, and Lemma \ref{LEM:SUM}, we have
\begin{align*}
\E\Big[   \|  \jb{\nb}^{-1} (Y_{1,N} Y_{2,N})  \|_{L^2}^2  \Big]&\les \sum_{\substack{ n \in \Z^2, \\ |n|\le 2N   } } \frac 1{\jb{n}^2}  \sum_{\substack{ n_1 \in \Z^2, \\ |n_1| \le N, |n_2| \le N, \\ n_1+n_2=n   } } \frac 1{\jb{n_1}^2 \jb{n_2}^{2(1-\g)} }\\
&\les\sum_{\substack{ n \in \Z^2, \\ |n|\le 2N   } } \frac 1{\jb{n}^2}  \sum_{\substack{ n_1 \in \Z^2, \\ |n_1| \le N, |n_2| \le N, \\ n_1+n_2=n   } } \frac 1{\jb{n_1}^{2-\eta} \jb{n_2}^{2(1-\g)} }\\
&\les  \sum_{\substack{ n \in \Z^2, \\ |n|\le 2N   } } \frac 1{\jb{n}^2   \jb{n}^{2-2\g-\eta}}<\infty,
\end{align*}

\noi
uniformly in $N$, where we choose $\eta>0$ such that $2-2\g-\eta>0$ by using the condition $\g<1$. This yields \eqref{Y2}.

\medskip

\noi
(ii) 
From Young's inequality and the condition $\g<1$, we have 	
\begin{align*}
\bigg| \int_{\T^2}  :\! Y_{1,N}^2 \!:  \Dr_{2,N}dx  \bigg|&\le \| :\! Y_{1,N}^2 \!: \|_{W^{-\eps,\infty}}    \| \jb{\nb}^\eps \Dr_{2,N} \|_{ L^1}\\
&\le c(\dl) \| :\! Y_{1,N}^2 \!: \|_{W^{-\eps,\infty}}^2  
+ \dl \| \Dr_{2,N} \|_{ H^{1-\g}}^2.   
\end{align*}	

\noi
This yields \eqref{Y1}.

From the fractional Leibniz rule (Lemma \ref{LEM:prod} (ii)) and Sobolev's inequality, we have
\begin{align}
&\bigg| \int_{\T^2}   Y_{1,N} \Dr_{1,N}   \Dr_{2,N} dx \bigg| \notag \\
&\le \|  Y_{1,N}  \|_{W^{-\eps,\infty}} \|   \Dr_{1,N} \Dr_{2,N}  \|_{W^{\eps,1}}  \notag \\
& \le \|  Y_{1,N} \|_{W^{-\eps,\infty}} \bigg[ \| \Dr_{1,N}  \|_{L^{2+}}
\| \jb{\nb}^\eps \Dr_{2,N}  \|_{L^2}   +  \| \jb{\nb}^\eps \Dr_{1,N}  \|_{L^{2+}}   \|  \Dr_{2,N}  \|_{L^2}    \bigg]  \notag  \\
& \le \|  Y_{1,N}  \|_{W^{-\eps,\infty}} \bigg[  \| \Dr_{1,N}  \|_{H^\eta}
\|  \Dr_{2,N}  \|_{H^{1-\g}}   +    \|  \Dr_{1,N}  \|_{H^{\eta}}    \|  \Dr_{2,N}  \|_{L^2}   \bigg]
\label{dr2dr1}
\end{align}

\noi
where $0<\eps<1-\g$ and $0<\eps<\eta $ for some small $\eta>0$.
From the interpolation inequality (Lemma \ref{LEM:prod} (i)), we have
\begin{align}
\| \Dr_{1,N} \|_{H^\eta } \les \|  \Dr_{1,N}  \|_{H^1}^\eta \|  \Dr_{1,N}  \|_{L^2}^{1-\eta}.
\label{intep} 
\end{align}

\noi
Hence, from \eqref{intep} and Young's inequality, we have
\begin{align*}
\eqref{dr2dr1} &\le \|  Y_{1,N}  \|_{W^{-\eps,\infty}}  \|  \Dr_{2,N}  \|_{H^{1-\g}} \| \Dr_{1,N}  \|_{H^1}^\eta \| \Dr_{1,N}  \|_{L^2}^{1-\eta}\\
&\le c(\dl)\| Y_{1,N} \|_{W^{-\eps, \infty} }^{c}+\dl \Big( \|  \Dr_{1,N}  \|_{H^1}^2  +  \| \Dr_{1,N}\|_{L^2}^{2\al}   +\| \Dr_{2,N}  \|_{H^{1-\g}}^2  \Big),  
\end{align*}
	
\noi
where $\frac 12+\frac \eta{2}+\frac {1-\eta}{2\al}<1$ if $\al \gg 1$. This yields \eqref{YY2}.

From the fractional Leibniz rule (Lemma \ref{LEM:prod} (ii)) (with $\frac{1}{1+\dl_0}=\frac{1}{2+\dl_1}+\frac 12$), Sobolev's inequality (with $\frac 1{2+\dl_1}=\frac 12 -\frac \eta2$), and Young's inequality, we have	
\begin{align*}
\bigg| \int_{\T^2} Y_{2,N} \Dr_{1,N}^2 dx  \bigg|  &\le \|  Y_{2,N}  \|_{W^{-\g-\eps,\infty}} \|  \jb{\nb}^{\g+\eps}( \Dr_{1,N}^2 ) \|_{L^{1+\dl_0} }  \notag \\
&\le \|  Y_{2,N}  \|_{W^{-\g-\eps,\infty}}  \| \jb{\nb}^{\g+\eps} \Dr_{1,N}  \|_{L^{2+\dl_1}} \| \Dr_{1,N}  \|_{L^{2}} \\
&\le   \|  Y_{2,N}  \|_{W^{-\g-\eps,\infty}}  \| \jb{\nb}^{\g+\eps+\eta} \Dr_{1,N}  \|_{L^{2}} \| \Dr_{1,N}  \|_{L^{2}} \\ 
&\le|  Y_{2,N}  \|_{W^{-\g-\eps,\infty}}  \| \Dr_{1,N}  \|_{H^1} \| \Dr_{1,N}  \|_{L^{2}} \\ 
&\le c(\dl) \|  Y_{2,N}  \|_{W^{-\g-\eps,\infty}}^c +\dl \Big( \|  \Dr_{1,N}  \|_{H^1}^2  +  \| \Dr_{1,N}\|_{L^2}^{2\al} \Big)    
\end{align*}	

\noi
for some sufficiently small $\dl_0, \dl_1$, and $\eta>0$,
where $\frac 12+\frac 1{2\al}<1$ if $\al \gg 1$. Notice that in the fourth step, we used the condition $\g<1$. This yields \eqref{YYY2}.

From H\"older's inequality (with $1=\frac 1{2+\dl_0}+\frac 1{2-\dl_1}$), we have
\begin{align}
\bigg| \int_{\T^2} \Dr_{2,N} \Dr_{1,N}^2 dx \bigg| &\le \|  \Dr_{2,N} \|_{L^{2+\dl_0}} \| \Dr_{1,N}^2 \|_{L^{2-\dl_1}}\notag\\
& \le \|  \Dr_{2,N} \|_{L^{2+\dl_0}} \| \Dr_{1,N} \|_{L^{4-2\dl_1}}^2.
\label{Y01}
\end{align}	

\noi
We first consider $\| \Dr_{2,N} \|_{L^{2+\dl_0}}$.
From the Sobolev's inequality (with $\frac 1{2+\dl_0}=\frac 12 -\frac \eps2$) and the interpolation inequality (Lemma \ref{LEM:prod} (i)), we have
\begin{align}
\| \Dr_{2,N} \|_{L^{2+\dl_0}} \les \| \Dr_{2,N} \|_{H^\eps } \le \| \Dr_{2,N} \|_{H^{1-\g}}^{\frac \eps{1-\g}}  \| \Dr_{2,N} \|_{L^2}^{\frac {1-\g-\eps}{1-\g} }
\label{Y02}
\end{align}

\noi
where $0<\eps<1-\g$. 
We next consider $\| \Dr_{1,N} \|_{L^{4-2\dl_1} }$.
From the Sobolev's inequality (with $\frac 1{4-2\dl_1}=\frac 12 -\frac {1-\dl_1}{2(2-\dl_1)}$) and the interpolation inequality (Lemma \ref{LEM:prod} (i)), we have
\begin{align}
\| \Dr_{1,N} \|_{L^{4-2\dl_1} } \les \| \Dr_{1,N} \|_{H^{\frac{1-\dl_1}{2-\dl_1}}} \les \| \Dr_{1,N} \|_{H^1}^{\frac{1-\dl_1}{2-\dl_1}}  \| \Dr_{1,N} \|_{L^2}^{\frac 1{2-\dl_1}}.
\label{Y03}
\end{align}

\noi
From \eqref{Y01}, \eqref{Y02}, \eqref{Y03}, and Young's inequality, we have
\begin{align*}
\bigg| \int_{\T^2} \Dr_{2,N} \Dr_{1,N}^2 dx \bigg| 
& \le \|  \Dr_{2,N} \|_{L^{2+\dl_0}} \| \Dr_{1,N} \|_{L^{4-2\dl_1}}^2\\
& \le  \| \Dr_{2,N} \|_{H^{1-\g}}^{\frac \eps{1-\g}}  \| \Dr_{2,N} \|_{L^2}^{\frac {1-\g-\eps}{1-\g} }   \| \Dr_{1,N} \|_{H^1}^{\frac{2(1-\dl_1)}{2-\dl_1}}  \| \Dr_{1,N} \|_{L^2}^{\frac 2{2-\dl_1}}\\
&\le  \frac A{100} \| \Dr_{1,N}  \|_{L^2}^{2\al} +\dl \Big( \| \Dr_{2,N} \|_{H^{1-\g}}^2 + \| \Dr_{2,N} \|_{L^2}^2+\| \Dr_{1,N} \|_{H^1}^2   \Big)
\end{align*}

\noi
where
$ \frac \eps{2(1-\g)}+\frac {1-\g-\eps}{2(1-\g)}+\frac {1-\dl_1}{2-\dl_1}+\frac 1{(2-\dl_1)\al}=1 $ if $\al \gg 1$. Notie that $\frac {1-\dl_1}{2-\dl_1}<\frac 12$ if and only if $\dl_1>0$. This yields \eqref{Y3}.

\medskip
	
\noi
(iii) 
Note that there exists a constant $C_\al>0$ such that 
\begin{align}
|a+b+c|^\al
\ge \frac 12 |c|^\al - C_\g(|a|^\al+|b|^\al)
\label{OOT}
\end{align}
	
\noi
for any $a,b,c \in \R$ (see Lemma 5.8 in \cite{OOT1}). Then, from \eqref{OOT}, we have 
\begin{align}
\begin{split}
A \bigg| \int_{\T^2}& \Big(  :\! Y_{1,N}^2 :\! + 2  Y_{1,N} \Dr_{1,N}  + \Dr_{1,N}^2 \Big) dx \bigg|^{\al} \\
&\ge \frac A2 \bigg( \int_{\T^2} \Dr_{1,N}^2dx \bigg)^\al
- AC_{\al} \bigg\{ \bigg| \int_{\T^2} :\!  Y_{1,N}^2 :\!  dx \bigg|^{\al}
+ \bigg| \int_{\T^2} Y_{1,N} \Dr_{1,N} dx \bigg|^{\al} \bigg\}.
\end{split}
\label{YZ3}
\end{align}

\noi
From the interpolation inequality (Lemma \ref{LEM:prod} (i)) and Young's inequality, we have
\begin{align}
\begin{split}
\bigg| \int_{\T^2}  Y_{1,N} \Dr_{1,N}  dx \bigg|^\al
&\le \| Y_{1,N} \|_{W^{-\eps,\infty }}^\al \| \jb{\nb}^\eps \Dr_{1,N} \|_{L^2}^\al\\
& \le  \| Y_{1,N} \|_{W^{-\eps,\infty }}^\al  \|  \Dr_{1,N} \|_{L^2}^{\al(1-\eps)} \| \Dr_{1,N} \|_{H^1}^{\al\eps}\\ 
&\le  c \| Y_{1,N}  \|_{W^{-\eps,\infty}}^{\frac {2\al}{1-(\al-1)\eps}}+ \frac 1{100C_\al} \| \Dr_{1,N} \|_{L^2}^{2\al} +\frac 1{100C_\al A} \| \Dr_{1,N} \|_{H^1}^2 .
\end{split}
\label{YZ4}
\end{align}
	
\noi
Notice that $\frac {1-\eps}2+\frac {\al \eps}2<1$ if $\eps$ is sufficiently small.
Hence, \eqref{YY5} follows from \eqref{YZ3} and \eqref{YZ4}.
\end{proof}

\section{Probabilistic well-posedness}
\label{SEC:ga12}
In this section, we present the proof of Theorem \ref{THM:2} by assuming random tensor estimates (Lemmas \ref{LEM:zSzWy}, \ref{LEM:Tens1}, and \ref{LEM:Tens2})    in Section \ref{SEC:rant}.
In particular, we show that the renormalized Zakharov-Yukawa system \eqref{Zak2}
is $d\rhoo_\g$-almost surely locally well-posed.

\subsection{Reduced first-order system}
\label{SUBSEC:reduce}
In this subsection, we consider reduced first-order system for the wave part.
For the Zakharov-Yukawa system, there is a standard procedure to factor the wave operator in order to derive a first-order system. We first look at the renormalized Zakharov-Yukawa system:
\begin{align}
\begin{cases}
i \dt u +\Dl u = uw\\
\dt^2 w + (1- \Dl) w = -\jb{\nb}^{2\g} \NN(u) \\
(u,w,\dt w)|_{t=0}=(u^\o,w_0^\o, w_1^\o),
\end{cases}
\label{Zak4}
\end{align}

\noi
where we recall $\NN(u)=|u|^2-\fint |u|^2$ and $(u^\o,w_0^\o, w_1^\o)$ is distributed according to $d\muu_\g=d\mu \otimes d\mu_{1-\g} \otimes d\mu_{-\g} $.
It is convenient to reduce the renormalized Zakharov-Yukawa system \eqref{Zak4} to a first-order system
by setting $ w_{\pm}:=w \pm i\jb{\nb}^{-1} \dt w $, where $(u,w, \dt w)$ is a solution to \eqref{Zak4}. The new system is then given by
\begin{align}
\begin{cases}
i \dt u +\Dl u = \frac 12 (w_{+}+w_{-})u \\
i\dt  w_{\pm} \mp  \jb{\nb}  w_{\pm} = \pm  \jb{\nb}^{-1+2\g} \NN(u) \\
(u, w_{\pm})|_{t=0}=(u_0^\o,  w_{\pm,0}^\o),
\end{cases}
\label{reduced}
\end{align}

\noi
where $(u_0^\o, w_{\pm,0}^\o)$ is given by
\begin{equation*} 
u_0^\o = \sum_{n \in \Z^2 } \frac{ g_n(\o)}{\jb{n}} e^{i n\cdot x}
\qquad
\text{and}
\qquad
w_{\pm,0}^\o = \sum_{n\in \Z^2}  \frac {\wt h_{n,\pm}(\o)} {\jb{n}^{1-\g}} e^{i n\cdot x}, 
\end{equation*}

\noi
where $\{g_n\}_{n \in \Z^2} $ and $\{\wt h_{n,\pm}=h_n \pm i \l_n \}_{n\in \Z^2}$, see \eqref{IV2}.
Since $w$ is real-valued, we can recover the solution of the original system \eqref{Zak4}
by setting $w=\Re  w_{\pm}$ and this consideration allows us to claim the statement of Theorem \ref{THM:2} by
proving a similar statement about the reduced system \eqref{reduced}. 
The corresponding system can be expressed in Duhamel formulation:
\begin{align}
u(t)&=e^{it\Dl}u_0^\o-\frac i2 \int_0^t e^{i(t-t') \Dl }\big[(w_{+}+ w_{-} )u  \big](t') dt' \notag\\
w_{\pm}(t)&=e^{\mp it\jb{\nb}}w_{\pm, 0}^\o \mp i\int_0^t e^{-i(t-t')\jb{\nb}} \jb{\nb}^{-1+2\g}\NN(u) (t') dt'.
\label{Duha}
\end{align}

\noi
For further use, we set
\begin{align}
\NN_S(u_1, w_1):=u_1  w_1 \qquad \text{and} \qquad 
\NN_W(u_2, u_3):= \jb{\nb}^{-1+2\g} \NN(u_2,u_3)
\label{bi0}
\end{align}


\subsection{First order expansion}
\label{SUBSEC:firstor}
Let us first go over the basic idea of the probabilistic local well-posedness, as developed for instance in  \cite{BO96, BT1, Th1, CO, BOP2, NS}; see also \cite{McKean}.
This argument is based on the following first-order expansion: random linear term+smoother term 
\begin{align*}
u &= z^S + R^S,  \\
w_{\pm} &= z^{W_\pm} + R^{W_\pm}, 
\end{align*}

\noi
where $z^S=z^{S,\o}$ and $z^{W_\pm}=z^{W_{\pm} ,\o}$ denote the random linear solutions defined by 
\begin{align}
z^S(t)=z^{S,\o}(t) :&= e^{it\Dl} u_0^\o, \notag \\
z^{W_\pm}(t)=z^{W_\pm,\o}(t) :&=e^{\mp it\jb{\nb} }w_{\pm, 0}^\o.
\label{lin1}
\end{align}

\noi
By rewriting  \eqref{Duha} as a  fixed point problem
for the residual terms $R^S: = u - z^S$ and $R^{W_\pm}:=w_{\pm}-z^{W_\pm}$, we obtain
the following perturbed renormalized Zakharov-Yukawa system   :
\begin{align}
R^S(t) &=  - i \int_0^t e^{i(t-t') \Dl } \NN_S(z^S + R^S, z^{W_\pm}+R^{W_\pm})(t') dt', \notag \\
R^{W_\pm}(t) &=  \mp i \int_0^t e^{\mp i(t-t') \jb{\nb} }  \NN_W(z^S + R^S, z^S+R^S) (t') dt'.
\label{pZak}
\end{align}

\noi
By viewing $(z^S, z^{W_\pm}, z^S z^{W_\pm})$ as a given enhanced data set, we study the fixed point
problem for the smoother term $(R^S, R^{W_\pm})$ in  $H^{s}(\T^2) \times H^{\l}(\T^2)$ for some $s>0$ and $\l>0$ in Theorem \ref{THM:2}.
In particular, for $0\le \g <\frac 13$, we will show that for each small $\dl > 0$, there exists an event $\O_\dl \subset \O$
with $P(\O_\dl^c) < Ce^{-\frac{1}{\dl^c}}$
such that  for each $\o \in \O_\dl$, there exists
a solution $u = z^S+ R^S$ and $w_{\pm}=z^{W_{\pm} }+R^{W_{\pm}}$ to the perturbed renormalized Zakharov-Yukawa system   \eqref{pZak}  in the class: 
\begin{align*}
z^S + X_S^{s,\frac 12+ }(\dl)
&\subset  C([-\dl, \dl]; H^{-\eps}(\T^2)),\\
z^{W_\pm} + X_{W_\pm}^{\l ,\frac 12+}(\dl)
&\subset  C([-\dl, \dl]; H^{-\g -\eps}(\T^2)).
\end{align*}

\noi
for some $s >0$ and $\l>0$ in Theorem \ref{THM:2}.

\subsection{The proof of Theorem \ref{THM:2} }
\label{SUBSEC:proof2}
In this subsection,  we prove Theorem \ref{THM:2} by assuming Lemma \ref{LEM:bi2}.
We first define the following Duhamel map 
\begin{align*}
\G^\o(R^S,R^{W_{\pm}} )=\bigg( \int_0^t e^{i(t-t')\Dl} \fN_S^\o(R^S, R^{W_{\pm}} ) (t') d t'  , \mp  \int_0^t e^{\mp i(t-t')\jb{\nb}} \fN_{W}^\o(R^S, R^S) (t') d t'   \bigg)
\end{align*}

\noi
where
\begin{align*}
\fN_S^\o (R^S,R^{W_\pm}):&=\chi_\dl \cdot \NN_S(\wt z^S + R^S, \wt z^{W_\pm}+R^{W_\pm}) \notag \\
\fN_{W}^\o (R^S, R^{S} ):&=\chi_\dl  \cdot  \NN_W( \wt z^S + R^S,  \wt z^S + R^S) 
\end{align*}

\noi
with extensions $\wt z^S$ and $\wt z^{W_\pm}$  of the truncated random linear solution $\chi_{_\dl}\cdot z^S$ and $\chi_{_\dl}\cdot z^{W_\pm} $. 
Given $R^S$ and $R^{W_\pm}$ on $\T^2\times [-\dl, \dl]$, 
let $\wt R^S$ and $\wt R^{W_\pm}$ be extensions of $R^S$ and $R^{W_\pm}$ onto $\T^2 \times \R$.
By the non-homogeneous linear estimate (Lemma~\ref{LEM:lin}),  we have
\begin{align*}
\bigg\| \int_0^t e^{i(t-t')\Dl} \fN_S^\o(R^S, R^{W_\pm} ) (t') d t'\bigg\|_{X_S^{s, \frac{1}{2}+, }(\dl) }
& \leq \bigg\|\eta_{_\dl}(t) \int_0^t e^{i(t-t')\Dl } \fN_S^\o(\wt R^S, \wt R^{W_\pm}) (t') d t'\bigg\|_{X_S^{s, \frac{1}{2}+}} \notag \\
& \les \| \fN_S^\o(\wt R^S, \wt R^{W_\pm} ) \|_{X_S^{s, -\frac{1}{2}+} }, 
\end{align*}

\noi 
and
\begin{align}
\bigg\| \int_0^t e^{i(t-t')\jb{\nb}} \fN_W^\o(R^S, R^{S }) (t') d t'\bigg\|_{X_{W_\pm}^{\l, \frac{1}{2}+, }(\dl) }
& \leq \bigg\|\eta_{_\dl}(t) \int_0^t e^{-i(t-t')\jb{\nb} } \fN_W^\o(\wt R^S, \wt R^{S}) (t') d t'\bigg\|_{X_{W_\pm}^{\l, \frac{1}{2}+}} \notag \\
& \les \| \fN_W^\o(\wt R^S, \wt R^{S} ) \|_{X_{W_\pm}^{\l, -\frac{1}{2}+}}, 
\label{D2}
\end{align}

\noi
where  $\eta_{_\dl}$ is a smooth cutoff on $[-2\dl, 2\dl]$ 
as in \eqref{eta1}.
The main goal of next two subsections \ref{SUBSEC:bis}, \ref{SUBSEC:biw} is to prove the following bilinear estimates by viewing $(\wt z^S, \wt z^{W_\pm},  \wt z^S \wt z^{W_\pm})$ as a given (enhanced) data set.

\begin{lemma}
\label{LEM:bi2}
Let $ 0\le \g<\frac 13 $ and $\dl>0$. 
Then, there exists an event $\O_\dl \subset \O$ and $c'>0$  with $P(\Omega^c_\dl) < e^{-\frac{1}{\dl^{c'} }}$ such that 
\begin{align}
\| \fN_S^\o(\wt R^S, \wt R^{W_\pm} ) \|_{X_S^{s, -\frac{1}{2}+} } 
&\les \dl^{0+}
\Big( 1+  \| \wt R^S\|_{X_S^{s, \frac 12 + }} +  \|\wt R^{W_\pm}\|_{X_{W_\pm}^{\l, \frac 12 + }}  + \| \wt R^S\|_{X_S^{s, \frac 12 + }}\|\wt R^{W_\pm} \|_{X_{W_{\pm}}^{\l, \frac 12 + }}  \Big) 
\label{bilin11} \\
\| \fN_W^\o(\wt R^S, \wt R^{S} ) \|_{X_{W_{\pm}}^{\l, -\frac{1}{2}+} } 
&\les \dl^{0+}
\Big( 1+  \| \wt R^S\|_{X_S^{s, \frac 12 + }} +    \| \wt R^S\|_{X_S^{s, \frac 12 + }}^2   \Big)
\label{bilin22}  
\end{align}
	
\noi 
for all $\o \in \Omega_\dl$ 
and any  extension $(\wt R^S, \wt R^W) $ of $(R^S, R^W)$, provided
\begin{align*}
s-1<\l<1-2\g 
\end{align*}
	
\noi
with $0<s<\frac 14-\frac \g2$ and $\l>0$.
\end{lemma}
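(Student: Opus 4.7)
The plan is to expand each nonlinearity by multilinearity and treat the resulting pieces according to the random/deterministic character of their inputs. For the Schrödinger bilinearity we write
\begin{align*}
\NN_S(\wt z^S+\wt R^S,\,\wt z^{W_\pm}+\wt R^{W_\pm})
= \NN_S(\wt z^S,\wt z^{W_\pm}) + \NN_S(\wt z^S,\wt R^{W_\pm}) + \NN_S(\wt R^S,\wt z^{W_\pm}) + \NN_S(\wt R^S,\wt R^{W_\pm}),
\end{align*}
and for the wave bilinearity
\begin{align*}
\NN_W(\wt z^S+\wt R^S,\,\wt z^S+\wt R^S)
= \NN_W(\wt z^S,\wt z^S) + 2\,\NN_W(\wt z^S,\wt R^S) + \NN_W(\wt R^S,\wt R^S).
\end{align*}
The overall $\dl^{0+}$ gain is produced by the truncation $\chi_\dl$: once $\wt R^S,\wt R^{W_\pm}$ have been extended globally in time, Lemma \ref{LEM:dlpower} lets us trade a little bit of the modulation exponent for a factor $\dl^{0+}$, while the truncated free evolutions $\chi_\dl\cdot\wt z^S,\,\chi_\dl\cdot\wt z^{W_\pm}$ obey Gaussian moment and tail bounds (Lemma \ref{LEM:multigauss}, Lemma \ref{LEM:prstricha}) on events of probability $\ge 1-e^{-\dl^{-c'}}$, so that polynomial losses in $\dl^{-1}$ coming from those norms are absorbed into the $\dl^{0+}$ factor.

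For the two purely random inputs, $\NN_S(\wt z^S,\wt z^{W_\pm})$ and $\NN_W(\wt z^S,\wt z^S)$, one applies directly Lemma \ref{LEM:zSzWy} (these are the enhanced data terms and they are bounded in the appropriate dual Bourgain space by $\dl^{0+}$ outside an exceptional set of probability $\le e^{-\dl^{-c'}}$). For the purely deterministic interactions $\NN_S(\wt R^S,\wt R^{W_\pm})$ and $\NN_W(\wt R^S,\wt R^S)$, one uses the standard Schrödinger–wave bilinear $X^{s,b}$ machinery: a Littlewood–Paley/modulation decomposition, the $L^4_{t,x}$ Strichartz estimate of Lemma \ref{LEM:L4}, Kishimoto's trilinear estimate (Lemma \ref{LEM:multi}) in the high-modulation regime, and the circle counting bound of Lemma \ref{LEM:circle} in the low-modulation regime. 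This is the place where the regularity window $s-1<\l<1-2\g$ with $0<s<\frac14-\frac\g2$ and $\l>0$ is actually used: the upper bound $\l<1-2\g$ comes from the $\jb{\nb}^{-1+2\g}$ smoothing in $\NN_W$, while $s<\frac14-\frac\g2$ ensures summability when both Schrödinger inputs are deterministic remainders.

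The main obstacle is the mixed random/deterministic pieces $\NN_S(\wt z^S,\wt R^{W_\pm})$, $\NN_S(\wt R^S,\wt z^{W_\pm})$, and $\NN_W(\wt z^S,\wt R^S)$, treated respectively by Lemmas \ref{LEM:zSRW}, \ref{LEM:RszW} (for the Schrödinger bilinearity) and by the wave analogue handled via Lemma \ref{LEM:Tens1} (together with Lemma \ref{LEM:Tens2} in the resonant sub-case). After a paraproduct decomposition one reduces, via $TT^*$ and duality, to bounding the operator norm of a bilinear multiplier whose kernel is a Gaussian matrix indexed by the input and output frequencies and parameterized by the modulation variables. The high-modulation branches can still be closed deterministically by Lemma \ref{LEM:multi} and the counting estimate of Lemma \ref{LEM:count1}. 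The remaining low-modulation diagonal branches are where the Hilbert–Schmidt bound of Lemma \ref{LEM:matrix} is insufficient (cf.\ Appendix \ref{SEC:A} and Remark \ref{RM:essen}); here one invokes the operator-norm random tensor bound of Lemmas \ref{LEM:Tens1}–\ref{LEM:Tens2}, based on the identity $\|T\|_{\mathrm{OP}}^{2m}=\|(TT^*)^m\|_{\mathrm{OP}}$ of \cite{DNY}, which gains enough over the Hilbert–Schmidt bound to reach the range $0\le\g<\frac13$.

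Finally, one performs a union bound over all finitely many dyadic frequency and modulation scales arising from the decomposition of $(\wt z^S,\wt z^{W_\pm},\wt R^S,\wt R^{W_\pm})$ to collect the exceptional sets into a single $\O_\dl^c$ of measure at most $e^{-\dl^{-c'}}$. On $\O_\dl$ the desired bounds \eqref{bilin11}–\eqref{bilin22} follow by summing the contributions from the three cases above, each of which is either trilinear or bilinear in $(\|\wt R^S\|_{X_S^{s,\frac12+}},\|\wt R^{W_\pm}\|_{X_{W_\pm}^{\l,\frac12+}})$ with a uniformly bounded random coefficient, as required.
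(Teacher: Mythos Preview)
Your overall architecture---multilinear expansion into random/mixed/deterministic pieces, case-by-case analysis in Bourgain spaces, and invoking the random tensor operator-norm bounds for the low-modulation resonant subcases---is exactly the paper's approach. However, several of your lemma attributions are off and indicate you have not correctly located where the difficulties lie.

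First, Lemmas \ref{LEM:zSzWy}, \ref{LEM:Tens1}, \ref{LEM:Tens2} are \emph{deterministic tensor norm bounds}, not the bilinear estimates themselves; they feed (via Lemma \ref{LEM:tens}) into the proofs of Lemmas \ref{LEM:zSzW}, \ref{LEM:zSRW}, \ref{LEM:RszW} respectively, which are the actual case-by-case results. Second, and more importantly, \emph{none} of the wave-part bilinear estimates require random tensor theory: $\NN_W(\wt z^S,\wt z^S)$ is Lemma \ref{LEM:zSzS} (Wiener chaos plus counting) and $\NN_W(\wt z^S,\wt R^S)$ is Lemma \ref{LEM:RSzS} (probabilistic Strichartz plus H\"older); neither appeals to Lemmas \ref{LEM:Tens1}--\ref{LEM:Tens2}. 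The tensor machinery is needed exclusively for three subcases of the \emph{Schr\"odinger} bilinearity (cf.\ Remark \ref{RM:essen}). Third, your regularity bookkeeping is inverted: the restriction $s<\tfrac14-\tfrac\g2$ comes from the purely random term $\NN_S(\wt z^S,\wt z^{W_\pm})$ (Lemma \ref{LEM:zSzW}, Subsubcase 2.b.(ii)), not from the deterministic $R^SR^S$ case; the deterministic wave piece (Lemma \ref{LEM:RSRS}) only needs $\l<1-2\g+s$. The crucial constraint $\g<\tfrac13$ arises from $\NN_S(\wt R^S,\wt z^{W_\pm})$ via the tensor bound $N_2^{-\frac12+\frac32\g+}$ in Lemma \ref{LEM:Tens2}.
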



From the definition \eqref{localtime} of the local-in-time norm, 
we then conclude from \eqref{D2} and Lemma \ref{LEM:bi2} that 
\begin{align*}
&\bigg\| \int_0^t e^{i(t-t')\Dl} \fN_S^\o(R^S, R^{W_\pm} ) (t') d t'\bigg\|_{X_S^{s, \frac{1}{2}+ }(\dl) }\\ 
&\les \dl^{0+}
\Big( 1+  \|  R^S\|_{X_S^{s, \frac 12 + } (\dl)} +\| R^{W_\pm}\|_{X_{W_\pm }^{\l, \frac 12 + } (\dl)}  + \|  R^S\|_{X_S^{s, \frac 12 + } (\dl)} \| R^{W_\pm} \|_{X_{W_\pm}^{\l, \frac 12 + } (\dl)}\Big)\\
\intertext{and}
&\bigg\| \int_0^t e^{i(t-t')\jb{\nb}} \fN_{W}^\o(R^S, R^{S} ) (t') d t'\bigg\|_{X_{W_\pm}^{\l, \frac{1}{2}+ }(\dl) }\les  \dl^{0+}
\Big( 1+  \|  R^S\|_{X_S^{s, \frac 12 + }(\dl)}   + \|  R^S\|_{X_S^{s, \frac 12 + }(\dl)}^2   \Big).
\end{align*}

\noi
By the bilinear structure of the nonlinearity $\fN_S^\o$ and $\fN_W^\o$, a similar estimate holds for the difference of the Duhamel map  $\G^\o (R_1^S, R_1^{W_\pm}) - \G^\o (R_2^S, R_2^{W_\pm} )$, which allows us to conclude that  $\G^\o$ is a contraction on $B_S(1) \times B_{W_\pm}(1) \subset X_S^{s, \frac 12+}(\dl) \times X_{W_\pm}^{\l, \frac 12+}(\dl)$ for $\o \in \O_\dl$.


\subsection{Bilinear estimates for the Schr\"odinger part}
\label{SUBSEC:bis}
In this subsection, we prove \eqref{bilin11} in Lemma \ref{LEM:bi2}.
In view of \eqref{bi0},  in order to prove \eqref{bilin11}, we need to  carry  out case-by-case analysis on  
\begin{align}
\|\chi_{_\dl} \cdot \mathcal{N}_S(u_1, w_1)\|_{X_S^{s, -\frac{1}{2}+}} 
\label{bi2}
\end{align}


\noi
where $u_1$ and $w_1$ are taken to be either of type
\begin{itemize}
\item[(I)] rough random parts: 
\begin{align*}
u_1  &= \wt z^{S}\text{, where $\wt z^S$ is {\it some} extension of }
\chi_{_\dl}\cdot  z^S, \notag \\
w_1  &= \wt z^{W_\pm} \text{, where $\wt z^{W_\pm}$ is {\it some} extension of }
\chi_{_\dl}\cdot  z^{W_\pm},
\end{align*}	
	
\noi
where $z^S$ and $z^{W_\pm}$ denote the random linear solutions defined in \eqref{lin1}, 
	
\vspace{1mm}
	
\item[$(\II)$]	 smoother  `deterministic' remainder (nonlinear) parts: 
\begin{align*}
u_1 &= \wt{R}^S \text{, where  $\wt{R}^S$ is {\it any} extension of $R^S$,}\\
w_1 &= \wt{R}^{W_\pm} \text{, where  $\wt{R}^{W_\pm}$ is {\it any} extension of $R^{W_\pm}$}.
\end{align*}
\end{itemize}

\noi
In the following, when $u_j$ and $w_j$ are of type (I),
we take $\wt z^S = \eta_{_\dl} z^S$ and $\wt z^{W_\pm} = \eta_{_\dl} z^{W_\pm}$.
Thanks to the cutoff function in \eqref{bi2},
we may take $u_j = \eta_{_\dl}\cdot \wt R_j^S $ and $w_j = \eta_{_\dl}\cdot \wt R^{W_\pm}$ in \eqref{bi2}
when $u_j$ and $w_j$ are of type $(\II)$.

\begin{remark} \rm
In the following, we drop the $\pm$ signs and work with one $w_+$ or $w_{-}$ since there is no role of $\pm$. Hence, we set $w:=w_{+}$ and $W:=W_{+}$. 

\end{remark}

\begin{remark}\rm
\label{REM:not}
To estimate $\|\chi_{_\dl} \cdot \mathcal{N}_S(u_1, w_1)\|_{X_S^{s, -\frac{1}{2}+}} $, we need to perform case-by-case analysis of expressions of the form:
\begin{align*}
\int_{\R}\int_{\T^2} \jb{\nb}^s(u_1w_1) \cj v^S dxdt, \qquad \text{where $\|v^{S} \|_{X_S^{0,\frac 12-}}\le 1$. }
\end{align*}

\noi
In the following, for simplicity of notation, we drop the complex conjugate sign and suppress the smooth time-cutoff function $\eta_\dl$; and thus we simply denote them by $z^S, z^W, R^S$, and $R^W$, respectively when there is no confusion. Finally, we dyadically decompose $u_1, w_1$ and $v^S$ such that their spatial frequency supports are $\supp \ft u_1 \subset \{ |n_1| \sim N_1 \}$, $\supp \ft w_1 \subset \{ |n_2| \sim N_2 \}$, and $\supp \ft v \subset \{ |n| \sim N \}$  for some dyadic $N_1,N_2$ and $N \ge 1$.  	
\end{remark}


We now prove Lemmas \ref{LEM:RSRW}, \ref{LEM:zSzW}, \ref{LEM:zSRW}, \ref{LEM:RszW}, which will imply \eqref{bilin11} in Lemma \ref{LEM:bi2} (bilinear estimates for the Schr\"odinger part).
\begin{lemma}[$R^S R^W$-case]
\label{LEM:RSRW}
Let $s>0$ and $\l>0$. Then, we have
\begin{align*}
\|\NN_S(R^S, R^W) \|_{X_{S,\dl}^{s,-\frac 12+}} \les \dl^{0+} \|R^S \|_{X_{S,\dl}^{s,\frac 12+} } \|R^W \|_{X_{W_\pm,\dl}^{\l,\frac 12+} }.
\end{align*}

\end{lemma}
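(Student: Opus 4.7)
The estimate is purely deterministic -- no probabilistic input is required -- so the plan is a standard duality and dyadic decomposition argument. First, I would choose extensions $\wt R^S$ and $\wt R^W$ onto $\R \times \T^2$ nearly realizing the local-in-time norms \eqref{localtime}, insert a smooth time cutoff $\eta_\dl$ supported on $[-2\dl,2\dl]$, and invoke Lemma \ref{LEM:dlpower} to gain a factor of $\dl^{0+}$ at the cost of a small drop in the modulation exponent. Dualizing the global $X^{s,-1/2+}_S$ norm against $X^{0,1/2-}_S$ then reduces the claim to bounding
\begin{align*}
\bigg|\int_{\R}\int_{\T^2} \jb{\nb}^s\bigl(\wt R^S\, \wt R^W\bigr)\, \cj v^S\, dx\, dt\bigg|
\end{align*}
uniformly in test functions $v^S$ with $\|v^S\|_{X^{0,1/2-}_S}\le 1$. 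I would then perform a Littlewood--Paley decomposition $\wt R^S=\sum_{N_1}P_{N_1}\wt R^S$, $\wt R^W=\sum_{N_2}P_{N_2}\wt R^W$, $v^S=\sum_N P_N v^S$; the momentum constraint $n=n_1+n_2$ leaves three essential regimes: (a) $N\sim N_1\gtrsim N_2$, (b) $N\sim N_2\gtrsim N_1$, and (c) $N_1\sim N_2\gtrsim N$.

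In regimes (b) and (c) the two Schr\"odinger-type frequencies $N$ and $N_1$ differ significantly, so Lemma \ref{LEM:multi} applies with $f=P_{N_2}\wt R^W$ playing the role of the wave output and $g_1=P_{N_1}\wt R^S$, $g_2=\cj{P_N v^S}$ playing the two Schr\"odinger roles. After a further dyadic modulation decomposition in $L,L_1,L_2$ and summation against the $X^{0,1/2\pm}$ weights coming from the three Bourgain norms, the algebraic gain $N_{\max}^{-1}$ in Lemma \ref{LEM:multi} dominates the $\jb{\nb}^s\sim N_{\max}^s$ loss (since $s<1$), so that the resulting dyadic sum converges thanks to $s,\lambda>0$.

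In regime (a) Lemma \ref{LEM:multi} is inapplicable. Here I would apply H\"older's inequality $L^4_{t,x}\cdot L^2_{t,x}\cdot L^4_{t,x}$, using the $\T^2$-Strichartz estimate (Lemma \ref{LEM:L4}) on the two Schr\"odinger factors $P_{N_1}\wt R^S$ and $P_N v^S$, and Sobolev embedding together with the $\dl$-support of $\eta_\dl$ to handle $\|P_{N_2}\wt R^W\|_{L^2_{t,x}}\lesssim \dl^{1/2}N_2^{-\lambda}\|P_{N_2}\wt R^W\|_{X^{\lambda,1/2+}_{W_\pm}}$. Since $N\sim N_1$, the factor $\jb{\nb}^s\sim N^s$ is exactly compensated by the $N_1^{-s}$ coming from the Schr\"odinger regularity, so no net derivative loss occurs; summability in $N_2$ is provided by $\lambda>0$, and the residual $N^{0+}$ loss from Lemma \ref{LEM:L4} is absorbed by combining the $\dl^{0+}$ gain from Lemma \ref{LEM:dlpower} with the positive margin $s>0$ via a final Cauchy--Schwarz over $N_1$.

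The hard part will be regime (a): the $L^4_{t,x}$ Strichartz on $\T^2$ carries an unavoidable $N^{0+}$ loss (there is no dispersion on the integer lattice), and Lemma \ref{LEM:multi} offers no help when the two Schr\"odinger frequencies are comparable. A careful choice of the small modulation parameter $\eps$ inside Lemma \ref{LEM:L4}, balanced against $s>0$ and $\dl^{0+}$, is needed so that the dyadic Cauchy--Schwarz closes within the advertised norms $\|R^S\|_{X^{s,1/2+}_S}$ and $\|R^W\|_{X^{\lambda,1/2+}_{W_\pm}}$. By contrast, regimes (b) and (c) are routine once Lemma \ref{LEM:multi} is in hand, and the bilinearity of $\NN_S$ makes the whole argument linear in each of $R^S$ and $R^W$.
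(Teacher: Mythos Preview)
Your plan for regimes (b) and (c) is essentially sound and matches the paper's Case~2 (the paper actually handles $N_1\sim N_2\gtrsim N$ by a direct $L^4$--$L^2$--$L^4$ H\"older rather than Lemma~\ref{LEM:multi}, but your route via Lemma~\ref{LEM:multi} would also work when $N_1\gg N$; just note that the sub-case $N_1\sim N_2\sim N$ falls outside the hypothesis of Lemma~\ref{LEM:multi} and must be absorbed into regime~(a)).

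The real gap is in regime~(a). Your claim that ``the residual $N^{0+}$ loss from Lemma~\ref{LEM:L4} is absorbed by combining the $\dl^{0+}$ gain \dots\ with the positive margin $s>0$'' does not hold. After $L^4$--$L^2$--$L^4$ H\"older the dyadic piece is bounded by $\dl^{1/2-}N_1^{2\eps}N_2^{-\lambda}$ times the three localized norms; since $N\sim N_1$, the $\jb{\nb}^s\sim N^s$ weight is \emph{exactly} cancelled by the $N_1^{-s}$ coming from $\|P_{N_1}R^S\|_{X^{s,\cdot}_S}$, so there is no leftover $s$-margin, and the factor $\dl^{0+}$ is a fixed power of $\dl$ independent of $N_1$. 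When $N_1\gg N_2$ the decay $N_2^{-\lambda}$ cannot compensate $N_1^{2\eps}$, and the dyadic sum over $N_1\sim N$ diverges for every $\eps>0$. The paper fixes this with an \emph{orthogonality} step: one writes $\{|n_1|\sim N_1\}=\bigcup_{\ell}J_{1\ell}$ and $\{|n|\sim N\}=\bigcup_{\ell'}J_{2\ell'}$ with balls $J_{\cdot}$ of radius $\sim N_2$, observes that for each $\ell$ only $O(1)$ values of $\ell'$ contribute, and applies Lemma~\ref{LEM:L4} on these small balls so that the Strichartz loss becomes $N_2^{\eps}$ rather than $N_1^{\eps}$. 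Then the Cauchy--Schwarz over $\ell$ recombines the pieces into $\|P_{N_1}R^S\|$ and $\|P_N v^S\|$, and the total prefactor $\dl^{1/2-}N_2^{-\lambda+\eps}$ sums for any $\lambda>0$. Without this almost-orthogonality decomposition the argument in regime~(a) does not close.
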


\begin{proof}
We perform the case-by-case analysis:	

\smallskip	
	
\noi
{\bf Case 1:} $N_1 \gg N_2$.

\noi
By writing $\{|n_1| \sim N_1 \}=\bigcup_{\ell_1} J_{1\ell_1}$ and $\{ |n| \sim N \}=\bigcup_{\ell_2}J_{2\ell_2}$, where $J_{1,\ell_1}$ and $J_{2,\ell_2}$ are balls of radius $\sim N_2$, we can decompose $\ft {P_{N_1}R^S}$ and $\ft {P_{N}v^S}$ as
\begin{align*}
\ft {P_{N_1}R^S}=\sum_{\ell_1} \ft { P_{N_1,\ell_1}R^S } \qquad \text{and} \qquad \ft {   P_{N}v^S }=\sum_{\ell_2} \ft { P_{N,\ell_2}v^S }
\end{align*} 

\noi
where $\ft {P_{N_1,\ell_1}R^S }(n_1,t)=\ind_{J_{1\ell_1}}(n_1) \ft { P_{N_1}R^S }(n_1,t) $ 
and $\ft {P_{N,\ell_2}v^S }(n,t)=\ind_{J_{2\ell_2}}(n) \ft { P_{N}v^S }(n,t) $. Given $n_1 \in J_{1\ell_1}$ for some $\ell_1$, there exists $O(1)$ many possible values for $\ell_2=\ell_2(\ell_1)$ such that $n \in J_{2\ell_2}$ under $n_1+n_2=n$. Notice that the number of possible values of $\ell_2$ is independent of $\ell_1$.

From the $L^4$-Strichartz estimate (Lemma \ref{LEM:L4}), the Cauchy-Schwarz inequality in $\l_1$, and Lemma \ref{LEM:dlpower}, we have\footnote{Here, we are assuming that $R^S, R^W$, and $v^S$ have non-negative
Fourier coefficients since the Bourgain spaces are $L^2$-based space.}
\begin{equation*}
\begin{split}
\bigg|\int_{\R}\int_{\T^2} \jb{\nb}^s(P_{N_1} R^S P_{N_2} R^W)  P_{N}v^S dxdt \bigg| &\les \sum_{\ell_1} \sum_{\ell_2} \int_{\R}\int_{\T^2} P_{N_1,\ell_1} (\jb{\nb}^sR^S) P_{N_2}R^W    P_{N,\ell_2}v^Sdxdt  \\
&\les  \sum_{\ell_1} \sum_{\ell_2}   \|P_{N_1,\ell_1} \jb{\nb}^s R^S  \|_{L^4_{t,x}} \| P_{N, \ell_2}v^S\|_{L^4_{t,x}} \| P_{N_2} R^W \|_{L^2_{t,x}}  \\
&\les\sum_{\ell_1}\sum_{\ell_2}  N_2^{\eps} \|  P_{N_1,\ell_1} R^S  \|_{X_S^{s,\frac 12-} }  \|P_{N,\ell_2}v^S \|_{X_S^{0,\frac 12-} }     \|P_{N_2} R^W \|_{X_{W_\pm}^{0,0} }  \\
& \les \sum_{\ell_1}\sum_{\ell_2} N_2^{-\l+\eps} \|  P_{N_1,\ell_1} R^S  \|_{X_S^{s,\frac 12-} }  \|P_{N,\ell_2}v^S \|_{X_S^{0,\frac 12-} }     \|P_{N_2} R^W \|_{X_{W_\pm}^{\l,0} }  \\
&\les \dl^{\frac 12-} N_2^{-\l+\eps}  \| P_{N_1} R^S  \|_{X_S^{s,\frac 12-} } \|P_{N}v^S \|_{X_S^{0,\frac 12-} } \|P_{N_2} R^W \|_{X_{W_\pm}^{\l,\frac 12-} }.
\end{split}
\end{equation*}

\noi
Hence, if $\l>0$, then we can do the dyadic summation over over $N_1 \sim N\ge N_2$.

\smallskip

\noi
{\bf Case 2:} $N_1 \ll N_2$ (non-resonant interaction).

\smallskip

\noi
This interaction includes the high-low interactions where one Schr\"odinger frequency is much greater than the other Schr\"odinger frequency. Hence, it follows from Lemma \ref{LEM:multi}  and \ref{LEM:dlpower} that we have 
\begin{align*}
&\|\NN_S(R^S, R^W) \|_{X_{S}^{s,-\frac 12+}} \\
& \les \sum_{L, L_1, L_2} N_2^s   L^{-\frac 12 +}         L_{\max}^{\frac 12} L_{\min}^\frac 38 L_{\med}^\frac 38    N_1^{\frac 12}  N_2^{-1}      \| P_{N_1,L_1} R^S \|_{ L^2_{t,x} }  \| P_{N_2,L_2} R^W \|_{L^2_{t,x} }  \|P_{N,L}  v^S\|_{L^2_{t,x}}     \\
& \les \dl^{0+} N_2^{s-\l-1} N_1^{\frac 12-s}   \| P_{N_1} R^S \|_{ X_{S}^{s,\frac 12+} } \| P_{N_2} R^W \|_{ X_{W_\pm}^{\l,\frac 12+}  }  \|P_{N}  v^S\|_{X_S^{0, \frac 12-}}   . 
\end{align*}

\noi
Hence, if $(s-\l-1)+\frac 12-s=-\l-\frac 12 < 0$ (i.e.  $\l > -\frac 12 $), we can perform the dyadic summation over $N_2 \sim N \ge N_1$.

\medskip

\noi
{\bf Case 3:} $N_1 \sim N_2 \ges N$ .

\noi
From $L^4$-Strichartz estimate (Lemma \ref{LEM:L4}) and Lemma \ref{LEM:dlpower}, we have
\begin{align*}
&\bigg|\int_{\R}\int_{\T^2} \jb{\nb}^s(P_{N_1} R^S P_{N_2} R^W)  P_{N}v^S dxdt \bigg|\\
&\les \|\jb{\nb}^s P_{N_1}R^S \|_{L^4_{t,x}} \| P_{N_2}R^W  \|_{L^2_{t,x}} \| P_Nv^S \|_{L^4_{t,x}}\\&\les \dl^{\frac 12-} N_1^{-\l+\eps} \| P_{N_1}R^S \|_{X_S^{s,\frac 12-}} \| P_{N_2}R^W \|_{X_{W_\pm}^{\l,\frac 12-}}   \| P_N v^S \|_{X_S^{0,\frac 12-}}.  
\end{align*}

\noi
If $\l \ge \eps$ (i.e. $\l>0$), then we can obtain the desired result by summing over $N_1 \sim N_2\ges N$.

\end{proof}

\begin{remark}\rm
In Case 2, if we proceed as in Case 1 (i.e. only using the $L^4$-Strichartz estimate with the orthogonality argument), then a restriction $\l \ge s$ happens.	
\end{remark}

\begin{lemma}[$z^S z^W$-case]
\label{LEM:zSzW}
Let $0<s<\frac 14-\frac {\g}2$. 
Then, for each small $\dl>0$, we have
\begin{align*}
\|\NN_S(z^S, z^W) \|_{X_{S,\dl}^{s,-\frac 12+}} \les \dl^{0+}
\end{align*}

\noi
outside an exceptional set of probability $< e^{-\frac 1{\dl^c}}$.
\end{lemma}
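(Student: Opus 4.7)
My plan is to dualize against a unit-norm $v^S \in X_S^{0,\frac12-}$, exploit the fact that $z^S \cdot z^W$, being a product of two independent Gaussian random linear solutions, lives in the inhomogeneous second Wiener chaos, and then close the estimate through the random tensor/operator norm bounds of Section~\ref{SEC:rant}. By the definition of the $X^{s,b}$-norm, it suffices to show that
\begin{align*}
I(v^S) := \iint_{\R\times\T^2} \jb{\nb}^s\bigl(\eta_\dl z^S \cdot \eta_\dl z^W\bigr)\,\overline{v^S}\,dx\,dt
\end{align*}
obeys $|I(v^S)| \les \dl^{0+}$ off an event of probability $< e^{-1/\dl^c}$, uniformly in $v^S$. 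Substituting the Gaussian series from \eqref{lin1} makes $I(v^S)$ a bilinear form in the independent families $\{g_{n_1}\}$ and $\{\wt h_{n_2,\pm}\}$.

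Next, I dyadically decompose the three factors in space at scales $N_1, N_2, N$ and in modulation at scales $L_1, L_2, L$, noting that the cutoffs localize $L_1, L_2 \les \dl^{-1-\eps}$, and I extract the overall $\dl^{0+}$ factor at the end via Lemma~\ref{LEM:dlpower}. The analysis splits into three frequency regimes: \textup{(i)} $N_1 \gg N_2$, \textup{(ii)} $N_1 \ll N_2$, and \textup{(iii)} $N_1 \sim N_2$, with each case further split by comparing $L_{\max}$ against the resonance scale $N_{\max}^2$. In the high-modulation sub-cases, the decay $L_{\max}^{-\frac12}$ combines with Lemma~\ref{LEM:multi}, the improved Strichartz bound of Lemma~\ref{LEM:L4}, the counting estimate of Lemma~\ref{LEM:count1}, and the pointwise Gaussian bound of Lemma~\ref{LEM:prob} to close the estimate; the probabilistic smallness is recovered through the exponential tail on $\max_n|g_n|$ and $\max_n|\wt h_{n,\pm}|$.

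In the low-modulation sub-cases the deterministic methods fall short, and I must exploit the second-chaos structure. For a fixed output frequency $(n,\tau)$, the block contribution is a bilinear Gaussian expression
\begin{align*}
F_{n,\tau}(\o) = \sum_{\substack{n_1+n_2=n\\|n_1|\sim N_1,\,|n_2|\sim N_2}} K(n,n_1,n_2,\tau)\,g_{n_1}(\o)\,\wt h_{n_2,\pm}(\o),
\end{align*}
whose $L^p_\o$-norm is controlled by $p\,\|F_{n,\tau}\|_{L^2_\o}$ thanks to Lemma~\ref{LEM:multigauss}. Summing over $(n,\tau)$ and using duality reduces matters to bounding the operator norm of the random tensor with entries $K$, which is the content of the random tensor estimates in Section~\ref{SEC:rant}. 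The higher-order $TT^*$ identity $\|T\|_{\text{OP}}^{2m}=\|(TT^*)^m\|_{\text{OP}}$ from \cite{DNY}, combined with the deterministic tensor bounds of Subsection~\ref{SUBSEC:dettens}, provides a decisive improvement over the Hilbert-Schmidt bound of Lemma~\ref{LEM:matrix}. Markov's inequality then promotes these moment bounds to pointwise estimates on an event of probability $\ges 1 - e^{-1/\dl^c}$, and summing the logarithmically many dyadic contributions keeps the total exceptional probability of this order.

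I expect the main obstacle to be the low-modulation sub-case of the wave-dominant regime $N_1 \ll N_2 \sim N$, the Subsubcase 2.b.(ii) flagged in Remark~\ref{RM:essen}: both rough random linear solutions must combine at a common high frequency to produce an output at the same scale, so the $\jb{n}^s$-weight cannot be absorbed by any deterministic gain, and only the operator norm of the random tensor — not its Hilbert-Schmidt norm, which would cost a logarithmic factor in $N_2$, cf.\ Appendix~\ref{SEC:A} — is sharp enough to close the estimate. It is precisely this regime that forces the threshold $s < \frac14 - \frac{\g}{2}$ in the statement.
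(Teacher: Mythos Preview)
Your overall strategy is right --- dualize, decompose in frequency and modulation, handle high-modulation pieces deterministically/probabilistically via Strichartz-type bounds, and invoke the random tensor machinery of Section~\ref{SEC:rant} in the low-modulation resonant case --- and this matches the paper's approach. But you have misidentified where the genuine obstruction lives, and this would derail the execution.

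You claim the hard case is the low-modulation sub-case of the regime $N_1 \ll N_2 \sim N$. That sub-case is \emph{empty}. When $|n_1| \ll |n_2| \sim |n|$ and $n_1+n_2=n$, the phase satisfies
\[
|n_1|^2 \pm |n_2| - |n|^2 = -|n|^2 + O(N_1^2 + N_2) \sim -N^2,
\]
so $L_{\max} \ges N_{\max}^2$ automatically; this regime is always non-resonant and falls to the easy high-modulation arguments (the paper's Case~1). The actual bottleneck --- the Subsubcase~2.b.(ii) flagged in Remark~\ref{RM:essen} --- is the \emph{opposite} configuration $N_1 \sim N \gg N_2$ with $L_{\max} \les N_1^{2s+2\gamma+}$: the Schr\"odinger input $z^S$ sits at the high output scale while the wave input $z^W$ is low-frequency. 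Here the phase can genuinely be small, the $\jb{n}^s$ weight costs a full $N_1^s$, and the Hilbert-Schmidt approach loses an unacceptable power of $N_1$ from summing the output frequency $n$. It is precisely this case that requires Lemma~\ref{LEM:tens} together with the deterministic tensor estimate Lemma~\ref{LEM:zSzWy}, and it is this estimate --- not anything in the $N_1 \ll N_2$ regime --- that produces the threshold $s < \frac14 - \frac{\gamma}{2}$. Your case~(i) $N_1 \gg N_2$ contains this sub-case, but your narrative treats~(i) as routine; correct this before proceeding. Also note that the relevant modulation threshold in the resonant case is $N_1^{2s+2\gamma+}$, not $N_{\max}^2$, and that Lemma~\ref{LEM:multi} is not used here (it applies when two \emph{Schr\"odinger} frequencies are separated, which in this lemma means $N_1 \not\sim N$, already covered by the non-resonant Case~1).
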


\begin{proof}
We perform the case-by-case analysis:	

\smallskip

\noi
{\bf   Case 1:} $N_1 \gg N$ or $N \gg N_1$  (non-resonant interaction). 
\smallskip

\noi
By symmetry, we only consider the case $N_1 \gg N$. 
In this interaction, we have
\begin{align}
L_{\max} \ges \big|  |n_1|^2 \pm |n_2|-|n|^2     \big| \gtrsim N_{\max}^2 \sim N_2^2.
\label{LL1}
\end{align}

\noi
Define 
\begin{align*}
L_{1}:=\jb{\tau_1-|n_1|^2},\; L_2:=\jb{\tau_2 \pm |n_2|}, \quad \text{and} \quad L:=\jb{\tau-|n|^2}.
\end{align*}

\noi
First, suppose that $L \sim L_{\max}$. Then, from $L^p_{t,x}L_{t,x}^{2+}L_{t,x}^2$-H\"older's inequality, Lemma \ref{LEM:prstricha} with large $p$, \eqref{LL1}, and Lemma \ref{LEM:dlpower}, we have
\begin{align*}
\bigg |\int_{\R}\int_{\T^2} \jb{\nb}^s(P_{N_1} z^S P_{N_2} z^W)  P_{N}v^S dxdt \bigg| &\les N_2^s \| P_{N_1}z^S \|_{L^p} \| P_{N_2}z^W \|_{L^{2+}} \|P_{N}v^S \|_{L^{2}_{t,x}} \\
&\les L_{\max}^{-\frac 12+} N_2^{\g+s} \| P_{N}v^S \|_{X_{S,}^{0,\frac 12-}}\\
& \les \dl^{0+} N_2^{-1+\g+s+} \| P_{N}v^S \|_{X_{S}^{0,\frac 12-}}
\end{align*}

\noi
outside an exceptional set of probability $< e^{-\frac 1{\dl^c}}$.
If $s < 1-\g$, then we obtain the desired result by summing over $N_1 \sim N_2 \ge N$. We point out that when $N_1\sim N_2\sim 1$, it is possible that $\big|   |n_1|^2 \pm |n_2| -|n|^2 \big| \ll 1$ but still $L_{\max} \ges N_{\max}^2 \sim N_2^2$ is true.

Next, suppose that $L\ll L_{\max}$ and so $\max\{L_1,L_2 \} \sim L_{\max}$. We may assume $L_1 \sim L_{\max}$. Then, we have
\begin{align}
\big|  \ft \eta_\dl(\tau_1-|n_1|^2) \big| \les \frac 1{L_1}\sim N_{\max}^{-2}
\label{highmodu12}
\end{align}

\noi
since $ \ft \eta_\dl(\tau)=\dl \ft \eta (\dl \tau)$. Then, from H\"older's inequality with $p \gg 1$, \eqref{highmodu12}, Young's inequality in $\tau$, and Lemma \ref{LEM:prob}, we have
\begin{align*}
&\bigg |\int_{\R}\int_{\T^2} \jb{\nb}^s(P_{N_1} z^S P_{N_2} z^W)  P_{N}v^S dxdt \bigg|\\
&= \Bigg| \sum_{\substack{n_1, n \in \Z^2: n_1+n_2=n \\ |n_1|\sim N_1, |n_2| \sim N_2 } } \jb{n}^s \frac{g_{n_1}(\o)}{\jb{n_1}} \frac {h_{n_2}(\o)}{\jb{n_2}^{1-\g}} \int_{\tau_1+\tau_2=\tau} \ft \eta_\dl (\tau_1-|n_1|^2) \ft \eta_\dl(\tau_2\pm |n_2|) \ft{P_Nv^S}(n,\tau) d\tau_1 d\tau \Bigg|\\
&\les \| \ft {P_N  v^S } \|_{\ell_n^2L_\tau^{1+}} \Bigg\| \sum_{\substack{n_1 \in \Z^2: n_1+n_2=n \\ |n_1|\sim N_1, |n_2| \sim N_2 } } \jb{n}^s \frac{|g_{n_1}(\o)|}{\jb{n_1}} \frac {|h_{n_2}(\o)|}{\jb{n_2}^{1-\g}} N_2^{-2(1-\eps)} \big\| |\ft \eta_\dl |^\eps   \big\|_{L^p_\tau}  \|\ft \eta_\dl \|_{L^1_\tau} \Bigg\|_{\l_n^2} \\
&\les  \dl^{\eps-\frac 1p-}  N^s N_1^{-1+}N_2^{-1+\g+}N_2^{-2+} N_1^2 N  \| P_N v^S \|_{X_S^{0,\frac 12-}}\\
&\les \dl^{0+} N_2^{-1+s+\g+} \| P_N v^S \|_{X_S^{0,\frac 12-}}
\end{align*}

\noi
outside an exceptional set of probability $< e^{-\frac 1{\dl^c}}$. Hence, if $s < 1-\g$, then we can perform the dyadic summation over $N_1 \sim N_2 \ge N$. The case $L_2 \sim L_{\max}$ follows from the same argument in the case $L_1\sim L_{\max}$.

\medskip

\noi
{\bf   Case 2:} $N_2 \les N_1\sim N$ (resonant interaction).

\smallskip

\noi We split the case into the high and low modulation cases.

\smallskip

\noi
{\bf   Subcase 2.a:} $L_{\max}\ges N_1^{2s+2\g+}$ (high modulation case). 
\smallskip

\noi
First, suppose that $L \sim L_{\max}$. Then, from $L^p_{t,x}L_{t,x}^{2+}L_{t,x}^2$-H\"older's inequality, Lemma \ref{LEM:prstricha} with large $p$, and Lemma \ref{LEM:dlpower}, we have
\begin{align*}
\bigg |\int_{\R}\int_{\T^2} \jb{\nb}^s(P_{N_1}z^S P_{N_2}z^W) P_Nv^S dxdt \bigg|&\les N_1^s \| P_{N_1}z^S \|_{L^p_{t,x}}  \| P_{N_2}z^W \|_{L^{2+}_{t,x}} \| P_N v^S \|_{L_{t,x}^2}\\
&\les N_1^{s+\g+}  \| P_N v^S \|_{L^2_{t,x}}\\
&\les \dl^{\frac 12-} N_1^{0-}  \| P_N v^S \|_{X_S^{0,\frac 12-}}
\end{align*}

\noi
outside an exceptional set of measure $< e^{-\frac 1{\dl^c}}$.
Hence, we can perform the dyadic summation over $N_1 \sim N\ge N_2$.

Next, suppose that $L\ll L_{\max}$ and so $\max\{L_1,L_2 \} \sim L_{\max}$. We may assume $L_1 \sim L_{\max}$. Then, we have
\begin{align}
\big|  \ft \eta_\dl(\tau_1-|n_1|^2) \big| \les \frac 1{L_1}\sim N_{\max}^{-2s-2\g-}.
\label{highmodu13}
\end{align}

\noi
We note that
\begin{align}
&\bigg |\int_{\R}\int_{\T^2} \jb{\nb}^s(P_{N_1}z^S P_{N_2}z^W) P_Nv^S dxdt \bigg| \notag \\
&= \bigg| \int_{\tau \in \R} \sum_{\substack{n\in \Z^2: \\ |n|\sim N}} \jb{n}^s \ft{P_Nv^S}(n,\tau)\Big( \sum_{\substack{n_2\in \Z^2: \\ n_1+n_2=n,\\ |n_1|\sim N_1, |n_2|\sim N_2}} a_{n_1,n_2,n}(\tau) g_{n_1}(\o) h_{n_2}(\o)  \Big) d\tau \bigg|
\label{highreso1}
\end{align}

\noi
where
\begin{align*}
a_{n_1,n_2,n}(\tau)=\ind_{\{n_1+n_2=n \}}\jb{n_1}^{-1}\jb{n_2}^{-1+\g}\int_{\tau_1+\tau_2=\tau }\ft \eta_\dl(\tau_1-|n_1|^2) \ft\eta_\dl(\tau_2\pm|n_2|) d\tau_1.
\end{align*}

\noi
Then, from Lemma \ref{LEM:multigauss}, Minkowski's inequality in $\tau$ (with $p\gg 1$), \eqref{highmodu13}, and Young's inequality, we have
\begin{align}
\Big\|  \sum_{\substack{n_2\in \Z^2: \\ n_1+n_2=n,\\ |n_1|\sim N_1, |n_2|\sim N_2}} a_{n_1,n_2,n}(\tau) \cdot  g_{n_1}(\o) h_{n_2}(\o)  \Big\|_{L^p_\tau} &\les \dl^{0-}N_1^{0+} \Big(\sum_{\substack{n_2\in \Z^2: \\ n_1+n_2=n,\\ |n_1|\sim N_1, |n_2|\sim N_2}}\|a_{n_1,n_2,n}(\tau)\|_{L^p_\tau}^2 \Big)^\frac 12 \notag \\
&\les \dl^{0-}N_1^{-1+}N_2^{-1+\g} \notag \\
&\ \hphantom{X}\times  \Big( \sum_{\substack{n_2\in \Z^2:|n_2|\sim N_2}} N_1^{-2(1-\eps)(2s+2\g+)} \big\| |\ft \eta_\dl|^\eps \|_{L^p_\tau}^2    \| \ft \eta_\dl \|_{L^1_\tau}^2     \Big)^\frac 12 \notag\\
&\les \dl^{\eps-\frac 1p-}N_1^{-1+}N_2^{-1+\g}N_1^{-2s-2\g+}N_2.
\label{gausscan1}
\end{align}

\noi
In \eqref{gausscan1}, we need to make sure that the probability $e^{-c'\frac{N_1^\eps}{\dl^c}}$ of the exceptional sets corresponding to different dyadic blocks and different values of $n_2$ should be summable and bounded by $e^{-\frac 1{\dl^c}}$ i.e. \eqref{gausscan1} holds outside an exceptional set of measure:
\begin{align*}
\sum_{N_1} N_2^2 e^{-\frac{c'N_1^\eps}{\dl^c}}\les e^{-\frac 1{\dl^c}}.
\end{align*}

\noi
From \eqref{highreso1}, H\"older's inequality in $\tau$, \eqref{gausscan1}, and  Cauchy-Schwarz inequality in $n$, we have
\begin{align*}
\text{LHS of \eqref{highreso1}} &\les \sum_{\substack{n\in \Z^2\\ |n|\sim N }} N^s \|\ft{P_Nv^S} \|_{L_\tau^{1+}}  \Big( \sum_{\substack{n_2\in \Z^2: \\ n_1+n_2=n,\\ |n_1|\sim N_1, |n_2|\sim N_2}} \|a_{n_1,n_2,n}(\tau)\|_{L^p_\tau}^2 \Big)^\frac 12 \notag \\
&\les \dl^{\eps-\frac 1p-}N_1^{-1+}N_2^{-1+\g} N^s N_1^{-2s-2\g+} N_2N \| P_Nv^S\|_{X_S^{0,\frac 12-}}  \notag \\
&\les \dl^{\eps-\frac 1p-}N_1^{-s-\g+}  \| P_Nv^S\|_{X_S^{0,\frac 12-}}.
\end{align*}

\noi
Hence, we can perform the dyadic summation over $N_1 \sim N\ge N_2$ if $s+\g>0$, which holds when $s>0$.

\smallskip

\noi
{\bf   Subcase 2.b:} $L_{\max}\les N_1^{2s+2\g+}$ (low modulation case). 
\smallskip

\noi
We split the case into $N_1 \sim N \sim N_2$ and $N_1\sim N \gg N_2$.

\smallskip

\noi
{\bf   Subsubcase 2.b.\textup{(i)}:} $N_1\sim N\sim N_2$.
\smallskip

\noi
We note that
\begin{align}
&\bigg |\int_{\R}\int_{\T^2} \jb{\nb}^s(P_{N_1}z^S P_{N_2}z^W) P_Nv^S dxdt \bigg| \notag \\
&= \bigg| \int_{\tau \in \R} \sum_{\substack{n\in \Z^2: \\ |n|\sim N}} \jb{n}^s \ft{P_Nv^S}(n,\tau)\Big( \sum_{\substack{n_2\in \Z^2: \\ n_1+n_2=n,\\ |n_1|\sim N_1, |n_2|\sim N_2}} a_{n_1,n_2,n}(\tau) g_{n_1}(\o) h_{n_2}(\o)  \Big) d\tau \bigg|
\label{highreso11}
\end{align}

\noi
where
\begin{align*}
a_{n_1,n_2,n}(\tau)=\ind_{\{n_1+n_2=n \}}\jb{n_1}^{-1}\jb{n_2}^{-1+\g}\int_{\tau_1+\tau_2=\tau }\ft \eta_\dl(\tau_1-|n_1|^2) \ft\eta_\dl(\tau_2\pm|n_2|) d\tau_1.
\end{align*}

\noi
Then, from Lemma \ref{LEM:multigauss}, Minkowski's inequality in $\tau$ (with $p\gg 1$), and Young's inequality, we have
\begin{align}
&\Big\|  \sum_{\substack{n_1: n_1+n_2=n,\\ |n_1|^2\pm |n_2|-|n|^2=O(N_1^{2s+2\g+}),\\ |n_1|\sim N_1, |n_2|\sim N_2   }} a_{n_1,n_2,n}(\tau) \cdot  g_{n_1}(\o) h_{n_2}(\o)  \Big\|_{L^p_\tau} \notag \\
&\les \dl^{0-}N_1^{0+} \bigg(\sum_{\substack{n_1: n_1+n_2=n,\\ |n_1|^2\pm |n_2|-|n|^2=O(N_1^{2s+2\g+}),\\ |n_1|\sim N_1, |n_2|\sim N_2   }}\|a_{n_1,n_2,n}(\tau)\|_{L^p_\tau}^2 \bigg)^\frac 12 \notag \\
&\les \dl^{0-}N_1^{-1+}N_2^{-1+\g}   \bigg( \sum_{\substack{n_1: n_1+n_2=n,\\ |n_1|^2\pm |n_2|-|n|^2=O(N_1^{2s+2\g+}),\\ |n_1|\sim N_1, |n_2|\sim N_2   }}  \big\| \ft \eta_\dl \|_{L^p_\tau}^2    \| \ft \eta_\dl \|_{L^1_\tau}^2     \bigg)^\frac 12 \notag\\
&\les \dl^{1-\frac 1p-}N_1^{-1+}N_2^{-1+\g} \Big (\#S_n \Big)^\frac 12
\label{gausscan113}
\end{align}

\noi
outside an exceptional set of probability $<  e^{-c'\frac{N_1^\eps}{\dl^c}}$, where
\begin{align*}
S_{n}:=\Big\{ n_1\in \Z^2: |n_1|^2\pm |n-n_1|-|n|^2=O(N_1^{2s+2\g+}), \; |n_1|\sim N_1, |n-n_1| \sim N_2, \; \text{and} \; |n|\sim N    \Big\}.
\end{align*}

\noi
Notice that in \eqref{gausscan113} we need to make sure that the probability $e^{-c'\frac{N_1^\eps}{\dl^c}}$ of the exceptional sets corresponding to different dyadic blocks and different values of $n_2$ should be summable and bounded by $e^{-\frac 1{\dl^c}}$ i.e. \eqref{gausscan113} holds outside an exceptional set of measure:
\begin{align*}
\sum_{N_1} N_2^2 e^{-\frac{c'N_1^\eps}{\dl^c}}\les e^{-\frac 1{\dl^c}}.
\end{align*}

\noi
Let $n_1 \in S_n$. Then, we have  
	\begin{align*}
	|n_1|^2-|n|^2=O(N_1^{2s+2\g+} + N_1)=O(N_1)
	\end{align*}
	
	\noi
	since $|n-n_2|\sim N_2$ and $N_2\sim N_1$. 
Therefore, we have
\begin{align*}
\big| |n_1|-|n| \big| \les 1
\end{align*}
	
\noi
since we are in the case $|n_1| \sim N_1, |n| \sim N$, $N_1\sim N $, and $s+\g<\frac 12$.   Therefore, $|n_1| \in ( |n|-c, |n|+c)$ for some constants $c$. Let $|n_1|=\sqrt{m} $, where $m \ge 0$. Then, $m\in (|n|^2-2c|n|+c^2, |n|^2+2c|n|+c^2)$ and so the possible number of $m$ is given by $ |n| \sim N$. 
	Hence, we have
	\begin{align}
	\sup_n \#S_n \les N_1^\eps N
	\label{S1}
	\end{align}
	
	\noi
	since if $(x,y)=n_1$, where $x,y \in \Z$, then thanks to Lemma \ref{LEM:circle}, the number of lattice points on a circle is given by  
	\begin{align*}
	\big| \{ (x,y)\in \Z^2: x^2+y^2=m   \}  \big| \les N_1^\eps.
	\end{align*}

\noi
From \eqref{highreso11}, H\"older's inequality in $\tau$, \eqref{gausscan113}, Cauchy-Schwarz inequality in $n$, and \eqref{S1}, we have
\begin{align}
\text{LHS of \eqref{highreso11}} &\les \dl^{0-}N_1^{0+} \sum_{\substack{n\in \Z^2\\ |n|\sim N }} N^s \|\ft{P_Nv^S} \|_{L_\tau^{1+}}  \bigg(\sum_{\substack{n_1: n_1+n_2=n,\\ |n_1|^2\pm |n_2|-|n|^2=O(N_1^{2s+2\g+}),\\ |n_1|\sim N_1, |n_2|\sim N_2   }} \|a_{n_1,n_2,n}(\tau)\|_{L^p_\tau}^2 \bigg)^\frac 12 \notag \\
&\les \dl^{1-\frac 1p-}N_1^{-1+s+}N_2^{-1+\g}  \| P_Nv^S\|_{X_S^{0,\frac 12-}}  \bigg( \sum_{\substack{n\in \Z^2\\ |n|\sim N}}  \sum_{\substack{n_1: n_1+n_2=n,\\ |n_1|^2\pm |n_2|-|n|^2=O(N_1^{2s+2\g+}),\\ |n_1|\sim N_1, |n_2|\sim N_2   }}  \bigg)^\frac 12 \notag \\
&\les  \dl^{1-} N_1^{-1+s+}N_2^{-1+\g} \| P_Nv^S  \|_{X_S^{0,0}}   \Big(N^2 \sup_n \#S_n   \Big)^\frac 12 \notag \\
&\les \dl^{1-}N_1^{-\frac 12+s+\g+}  \|P_N v^S  \|_{X_S^{0,0}}.
\label{HS23}
\end{align}

\noi
\noi
Hence, if $s<\frac 12-\g$, then we can perform the dyadic summation over $N_1 \sim N \sim N_2$.



\smallskip

\noi
{\bf   Subsubcase 2.b.\textup{(ii)}:} $N_1\sim N \gg N_2$.
\smallskip

\noi
In this case, if we proceed as in \eqref{HS23} which is based on the Hilbert-Schmidt norm approach, we can no longer use the $N_2^{-1+\g}$ to perform the dyadic summation over $N_1\sim N$. Therefore, the summation loss  $N^2$ in $n$ (the third line of \eqref{HS23} i.e. the summation in $n$) is a big obstacle to performing the dyadic summation over $N_1\sim N \gg N_2$. However, by using the operator norm approach and random matrix estimates, we can overcome the summation loss.

By taking the Fourier transform, we have
\begin{align*}
\F_xP_{N_2}({ P_{N_1}z^S, P_{N}\jb{\nb}^sv^S })(n_2,t)=  \sum_{n \in \Z^2:n+n_1=n_2}  \jb{n}^s \ft{ P_{N}v^S}(n, t )\eta_\dl(t) H(n,n_2,t),
\end{align*}
 
\noi
where $\eta_\dl(t)=\eta(\dl^{-1}t)$ is from our notation \eqref{eta1}, the random matrix $H(n,n_2,t)$ is defined by 
\begin{align*}
H(n,n_2,t):&=  \sum_{n_1\in \Z^2} \frac{e^{-it|n_1|^2}g_{n_1}(\o) }{\jb{n_1} }  \ind_{ \{n_1=n_2-n \} } \ind_{ \{ \varphi(n_1,n_2,n)=O(N_1^{2s+2\g+}) \} } \ind_{\{|n| \sim N \} }   \prod_{j=1}^2 \ind_{\{|n_j| \sim N_j \} }
\end{align*}
 
\noi
and the phase function $\varphi: (\Z^2)^3 \to \R $ is defined by
\begin{align}
\varphi(n_1,n_2,n)= |n_1|^2 \pm |n_2| -|n|^2.
\label{mm1}
\end{align}

 
\noi
Then, from Cauchy-Schwarz inequality, Lemma \ref{LEM:prob}, and taking the operator norm, we have
\begin{equation}
\begin{split}
 &\bigg| \int_{\T^2 \times \R} \jb{\nb}^s \big[ \NN_S(P_{N_1}z^S, P_{N_2}z^W ) \big]     P_N v^S dxdt  \bigg|\\
 &\les \|  P_{N_2} z^W \|_{L^2_{t,x}}
 \bigg\|  \sum_{n}   \jb{n}^s \ft{ P_{N}v^S}(n, t )\eta_\dl(t) H(n,n_2,t)   \bigg\|_{\l^2_{n_2} L_t^2 }\\
 &\les N_2^{\g+} \dl^{\frac 12-}    \Big \| \| H(n,n_2,t) \|_{\l^2_{n}\to \l^2_{n_2}}  \big\| \jb{n}^s  \ft{ P_{N} v^S}(n, t ) \eta_\dl(t) \big\|_{\l^2_{n} } \Big\|_{L^2_t}   \\
 &\les N_2^{\g+} \dl^{\frac 12-} N^s \sup_{t\in \R} \| H(n,n_2,t) \|_{\l^2_{n} \to \l^2_{n_2}}  \| P_N v^S \|_{L^2_{t,x}},
\end{split}
\label{R00}
\end{equation}
 
\noi
which holds outside an exceptional set of measure $e^{-\frac 1{\dl^c}}$.
The random matrix $H(n,n_2,t)$ can be written with the random tensor $h(n,n_1,n_2,t)$ as follows:
\begin{align*}
H(n,n_2,t)&=  \sum_{n_1\in \Z^2} \frac{e^{-it|n_1|^2}g_{n_1}(\o) }{\jb{n_1} }  \ind_{ \{n_1=n_2-n \} } \ind_{ \{ \varphi(n_1,n_2,n)=O(N_1^{2s+2\g+}) \} }  \ind_{\{|n| \sim N \} } \prod_{j=1}^2 \ind_{\{|n_j| \sim N_j \} } \\
&=\sum_{n_1 \in \Z^2} h(n,n_1, n_2,t) g_{n_1}(\o),
\end{align*}
 
\noi
where 
\begin{align}
h(n,n_1,n_2,t)=e^{-it|n_1|^2}\ind_{ \{n_1=n_2-n \} } \ind_{ \{ \varphi(n_1,n_2,n)=O(N_1^{2s+2\g+}) \} } \jb{n_1}^{-1}  \ind_{\{|n| \sim N \} }   \prod_{j=1}^2 \ind_{\{|n_j| \sim N_j \} }.
\label{rt13}
\end{align}
 
\noi
Then, from Lemma \ref{LEM:tens}, we have 
 \begin{align}
\sup_{t\in \R} \| H(n,n_2,t) \|_{\l^2_{n} \to \l^2_{n_2} } \les N_1^\eps \sup_{t\in \R} \max \big( \| h(t)\|_{n_1n \to n_2}, \| h(t)\|_{n \to n_2n_1}  \big).
 \label{tt1}
 \end{align}
 
\noi
Hence, from \eqref{tt1} and Lemma \ref{LEM:zSzWy}, we have
 \begin{align}
\sup_{t\in \R}\| H(n,n_2,t) \|_{\l^2_{n} \to \l^2_{n_2} }  \les N_1^{s+\g-\frac 12+}N_2^{-\frac 12}+ N_1^{-\frac 12+}. 
\label{R123} 
\end{align}
 
\noi
Hence, by combining \eqref{R00} and \eqref{R123}, we have
\begin{align*}
\text{LHS of \eqref{R00}} \les \dl^{\frac 12-}(N_1^{2s+\g-\frac 12+}N_2^{-\frac 12+\g+}+ {N_1}^{-\frac 12+s+\g+}).
\end{align*}

\noi
Therefore, if $s<\frac 14-\frac {\g}2$ and $\g<\frac 12$, then we can perform the dyadic summation over $N_1\sim N\gg N_2$.

\end{proof}

\begin{lemma}[$z^S R^W$-case ]
\label{LEM:zSRW}
Let $0<s<\min\{\frac 14, \l+1 \}$ and $\l>0$. Then,
for each small $\dl>0$, we have
\begin{align*}
\|\NN_S(z^S, R^W) \|_{X_{S,\dl}^{s,-\frac 12+}} \les \dl^{0+} \|R^W \|_{X_{W_\pm,\dl}^{\l,\frac 12+}}
\end{align*}

\noi
outside an exceptional set of probability $< e^{-\frac 1{\dl^c}}$.	
\end{lemma}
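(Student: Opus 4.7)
The approach mirrors Lemma \ref{LEM:zSzW} with the rough random wave $z^W$ replaced by the smoother deterministic remainder $R^W$. As in Remark \ref{REM:not}, we dualize against $v^S$ with $\|v^S\|_{X_S^{0,\frac12-}}\le 1$ and dyadically decompose so that $\ft{z^S}$, $\ft{R^W}$, $\ft{v^S}$ are supported at $|n_1|\sim N_1$, $|n_2|\sim N_2$, $|n|\sim N$. The analysis branches on the resonance function $\varphi(n_1,n_2,n)=|n_1|^2\pm|n_2|-|n|^2$ of \eqref{mm1}, and Lemma \ref{LEM:dlpower} applied to the cutoff $\chi_\dl$ supplies the overall $\dl^{0+}$ gain. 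Exceptional sets for the random factor $z^S$ are produced via Lemma \ref{LEM:prob} and Lemma \ref{LEM:prstricha}, and their probabilities are summed dyadically to a net bound of $e^{-1/\dl^c}$.

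In the non-resonant regime ($N_1\gg N$ or $N\gg N_1$) we have $N_1\sim N_2$ and $L_{\max}\ges N_{\max}^2$. Kishimoto's trilinear bound (Lemma \ref{LEM:multi}) applied with $P_{N_2}R^W$ as the wave factor gives $L_{\max}^{\frac12}L_{\med}^{\frac38}L_{\min}^{\frac38}N_{\min}^{\frac12}N_{\max}^{-1}$ times three $L^2_{t,x}$-norms; controlling $\|P_{N_1,L_1}(\eta_\dl z^S)\|_{L^2_{t,x}}$ probabilistically and using $\|P_{N_2,L_2}R^W\|_{L^2_{t,x}}\les N_2^{-\l}L_2^{-\frac12-}\|R^W\|_{X_{W_\pm}^{\l,\frac12+}}$, the modulation/dyadic sum closes for any $\l>-\frac12$. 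In the resonant regime $N_1\sim N\ges N_2$, split by modulation: when $L_{\max}\ges N_1^{2s+\eta}$ with $\eta>0$ small, trade $L_{\max}^{-\frac12+}\les N_1^{-s-\eta/2+}$ against the $N_1^s$-loss via $L^p_{t,x}L^{2+}_{t,x}L^2_{t,x}$-H\"older combined with Lemma \ref{LEM:prstricha} when $L\sim L_{\max}$, or via the Fourier/Gaussian decoupling of \eqref{highreso1}--\eqref{gausscan1} when $L_1$ or $L_2$ is maximal; the resulting bound $\dl^{0+}N_1^{-\eta/2+0+}N_2^{-\l+0+}$ sums for $\l>0$.

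The decisive branch is low modulation with $N_1\sim N$ (and $N_2\les N_1$): the Hilbert--Schmidt/Wiener-chaos approach suffers an unsalvageable $\sqrt{N}$ loss from the $n$-summation, exactly as in Subsubcase 2.b.(ii) of Lemma \ref{LEM:zSzW}. We invoke the operator-norm framework. Writing
\begin{align*}
\F_x P_{N_2}\bigl(P_{N_1}z^S\cdot \overline{P_N\jb{\nb}^sv^S}\bigr)(n_2,t)=\sum_n\jb{n}^s\,\ft{P_Nv^S}(n,t)\,\eta_\dl(t)\,H(n,n_2,t),
\end{align*}
with random matrix $H(n,n_2,t)=\sum_{n_1}h(n,n_1,n_2,t)g_{n_1}(\o)$ and base tensor $h$ of the form \eqref{rt13}, Cauchy--Schwarz against $\|P_{N_2}R^W\|_{L^2_{t,x}}\les N_2^{-\l}\|R^W\|_{X_{W_\pm}^{\l,0}}$ reduces the estimate to $\sup_t\|H(t)\|_{\l^2_n\to\l^2_{n_2}}$. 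Lemma \ref{LEM:tens} bounds this by $\max\bigl(\|h(t)\|_{n_1n\to n_2},\|h(t)\|_{n\to n_2n_1}\bigr)$, and the deterministic tensor estimate of Lemma \ref{LEM:zSzWy} yields $\les N_1^{s+\g-\frac12+}N_2^{-\frac12}+N_1^{-\frac12+}$. Combining with the $N_2^{-\l}$ gain and summing $N_1\sim N\ges N_2$ closes provided $s<\frac14$ and $s<\l+1$. This low-modulation case is the main obstacle; the operator-norm/random tensor input is essential, while the remaining cases reduce to routine $X^{s,b}$ and random Strichartz manipulations.
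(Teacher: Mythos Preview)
Your overall architecture is right, but the decisive low-modulation resonant case is handled with the wrong tensor lemma, and the numerics you state do not close at the range $s<\tfrac14$ claimed in the lemma.

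Concretely: you cite the base tensor \eqref{rt13} and Lemma \ref{LEM:zSzWy}, which are tailored to the $z^Sz^W$ estimate (Lemma \ref{LEM:zSzW}) with modulation cutoff $N_1^{2s+2\g+}$ and output $N_1^{s+\g-\frac12+}N_2^{-\frac12}+N_1^{-\frac12+}$. Multiplying by the $N_1^s$ weight and the $N_2^{-\l}$ gain from $R^W$ gives $N_1^{2s+\g-\frac12+}N_2^{-\l-\frac12}+N_1^{s-\frac12+}N_2^{-\l}$, and the first term sums over $N_1\sim N\ges N_2$ only if $s<\tfrac14-\tfrac\g2$, not $s<\tfrac14$. (There is also an internal inconsistency: your high-modulation threshold is $N_1^{2s+\eta}$, whereas the indicator in \eqref{rt13} carries $N_1^{2s+2\g+}$.) The paper instead uses the dedicated tensor \eqref{rt} with modulation cutoff $N^{s+\frac14+}N_2^{\frac34}$ and the matching estimate Lemma \ref{LEM:Tens1}, which yields $N_1^{\frac s2-\frac38+}N_2^{-\frac18}+N_1^{-\frac12}$; this choice is what produces the $\g$-free restriction $s<\tfrac14$. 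If you wish to keep your own threshold $N_1^{2s+\eta}$, you must re-derive the Schur bounds with that cutoff (the same argument gives $N_1^{s+\eta/2-\frac12+}N_2^{-\frac12}+N_1^{-\frac12+}$), rather than invoke Lemma \ref{LEM:zSzWy} verbatim.

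Two secondary remarks. In Case 1 you invoke Lemma \ref{LEM:multi} rather than the paper's direct H\"older/Fourier-decay argument; this is legitimate (the two Schr\"odinger frequencies $N_1$ and $N$ are separated), just a different route. And your citation of Lemma \ref{LEM:zSzWy} is stated only for $N_1\sim N\gg N_2$, so the borderline $N_1\sim N\sim N_2$ would still need a separate (Hilbert--Schmidt type) treatment; the paper's Lemma \ref{LEM:Tens1} covers $N_1\sim N\ges N_2$ uniformly.
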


\begin{proof}
We perform the case-by-case analysis:	

\smallskip

\noi
{\bf Case 1:} $N_1 \gg N$ or $N \gg N_1$ (non-resonant interaction).

\smallskip

\noi 
 We may assume $N_1 \gg N$ by symmetry.
In this case, we have 
\begin{align}
L_{\max} \ges \big|  |n_1|^2 \pm |n_2|-|n|^2     \big| \gtrsim N_{\max}^2 \sim N_2^2.
\label{highnon21}
\end{align}

\noi
First, suppose that $\max( L_2,L)  \sim L_{\max}$. Then, from $L^p_{t,x}L_{t,x}^{2}L_{t,x}^{2+}$-H\"older's inequality, Lemma \ref{LEM:prstricha} with large $p$, \eqref{highnon21}, and Lemma \ref{LEM:dlpower}, we have
\begin{align*}
\bigg|\int_{\R}\int_{\T^2} \jb{\nb}^s(P_{N_1} z^S P_{N_2} R^W)  P_{N}v^S dxdt \bigg| &\les N_2^s \| P_{N_1}z^S \|_{L^p_{t,x}} \| P_{N_2}R^W \|_{L^2_{t,x}} \|P_{N}v^S \|_{L^{2+}_{t,x}}\\
&\les N_2^{s-\l+} \|P_{N_2}R^W \|_{X^{\l,0}_{W_\pm}}  \| P_{N}v^S \|_{X_S^{0,0+}}\\
& \les \dl^{\frac 12-} N_2^{-1+s-\l+} \|P_{N_2}R^W \|_{X^{\l,\frac 12+}_{W_\pm}} \| P_{N}v^S \|_{X_S^{0,\frac 12-}}
\end{align*}

\noi
outside an exceptional set of measure $< e^{-\frac 1{\dl^c}}$.
Hence, if $s < \l+1$, then we can perform the dyadic summation over $N_1 \sim N_2 \ge N$.

Next, suppose that $\max( L_2,L)  \ll L_{\max}$ and hence $L_1 \sim L_{\max}$. Then, we have
\begin{align}
\big|  \ft \eta_\dl(\tau_1-|n_1|^2) \big| \les \frac 1{L_1}\sim N_{\max}^{-2}
\label{highmodu122}
\end{align}

\noi
since $ \ft \eta_\dl(\tau)=\dl \ft \eta (\dl \tau)$. Then, from H\"older's inequality with $p \gg 1$, \eqref{highmodu122}, Young's inequality in $\tau$, and Lemma \ref{LEM:prob}, we have
\begin{align*}
&\bigg |\int_{\R}\int_{\T^2} \jb{\nb}^s(P_{N_1} z^S P_{N_2} R^W)  P_{N}v^S dxdt \bigg|\\
&= \Bigg| \sum_{\substack{n_1, n \in \Z^2: n_1+n_2=n \\ |n_1|\sim N_1, |n_2| \sim N_2 } } \jb{n}^s \frac{g_{n_1}(\o)}{\jb{n_1}}  \int_{\tau_1+\tau_2=\tau} \ft \eta_\dl (\tau_1-|n_1|^2) \ft{P_{N_2}R^W}(\tau_2,n_2) \ft{P_Nv^S}(n,\tau) d\tau_1 d\tau \Bigg|\\
&\les \| \ft {P_N  v^S } \|_{\ell_n^2L_\tau^{1+}} \Bigg\| \sum_{\substack{n_2 \in \Z^2: n_1+n_2=n \\ |n_1|\sim N_1, |n_2| \sim N_2 } } \jb{n}^s \frac{|g_{n_1}(\o)|}{\jb{n_1}}  N_2^{-2(1-\eps)} \big\| |\ft \eta_\dl |^\eps   \big\|_{L^p_\tau}  \|\ft { P_{N_2}R^W} \|_{L^1_\tau} \Bigg\|_{\l_n^2} \\
&\les  \dl^{\eps-\frac 1p}  N_1^s N_1^{-1+}N_1^{-2(1-\eps)} N_2 N_2^{-\l} N \| P_{N_2}R^W\|_{X_W^{\l,\frac 12+}} \| P_N v^S \|_{X_S^{0,\frac 12-}}\\
&\les \dl^{0+} N_1^{-1+s-\l+\eps } \| P_{N_2}R^W\|_{X_W^{\l,\frac 12+}} \| P_N v^S \|_{X_S^{0,\frac 12-}}
\end{align*}

\noi
outside an exceptional set of probability $< e^{-\frac 1{\dl^c}}$. Hence, if $s < \l+1$, then we can perform the dyadic summation over $N_1 \sim N_2 \ge N$.

\smallskip

\noi
{\bf Case 2:} $N_2 \les N_1 \sim N$ (resonant interaction).

\smallskip

\noi We split the case into the high and low modulation cases.

\smallskip

\noi
{\bf Subcase 2.a:} $L_{\max}\ges  N^{s+\frac 14+}N_2^{\frac 34}$ (high modulation case). 

\smallskip

\noi
First, suppose that $\max(L_2,L)\sim L_{\max}$. Then, from $L^p_{t,x}L_{t,x}^{2}L_{t,x}^{2+}$-H\"older's inequality, Lemma \ref{LEM:prstricha} with large $p$, and Lemma \ref{LEM:dlpower}, we have
\begin{align*}
\bigg|\int_{\R}\int_{\T^2} \jb{\nb}^s(P_{N_1}z^S P_{N_2}R^W) P_Nv^S dxdt \bigg|&\les N^s \| P_{N_1}z^S \|_{L^p_{t,x}}  \| P_{N_2}R^W \|_{L^{2}_{t,x}} \| P_N v^S \|_{L_{t,x}^{2+}}\\
&\les N^{s+} \| P_{N_2}R^W \|_{L^{2}_{t,x}} \| P_N v^S \|_{L^{2+}_{t,x}}\\
&\les \dl^{\frac 12-} N^{\frac s2-\frac 18-}N_2^{-\l-\frac 38}  \| P_{N_2}R^W \|_{X_W^{\l,\frac 12+}}    \| P_N v^S \|_{X_S^{0,\frac 12-}}
\end{align*}

\noi
outside an exceptional set of probability $< e^{-\frac 1{\dl^c}}$.
Hence, if $s<\frac 14$, we obtain the desired result by summing over $N_1 \sim N\ge N_2$.

Next, suppose that $\max(L_2,L ) \ll L_{\max}$ and so $L_1\sim L_{\max}$. Then, from H\"older's inequality with $p \gg 1$, Young's inequality in $\tau$, and Lemma \ref{LEM:prob}, we have
\begin{align*}
&\bigg |\int_{\R}\int_{\T^2} \jb{\nb}^s(P_{N_1} z^S P_{N_2} R^W)  P_{N}v^S dxdt \bigg|\\
&= \Bigg| \sum_{\substack{n_1, n \in \Z^2: n_1+n_2=n \\ |n_1|\sim N_1, |n_2| \sim N_2 } } \jb{n}^s \frac{g_{n_1}(\o)}{\jb{n_1}}  \int_{\tau_1+\tau_2=\tau} \ft \eta_\dl (\tau_1-|n_1|^2) \ft{P_{N_2}R^W}(\tau_2,n_2) \ft{P_Nv^S}(n,\tau) d\tau_1 d\tau \Bigg|\\
&\les \| \ft {P_N  v^S } \|_{\ell_n^2L_\tau^{1+}} \Bigg\| \sum_{\substack{n_2 \in \Z^2: n_1+n_2=n \\ |n_1|\sim N_1, |n_2| \sim N_2 } } \jb{n}^s \frac{|g_{n_1}(\o)|}{\jb{n_1}}  N^{-s-\frac 14-}N_2^{-\frac 34+} \big\| |\ft \eta_\dl |^\eps   \big\|_{L^p_\tau}  \|\ft { P_{N_2}R^W} \|_{L^1_\tau} \Bigg\|_{\l_n^2} \\
&\les  \dl^{\eps-\frac 1p}  N^sN^{-1} N^{-s-\frac 14-}N_2^{-\frac 34+} N_2 N N_2^{-\l} \| P_{N_2}R^W\|_{X_W^{\l,\frac 12+}} \| P_N v^S \|_{X_S^{0,\frac 12-}}\\
&\les \dl^{0+} N^{0- }N_2^{-\l} \| P_{N_2}R^W\|_{X_W^{\l,\frac 12+}} \| P_N v^S \|_{X_S^{0,\frac 12-}}
\end{align*}

\noi
outside an exceptional set of probability $< e^{-\frac 1{\dl^c}}$. Hence, if $\l>0$, we can perform the dyadic summation over $N_1 \sim N \ge N_2$.

\smallskip

\noi
{\bf   Subcase 2.b:} $L_{\max}\les N^{s+\frac 14+}N_2^{\frac 34}$ (low modulation case). 
\smallskip

\noi 
We first rewrite $\NN_S(P_{N_1}z^S, P_{N_2}R^W )$ as a random operator. By taking the Fourier transform, we have
\begin{align*}
\F_x{ \NN^S(P_{N_1}z^S, P_{N_2}R^W) }(n,t)=  \sum_{n_2}   \ft{ P_{N_2}R^W}(n_2, t )\eta_\dl(t) H(n,n_2,t),
\end{align*}
 
\noi
where  $\eta_\dl(t)=\eta(\dl^{-1}t)$ is from our notation \eqref{eta1}, the random matrix $H(n,n_2,t)$ is defined by 
\begin{align*}
H(n,n_2,t):&=  \sum_{n_1\in \Z^2} \frac{e^{-it|n_1|^2}g_{n_1}(\o) }{\jb{n_1} }  \ind_{ \{n_1=n-n_2 \} } \ind_{ \{ \varphi(n_1,n_2,n)=O(N^{s+\frac 14+}N_2^{\frac 34}) \} }  \ind_{\{|n| \sim N \} } \prod_{j=1}^2 \ind_{\{|n_j| \sim N_j \} },
\end{align*}
 
\noi
and the phase function $\varphi: (\Z^2)^3 \to \R $ is defined by
\begin{align}
\varphi(n_1,n_2,n)= |n_1|^2 \pm |n_2| -|n|^2.
\label{m19}
\end{align}

 
\noi
Then, from Cauchy-Schwarz inequality and taking the operator norm, we have
\begin{equation}
\begin{split}
 &\bigg| \int_{\T^2 \times \R} \jb{\nb}^s \big[ \NN_S(P_{N_1}z^S, P_{N_2}R^W ) \big]     P_N v^S dxdt  \bigg|\\
 &\les \| \jb{\nb}^s P_N v^S \|_{L^2_{t,x}}
 \bigg\|  \sum_{n_2}    \ft{ P_{N_2}R^W }(n_2,t )\eta_\dl(t) H(n,n_2,t)     \bigg\|_{\l^2_n L_t^2 }\\
 &\les N^s \| P_N v^S \|_{L^2_{t,x}}   \Big\|  \| H(n,n_2,t) \|_{\l^2_{n_2}\to \l^2_n}  \big\|   \ft{ P_{N_2} R^W}(n_2, t )  \big\|_{\l^2_{n_2}} \Big\|_{L^2_t}  \\
 &\les N^sN_2^{-\l} \sup_{t\in \R}\| H(n,n_2,t) \|_{\l^2_{n_2} \to \l^2_n} \| P_{N_2}R^W \|_{X_{W_\pm}^{\l,\frac 12+\eps}} \| P_N v^S \|_{L^2_{t,x}} .
\end{split}
\label{R0}
\end{equation}
 
\noi
The random matrix $H(n,n_2,t)$ can be written with the random tensor $h(n,n_1,n_2,t)$ as follows:
\begin{align*}
H(n,n_2,t)&=  \sum_{n_1\in \Z^2} \frac{e^{-it|n_1|^2}g_{n_1}(\o) }{\jb{n_1} }  \ind_{ \{n_1=n-n_2 \} } \ind_{ \{ \varphi(n_1,n_2,n)=O(N^{s+\frac 14+}N_2^{\frac 34}) \} }  \ind_{\{|n| \sim N \} } \prod_{j=1}^2 \ind_{\{|n_j| \sim N_j \} }\\
&=\sum_{n_1 \in \Z^2} h(n,n_1, n_2,t) g_{n_1}(\o),
\end{align*}
 
\noi 
where 
\begin{align}
h(n,n_1,n_2,t)=e^{-it|n_1|^2}\ind_{ \{n_1=n-n_2 \} } \ind_{ \{ \varphi(n_1,n_2,n)=O(N^{s+\frac 14+}N_2^{\frac 34}) \} } \jb{n_1}^{-1}  \ind_{\{|n| \sim N \} }   \prod_{j=1}^2 \ind_{\{|n_j| \sim N_j \} } .
\label{rt}
\end{align}
 
\noi
Then, from Lemma \ref{LEM:tens}, we have 
 \begin{align}
\sup_{t\in\R} \| H(n,n_2,t) \|_{\l^2_{n_2} \to \l^2_n } \les N_1^\eps \sup_{t\in \R} \max \big( \| h(t)\|_{n_1n_2 \to n}, \| h(t)\|_{n_2 \to nn_1}  \big).
 \label{t}
 \end{align}
 
\noi
Hence, from \eqref{t} and Lemma \ref{LEM:Tens1}, we have
 \begin{align}
\sup_{t\in \R} \| H(n,n_2,t) \|_{\l^2_{n_2} \to \l^2_n }  \les N^{\frac s2-\frac 38+}N_2^{-\frac 18}+N_1^{-\frac 12}.
\label{R1} 
\end{align}
 
\noi
Hence, by combining \eqref{R0} and \eqref{R1}, we have
\begin{align*}
\text{LHS of \eqref{R0}} \les N_1^{\frac 32s-\frac 38+}N_2^{-\frac 18-\l}+N_1^{s-\frac 12+}N_2^{-\l}.
\end{align*}

\noi
Therefore, if $s<\frac 14$ and $\l>0$, then we can perform the dyadic summation over $N_1\sim N\ges N_2$.

\end{proof}

\begin{lemma}[$R^S z^W$-case ]
\label{LEM:RszW}
Let $\g<\frac 13$ and $0<s<1-\g$. 
Then, for each small $\dl>0$,  we have
\begin{align}
\|\NN_S(R^S, z^W) \|_{X_{S,\dl}^{s,-\frac 12+}} \les \dl^{0+} \|R^S \|_{X_{S,\dl}^{s,\frac 12+}}
\label{APPbi}
\end{align}
	
\noi
outside an exceptional set of probability $< e^{-\frac 1{\dl^c}}$.		
\end{lemma}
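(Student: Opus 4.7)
The plan is to follow the template established for Lemmas \ref{LEM:RSRW}--\ref{LEM:zSRW}: dualize against $v^S$ with $\|P_N v^S\|_{X_S^{0,\frac12-}}\le 1$, dyadically decompose with $|n_1|\sim N_1$ for $R^S$, $|n_2|\sim N_2$ for $z^W$, and $|n|\sim N$ for $v^S$, and split cases according to which of the modulations $L=\jb{\tau-|n|^2}$, $L_1=\jb{\tau_1-|n_1|^2}$, $L_2=\jb{\tau_2\pm|n_2|}$ dominates relative to the phase $\varphi(n_1,n_2,n)=|n_1|^2\pm|n_2|-|n|^2$. Morally this mirrors the proof of Lemma \ref{LEM:zSRW} with the roles of the random factor and the remainder exchanged; the new difficulty is that the random object is now the wave component, whose Gaussian coefficients $h_{n_2}/\jb{n_2}^{1-\g}$ are rougher than those of $z^S$ by a factor $\jb{n_2}^\g$.

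I would first handle the non-resonant case $N_1\gg N$ or $N\gg N_1$, in which the relation $n_1+n_2=n$ together with the frequency imbalance forces $L_{\max}\ges N_{\max}^2\sim N_2^2$. If $L\sim L_{\max}$ or $L_1\sim L_{\max}$, the bilinear estimate of Lemma \ref{LEM:multi} applied to the pairing of $R^S$ against $v^S$, combined with the probabilistic Strichartz improvement for $z^W$ (Lemma \ref{LEM:prstricha}) and Lemma \ref{LEM:dlpower}, closes the bound under the assumption $s<1-\g$. When $L_2\sim L_{\max}$, I would instead use the pointwise bound $|\ft\eta_\dl(\tau_2\pm|n_2|)|\les N_{\max}^{-2}$ together with Lemma \ref{LEM:prob} to dispose of the Gaussians $h_{n_2}$, outside an exceptional set of probability $e^{-1/\dl^c}$.

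In the resonant case $N_2\les N_1\sim N$, I would split further on modulation as in Lemma \ref{LEM:zSRW}. In the high-modulation Subcase~2.a (when $L_{\max}$ exceeds a suitable polynomial threshold in $N_1,N_2$), $L^p_{t,x}L^2_{t,x}L^{2+}_{t,x}$-H\"older combined with the probabilistic Strichartz bound for $z^W$ handles the branch $\max(L,L_1)\sim L_{\max}$, while the Fourier-side bound on $\ft\eta_\dl$ coupled with Lemma \ref{LEM:prob} handles the branch $L_2\sim L_{\max}$. The low-modulation Subcase~2.b is then treated as a random matrix estimate. I would rewrite the bilinear nonlinearity as
$$\F_x\bigl[\NN_S(P_{N_1}R^S,P_{N_2}z^W)\bigr](n,t)=\sum_{n_1}\ft{P_{N_1}R^S}(n_1,t)\,\eta_\dl(t)\,H(n,n_1,t),$$
where
$$H(n,n_1,t)=\sum_{n_2}\frac{e^{\mp it\jb{n_2}}h_{n_2}(\o)}{\jb{n_2}^{1-\g}}\,\ind_{\{n_2=n-n_1\}}\,\ind_{\{\varphi=O(\text{threshold})\}}\,\ind_{\{|n|\sim N\}}\prod_{j=1}^2\ind_{\{|n_j|\sim N_j\}},$$
and pass to the operator norm via Cauchy--Schwarz, obtaining
$$|\text{LHS}|\les N^s\,\|P_Nv^S\|_{L^2_{t,x}}\,\|P_{N_1}R^S\|_{L^2_{t,x}}\,\sup_t\|H(\cdot,\cdot,t)\|_{\ell^2_{n_1}\to\ell^2_n}.$$
Writing $H=\sum_{n_2}h(n,n_1,n_2,t)\,h_{n_2}(\o)$ and applying Lemma \ref{LEM:tens}, the operator norm is controlled, up to a factor $N_1^{0+}$ and outside an exceptional set of probability $e^{-1/\dl^c}$, by the tensor norms $\|h(t)\|_{n_1n_2\to n}$ and $\|h(t)\|_{n_1\to nn_2}$, and these are supplied by Lemma \ref{LEM:Tens2}.

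The main obstacle will be Subcase~2.b in the sub-regime $N_1\sim N\gg N_2$: the outer summation over $n$ with $|n|\sim N$ prohibits any Hilbert--Schmidt or Wiener chaos argument from closing, even at $\g=0$ (cf.\ Remark \ref{RM:essen}). It is precisely the operator-norm bound on the random matrix $H$ provided by the random tensor theory of \cite{DNY} that rescues the estimate; balancing the tensor-norm output of Lemma \ref{LEM:Tens2} (carrying the cost factor $N_2^{-1+\g}$ coming from the wave coefficient) against the weights $N^s$ and $N_1^{-s}$ in the dyadic summation over $N_1\sim N\gg N_2$ is what forces the restriction $\g<\tfrac13$.
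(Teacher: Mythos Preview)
Your overall plan is on the right track, but there is a genuine structural gap in the resonant case $N_2\les N_1\sim N$. The analogy with Lemma~\ref{LEM:zSRW} breaks down precisely because the roles are reversed: there the random factor $z^S$ sits at the \emph{high} frequency $N_1\sim N$, so placing it in $L^p$ via probabilistic Strichartz costs nothing at scale $N_1$. Here the random factor $z^W$ sits at the \emph{low} frequency $N_2$, while the deterministic factors $R^S$ and $v^S$ are both at scale $N_1\sim N$. Consequently, in Subcase~2.a any $L^{2+}$--Strichartz estimate applied to $P_{N_1}R^S$ or $P_Nv^S$ produces a loss $N_1^{0+}$ that cannot be absorbed by a modulation threshold in $N_2$ alone, nor into $\|R^S\|_{X_S^{s,\frac12+}}$ or $\|v^S\|_{X_S^{0,\frac12-}}$ without changing the exponents. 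The paper remedies this by an almost--orthogonal decomposition of $\{|n_1|\sim N_1\}$ and $\{|n|\sim N\}$ into balls $J_{1\ell_1},J_{2\ell_2}$ of radius $\sim N_2$: the Strichartz loss on $P_{N_1,\ell_1}R^S$ is then $N_2^{0+}$, and the $\ell_1$--sum is recovered by Cauchy--Schwarz (each $\ell_1$ pairs with $O(1)$ values of $\ell_2$). Without this step your Subcase~2.a does not close.

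The same issue resurfaces in Subcase~2.b. You invoke only Lemma~\ref{LEM:tens}; since $\jb{n},\jb{n_1}\sim N_1$, that forces $M\ges N_1$ and yields the prefactor $N_1^{0+}$ you mention, which again obstructs the diagonal summation over $N_1\sim N$. The paper instead appeals to the variant Lemma~\ref{LEM:tens0}, designed exactly for the situation where the deterministic indices $b=n$, $c=n_1$ are large but $|b-c|=|n_2|\les M$ and the base tensor depends only on $b-c$, $|b|^2-|c|^2$, and $n_A$; taking $M=N_2$ gives the harmless factor $N_2^{0+}$, after which Lemma~\ref{LEM:Tens2} supplies $N_2^{-\frac12+\frac32\g+}$ and the $N_2$--summation forces $\g<\tfrac13$. (A minor side remark: in Case~1 the paper does not use Lemma~\ref{LEM:multi} at all but rather a direct $L^{2+}L^pL^2$--H\"older argument; your combination of Lemma~\ref{LEM:multi} with probabilistic Strichartz is not quite coherent, since Lemma~\ref{LEM:multi} is already a trilinear $L^2$ estimate, but this is not where the real difficulty lies.)
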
	


\begin{proof}
We perform the case-by-case analysis:

\smallskip

\noi
{\bf  Case 1:} $N_1 \gg N$ or $N \gg N_1$ (non-resonant interaction).

\smallskip

\noi 
We first consider the (worse) case $N \gg N_1$.
In this case, we have 
\begin{align}
L_{\max} \ges \big|  |n_1|^2 \pm |n_2|-|n|^2     \big| \gtrsim N_{\max}^2 \sim N_2^2.
\label{highmodu22}
\end{align}

\noi
First suppose that $\max(L_1,L) \sim L_{\max}$. Then, from $L_{t,x}^{2+}L_{t,x}^{p}L_{t,x}^{2}$-H\"older's inequality with $p$ large, Lemma \ref{LEM:prstricha}, \eqref{interop}, \eqref{highmodu22}, and Lemma \ref{LEM:dlpower}, we have
\begin{align*}
\bigg|\int_{\R}\int_{\T^2} \jb{\nb}^s(P_{N_1} R^S P_{N_2} z^W)  P_{N}v^S dxdt \bigg| &\les N_2^s \| P_{N_1}R^S \|_{L^{2+}_{t,x}} \| P_{N_2}z^W \|_{L^p_{t,x}} \|P_{N}v^S \|_{L^2_{t,x}}\\
&\les \dl^{\frac 12-} N_2^{-1+s+\g+}  \|P_{N_1}R^s \|_{X^{0,\frac 12+}_S}  \| P_{N}v^S \|_{X_S^{0,\frac 12-}}\\
& \les \dl^{\frac 12-} N_2^{-1+\g+s+}N_1^{-s}   \|P_{N_1}R^S \|_{X^{s,\frac 12+}_S} \| P_{N}v^S \|_{X^{0,\frac 12-}_S}
\end{align*}

\noi
for an exceptional set of measure $<e^{-\frac 1{\dl^c}}$.
Hence, if $s+\g < 1$, then we can perform the dyadic summation over $N \sim N_2 \ge N_1$. Notice that in the case $N_1 \gg N$, it suffices to assume $\g<1$.

Next, suppose that $\max(L_1,L )\ll L_{\max}$ and so $L_2\sim L_{\max}$. Then, from H\"older's inequality with $p \gg 1$, Young's inequality in $\tau$, and Lemma \ref{LEM:prob}, we have
\begin{align*}
&\bigg |\int_{\R}\int_{\T^2} \jb{\nb}^s(P_{N_1} R^S P_{N_2} z^W)  P_{N}v^S dxdt \bigg|\\
&= \Bigg| \sum_{\substack{n_1, n \in \Z^2: n_1+n_2=n \\ |n_1|\sim N_1, |n_2| \sim N_2 } } \jb{n}^s \frac{h_{n_2}(\o)}{\jb{n_2}^{1-\g}}  \int_{\tau_1+\tau_2=\tau} \ft{P_{N_1}R^S}(\tau_1,n_1) \ft \eta_\dl (\tau_2\pm|n_2|)  \ft{P_Nv^S}(n,\tau) d\tau_1 d\tau \Bigg|\\
&\les \| \ft {P_N  v^S } \|_{\ell_n^2L_\tau^{1+}} \Bigg\| \sum_{\substack{n_1 \in \Z^2: n_1+n_2=n \\ |n_1|\sim N_1, |n_2| \sim N_2, |n|\sim N } } \jb{n}^s \frac{|h_{n_2}(\o)|}{\jb{n_2}^{1-\g}}  N_2^{-2(1-\eps)} \big\| |\ft \eta_\dl |^\eps   \big\|_{L^p_\tau}  \|\ft { P_{N_1}R^S} \|_{L^1_\tau} \Bigg\|_{\l_n^2} \\
&\les  \dl^{\eps-\frac 1p}  N^sN_2^{-1+\g+} N_2^{-2+} N_1 N  \| P_{N_1}R^S\|_{X_S^{0,\frac 12+}} \| P_N v^S \|_{X_S^{0,\frac 12-}}\\
&\les \dl^{0+} N^{s-1+\g+} N_1^{-s} \| P_{N_1}R^S\|_{X_S^{0,\frac 12+}} \| P_N v^S \|_{X_S^{0,\frac 12-}}
\end{align*}

\noi
outside an exceptional set of probability $< e^{-\frac 1{\dl^c}}$. Hence, if $s<1-\g$, we can perform the dyadic summation over $ N \sim  N_2 \ge N_1$. Notice that in the case $N_1 \gg N$, it suffices to assume $\g<1$.

\smallskip

\noi
{\bf   Case 2:} $N_2 \les N_1\sim N$ (resonant interaction).

\smallskip

\noi We split the case into the high and low modulation cases.

\smallskip

\noi
{\bf   Subcase 2.a:} $L_{\max}\ges N_2^{1+\g+}$ (high modulation case). 
\smallskip

\noi
When $N_1\gg N_2$, we first wirte $\{|n_1| \sim N_1 \}=\bigcup_{\ell_1} J_{1\ell_1}$ and $\{ |n| \sim N \}=\bigcup_{\ell_2}J_{2\ell_2}$, where $J_{1,\ell_1}$ and $J_{2,\ell_2}$ are balls of radius $\sim N_2$, we can decompose $\ft {P_{N_1}R^S}$ and $\ft {P_{N}v^S}$ as
\begin{align*}
\ft {P_{N_1}R^S}=\sum_{\ell_1} \ft { P_{N_1,\ell_1}R^S } \qquad \text{and} \qquad \ft {   P_{N}v^S }=\sum_{\ell_2} \ft { P_{N,\ell_2}v^S }
\end{align*} 

\noi
where $\ft {P_{N_1,\ell_1}R^S }(n_1,t)=\ind_{J_{1\ell_1}}(n_1) \ft { P_{N_1}R^S }(n_1,t) $ 
and $\ft {P_{N,\ell_2}v^S }(n,t)=\ind_{J_{2\ell_2}}(n) \ft { P_{N}v^S }(n,t) $. Given $n_1 \in J_{1\ell_1}$ for some $\ell_1$, there exists $O(1)$ many possible values for $\ell_2=\ell_2(\ell_1)$ such that $n \in J_{2\ell_2}$ under $n_1+n_2=n$. Notice that the number of possible values of $\ell_2$ is independent of $\ell_1$. 

First suppose that $\max(L_1,L) \sim L_{\max}$. Then, from $L_{t,x}^{2+}L_{t,x}^{p}L_{t,x}^{2}$-H\"older's inequality with $p$ large, Lemma \ref{LEM:prstricha}, \eqref{interop}, and Lemma \ref{LEM:dlpower}, we have 
\begin{align*}
\bigg|\int_{\R}\int_{\T^2} \jb{\nb}^s(P_{N_1,\l_1} R^S P_{N_2} z^W)  P_{N,\l_2}v^S dxdt \bigg| &\les N_1^s \| P_{N_1,\l_1}R^S \|_{L^{2+}_{t,x}} \| P_{N_2}z^W \|_{L^p_{t,x}} \|P_{N,\l_2}v^S \|_{L^2_{t,x}} \\
&\les \dl^{\frac 12-} N_2^{\g+}N_2^{-\frac 12-\frac \g2-}  \| P_{N_1,\l_1}R^S \|_{X^{s,\frac 12+}_S}  \|P_{N,\l_2}v^S \|_{X^{0,\frac 12-}_S}\\
& = \dl^{\frac 12-} N_2^{-\frac 12+\frac \g2-} \| P_{N_1,\l_1}R^S \|_{X^{s,\frac 12+}_S}  \|P_{N,\l_2}v^S \|_{X^{0,\frac 12-}_S}
\end{align*}

\noi
for an exceptional set of measure $<e^{-\frac 1{\dl^c}}$.
Hence, we can perform the the Cauchy-Schwarz inequality in $\l_1$ and the dyadic summation over $N_1 \sim N \ges N_2$ if $\g<1$.

Next, suppose that $\max(L_1,L )\ll L_{\max}$ and so $L_2\sim L_{\max}$. Then, from H\"older's inequality with $p \gg 1$, Young's inequality in $\tau$, Lemma \ref{LEM:prob}, and Cauchy-Schwarz inequality in $n_1\in J_{1\l_1}$, we have
\begin{align*}
&\bigg |\int_{\R}\int_{\T^2} \jb{\nb}^s(P_{N_1,\l_1} R^S P_{N_2} z^W)  P_{N,\l_2}v^S dxdt \bigg|\\
&= \Bigg| \sum_{\substack{n_1, n \in \Z^2: n_1+n_2=n \\ n_1\in J_{1\l_1}, n\in J_{2\l_2}, |n_2|\sim N_2 }  } \jb{n}^s \frac{h_{n_2}(\o)}{\jb{n_2}^{1-\g}}  \int_{\tau_1+\tau_2=\tau} \ft{P_{N_1,\l_1}R^S}(\tau_1,n_1) \ft \eta_\dl (\tau_2\pm|n_2|)  \ft{P_{N,\l_2}v^S}(n,\tau) d\tau_1 d\tau \Bigg|\\
&\les \| \ft {P_{N,\l_2}  v^S } \|_{\ell_n^2L_\tau^{1+}} \Bigg\| \sum_{\substack{n_1 \in \Z^2: n_1+n_2=n \\ n_1\in J_{1\l_1}, n \in J_{2\l_2}, |n_2|\sim N_2 } } \jb{n}^s \frac{|h_{n_2}(\o)|}{\jb{n_2}^{1-\g}}  N_2^{-(1+\g+)(1-\eps)} \big\| |\ft \eta_\dl |^\eps   \big\|_{L^p_\tau}  \|\ft { P_{N_1,\l_1}R^S} \|_{L^1_\tau} \Bigg\|_{\l_n^2} \\
&\les  \dl^{\eps-\frac 1p-}  N^s N_2^{-1+\g+\eta}N_2^{-1-\g-2\eta} N_2 N_2  \| P_{N_1,\l_1}R^S\|_{X_S^{0,\frac 12+}} \| P_{N,\l_2} v^S \|_{X_S^{0,\frac 12-}}\\
&\les \dl^{0+} N_2^{0-} \| P_{N_1,\l_1}R^S \|_{X^{s,\frac 12+}_S}  \|P_{N,\l_2}v^S \|_{X^{0,\frac 12-}_S}
\end{align*}

\noi
outside an exceptional set of probability $< e^{-\frac 1{\dl^c}}$. Hence, we can perform the Cauchy-Schwarz inequality in $\l_1$ and the dyadic summation over $ N \sim  N_1 \ge N_2$.




\smallskip

\smallskip

\noi
{\bf  Subcase 2.b:} $L_{\max}\les  N_2^{1+\g+}$ (low modulation case). 
\smallskip

\noi
We rewrite $\NN_S(P_{N_1,\l_1}R^S, P_{N_2}z^W )$ as a random operator. By taking the Fourier transform, we have
\begin{align*}
\F_{x} \NN_S(P_{N_1,\l_1}R^S, P_{N_2}z^W)(n,t)=  \sum_{n_1} \ft{ P_{N_1,\l_1}R^S}(n_1, t) \eta_\dl(t) H(n,n_1,t),
\end{align*}

\noi
where $\eta_\dl(t)=\eta(\dl^{-1}t)$ is from our notation \eqref{eta1}, the random matrix $H(n,n_1,t)$ is defined by
\begin{align}
H(n,n_1,t)&:=  \sum_{n_2 \in \Z^2} \frac{e^{-it|n_2|}h_{n_2}(\o) }{\jb{n_2}^{1-\g} } \ind_{ \{n_2=n-n_1 \} } \ind_{ \{ \varphi(n_1,n_2,n)=O(N_2^{1+\g+}) \} }  \ind_{\{ n_1 \in J_{1\l_1}  \} } \ind_{\{ n \in J_{2\l_2}  \} }  \ind_{ \{|n_2| \sim N_2 \}},
\label{r2}
\end{align}

\noi
and the phase function $\varphi: (\Z^2)^3 \to \R $ is defined by
\begin{align}
\varphi(n_1,n_2,n)= |n_1|^2 \pm |n_2| -|n|^2.
\label{m1}
\end{align}


\noi
Then, from Cauchy-Schwarz inequality, Minkowski's inequality in $\tau$ and taking the operator norm, we have
\begin{equation}
\begin{split}
&\bigg| \int_{\T^2 \times \R} \jb{\nb}^s \big[\NN_S(P_{N_1,\l_1}R^S, P_{N_2}z^W)\big]     P_{N,\l_2} v^S dxdt  \bigg|\\
&\les \| \jb{\nb}^s P_{N,\l_2} v^S \|_{L^2_{t,x}}
\bigg\|  \sum_{n_1} \ft{ P_{N_1,\ell_1}R^S }(n_1, t) \eta_\dl(t) H(n,n_1)     \bigg\|_{\l^2_n L_t^2 }\\
&\les N^s \| P_{N,\l_2} v^S \|_{L^2_{t,x}}    \Big\|  \| H(n,n_1,t) \|_{\l^2_{n_1}\to \l^2_n}  \big\|   \ft { P_{N_1,\l_1}R^S}(n_1, t )  \big\|_{\l^2_{n_1} } \Big\|_{L_t^2}    \\
&\les   \sup_{t\in \R}\| H(n,n_1,t) \|_{\l^2_{n_1} \to \l^2_n} \| P_{N,\l_2} v^S \|_{X^{0,\frac 12-}_S} \| P_{N_1,\l_1}R^S \|_{X_{S}^{s,\frac 12+}}.
\end{split}
\label{s2}
\end{equation}

\noi
The random matrix $H(n,n_1,t)$ in \eqref{r2} can be written with the random tensor $h(n,n_1,n_2,t)$ as follows:
\begin{align*}
H(n,n_1,t)&=\sum_{n_2 \in \Z^2} h(n,n_1, n_2,t) h_{n_2}(\o),
\end{align*}

\noi
where 
\begin{equation}
\begin{split}
h(n,n_1,n_2,t) &=e^{-it|n_2|}\ind_{ \{n_2=n-n_1,\; |n_2| \sim N_2 \} } \ind_{ \{ \varphi(n_1,n_2,n)=O(N_2^{1+\g+}) \} } \ind_{\{n_1 \in J_{1\ell_1}\}} \ind_{\{n \in J_{2\ell_2}\}}   \jb{n_2}^{-1+\g}.
\end{split}
\label{rt1}
\end{equation}

\noi
Then, from Lemma \ref{LEM:tens} and \ref{LEM:tens0}, we have 
\begin{align}
\| H(n,n_1,t) \|_{\l^2_{n_1} \to \l^2_n } \les N_{2}^\eps \max \big( \| h(t)\|_{n_1n_2 \to n}, \| h(t)\|_{n_1 \to nn_2 }  \big).
\label{t1}
\end{align}

\noi
Hence, from \eqref{t1} and Lemma \ref{LEM:Tens2}, we have
\begin{align}
\sup_{t\in \R}\| H(n,n_1,t) \|_{\l^2_{n_1} \to \l^2_n }  \les  N_2^{-\frac 12+\frac 32\g+}. 
\label{t11}
\end{align}

\noi
From \eqref{s2}, \eqref{t11}, and the Cauchy-Schwarz inequality in $\l_1$, we have 
\begin{align*}
\text{LHS of \eqref{s2}}\les N_2^{-\frac 12+\frac 32\g+} \| P_{N}v^S\|_{X_S^{0,\frac 12-}} \| P_{N_1} R^s \|_{X_S^{s,\frac 12+}},
\end{align*}

\noi
where we used the fact that the number of possible value of $\l_2=\l_2(\l_1)$ is independent of $\l_1$.
Therefore, if $\g<\frac 13$, we can perform the dyadic summation over $N_1\sim N\gg N_2$.  

When $N_1 \sim N_2$, the above argument used in the case $N_1 \gg N_2$ is itself applicable without using the orthogonality argument.

\end{proof}

\subsection{Bilinear estimates for the wave part}
\label{SUBSEC:biw}
In this subsection, we prove \eqref{bilin22} in Lemma \ref{LEM:bi2}.
In view of \eqref{bi0},  in order to prove \eqref{bilin22}, we need to  carry  out case-by-case analysis on  
\begin{align}
\|\chi_{_\dl} \cdot \mathcal{N}_{W}(u_1, u_2)\|_{X_{W_\pm}^{\l, -\frac{1}{2}+}} 
\label{bi00}
\end{align}


\noi
where $u_j$ is taken to be either of type
\begin{itemize}
\item[(I)] rough random part: 
\begin{align*}
u_j  &= \wt z^{S}\text{, where $\wt z^S$ is {\it some} extension of }
\chi_{_\dl}\cdot  z^S
\end{align*}	
	
\noi
where $z^S$ denotes the random linear solution defined in \eqref{lin1}, 
	
\vspace{1mm}
	
\item[$(\II)$]	 smoother  `deterministic' remainder (nonlinear) part: 
\begin{align*}
u_j &= \wt{R}^S_j \text{, where  $\wt{R}^S_j$ is {\it any} extension of $R^S$,}
\end{align*}
\end{itemize}

\noi
In the following, when $u_j$ is of type (I),
we take $\wt z^S = \eta_{_\dl} z^S$.
Thanks to the cutoff function in \eqref{bi00},
we may take $u_j = \eta_{_\dl}\cdot \wt R_j^S $ in \eqref{bi00}
when $u_j$ is of type $(\II)$.



\begin{remark} \rm
	In the following, we drop the $\pm$ signs and work with one $w_+$ or $w_{-}$ since there is no role of $\pm$. Hence, we set $w:=w_{+}$ and $W:=W_{+}$. 
	
\end{remark}

\begin{remark}\rm
To estimate $\|\chi_{_\dl} \cdot \mathcal{N}_W(u_1, u_2)\|_{X_{W_\pm}^{\l, -\frac{1}{2}+}} $,  we need to perform case-by-case analysis of expressions of the form:
\begin{align*}
\int_{\R}\int_{\T^2} \jb{\nb}^{\l-1+2\g} \NN_W(u_1,u_2) \cj v^W dxdt,
\end{align*}
	
\noi
where $\|v^{W} \|_{X_{W_\pm}^{0,\frac 12-}}\le 1$. As in Remark \ref{REM:not},
for simplicity of notation, we drop the complex conjugate sign and suppress the smooth time-cutoff function $\eta_\dl$ and thus simply denote them by $z^S$ and $R^S$, respectively when there is no confusion. We dyadically decompose $u_1$ and $u_2$ such that their spatial frequency support are $\supp \ft u_1 \subset \{ |n_1| \sim N_1 \}$, $\supp \ft u_2 \subset \{ |n_2| \sim N_2 \}$, and $\supp \ft{v^W} \subset \{ |n| \sim N \}$  for some dyadic $N_1,N_2$ and $N \ge 1$. Lastly, we point out that $n_1 \neq n_2$ thanks to the renormalization (see \eqref{nonlinear}).   	
\end{remark}

We now prove Lemmas \ref{LEM:RSRS}, \ref{LEM:zSzS}, and \ref{LEM:RSzS} which will imply Lemma \ref{LEM:bi2}'s second part \eqref{bilin22} (bilinear estimates for the wave part).

\begin{lemma}[$R^S R^S$-case]
\label{LEM:RSRS}
Let $\l<1-2\g+s$ and $s>0$. Then, we have
\begin{align*}
\|\NN_W(R^S, R^S) \|_{X_{W_\pm,\dl}^{\l,-\frac 12+}} \les \dl^{0+} \|R^S \|_{X_{S,\dl}^{s,\frac 12+} } \|R^S \|_{X_{S,\dl}^{s,\frac 12+} }.
\end{align*}
	
\end{lemma}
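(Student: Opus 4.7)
The plan is a clean duality argument combining dyadic decomposition with the $L^4$-Strichartz estimate, analogous in spirit to Case~1 of Lemma~\ref{LEM:RSRW} but now on the wave side. By the definition of $X_{W_\pm}^{\l,-\frac12+}$ it suffices to bound
\begin{align*}
\mathcal{I}:=\bigg|\int_{\R\times\T^2}\jb{\nb}^{\l-1+2\g}\NN(R^S,R^S)\cdot\overline{v^W}\,dx\,dt\bigg| \les \dl^{0+}\|R^S\|^2_{X_S^{s,\frac 12+}}
\end{align*}
uniformly over $v^W$ with $\|v^W\|_{X_{W_\pm}^{0,\frac 12-}}\le 1$. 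I would then dyadically decompose the two Schr\"odinger factors and the wave factor at scales $N_1,N_2,N$ as in Remark \ref{REM:not}. The frequency constraint $n=n_1-n_2$ forces one of two regimes: (i) $N\sim\max(N_1,N_2)$, with $\min(N_1,N_2)\les\max(N_1,N_2)$; or (ii) $N_1\sim N_2\gg N$ (the high-high-to-low interaction).

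In both regimes the workhorse is H\"older's inequality in $L^4_{t,x}L^4_{t,x}L^2_{t,x}$ combined with Lemma~\ref{LEM:L4} at each Schr\"odinger scale (exploiting that the dyadic annulus $\{|n_j|\sim N_j\}$ is contained in a single ball of radius $\sim N_j$) and Lemma~\ref{LEM:dlpower} to harvest the $\dl^{0+}$ factor:
\begin{align*}
\mathcal{I}_{N_1,N_2,N} \les\dl^{0+}\,N^{\l-1+2\g}N_1^{-s+\eps}N_2^{-s+\eps}\|P_{N_1}R^S\|_{X_S^{s,\frac 12+}}\|P_{N_2}R^S\|_{X_S^{s,\frac 12+}}\|P_Nv^W\|_{X_{W_\pm}^{0,\frac 12-}}.
\end{align*}
In regime~(i), by symmetry take $N\sim N_1\ges N_2$; the dyadic summation then reduces to $\sum_{N_1}\sum_{N_2\les N_1}N_1^{\l-1+2\g-s+\eps}N_2^{-s+\eps}$, which converges precisely when $s>0$ and $\l-1+2\g-s<0$, i.e.\ exactly the hypothesis $\l<1-2\g+s$. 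In regime~(ii) the summation becomes $\sum_{N_1}N_1^{-2s+2\eps}\sum_{N\les N_1}N^{\l-1+2\g}$ and converges under the strictly weaker condition $\l<1-2\g+2s$ together with $s>0$, which is automatic from the hypothesis.

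The main (and only) obstacle is the non-resonant regime~(i) with $N\sim N_1\gg N_2$, which produces the sharp threshold $\l<1-2\g+s$ and explains why the statement pairs the wave regularity $\l$ with the Schr\"odinger regularity $s$ in exactly this manner. Note that no orthogonality refinement, random tensor estimate, or use of the sharp bilinear bound in Lemma~\ref{LEM:multi} is required here, because both $R^S$ factors already carry the positive regularity $s>0$ on their $X_S^{s,\frac 12+}$-norms; the smoothing factor $\jb{\nb}^{-1+2\g}$ coming from $\NN_W$ is precisely what makes the high-high-to-low case~(ii) strictly sub-critical. The bilinear structure is clearly compatible with replacing $R^S$ by $R^S_1-R^S_2$, so the same bound yields the required Lipschitz estimate for the contraction mapping in Subsection~\ref{SUBSEC:proof2}.
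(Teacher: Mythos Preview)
Your proposal is correct and follows essentially the same approach as the paper: both split into $N_1\gg N_2$ (your regime~(i)) versus $N_1\sim N_2\ges N$ (your regime~(ii)), apply $L^4_{t,x}L^4_{t,x}L^2_{t,x}$-H\"older together with the $L^4$-Strichartz estimate (Lemma~\ref{LEM:L4}) and Lemma~\ref{LEM:dlpower}, and arrive at the same dyadic summability conditions $\l<1-2\g+s$ and $\l<1-2\g+2s$ respectively. The paper additionally invokes the boundedness of the zero-mode projection $\mathbf{P}_{\neq 0}$ on $L^p(\T^2)$ when passing through H\"older, but this is implicit in your use of $\NN(R^S,R^S)$, which by definition already excludes $n_1=n_2$.
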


\begin{proof}
We may assume $N_1 \ge N_2$ by symmetry.

\smallskip

\noi
{\bf  Case 1:} $N_1 \gg N_2$ 
	
\smallskip

\noi
Let ${\bf P}_{\neq 0}$ is the projection onto non-zero frequencies: ${\bf P}_{\neq 0}f:=f-\int_{\T^2} f$. Then, from the boundedness\footnote{${\bf P}_{\neq 0}$ is clearly bounded on $L^p(\T^2)$, $1\le p \le \infty$.} of ${\bf P}_{\neq 0}$ on $L^p(\T^2)$, $L_{t,x}^{4}L_{t,x}^{4}L_{t,x}^{2}$-H\"older's inequality, the $L^4$-Strichartz estimate (Lemma \ref{LEM:L4}), and Lemma \ref{LEM:dlpower}, we have
\begin{align*}
&\bigg|\int_{\R}\int_{\T^2} \jb{\nb}^{\l-1+2\g}{\bf P}_{\neq 0}(P_{N_1} R^S P_{N_2} R^S)  P_{N}v^W dxdt \bigg|\\
&\les N^{\l-1+2\g} N_1^{-s+} N_2^{-s+}   \| P_{N_1} R^S  \|_{X_S^{s,\frac 12-} } \|P_{N_2}R^S \|_{X_S^{s,\frac 12-} } \|P_Nv^W \|_{X_W^{0,0} }\\
&\les \dl^{\frac 12-}  N^{\l-1+2\g-s+} N_2^{-s+}   \| P_{N_1} R^S  \|_{X_S^{s,\frac 12-} } \|P_{N_2}R^S \|_{X_S^{s,\frac 12-} } \|P_Nv^W \|_{X_W^{0,\frac 12-} }.
\end{align*}

\noi
Hence, if $\l<1-2\g+s$ and $s>0$, we can perform the dyadic summation over $N_1 \sim N \ges N_2$.

\smallskip

\noi
{\bf  Case 2:} $N_1 \sim N_2 \ges N$ 
\smallskip

\noi
By $L_{t,x}^{4}L_{t,x}^{4}L_{t,x}^{2}$-H\"older's inequality, the $L^4$-Strichartz estimate (Lemma \ref{LEM:L4}), and Lemma \ref{LEM:dlpower}, we have
\begin{align*}
&\bigg|\int_{\R}\int_{\T^2} \jb{\nb}^{\l-1+2\g}{\bf P}_{\neq 0}(P_{N_1} R^S P_{N_2} R^S)  P_{N}v^W dxdt \bigg|\\
&\les \dl^{\frac 12-} N^{\l-1+2\g} N_1^{-2s+}   \| P_{N_1} R^S  \|_{X_S^{s,\frac 12-} } \|P_{N_2}R^S \|_{X_S^{s,\frac 12-} } \|P_Nv^W \|_{X_W^{0,\frac 12-} }.
\end{align*}

\noi
Hence, if $\l<1-2\g+2s$ and $s>0$, we can perform the dyadic summation over $N_1 \sim N_2 \ges N$.

\end{proof}

\begin{lemma}[$z^S z^S$-case]
\label{LEM:zSzS}
Let $\l<1-2\g$.
Then, for each small $\dl>0$, we have
\begin{align*}
\|\NN_W(z^S, z^S) \|_{X_{W_\pm ,\dl}^{\l,-\frac 12+}} \les \dl^{0+}
\end{align*}

\noi
outside an exceptional set of probability $< e^{-\frac 1{\dl^c}}$.
\end{lemma}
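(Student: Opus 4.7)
The plan is to exploit the bilinear Gaussian chaos structure of $\NN(z^S, z^S)$ via a direct variance computation, and then upgrade to the required tail bound through hypercontractivity (Lemma \ref{LEM:multigauss}). The renormalization enforcing $n_1 \neq n_2$ in the definition of $\NN$ is essential: it precisely removes the diagonal contribution that would otherwise produce the divergent Wick mass $\s_N$. Since both inputs are Gaussian and the product structure in $X_{W_\pm}^{\l,-\frac12+}$ is $L^2$-based, the entire analysis can be reduced to a clean deterministic sum.

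Taking the space-time Fourier transform, I find
\begin{align*}
\F_{t,x}\big(\chi_\dl\, \NN(z^S, z^S)\big)(\tau, n)
= \sum_{\substack{n_1 - n_2 = n\\ n_1 \neq n_2}} \frac{g_{n_1}(\o)\, \cj{g_{n_2}(\o)}}{\jb{n_1}\jb{n_2}}\, \ft\chi_\dl\big(\tau + |n_1|^2 - |n_2|^2\big).
\end{align*}
Independence of $\{g_n\}_{n \in \Z^2}$ combined with the restriction $n_1 \neq n_2$ forces only the diagonal contraction $(m_1, m_2) = (n_1, n_2)$ to contribute to the second moment, yielding
\begin{align*}
\E\, \big\| \chi_\dl\, \NN_W(z^S, z^S) \big\|_{X_{W_\pm}^{\l, -\frac12+}}^2
\les \sum_{n \in \Z^2} \sum_{\substack{n_1 - n_2 = n\\ n_1 \neq n_2}} \frac{\jb{n}^{2\l - 2 + 4\g}}{\jb{n_1}^2\, \jb{n_2}^2}\, I_{n,n_1,n_2},
\end{align*}
with $I_{n,n_1,n_2} := \int_{\R} \jb{\tau \pm |n|}^{-1+2\eps}\, |\ft\chi_\dl(\tau + |n_1|^2 - |n_2|^2)|^2\, d\tau$.

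Using $|\ft\chi_\dl(\xi)| \les \dl\, \jb{\dl\, \xi}^{-M}$, a direct splitting into the regions $|\tau - (|n_2|^2-|n_1|^2)| \le 1/\dl$ and its complement gives $I_{n,n_1,n_2} \les \dl \les \dl^{0+}$ uniformly. For the remaining frequency sum, Lemma \ref{LEM:SUM} applied to the convolution
$\sum_{n_1 - n_2 = n} \jb{n_1}^{-2} \jb{n_2}^{-2} \les \jb{n}^{-2+}$
reduces everything to $\sum_n \jb{n}^{2\l - 4 + 4\g +}$, which converges in two dimensions precisely when $\l < 1 - 2\g$. The sharpness of this endpoint reflects the logarithmic divergence that the diagonal term would have produced, confirming that the renormalization is exactly what is needed.

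Finally, I would upgrade this $L^1(\Omega)$ bound to the desired tail estimate by noting that $\|\chi_\dl\, \NN_W(z^S, z^S)\|_{X_{W_\pm}^{\l,-\frac12+}}^2$ is a quadratic functional of a bilinear Gaussian chaos, hence an element of $\mathcal H_{\le 4}$. Lemma \ref{LEM:multigauss} (applied fiberwise in $(\tau, n)$ to $\ft{\chi_\dl \NN}$ and then combined with Minkowski) gives
$\big\| \| \chi_\dl\, \NN_W(z^S, z^S) \|_{X_{W_\pm}^{\l,-\frac12+}}^2 \big\|_{L^p(\Omega)} \les p^2\, \E\| \cdot \|^2 \les p^2\, \dl^{0+}$,
and Chebyshev's inequality with $p$ optimized as a small negative power of $\dl$ produces $\PP\big(\|\chi_\dl\, \NN_W(z^S, z^S)\|_{X_{W_\pm}^{\l,-\frac12+}} > \dl^{0+}\big) \les e^{-1/\dl^c}$. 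The principal obstacle is the resonant regime $N_1 \sim N_2 \gtrsim N$ at the endpoint $\l \to 1 - 2\g$, where one must carefully balance the modulation gain in $\jb{\tau \pm |n|}^{-1+}$ against the volume of the resonance set; all other regimes (in particular $N_1 \gg N_2$) are strictly easier and summable under the weaker condition $\l < 2 - 2\g$.
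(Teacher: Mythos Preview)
Your argument is correct and takes a genuinely different, more direct route than the paper. The paper proceeds by dyadic decomposition and a case split: in the non-resonant regime $N_1 \gg N_2$ (or $N_2 \gg N_1$) it uses the high-modulation relation $L_{\max} \ges N_{\max}^2$ and further subdivides according to which of $L, L_1, L_2$ realizes $L_{\max}$, invoking probabilistic Strichartz (Lemma~\ref{LEM:prstricha}) or pointwise decay of $\ft\eta_\dl$ together with Lemma~\ref{LEM:prob}; in the resonant regime $N_1 \sim N_2 \ges N$ it applies the Wiener chaos estimate fiberwise and then sums. Your approach collapses all of this into a single second-moment computation on the full $X_{W_\pm}^{\l,-\frac12+}$ norm, using only that the modulation weight is bounded by $1$ and that $\|\ft\chi_\dl\|_{L^2_\tau}^2 \sim \dl$, followed by the convolution estimate and hypercontractivity. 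This is cleaner and shows that for this particular term---both inputs Gaussian, output measured in the wave Bourgain norm with $b < 0$---no Strichartz or modulation analysis is needed at all. The paper's approach, on the other hand, maintains a uniform template across all the bilinear lemmas and makes transparent that the non-resonant regime only requires $\l < 2 - 2\g$ while the resonant regime is what forces $\l < 1 - 2\g$.

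One minor point: your appeal to Lemma~\ref{LEM:SUM} for $\sum_{n_1 - n_2 = n}\jb{n_1}^{-2}\jb{n_2}^{-2} \les \jb{n}^{-2+}$ is slightly informal, since part~(i) requires $\al, \be < d = 2$. You should either sacrifice an $\eps$ on one factor before applying part~(i), or split into $|n_1| \sim |n_2|$ (handled by part~(ii)) and $|n_1| \not\sim |n_2|$ (where the larger factor is $\sim \jb{n}$ and the remaining sum contributes only a logarithm). Either way the claimed bound holds and your conclusion is unaffected.
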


\begin{proof}
We may assume $N_1 \ge N_2$ by symmetry.

\smallskip

\noi
{\bf Case 1:} $N_1 \gg N_2$ (non-resonant interaction). 
\smallskip

\noi
In this case, we have 
\begin{align}
L_{\max} \ges \big|  |n_1|^2 - |n_2|^2\pm |n|     \big| \gtrsim N_{\max}^2 \sim N_1^2.
\label{highmodu222}
\end{align}

\noi
First suppose that $L\sim L_{\max}$. Then, from the boundedness of ${\bf P}_{\neq 0}$ on $L^p(\T^2)$, $L_{t,x}^{p}L_{t,x}^{2+}L_{t,x}^{2}$-H\"older's inequality with $p$ large, Lemma \ref{LEM:prstricha}, \eqref{highmodu222}, and Lemma \ref{LEM:dlpower}, we have
\begin{align*}
\bigg|\int_{\R}\int_{\T^2} \jb{\nb}^{\l-1+2\g}{\bf P}_{\neq 0}(P_{N_1} z^S \cj{P_{N_2} z^S})  P_{N}v^W dxdt \bigg| &\les N_1^{\l-1+2\g} \| P_{N_1}z^S \|_{L^p} \| P_{N_2}z^S \|_{L^2+} \|P_{N}v^W \|_{L^{2}_{t,x}}\\
&\les \dl^{0+} L_{\max}^{-\frac 12+} N_1^{\l-1+2\g+}    \| P_{N_1}v^W \|_{X_W^{0,\frac 12-}}\\
& \les \dl^{0+} N_1^{\l-2+2\g+} \| P_{N}v^W \|_{X_W^{0,\frac 12-}}
\end{align*}

\noi
outside an exceptional set of measure $< e^{-\frac 1{\dl^c}}$.
Hence, if $\l < 2-2\g$, we can perform the dyadic summation over $N_1 \sim N \ge N_2$.

Next, suppose that $L\ll L_{\max}$ and so $\max(L_1,L_2)\sim L_{\max}$. Then, from H\"older's inequality with $p \gg 1$, Young's inequality in $\tau$, and Lemma \ref{LEM:prob}, we have
\begin{align*}
&\bigg |\int_{\R}\int_{\T^2} \jb{\nb}^{\l-1+2\g} {\bf P}_{\neq 0}(P_{N_1} z^S \cj{P_{N_2} z^S})  P_{N}v^W dxdt \bigg|\\
&= \Bigg| \sum_{\substack{n_2, n \in \Z^2: n_1-n_2=n \\ |n_1|\sim N_1, |n_2| \sim N_2, n_1\neq n_2 } } \jb{n}^s \frac{g_{n_1}(\o)}{\jb{n_1}} \frac {\cj {g_{n_2} }(\o)}{\jb{n_2}} \int_{\tau_1+\tau_2=\tau} \ft \eta_\dl (\tau_1-|n_1|^2) \ft \eta_\dl(\tau_2- |n_2|^2) \ft{P_Nv^W}(n,\tau) d\tau_1 d\tau \Bigg|\\
&\les \| \ft {P_N  v^W } \|_{\ell_n^2L_\tau^{1+}} \Bigg\| \sum_{\substack{n_2 \in \Z^2: n_1-n_2=n \\ |n_1|\sim N_1, |n_2| \sim N_2, n_1 \neq n_2 } } \jb{n}^{\l-1+2\g} \frac{|g_{n_1}(\o)|}{\jb{n_1}} \frac {|g_{n_2}(\o)|}{\jb{n_2}} N_1^{-2(1-\eps)} \big\| |\ft \eta_\dl |^\eps   \big\|_{L^p_\tau}  \|\ft \eta_\dl \|_{L^1_\tau} \Bigg\|_{\l_n^2} \\
&\les  \dl^{\eps-\frac 1p-}  N^{\l-1+2\g} N_1^{-1+}N_2^{-1+}N_1^{-2+} N_2^2 N  \| P_N v^W \|_{X_S^{0,\frac 12-}}\\
&\les \dl^{0+} N_1^{\l-2+2\g+}  \| P_N v^W \|_{X_W^{0,\frac 12-}}.
\end{align*}

\noi
Hence, if $\l<2-2\g$, we can perform the dyadic summation over $N_1\sim N\ges N_2$.

\smallskip

\noi
{\bf Case 2:} $N_1\sim N_2 \ges N$ (resonant interaction).

\smallskip

\noi
We note that
\begin{align}
&\bigg |\int_{\R}\int_{\T^2} \jb{\nb}^{\l-1+2\g}{\bf P}_{\neq 0}(P_{N_1}z^S \cj{P_{N_2} z^S}) P_Nv^W dxdt \bigg| \notag \\
&= \bigg| \int_{\tau \in \R}  \sum_{\substack{n\in \Z^2: \\ |n|\sim N}} \jb{n}^{\l-1+2\g} \ft{P_Nv^W}(n,\tau)\Big(\sum_{\substack{n_1\in \Z^2: \\ n_1-n_2=n,\\ |n_1|\sim N_1, |n_2|\sim N_2, n_1 \neq n_2}} a_{n_1,n_2,n}(\tau) g_{n_1}(\o) \cj {g_{n_2} }(\o) \Big) d\tau \bigg|
\label{highreso111}
\end{align}

\noi
where
\begin{align*}
a_{n_1,n_2,n}(\tau)=\jb{n_1}^{-1}\jb{n_2}^{-1}  \int_{\tau_1+\tau_2=\tau }\ft \eta_\dl(\tau_1-|n_1|^2) \ft\eta_\dl(\tau_2-|n_2|^2) d\tau_1.
\end{align*}

\noi
Then, from Lemma \ref{LEM:multigauss}, Minkowski's inequality in $\tau$ (with $p\gg 1$), \eqref{highmodu13}, Young's inequality, and no pairing condition $n_1\neq n_2$, we have
\begin{align}
\Big\| \sum_{\substack{n_1\in \Z^2: \\ n_1-n_2=n,\\ |n_1|\sim N_1, |n_2|\sim N_2, n_1 \neq n_2}} a_{n_1,n_2,n}(\tau) g_{n_1}(\o) \cj {g_{n_2} }(\o)  \Big\|_{L^p_\tau} &\les \dl^{0-}N_1^{0+} \Big(\sum_{\substack{n_2\in \Z^2: \\ n_1-n_2=n,\\ |n_1|\sim N_1, |n_2|\sim N_2}}\|a_{n_1,n_2,n}(\tau)\|_{L^p_\tau}^2 \Big)^\frac 12 \notag \\
&\les \dl^{0-}N_1^{-1+}N_2^{-1} \notag \\
&\times  \Big( \sum_{\substack{n_2\in \Z^2:|n_2|\sim N_2}}  \big\| \ft \eta_\dl \|_{L^p_\tau}^2    \| \ft \eta_\dl \|_{L^1_\tau}^2     \Big)^\frac 12 \notag\\
&\les \dl^{1-\frac 1p-}N_1^{-1+}N_2^{-1}N_2.
\label{gausscan11}
\end{align}

\noi
In \eqref{gausscan11}, we need to make sure that the probability $e^{-c'\frac{N_1^\eps}{\dl^c}}$ of the exceptional sets corresponding to different dyadic blocks and different values of $n_2$ should be summable and bounded by $e^{-\frac 1{\dl^c}}$ i.e. \eqref{gausscan11} holds outside an exceptional set of measure:
\begin{align*}
\sum_{N_1} N_2^2 e^{-\frac{c'N_1^\eps}{\dl^c}}\les e^{-\frac 1{\dl^c}}.
\end{align*}

\noi
From \eqref{highreso111}, H\"older's inequality in $\tau$, \eqref{gausscan11}, and  Cauchy-Schwarz inequality in $n$, we have
\begin{align*}
\text{LHS of \eqref{highreso111}} &\les \sum_{\substack{n\in \Z^2\\ |n|\sim N }} N^{\l-1+2\g} \|\ft{P_Nv^W} \|_{L_\tau^{1+}}  \Big( \sum_{\substack{n_2\in \Z^2: \\ n_1-n_2=n,\\ |n_1|\sim N_1, |n_2|\sim N_2}}\|a_{n_1,n_2,n}(\tau)\|_{L^p_\tau}^2 \Big)^\frac 12 \notag \\
&\les \dl^{1-\frac 1p-}N^{\l-1+2\g}N_1^{-1+}N_2^{-1}  N_2N \| P_Nv^W\|_{X_W^{0,\frac 12-}}  \notag \\
&\les \dl^{1-\frac 1p-}N_1^{\l-1+2\g+} \| P_Nv^W\|_{X_W^{0,\frac 12-}}.
\end{align*}

\noi
Hence, if $\l<1-2\g$, we can perform the dyadic summation over $N_1 \sim N_2\ges N$.
\end{proof}

\begin{lemma}[$ z^SR^S$-case]
\label{LEM:RSzS}
Let $\l< 1-2\g $ and $s>0$. Then, for each small $\dl>0$, we have
\begin{align*}
\|\NN_W(z^S, R^S) \|_{X_{W_\pm,\dl}^{\l,-\frac 12+}} \les \dl^{0+} \|R^S \|_{X_{S,\dl}^{s,\frac 12+} }
\end{align*}	

\noi
outside an exceptional set of probability $< e^{-\frac 1{\dl^c}}$.
\end{lemma}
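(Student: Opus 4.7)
The plan is to dualize the $X^{\l,-\frac12+}_{W_\pm}$-norm against a test function $v^W$ with $\|v^W\|_{X^{-\l,\frac12-}_{W_\pm}}\le 1$ (equivalently, after setting $\wt v^W=\jb{\nb}^{-\l}v^W$, against $\wt v^W$ with $\|\wt v^W\|_{X^{0,\frac12-}_{W_\pm}}\le 1$) and reduce the lemma to bounding
\[
\bigg|\int_\R\!\!\int_{\T^2}\jb{\nb}^{\l-1+2\g}{\bf P}_{\neq 0}(z^S\cj{R^S})\,\cj{\wt v^W}\,dxdt\bigg|\lesssim \dl^{0+}\|R^S\|_{X^{s,\frac12+}_S(\dl)}
\]
outside an exceptional set of probability $\lesssim e^{-1/\dl^c}$. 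I then perform a dyadic decomposition $|n_1|\sim N_1$ on $z^S$, $|n_2|\sim N_2$ on $R^S$, $|n|\sim N$ on $\wt v^W$, subject to $n_1-n_2=n$ and $n_1\ne n_2$ (the latter from ${\bf P}_{\neq 0}$). The case split mirrors that of Lemma \ref{LEM:zSzS} (with one $z^S$ replaced by $R^S$): Case~1 is $N_1\gg N_2$ (so $N\sim N_1$); Case~2 is $N_2\gg N_1$ (so $N\sim N_2$); Case~3 is $N_1\sim N_2\gtrsim N$.

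In Cases~1 and 2, the modulation identity forces $L_{\max}\gtrsim N_{\max}^2$; I split further according to whether $L$, $L_1$, or $L_2$ dominates. When $L\sim L_{\max}$, I apply an $L^p_{t,x}L^2_{t,x}L^{2+}_{t,x}$-H\"older with $p\gg 1$, placing $z^S$ in $L^p_{t,x}$ via the probabilistic Strichartz estimate (Lemma~\ref{LEM:prstricha}), $R^S$ in $L^2_{t,x}$, and $\wt v^W$ in $L^{2+}_{t,x}$, extracting $\dl^{0+}$ through Lemma~\ref{LEM:dlpower}; the gain $L_{\max}^{-\frac12+}\sim N_{\max}^{-1+}$ controls the top Sobolev weight. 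When $L_2\sim L_{\max}$ is on $R^S$, the same estimate absorbs the $L_{\max}^{-\frac12+}$-factor into $\|R^S\|_{X^{0,\frac12+}_S}$. When $L_1\sim L_{\max}$ is on $z^S$, I adapt the high-modulation-on-$z^S$ trick from Case~1 of Lemma~\ref{LEM:zSzW}, exploiting $|\widehat{\eta_\dl}(\tau_1-|n_1|^2)|\lesssim L_1^{-1}\lesssim N_{\max}^{-2}$ via Young's inequality in $\tau$ and Lemma~\ref{LEM:prob} to control $|g_{n_1}(\o)|$. In every subcase the resulting dyadic sum converges under $\l<2-2\g$ (or $\l<2-2\g+s$), both strictly weaker than the hypothesis.

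Case~3 is the tight one. Here no modulation gain is available in the worst subcase, so I apply $L^p_{t,x}L^2_{t,x}L^{2+}_{t,x}$-H\"older directly to obtain
\[
|\cdot|\lesssim \dl^{\frac12-}\,N^{\l-1+2\g}\,N_1^{-s}\,\|P_{N_1}z^S\|_{L^p_{t,x}}\|P_{N_2}R^S\|_{X^{s,\frac12+}_S}\|P_N\wt v^W\|_{X^{0,\frac12-}_{W_\pm}},
\]
outside an exceptional set of probability $\le e^{-1/\dl^c}$ coming from Lemma~\ref{LEM:prstricha}. Summing first in $N\le N_1$ with $\l<1-2\g$ gives a geometrically convergent series with $O(1)$ total, then summing in $N_1\sim N_2$ with $s>0$ closes the estimate. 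The assumption $\l<1-2\g$ is therefore exactly what allows one to absorb the \emph{wave-side} weight $\jb{\nb}^{\l-1+2\g}$, and $s>0$ supplies the diagonal decay.

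The main technical obstacle is not an analytic one but the bookkeeping of the exceptional sets: each high-modulation-on-$z^S$ subcase produces an exceptional set of probability $\lesssim e^{-cN_1^\eps/\dl^c}$ indexed by dyadic parameters, which must be summed to $\lesssim e^{-1/\dl^c}$, following exactly the pattern already used in Lemmas~\ref{LEM:zSzW}--\ref{LEM:RszW}. Crucially, no random matrix/tensor input is required here, since we have only one random factor $z^S$ (whose propagation is controlled deterministically by $L^p_{t,x}$-Strichartz) and the output lives at the wave regularity $\l<1-2\g$, which is strictly lower than the thresholds $\frac14-\frac\g2$ and $\frac13$ that previously forced recourse to Lemma~\ref{LEM:tens}.
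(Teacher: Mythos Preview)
Your proposal is correct and follows essentially the same route as the paper's proof: the same three-case split by relative frequency size, the same exploitation of $L_{\max}\gtrsim N_{\max}^2$ in the non-resonant Cases~1--2 (including the $|\ft{\eta_\dl}(\tau_1-|n_1|^2)|\lesssim L_1^{-1}$ trick when $L_1\sim L_{\max}$), and the direct $L^p\!\cdot\! L^{2+}\!\cdot\! L^2$ H\"older in the resonant Case~3 yielding the constraint $\l<1-2\g$, $s>0$. The only cosmetic difference is that the paper places $R^S$ in $L^{2+}_{t,x}$ via the interpolated Schr\"odinger Strichartz estimate \eqref{interop} and $v^W$ in $L^2_{t,x}$, whereas you swap these roles; the paper's choice is slightly more natural since \eqref{interop} is Schr\"odinger-specific, but your variant works equally well after a Sobolev embedding on the wave factor.
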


\begin{proof}
We perform the case-by-case analysis:	

\smallskip

\noi
{\bf Case 1:} $N_1 \gg N_2$ (non-resonant interaction).
\smallskip

\noi 
In this case, we have 
\begin{align*}
L_{\max} \ges \big|  |n_1|^2 - |n_2|^2 \pm |n|     \big| \gtrsim N_{\max}^2\sim N_1^2.
\end{align*}

\noi
First suppose that $\max(L_2,L) \sim L_{\max}$. Then, from the boundedness of ${\bf P}_{\neq 0}$ on $L^p(\T^2)$, $L_{t,x}^{p}L_{t,x}^{2+}L_{t,x}^{2}$-H\"older's inequality with $p$ large, Lemma \ref{LEM:prstricha}, \eqref{interop}, and Lemma \ref{LEM:dlpower}, we have 
\begin{align*}
&\bigg| \int_{\R}\int_{\T^2} \jb{\nb}^{\l-1+2\g}{\bf P}_{\neq 0}(P_{N_1} z^S P_{N_2} R^S)  P_{N}v^W dxdt \bigg|\\
&\les N^{\l-1+2\g} \| P_{N_1}z^S \|_{L^p_{t,x}} \| P_{N_2}R^S \|_{L^{2+}_{t,x}} \|P_{N}v^W \|_{L^{2}_{t,x}}\\
&\les N^{\l-1+2\g+} N_2^{-s+} \|P_{N_2}R^S \|_{X^{s,0+}_S}  \| P_{N}v^W \|_{X_{W_\pm}^{0,0}}\\
&\les \dl^{\frac 12-} L_{\max}^{-\frac 12+}  N^{\l-1+2\g+} N_2^{-s+}    \|P_{N_2}R^S \|_{X^{s,\frac 12+}_S} \| P_{N_1}v^W \|_{X^{0,\frac 12-}_{W_\pm}}\\
&\les \dl^{\frac 12-} N_1^{\l-2+2\g+} N_2^{-s+} \|P_{N_2}R^S \|_{X^{s,\frac 12+}_S} \| P_{N}v^W \|_{X^{0,\frac 12-}_{W_\pm}}
\end{align*}

\noi
outside an exceptional set of measure $< e^{-\frac 1{\dl^c}}$.
Hence, if $\l<2-2\g$ and $s>0$, then we can perform the dyadic summation over $N_1\sim N \ge N_2$.

Next, suppose that $\max(L_2,L )\ll L_{\max}$ and so $L_1\sim L_{\max}$. Then, from H\"older's inequality with $p \gg 1$, Young's inequality in $\tau$, Lemma \ref{LEM:prob}, and Cauchy-Schwarz inequality in $n_2$, we have
\begin{align*}
&\bigg |\int_{\R}\int_{\T^2} \jb{\nb}^{\l-1+2\g} {\bf P}_{\neq 0}(P_{N_1} z^S P_{N_2} R^S)  P_{N}v^W dxdt \bigg|\\
&= \Bigg| \sum_{\substack{n_2, n \in \Z^2: n_1-n_2=n \\ |n_1|\sim N_1, |n_2| \sim N_2, n_1\neq n_2 } } \jb{n}^s \frac{g_{n_1}(\o)}{\jb{n_1}}  \int_{\tau_1+\tau_2=\tau} \ft \eta_\dl (\tau_1-|n_1|^2) \ft{P_{N_2}R^S}(n_2, \tau_2-|n_2|^2) \ft{P_Nv^W}(n,\tau) d\tau_1 d\tau \Bigg|\\
&\les \| \ft {P_N  v^W } \|_{\ell_n^2L_\tau^{1+}} \Bigg\| \sum_{\substack{n_2 \in \Z^2: n_1-n_2=n \\ |n_1|\sim N_1, |n_2| \sim N_2, |n|\sim N } } \jb{n}^{\l-1+2\g} \frac{|g_{n_1}(\o)|}{\jb{n_1}}  N_1^{-2(1-\eps)} \big\| |\ft \eta_\dl |^\eps   \big\|_{L^p_\tau}  \| \ft{P_{N_2}R^S} \|_{L^1_\tau} \Bigg\|_{\l_n^2} \\
&\les  \dl^{\eps-\frac 1p-}  N^{\l-1+2\g} N_1^{-1+}N_1^{-2+} N_2 N \| P_{N_2} R^S \|_{X_S^{0,\frac 12+}} \| P_N v^W \|_{X_W^{0,\frac 12-}}\\
&\les \dl^{0+} N_1^{\l-2+2\g+} N_2^{-s} \| P_{N_2} R^S \|_{X_S^{s,\frac 12+}} \| P_N v^W \|_{X_W^{0,\frac 12-}}.
\end{align*}

\noi
Hence, if $\l<2-2\g$ and $s>0$, then we can perform the dyadic summation over $N_1\sim N \ge N_2$.

\smallskip

\noi
{\bf  Case 2:} $N_1 \ll N_2$  (non-resonant interaction).
\smallskip

\noi 
In this case, we have 
\begin{align*}
L_{\max} \ges \big|  |n_1|^2 - |n_2|^2 \pm |n|     \big| \gtrsim N_{\max}^2 \sim N_2^2.
\end{align*}

\noi
We point out that in Case 2, it suffices to assume $\l<2-2\g+s$ by proceeding with the proof of Case 1 ($N_1\gg N_2$).

\smallskip

\noi
{\bf Case 3:} $N_1 \sim N_2 \ges N$ (resonant interaction).
\smallskip

\noi
From $L_{t,x}^{p}L_{t,x}^{2+}L_{t,x}^{2}$-H\"older's inequality with $p$ large, Lemma \ref{LEM:prstricha}, \eqref{interop}, and Lemma \ref{LEM:dlpower}, we have
\begin{align*}
&\bigg|\int_{\R}\int_{\T^2} \jb{\nb}^{\l-1+2\g}{\bf P}_{\neq 0}(P_{N_1}z^S P_{N_2}R^S) P_Nv^W dxdt\bigg|\notag\\
&\les N^{\l-1+2\g} \| P_{N_1}z^S \|_{L^p_{t,x}}  \| P_{N_2}R^S \|_{L^{2+}_{t,x}} \| P_N v^W \|_{L_{t,x}^2}\\
&\les \dl^{\frac 12-} N^{\l-1+2\g}  N_2^{-s+}  \| P_{N_2}R^S \|_{X_S^{s,\frac 12+}}   \| P_N v^W \|_{X_{W_\pm}^{0,\frac 12-}}
\end{align*}

\noi
outside an exceptional set of measure $< e^{-\frac 1{\dl^c}}$.
Hence, if $\l<1-2\g$ and $s>0$, we can perform the dyadic summation over $N_1 \sim N_2 \ge N$.
\end{proof}

\section{Random tensor theory }
\label{SEC:rant}
In this section, we present the proof of random tensor estimates which were used in the proof of Lemmas \ref{LEM:zSRW}, \ref{LEM:RszW}, and \ref{LEM:RSzS}).

\subsection{Random tensors}
In this subsection, we provide the basic definition and 
some lemmas on (random) tensors from~\cite{DNY, Bring, OWZ, BDNY}.
See~\cite[Sections 2 and 4]{DNY} and \cite[Section 4]{Bring}
for further discussion.

\begin{definition} \label{def_tensor} \rm
	Let $A$ be a finite index set. We denote by $n_A$ the tuple $ (n_j : j \in A)$. 
	A tensor $h = h_{n_A}$ is a function: $(\Z^2)^{A} \to \mathbb{C} $ with the input variables $n_A$. Note that the tensor $h$ may also depend on $\o \in \O$. 
	The support of a tensor $h$ is the set of $n_A$ such that $h_{n_A} \neq 0$. 
	
	Given a finite index set  $A$, 
	let $(B, C)$ be a partition of $A$. We define the norms 
	$\| \cdot \|_{n_A}$ and 
	$\| \cdot \|_{n_{B} \to n_{C}}$ by 
	\[ \| h \|_{n_A}  = \|h\|_{\l^2_{n_A}} = \bigg(\sum_{n_A} |h_{n_A}|^2\bigg)^\frac{1}{2}\]
	and
	\begin{align}
	\| h \|^2_{n_{B} \to n_{C}} = \sup \bigg\{ 
	\sum_{n_{C}} \Big| \sum_{n_{B}} h_{n_A} f_{n_{B}} \Big|^2 :  \| f \|_{\l^2_{n_{B}}} =1  \bigg\},  
	\label{Z0a}
	\end{align}

	\noi
	where  we used the short-hand notation $\sum_{n_Z}$ for $\sum_{n_Z \in (\Z^2)^Z}$ for a finite index set $Z$.
	Note that, by duality, we have  $\| h \|_{n_{B} \to n_{C}} = \| h \|_{n_{C} \to n_{B}} 
	= \| \cj h \|_{n_{B} \to n_{C}}$ for any tensor $h = h_{n_A}$. 
	If $B = \varnothing$ or $C = \varnothing$,  then we have
	$  \| h \|_{n_{B} \to n_{C}} = \| h \|_{n_A}$.
\end{definition}

For example, when $A = \{1, 2\}$, 
the norm  $\| h \|_{n_{1} \to n_{2}}$ denotes the usual operator norm
$\| h \|_{\l^2_{n_{1}} \to \l^2_{n_{2}}}$
for an infinite dimensional matrix operator $\{h_{n_1 n_2}\}_{n_1, n_2 \in \Z^2}$.
By bounding the matrix operator norm by the Hilbert-Schmidt norm (= the Frobenius norm), we have
\begin{align*}
\| h \|_{\l^2_{n_{1}} \to \l^2_{n_{2}}} \le \| h\|_{\l^2_{n_1, n_2}}
\end{align*}

Let $(B, C)$ be a partition of $A$.
Then, 
by duality, we can write \eqref{Z0a} as 
\begin{align*}
\| h \|_{n_{B} \to n_{C}} = \sup \bigg\{ \Big|
\sum_{n_{B}, n_C} h_{n_A} f_{n_{B}} g_{n_C}\Big| : 
\| f \|_{\l^2_{n_{B}}} =  \| g \|_{\l^2_{n_{C}}} =1   \bigg\}, 
\end{align*}

\noi
from which we obtain
\begin{align*}
\sup_{n_A}|h_{n_A}|
= \sup_{n_B, n_C}|h_{n_Bn_C}|
\le   \| h \|_{n_{B} \to n_{C}}.
\end{align*}

Before we state the main lemma in this subsection (Lemma \ref{LEM:tens}), we first present the following notations. 
For a complex number $z$, we define $z^{+}=z$ and $z^{-}=\cj z$. Let $A$ be a finite index set. For each $j\in A$, we associate $j$ with a sign $\zeta_j \in \{\pm \}$. For $j_1,j_2 \in A$, we say that $(n_{j_1}, n_{j_2})$ is a pairing if $n_{j_1}=n_{j_2}$ and $\zeta_{j_1}=-\zeta_{j_2}$. Let $\{g_n\}_{n\in \Z^2}$ be a set of independent standard complex-valued Gaussian random variables. We write in polar coordinates 
\begin{align*}
g_k(\omega)=\rho_k(\omega)\eta_k(\omega)
\end{align*}

\noi
where $\rho_k=|g_k|$ and $\eta_k=\rho_k^{-1}g_k$. Then all the $\rho_k$ and $\eta_k$ are independent, and each $\eta_k$ is uniformly distributed on the unit circle of $\C$. 

We now present the following random tensor estimate. For the proof, see Proposition 4.14
in \cite{DNY}.

\begin{lemma}[Proposition 4.14 in \cite{DNY}] 
	\label{LEM:tens} 	
	Let $\dl>0$, $A$ be a finite set and $h_{bcn_A}=h_{bcn_A}(\omega)$ be a random tensor, where each $n_j\in \Z^2$ and $(b,c)\in (\Z^2)^q$ for some integer $q\geq 2$. Given signs $\zeta_j\in\{\pm\}$, we also assume that $\langle b\rangle,\langle c\rangle\lesssim M$ and $\langle n_j\rangle\lesssim M$ for all $j\in A$, where $M$ is a dyadic number, and that in the support of $h_{bcn_A}$, there is no pairing in $n_A$. Define the tensor 
	\begin{equation}\label{contract}
	H_{bc}=\sum_{n_{A}}h_{bcn_A}\prod_{j\in A}\eta_{n_j}^{\zeta_j},
	\end{equation}
	
	\noi
	where we assume that $\{h_{bcn_A}\}$ is independent with $\{\eta_n\}_{ n\in \Z^2 }$. Then, there exists constants $C,c>0$ such that we have
	\begin{equation*}\label{contbd}
\|H_{bc}\|_{b\to c}\les \dl^{-\theta}M^{\theta} \cdot\max_{(B,C)}\|h\|_{bn_B\to cn_C},
	\end{equation*} 
	
	\noi
outside an exceptional set of probability $\le C\exp(-\frac {cM}{\dl^\dr } )$ with $\dr>0$, where $(B,C)$ runs over all partitions of $A$.
\end{lemma}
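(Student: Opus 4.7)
The plan is to establish Lemma~\ref{LEM:tens} by a moment method combined with a graph-theoretic bookkeeping of Gaussian pairings, following \cite[Proposition~4.14]{DNY}. First, I would fix an even integer $2p$, to be chosen later as a function of $M$ and $\dl$, and use the elementary inequality
\begin{align*}
\|H_{bc}\|_{b\to c}^{2p} \le \mathrm{tr}\bigl((HH^*)^p\bigr) = \sum_{b^{(1)},\ldots,b^{(p)},\, c^{(1)},\ldots,c^{(p)}} \prod_{i=1}^p H_{b^{(i)} c^{(i)}}\, \overline{H_{b^{(i)} c^{(i+1)}}}
\end{align*}
(cyclic in $i$), so that a probabilistic bound on $\|H\|_{b\to c}$ is reduced to an $L^{2p}$ moment estimate on the trace, which will then be converted to a large-deviation estimate via Chebyshev's inequality.

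Next, I would substitute the definition of $H_{bc}$ and take expectation. By the independence of $\{h\}$ and $\{\eta_n\}$, one first integrates out the angular Gaussian factors $\eta_{n_j^{(i)}}^{\pm}$: the corresponding expectation vanishes unless the $2p|A|$ frequency indices admit a perfect matching in which each pair has opposite signs. The crucial role of the no-internal-pairing hypothesis on the support of $h$ is to rule out every matching that identifies two indices within the \emph{same} copy of $h$; hence every admissible matching $\mathcal M$ must pair distinct replicas, and the surviving sum is a tensor-network contraction of $2p$ copies of $h$ and $\bar h$ glued along the pairs prescribed by $\mathcal M$ and along the cyclic structure on $(b^{(\cdot)}, c^{(\cdot)})$.

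The central combinatorial step, which is the technical heart of \cite[Proposition~4.14]{DNY}, is to bound each such contraction by a product of base norms $\|h\|_{bn_B\to cn_C}$ via a recursive slicing procedure: at each stage one picks a replica, cuts along a pair prescribed by $\mathcal M$, applies Cauchy-Schwarz, and reduces the network to one with fewer copies. Summing over $\mathcal M$ and using the frequency cutoffs $\langle n_j\rangle, \langle b\rangle, \langle c\rangle \lesssim M$ to bound both the number of matchings and any free index sums, one arrives at
\begin{align*}
\E\bigl[\mathrm{tr}\bigl((HH^*)^p\bigr)\bigr] \le (C_{|A|}\, p)^{C_{|A|} p}\, M^{C_{|A|}}\, \Bigl(\max_{(B,C)}\|h\|_{bn_B\to cn_C}\Bigr)^{2p},
\end{align*}
where the combinatorial prefactor counts the admissible matchings and $M^{C_{|A|}}$ absorbs the frequency localization.

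Finally, an application of Chebyshev's inequality with the optimal choice $p \sim M/\dl^{\theta}$ converts the moment bound into the exponential large-deviation estimate $\PP(\|H\|_{b\to c} > \dl^{-\theta} M^{\theta}\, \max_{(B,C)}\|h\|_{bn_B\to cn_C}) \le C\exp(-cM/\dl^{\theta})$ claimed in the lemma. The main obstacle is the combinatorial slicing step: one must verify that every admissible matching $\mathcal M$ can be estimated \emph{only} in terms of the partition-indexed family $\|h\|_{bn_B\to cn_C}$, and that the order of the slicings can always be chosen so that no exotic contraction norm outside this family is ever generated. This is precisely the graph-theoretic pruning argument of Deng-Nahmod-Yue, which proceeds by induction on $|A|$ and exploits the structure of the ambient cyclic graph on $(b^{(\cdot)}, c^{(\cdot)})$; the presence of external indices $(b,c)$ and of signs $\zeta_j$ does not introduce any essential new difficulty.
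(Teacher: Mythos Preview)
The paper does not give its own proof of this lemma; it simply cites \cite[Proposition~4.14]{DNY}. Your proposal correctly sketches the moment/trace method with graph-theoretic bookkeeping of pairings and the recursive slicing argument that constitutes the proof in \cite{DNY}, so your approach is essentially the same as the one the paper refers to.
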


For example, under the independence assumption\footnote{
Lemma 5.2 requires the independence of $\{h_{bcn_A}\}$ and $\{ \eta_n \}$. Since $\rho_{n}=|g_{n}|$ and $\eta_{n}=\rho_{n}^{-1}g_{n}$ are independent, we know that $\{h_{bdac}\rho_{a} \rho_{c} \}  $ and $\{\eta_{n}\}$ are independent, which satisfies the assumption in Lemma \ref{LEM:tens}}, with high probability we have
\[\|H_{bd}\|_{b\to d}\lesssim\max(\|h\|_{abc\to d},\|h\|_{ab\to cd},\|h\|_{bc\to ad},\|h\|_{b\to acd}),\quad \mathrm{where}\quad H_{bd}=\sum_{a,c}h_{bdac}g_a^\pm g_c^\pm.\]

We also present the following variant of Lemma \ref{LEM:tens}. For the proof, see Proposition 4.15 in \cite{DNY}.

\begin{lemma}[Proposition 4.15 in \cite{DNY}]\label{LEM:tens0} 
Consider the same setting as in Lemma \ref{LEM:tens} with the following differences: 
\begin{itemize}
\item[(1)]  We only restrict $\langle n_j\rangle\lesssim M$ for all $j\in A$ but do not impose any condition on $\langle b\rangle$ or $\langle c\rangle$.

\item[(2)] We assume that $b,c\in \Z^2$ and that in the support of the random tensor $h_{bcn_A}$ we have $|b-\zeta c|\lesssim M$ where $\zeta\in\{\pm\}$.

\item[(3)] The random tensor $h_{bcn_A}$ only depends on $b-\zeta c$, $|b|^2-\zeta |c|^2$, and $n_A$, and is supported in the set where $\big||b|^2-\zeta|c|^2\big|\leq M^{5}$.
\end{itemize}

\noi
Then, there exists constants $C,c>0$ such that we have
\begin{equation*}\label{contbd}
\|H_{bc}\|_{b\to c}\les \dl^{-\theta}M^{\theta} \cdot\max_{(B,C)}\|h\|_{bn_B\to cn_C},
\end{equation*} 
	
\noi
outside an exceptional set of probability $\le C\exp(-\frac {cM}{\dl^\dr } )$ with $\dr>0$, where $(B,C)$ runs over all partitions of $A$.
\end{lemma}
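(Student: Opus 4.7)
Proof plan: The statement is a close variant of Lemma \ref{LEM:tens}, differing only in how the external indices $b,c$ are constrained. The natural approach is to reduce it to Lemma \ref{LEM:tens} by repackaging the external parameters so that the hypothesis ``$\langle b\rangle,\langle c\rangle\les M$'' of Lemma \ref{LEM:tens} is replaced by an essentially equivalent polynomial-in-$M$ bound on reduced external variables.

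Concretely, the plan is to introduce the reduced parameters
\[
u := b-\zeta c \in \Z^2, \qquad v := |b|^2 - \zeta|c|^2 \in \Z.
\]
By assumption (3), the tensor factors as $h_{bcn_A} = \widetilde{h}_{uvn_A}$, and by (2)–(3) the support of $\widetilde{h}$ is contained in $\{|u|\les M,\ |v|\le M^5\}$, a set of cardinality $\les M^{9}$. Thus $(u,v)$ plays exactly the role that $(b,c)$ played in Lemma \ref{LEM:tens}, namely that of a finitely supported external index bounded by a polynomial power of $M$. One then writes
\[
H_{bc} \;=\; \sum_{n_A} \widetilde h_{u(b,c)\,v(b,c)\,n_A}\,\prod_{j\in A}\eta_{n_j}^{\zeta_j}
\;=\; \widetilde H_{u(b,c)\,v(b,c)}\, ,
\]
and applies Lemma \ref{LEM:tens} to the tensor $\widetilde h_{uvn_A}$, which yields
\[
\|\widetilde H_{uv}\|_{u\to v}\;\les\;\dl^{-\theta}M^{\theta}\cdot \max_{(B,C)}\|\widetilde h\|_{un_B\to vn_C},
\]
outside an exceptional set of the claimed probability. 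The desired bound $\|H_{bc}\|_{b\to c}\les \dl^{-\theta}M^{\theta}\max_{(B,C)}\|h\|_{bn_B\to cn_C}$ would then follow once one shows that passing from the $(u,v)$–norms back to the $(b,c)$–norms is lossless (up to an extra $M^{\theta}$ factor).

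The step that requires the most care—and that I expect to be the main obstacle—is precisely this last translation between $\|\cdot\|_{u\to v}$ and $\|\cdot\|_{b\to c}$. The map $(b,c)\mapsto(u,v)$ is neither injective nor surjective: for fixed $u$, the constraint $|b|^2-\zeta|c|^2=v$ cuts out a $1$-dimensional (affine or circular) fiber in $(\Z^2)^2$. Along each such fiber, $H_{bc}=\widetilde H_{uv}$ is constant, so the $b\to c$ operator norm behaves like a weighted diagonal operator on the fiber, and one must count fiber points to pass between norms. Since each fiber has at most $O(M^{\eps})$ lattice points by a divisor-type estimate (as used in Lemma \ref{LEM:circle}), this loss can be absorbed into the $M^\theta$ factor. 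A parallel check must be performed on the right-hand side $\max_{(B,C)}\|h\|_{bn_B\to cn_C}$ to confirm that the ``$b,c$-to-$u,v$'' passage does not weaken the control on the contracted tensor. Once these bookkeeping steps are handled, the moment/net argument underlying Lemma \ref{LEM:tens} applies verbatim, with $(u,v)$ in place of $(b,c)$, giving the required tail probability with the same exponent.
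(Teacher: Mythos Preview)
The paper does not prove this lemma; it quotes it as \cite[Proposition~4.15]{DNY}. Your reduction strategy---pass to the reduced variables $(u,v)=(b-\zeta c,\,|b|^2-\zeta|c|^2)$ and invoke Lemma~\ref{LEM:tens} for $\widetilde h_{uvn_A}$---is the right framework and is indeed how \cite{DNY} proceeds. However, the step you correctly identify as the crux (translating between the $(b,c)$-norms and the $(u,v)$-norms) contains a concrete error.

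You assert that each fiber of $(b,c)\mapsto(u,v)$ has at most $O(M^{\eps})$ lattice points by a divisor bound. For $\zeta=-$ this is true: the fiber is the circle $|2b-u|^2=2v-|u|^2$, and Lemma~\ref{LEM:circle} applies. But for $\zeta=+$, which is precisely the sign used in the paper's application (Lemma~\ref{LEM:RszW}, Subcase~2.b, where $b=n_1$, $c=n$, $u=n_1-n=-n_2$), the fiber is the \emph{affine line}
\[
\{(c+u,\,c)\in(\Z^2)^2:\ 2\,u\cdot c=v-|u|^2\},
\]
which contains infinitely many lattice points whenever $u\neq 0$. No divisor-type estimate controls it, and your ``count fiber points, absorb into $M^\theta$'' mechanism collapses. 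A related symptom is that $\mathrm{tr}\big((HH^*)^m\big)$ diverges in the $\zeta=+$ case, so the standard trace-moment route to the operator norm is unavailable.

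The argument in \cite{DNY} does not attempt a post-hoc comparison of $\|H\|_{b\to c}$ with $\|\widetilde H\|_{u\to v}$ through fiber multiplicities. Instead one reruns the moment computation underlying Lemma~\ref{LEM:tens} directly for $H_{bc}$, using that (a) the band structure $|b-\zeta c|\lesssim M$ makes each column/row of $(H^*H)^m$ have only $O((mM)^2)$ nonzero entries, and (b) the summand in any matrix entry $((H^*H)^m)_{b_0b_0'}$ depends only on the finitely many values $(u_j,v_j)$ along the path once $b_0$ is fixed. This keeps all sums finite without ever invoking a fiber count. Your plan is the right skeleton; the missing piece is to abandon the fiber-size heuristic and instead feed the $(u,v)$-dependence directly into the moment bound.
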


\subsection{Deterministic tensor estimates}
\label{SUBSEC:dettens}
In this subsection, we present the deterministic tensor estimates (Lemma \ref{LEM:zSzWy}, \ref{LEM:Tens1}, and \ref{LEM:Tens2}).

\begin{remark}\rm
Lemma \ref{LEM:zSzWy}, \ref{LEM:Tens1}, and \ref{LEM:Tens2} play an important role in proving Lemma \ref{LEM:zSzW} (in particular, Subsubcase 2.b.(ii)), Lemma \ref{LEM:zSRW} (in particular, Subcase 2.b), and Lemma \ref{LEM:RszW} (in particular, Subcase 2.b), respectively.
\end{remark}

\begin{lemma}[First deterministic tensor estimate]\label{LEM:zSzWy}
Let $h_{n,n_1,n_2}(t):=h(n,n_1,n_2,t)$ be the random tensor defined in \eqref{rt13}:
\begin{align*}
h(n,n_1,n_2,t)=e^{-it|n_1|^2}\ind_{ \{n_1=n_2-n \} } \ind_{ \{ \varphi(n_1,n_2,n)=O(N_1^{2s+2\g+}) \} } \jb{n_1}^{-1}  \ind_{\{|n| \sim N \} }   \prod_{j=1}^2 \ind_{\{|n_j| \sim N_j \} },
\end{align*}

\noi
where the phase function $\varphi(n_1,n_2,n)$ is given in \eqref{mm1} and $N_1\sim N \gg N_2$. Then, we have
\begin{align}
\sup_{t\in \R}\| h(t)\|_{n_1n \to n_2} &\les N_1^{s+\g-\frac 12+}N_2^{-\frac 12}+ N_1^{-\frac 12+} \label{w41},\\
\sup_{t\in \R}\| h(t)\|_{n \to n_2n_1} &\les N_1^{-\frac 12+} \label{w42}. 
\end{align}

\end{lemma}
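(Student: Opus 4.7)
The plan is to reduce both tensor norms to Hilbert--Schmidt-type bounds via Cauchy--Schwarz, and then to count lattice points satisfying the phase inequality $\varphi = O(N_1^{2s+2\g+})$. Since $|e^{-it|n_1|^2}|=1$, the two norms are in fact $t$-independent, so I will suppress the $t$ throughout. On the support of $h$, the delta constraint $n_1 = n_2 - n$ determines $n_1$ from $(n, n_2)$, and the weight $\jb{n_1}^{-2} \sim N_1^{-2}$ is constant; the whole problem therefore reduces to counting lattice pairs $(n, n_2)$ with $|n|, |n_2 - n| \sim N_1$, $|n_2| \sim N_2$, and $\varphi = O(N_1^{2s+2\g+})$.

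For \eqref{w41}, Cauchy--Schwarz in the sum over $(n_1, n)$ yields $\|h\|_{n_1 n \to n_2}^2 \le \sup_{n_2} \sum_{n_1, n} |h|^2$. Fixing $n_2$ with $|n_2| \sim N_2$ leaves a single free variable $n$, constrained by $|n|\sim N_1$ and the phase inequality. This is precisely the setting of Lemma \ref{LEM:count1} after relabeling its $(n, n_1, n_2) \mapsto (n_2, n, n_1)$, which gives
\begin{equation*}
\#\{n : |n|\sim N_1,\ \varphi = O(N_1^{2s+2\g+})\} \les \Big(\tfrac{N_1^{2s+2\g+}}{N_2} + 1\Big) N_1.
\end{equation*}
Multiplying by $N_1^{-2}$ and taking square roots then yields \eqref{w41}.

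For \eqref{w42}, the analogous reduction gives $\|h\|_{n \to n_2 n_1}^2 \le \sup_{n} \sum_{n_1, n_2} |h|^2$; now $n$ (the high frequency) is fixed and $n_2$ (the low frequency) is the free variable. Lemma \ref{LEM:count1} cannot be invoked directly because it is written to count the high-frequency variable given the low one, so the main obstacle is to establish the count by a direct geometric argument. Setting $\phi(n_2) := |n_2|^2 - 2n\cdot n_2 \mp |n_2|$, we have $\nabla_{n_2} \phi = 2 n_2 - 2n \mp n_2/|n_2|$, and $|n_2|\ll|n| \sim N_1$ gives $|\nabla_{n_2}\phi| \sim N_1$. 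Thus, intersected with the annulus $|n_2|\sim N_2$, the region $\{|\phi| \les N_1^{2s+2\g+}\}$ is a strip of thickness $\les N_1^{2s+2\g-1+}$ sitting inside a set of perimeter $\les N_2$; a standard area-plus-perimeter lattice count then yields
\begin{equation*}
\#\{n_2 : |n_2|\sim N_2,\ |\phi| \les N_1^{2s+2\g+}\} \les N_2\bigl(1 + N_1^{2s+2\g-1+}\bigr).
\end{equation*}
Under the ambient hypothesis $s < \tfrac14 - \tfrac{\g}{2}$ of Lemma \ref{LEM:zSzW} (which forces $s + \g < \tfrac12$), the right-hand side is $\les N_2 \les N_1$, so multiplication by $N_1^{-2}$ gives $\|h\|_{n \to n_2 n_1}^2 \les N_1^{-1+}$, which is \eqref{w42}.
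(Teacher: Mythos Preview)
Your proof is correct. For \eqref{w41} your argument is essentially the paper's: both reduce to the same lattice count $\sup_{n_2}\#\{n:\varphi=O(N_1^{2s+2\g+})\}\les(N_1^{2s+2\g+}/N_2+1)N_1$, the only cosmetic difference being that the paper writes out Schur's test $\|h\|^2\les(\sup_{n_2}\sum_{n_1,n}|h|)(\sup_{n_1,n}\sum_{n_2}|h|)$ and repeats the proof of Lemma~\ref{LEM:count1} inline, whereas you invoke the lemma directly and use the disjointness of the fibers $\{(n_1,n):n_1+n=n_2\}$ to pass from Cauchy--Schwarz to $\sup_{n_2}\sum|h|^2$.

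For \eqref{w42} your route is genuinely different. The paper again applies Schur's test, then observes that $|n_1|^2-|n|^2=O(N_1)$ forces $\big||n_1|-|n|\big|\les 1$, and finishes with the divisor bound of Lemma~\ref{LEM:circle} to get $\#S_n\les N_1^{\eps}N\sim N_1^{1+\eps}$. You instead exploit that, with $n$ fixed, the delta constraint makes the map $n_2\mapsto(n_1,n_2)$ injective, so the operator norm equals $\sup_n\sum_{n_2}|h|^2$ exactly; then you count $n_2$ directly via the gradient bound $|\nabla_{n_2}\phi|\sim N_1$, obtaining $\#\{n_2\}\les N_2(1+N_1^{2s+2\g-1+})\les N_2$. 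Your intermediate count $N_2$ is in fact sharper than the paper's $N_1^{1+\eps}$, though both collapse to the same final bound $N_1^{-1/2+}$ after multiplying by $N_1^{-2}$ and using $N_2\les N_1$. The advantage of your approach is that it avoids the number-theoretic divisor bound entirely; the paper's approach has the advantage that the annulus count $\big||n_1|-|n|\big|\les 1$ plus Lemma~\ref{LEM:circle} is a template reused elsewhere (e.g.\ Subsubcase~2.b.(i) of Lemma~\ref{LEM:zSzW}).
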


\begin{proof}
We first prove \eqref{w41}. By the Schur's test, we have that
\begin{align}
\begin{split}
\|h(t)\|_{n_1n \to n_2}^2 & \les  \Big(\sup_{n_2} \sum_{n_1,n} |h_{n_1n_2n}(t)| \Big) \Big(\sup_{n_1,n} \sum_{n_2} |h_{n_1n_2n}(t)| \Big) \\ 
& \les  N_1^{-2} \sup_{\substack{n_2 \\ |n_2|\sim N_2} } \Big| \big\{(n_1, n): n_2 = n_1 + n , \varphi (n_1,n_2,n) = O(N_1^{2s+2\g+}),\; |n_1| \sim N_1,\;  |n|\sim N \big\} \Big|\\
& \times \sup_{n_1,n} \Big| \big\{n_2: n_2 = n_1 +n, \varphi (n_1,n_2,n) = O(N_1^{2s+2\g+})\big\}\Big|
\end{split}
\label{QQ1Q}
\end{align}
	
\noi
Since $n_2$ is uniquely determined by $n_1$ and $n$, the last factor can be bounded by 1.
As for the remaining part, we have
\begin{align}
& \sup_{\substack{n_2 \\ |n_2|\sim N_2} } \Big| \big\{(n_1, n): n_2 = n_1 + n , \varphi (n_1,n_2,n) = O(N_1^{2s+2\g+}),\; |n_1| \sim N_1,\; \text{and}\; |n|\sim N \big\} \Big| \notag \\
&\les \sup_{\substack{n_2\\ |n_2|\sim N_2} } \Big|\big\{n_1:  |n_1|^2 \pm |n_2| - |n_2-n_1|^2=  O(N_1^{2s+2\g+}),\; |n_1| \sim N_1,\; \text{and} \; |n_2-n_1|\sim N \big\}\Big| \notag \\
& = \sup_{\substack{n_2\\ |n_2|\sim N_2}} \#S_{n_2}.
\label{S2QQ}
\end{align}
	
\noi
Let $n_1 \in S_{n_2}$. Then, we have  $|n_1|^2\pm |n_2|-|n_2-n_1|^2=O(N_1^{2s+2\g+})$ and so $|n_2| \sim N_2 $ implies that
\begin{align*}
\frac {n_2}{|n_2|} \cdot n_1= -\frac {|n_2|}2 \pm \frac 12 + O\Big(\frac {N_1^{2s+2\g+}}{N_2} \Big)
\end{align*}

\noi
i.e. the component of $n_1$ parallel  to $n_2$ is restricted in an interval of length $O\Big(\frac {N_1^{2s+2\g+}}{N_2} \Big)$. Since $|n_1| \sim N_1$, we have 
\begin{align}
& \sup_{\substack{n_2\\ |n_2|\sim N_2}} \#S_{n_2} \les  \Big(\frac {N_1^{2s+2\g+}}{N_2} +1  \Big)N_1\les N_1^{2s+2\g+1+}N_2^{-1}+ N_1
\label{cru11}
\end{align}

\noi
Therefore, from \eqref{QQ1Q}, \eqref{S2QQ}, and \eqref{cru11}, we have
\begin{align*}
\begin{split}
\|h\|_{ n_1 n \to n_2}^2 
& \les  N_1^{-2}\big(N_1^{2s+2\g+1+}N_2^{-1}+ N_1 \big)\les N_1^{2s+2\g-1+}N_2^{-1}+N_1^{-1}
\end{split}
\end{align*}
	
\noi
which proves \eqref{w41}.

We now prove \eqref{w42}. By the Schur's test, we have that
\begin{align}
\begin{split}
\|h(t)\|_{n \to n_2n_1}^2  &\les \Big(\sup_n \sum_{n_1,n_2} |h_{n_1n_2n}(t)| \Big) \Big(\sup_{n_1,n_2} \sum_{n} |h_{n_1n_2n}(t)| \Big)\\
& \les  N_1^{-2} \sup_{\substack{n \\ |n|\sim N} } \Big| \big\{(n_1, n_2): n_2 = n_1 + n , \varphi (n_1,n_2,n) = O(N_1^{2s+2\g+}),\; |n_1| \sim N_1,\;  |n_2|\sim N_2 \big\} \Big|\\
& \times \sup_{n_1,n_2} \Big| \big\{n: n_2 = n_1 +n, \varphi (n_1,n_2,n) = O(N_1^{2s+2\g+})\big\}\Big|
\label{s11y}
\end{split}
\end{align}
	
\noi
We first consider the last factor in \eqref{s11y}. Since $n$ is uniquely determined by $n_1$ and $n_2$,  the last factor in \eqref{s11y} can be bounded by $1$:
\begin{align}
\sup_{n_1,n_2} \Big| \big\{n: n_2 = n_1 +n, \varphi (n_1,n_2,n) = O(N_1^{2s+2\g+})\big\}\Big| \les 1.
\label{s12y}
\end{align}

\noi	
As for the remaining part, we have
\begin{align}
& \sup_{\substack{n \\ |n|\sim N} } \Big| \big\{(n_1, n_2): n_2 = n_1 + n , \varphi (n_1,n_2,n) = O(N_1^{2s+2\g+}),\; |n_1| \sim N_1,\; \text{and}\; |n_2|\sim N_2 \big\} \Big| \notag \\
&\les \sup_{\substack{n\\ |n|\sim N} } \Big|\big\{n_1:  |n_1|^2 \pm |n_1+n| - |n|^2=  O(N_1^{2s+2\g+}),\; |n_1|\sim N_1,\; \text{and} \; |n_1+n|\sim N_2 \big\}\Big| \notag \\
& = \sup_{\substack{n\\ |n|\sim N}} \#S_n.
\label{s13y}
\end{align}
	
	\noi
	Let $n_1 \in S_n$. Then, we have  
	\begin{align*}
	|n_1|^2-|n|^2=O(N_1^{2s+2\g+} + N_1)=O(N_1)
	\end{align*}
	
	\noi
	since $|n_1+n| \sim N_2 \les N_1$, and $2s+2\g<1$. Therefore, we have 
\begin{align*}
	\big| |n_1|-|n| \big| \les 1,
	\end{align*}
	
	\noi
	which implies that $|n_1| \in ( |n|-c, |n|+c)$ for some constants $c$. Let $|n_1|=\sqrt{m} $, where $m \ge 0$. Then, $m\in (|n|^2-2c|n|+c^2, |n|^2+2c|n|+c^2)$ and so the possible number of $m$ is given by $ |n| \sim N$. 
	Hence, we have
	\begin{align}
	\sup_{\substack{n\\ |n|\sim N}} \#S_n \les N_1^\eps N,
	\label{s14y}
	\end{align}
	
	\noi
	since if $(x,y)=n_1$, where $x,y \in \Z$, then thanks to Lemma \ref{LEM:circle}, the number of lattice points on a circle is given by  
	\begin{align*}
	\big| \{ (x,y)\in \Z^2: x^2+y^2=m   \}  \big| \les N_1^\eps.
	\end{align*}
	
	\noi
	Hence, from \eqref{s11y}, \eqref{s12y}, \eqref{s13y}, and \eqref{s14y}, we have 
	\begin{align*}
	\| h \|^2_{ n \to n_2n_1} \les N_1^{-2}N_1^\eps N\les N_1^{-1+\eps},
	\end{align*}
	
	\noi
	which proves \eqref{w42}.

\end{proof}

\begin{lemma}[Second deterministic tensor estimate]
\label{LEM:Tens1}
Let $h_{n,n_1,n_2}(t):=h(n,n_1,n_2,t)$ be the random tensor defined in \eqref{rt}:
\begin{align*}
h(n,n_1,n_2,t)=e^{-it|n_1|^2}\ind_{ \{n_1=n-n_2 \} } \ind_{ \big\{ \varphi(n_1,n_2,n)=O\big(N^{s+\frac 14+}N_2^{\frac 34}\big) \big\} } \bigg(\prod_{j=1}^2 \ind_{\{|n_j| \sim N_j \} } \bigg) \ind_{\{|n| \sim N \} } \jb{n_1}^{-1},
\end{align*}
 
\noi
where the phase function $\varphi(n_1,n_2,n)$ is given in \eqref{m19} and $N_1\sim N \ges N_2$.
Then, we have 
\begin{align}
\sup_{t\in \R}\|h(t)\|_{n_1 n_2 \to n} & \les N_1^{\frac s2-\frac 12+}, 
\label{A1}\\
\sup_{t\in\R}\|h(t)\|_{n_2 \to n_1 n} & \les N_1^{\frac s2-\frac 38+}N_2^{-\frac 18}+N_1^{-\frac 12}.  
\label{A2}
\end{align}
	
\end{lemma}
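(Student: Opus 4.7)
\medskip
\noindent
\textbf{Proof proposal for Lemma \ref{LEM:Tens1}.}
My plan is to prove both bounds \eqref{A1} and \eqref{A2} by applying Schur's test together with a geometric lattice-point counting argument in the same spirit as the proof of Lemma~\ref{LEM:zSzWy}. Since $|h(n,n_1,n_2,t)| \les \jb{n_1}^{-1}\les N_1^{-1}$, the only substantive work is to count lattice points $(n_1,n_2,n)$ on the support of $h$, and in each Schur estimate one of the two suprema is controlled trivially using the constraint $n_1=n-n_2$, which fixes one of the three frequencies once the other two are given.

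For \eqref{A1}, Schur's test gives
\begin{align*}
\|h(t)\|_{n_1n_2\to n}^2 \les \Big(\sup_n \sum_{n_1,n_2}|h|\Big)\Big(\sup_{n_1,n_2}\sum_n |h|\Big),
\end{align*}
and the second factor is $\les N_1^{-1}$ since $n$ is determined by $(n_1,n_2)$. To bound the first factor, fix $n$ with $|n|\sim N$; then $n_1=n-n_2$, so we count $n_2$ with $|n_2|\sim N_2$ and the phase constraint. Expanding $|n_1|^2=|n-n_2|^2=|n|^2-2\jb{n,n_2}+|n_2|^2$, the constraint $\varphi=O(N^{s+\frac14+}N_2^{\frac34})$ becomes
\begin{align*}
\frac{n}{|n|}\cdot n_2 = \frac{|n_2|^2\pm |n_2|}{2|n|} + O\Big(N_1^{s-\frac34+}N_2^{\frac34}\Big),
\end{align*}
so the component of $n_2$ parallel to $n$ lies in an interval of length $O(N_1^{s-\frac34+}N_2^{\frac34})$, while the perpendicular component is constrained by $|n_2|\sim N_2$ to vary in an interval of length $O(N_2)$. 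Hence $\#\{n_2\}\les N_1^{s-\frac34+}N_2^{\frac74}+N_2$, and using $N_2\les N_1$ we obtain
\begin{align*}
\|h\|_{n_1n_2\to n}^2 \les N_1^{-2}\Big(N_1^{s-\frac34+}N_2^{\frac74}+N_2\Big)\les N_1^{s-1+},
\end{align*}
which gives \eqref{A1} upon taking square roots (using $s>0$ to absorb the $N_1^{-1}$ error).

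For \eqref{A2}, Schur's test again gives
\begin{align*}
\|h(t)\|_{n_2\to n_1n}^2 \les \Big(\sup_{n_2}\sum_{n_1,n}|h|\Big)\Big(\sup_{n_1,n}\sum_{n_2}|h|\Big),
\end{align*}
and now the second factor is $\les N_1^{-1}$ because $n_2=n-n_1$ is determined by $(n_1,n)$. For the first factor, fix $n_2$ and set $n=n_1+n_2$; expanding $|n|^2=|n_1+n_2|^2=|n_1|^2+2\jb{n_1,n_2}+|n_2|^2$, the phase condition becomes
\begin{align*}
\frac{n_2}{|n_2|}\cdot n_1 = -\frac{|n_2|}{2}\pm\frac12 + O\Big(N_1^{s+\frac14+}N_2^{-\frac14}\Big),
\end{align*}
so the component of $n_1$ parallel to $n_2$ is restricted to an interval of length $O(N_1^{s+\frac14+}N_2^{-\frac14})$, while the perpendicular component is constrained by $|n_1|\sim N_1$ to vary in an interval of length $O(N_1)$. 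Consequently $\#\{n_1\}\les N_1^{s+\frac54+}N_2^{-\frac14}+N_1$, and combining,
\begin{align*}
\|h\|_{n_2\to n_1n}^2 \les N_1^{-2}\Big(N_1^{s+\frac54+}N_2^{-\frac14}+N_1\Big) \les N_1^{s-\frac34+}N_2^{-\frac14}+N_1^{-1},
\end{align*}
and the square root yields \eqref{A2}.

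\medskip
\noindent
The only delicate point in carrying out this plan is keeping track of which Schur factor uses the ``trivial" determination of a variable from $n_1+n_2=n$ and which factor requires the divisor-style lattice count; the counting itself is essentially a one-dimensional transversality argument (parallel component pinned by the phase, perpendicular component pinned by the size) and does not need the arithmetic circle bound (Lemma~\ref{LEM:circle}) used in Lemma~\ref{LEM:zSzWy}, because here we sum over the frequency admitting the largest size constraint rather than over a single $n$ with $|n_1|\sim N$. I expect no real obstacle beyond book-keeping the exponents and verifying the resonant/non-resonant threshold $N_2\les N_1$ in the final step.
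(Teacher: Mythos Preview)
Your argument for \eqref{A2} is correct and is exactly what the paper does: with $n_2$ fixed, the relation $\frac{n_2}{|n_2|}\cdot n_1 = -\tfrac{|n_2|}{2}\pm\tfrac12 + O(N_1^{s+\frac14+}N_2^{-\frac14})$ has a center \emph{independent of $n_1$}, so the transversality count is legitimate.

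Your argument for \eqref{A1} has a genuine gap. In the displayed identity
\[
\frac{n}{|n|}\cdot n_2 \;=\; \frac{|n_2|^2\pm |n_2|}{2|n|} \;+\; O\big(N_1^{s-\frac34+}N_2^{\frac34}\big),
\]
the ``center'' $\frac{|n_2|^2\pm |n_2|}{2|n|}$ depends on $|n_2|$ and hence on $n_2$ itself; it is not a fixed number once $n$ is given. Consequently the parallel component of $n_2$ is \emph{not} confined to a fixed interval of length $O(N_1^{s-\frac34+}N_2^{\frac34})$ as you claim. If you simply enlarge the interval to accommodate the variation of the center (which is of size $O(N_2^2/N_1)$), the resulting count picks up an extra term $N_2^3/N_1$, and in the regime $N_2\sim N_1$ this only yields $\|h\|^2\les 1$, strictly weaker than the required $N_1^{s-1+}$.

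The paper sidesteps this by counting $n_1$ (which is the same as counting $n_2$, since $n$ is fixed) via a different geometric observation: from $\varphi=O(M)$ one gets
\[
|n_1|^2-|n|^2 \;=\; \mp|n_2|+O(M)\;=\;O\big(N^{s+1+}\big),
\]
hence $\big||n_1|-|n|\big|\les N^{s+}$, so $n_1$ lies in an annulus of thickness $\sim N^{s+}$ and radius $\sim N_1$, giving $\#S_n\les N^{s+}\cdot N_1 \sim N_1^{s+1+}$ and therefore $\|h\|^2\les N_1^{-2}N_1^{s+1+}=N_1^{s-1+}$. This annulus count is what you should use for \eqref{A1}; your transversality idea does not apply here because, unlike in \eqref{A2}, the fixed frequency ($n$) is the one being subtracted in $\varphi$, so no clean linear constraint on the free variable emerges.
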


\begin{proof}
	We first prove \eqref{A1}. By the Schur's test, we have that
	\begin{align}
	\begin{split}
	\|h(t)\|_{n_1 n_2 \to n}^2  &\les  \Big(\sup_{n_1,n_2} \sum_{n} |h_{n_1n_2n}(t)| \Big)\Big(\sup_n \sum_{n_1,n_2} |h_{n_1n_2n}(t)| \Big)\\
	& \les N_1^{-2} 
	\sup_{n_1,n_2} \Big|\big\{n: n = n_1 + n_2,  \; |\varphi (n_1,n_2, n)|=O(N^{s+\frac 14+}N_2^{\frac 34})  \big\}\Big|\\
\hphantom{X}&\times \sup_{\substack{n \\ |n|\sim N} } \Big|\big\{(n_1,n_2): n = n_1 +n_2, \;  |\varphi (n_1,n_2,n)|=O(N^{s+1+}),\; |n_1|\sim N_1, \; |n_2|\sim N_2  \big\}\Big|
	\label{s11}
	\end{split}
	\end{align}
	
	\noi
	We first consider the first factor in \eqref{s11}. Since $n$ is uniquely determined by $n_1$ and $n_2$,  the first factor in \eqref{s11} can be bounded by $1$:
	\begin{align}
	\sup_{n_1,n_2} |\{n: n = n_1 + n_2,  \; |\varphi (n_1,n_2,n)|=O(N^{s+1+})  \}| \les 1.
	\label{s12}
	\end{align}

\noi	
As for the remaining part, we have
	\begin{align}
	& \sup_{\substack{n \\ |n|\sim N} } \Big|\big\{(n_1,n_2): n = n_1 +n_2, \;  |\varphi (n_1,n_2,n)|=O(N^{s+1+}),\; |n_1|\sim N_1, \; \text{and} \; |n_2|\sim N_2  \big\}\Big| \notag \\
	&\les \sup_{\substack{n\\ |n|\sim N} } \Big|\big\{n_1:  |n_1|^2 \pm |n- n_1| - |n|^2=  O(N^{s+1+}),\; |n_1|\sim N_1,\; \text{and} \; |n-n_1|\sim N_2 \big\}\Big| \notag \\
	& = \sup_{\substack{n\\ |n|\sim N}} \#S_n.
	\label{s13}
	\end{align}
	
	\noi
	Let $n_1 \in S_n$. Then, we have  
	\begin{align*}
	|n_1|^2-|n|^2=O(N^{s+1+} + N_2)
	\end{align*}
	
	\noi
	Therefore, we have 
\begin{align*}
|n|-cN^{s+} \le|n_1|\le |n|+cN^{s+}
\end{align*}
	
\noi
for some constants $c>0$. Since $|n|\sim N$ and $N\gg cN^{s+}$ with $s<1$, $n_1$ is contained in an annulus of thickness $\sim N^{s+}$ and a ball of radius $\sim N_1$. Hence, we have
\begin{align}
\sup_{\substack{n\\ |n|\sim N}} \#S_n \les N^{s+}N_1\sim N^{s+1+}
\label{s14}
\end{align}
	
	
\noi
Hence, from \eqref{s11}, \eqref{s12}, \eqref{s13}, and \eqref{s14}, we have 
\begin{align*}
\sup_{t\in \R}\| h(t) \|^2_{n_1n_2 \to n} \les N_1^{-2} N^{s+1+}\les N_1^{s-1+},
\end{align*}
	
\noi
which proves \eqref{A1}.
	
As for \eqref{A2}, by the Schur's test, we have that
\begin{align}
\begin{split}
\|h(t)\|_{n_2 \to n_1  n}^2 & \les  \Big(\sup_{n_2} \sum_{n_1,n} |h_{n_1n_2n}(t)| \Big) \Big(\sup_{n_1,n} \sum_{n_2} |h_{n_1n_2n}(t)| \Big) \\ 
& \les  N_1^{-2} \sup_{n_1,n} \Big| \big\{n_2: n = n_1 +n_2 , \varphi (n_1,n_2,n) = O(N^{s+\frac 14+}N_2^{\frac 34})\big\}\Big|\\
&\times \sup_{\substack{n_2 \\ |n_2|\sim N_2} } \Big| \big\{(n_1, n): n = n_1 + n_2 , \varphi (n_1,n_2,n) = O(N^{s+\frac 14+}N_2^{\frac 34}),\; |n_1| \sim N_1,\; |n|\sim N \big\} \Big|.
\end{split}
\label{QQ1}
\end{align}
	
\noi
Since $n_2$ is uniquely determined by $n_1$ and $n$, the first factor can be bounded by 1.
As for the remaining part, we have
\begin{align}
& \sup_{\substack{n_2 \\ |n_2|\sim N_2} } \Big|\big\{(n_1,n): n = n_1 +n_2, \;  |\varphi (n_1,n_2,n)|=O(N^{s+\frac 12+}N_2^{\frac 12}),\; |n_1| \sim N_1, \; \text{and} \; |n|\sim N \big\}\Big| \notag \\
&\les \sup_{\substack{n_2\\ |n_2|\sim N_2} } \Big|\big\{n_1:  |n_1|^2 \pm |n_2| - |n_1+n_2|^2=  O(N^{s+\frac 12+}N_2^{\frac 12}),\; |n_1| \sim N_1,\; \text{and} \; |n_1+n_2|\sim N \big\}\Big| \notag \\
& = \sup_{\substack{n_2\\ |n_2|\sim N_2}} \#S_{n_2}.
\label{S2Q}
\end{align}
	
\noi
Let $n_1 \in S_{n_2}$. Then, we have  $|n_1|^2\pm |n_2|-|n_1+n_2|^2=O(N^{s+\frac 14+}N_2^{\frac 34})$ and so $|n_2| \sim N_2 $ implies that
\begin{align*}
\frac {n_2}{|n_2|} \cdot n_1= -\frac {|n_2|}2 \pm \frac 12 + O(N^{s+\frac 14+}N_2^{-\frac 14})
\end{align*}

\noi
i.e. the component of $n_1$ parallel  to $n_2$ is restricted in an interval of length $O(N^{s+\frac 14+}N_2^{-\frac 14})$. Since $|n_1| \sim N_1$, we have 
\begin{align}
& \sup_{\substack{n_2\\ |n_2|\sim N_2}} \#S_{n_2} \les  (N^{s+\frac 14+}N_2^{-\frac 14} +1  )N_1\les N^{s+\frac 54+}N_2^{-\frac 14}+ N_1.
\label{cru1}
\end{align}

\noi
Therefore, from \eqref{QQ1}, \eqref{S2Q}, and \eqref{cru1}, we have
\begin{align*}
\begin{split}
\sup_{t\in\ R}\|h(t)\|_{n_2 \to n_1 n}^2 
& \les  N_1^{-2}\big(N^{s+\frac 54+}N_2^{-\frac 14}+ N_1 \big)\les N^{s-\frac34+}N_2^{-\frac 14}+N_1^{-1},
\end{split}
\end{align*}
	
\noi
which proves \eqref{A2}.
	
\end{proof}

\begin{lemma}[Third deterministic tensor estimate]
\label{LEM:Tens2}
Let $h_{n,n_1,n_2}(t):=h(n,n_1,n_2,t)$ be the random tensor defined in \eqref{rt1}:
\begin{equation*}
\begin{split}
h(n,n_1,n_2,t) &=e^{-it|n_2|} \ind_{ \{n_2=n-n_1, |n_2|\sim N_2\} } \ind_{ \{ \varphi(n_1,n_2,n)=O(N_2^{1+\g+}) \} } \ind_{\{n_1 \in J_{1\ell_1}\}} \ind_{\{n \in J_{2\ell_2}\}}   \jb{n_2}^{-1+\g},
\end{split}
\end{equation*}

\noi
where the phase function $\varphi(n_1,n_2,n)$ is given in \eqref{m1} and $N_1\sim N \ges N_2$.
Then, we have 
\begin{align}
\sup_{t\in \R}\|h(t)\|_{n_1 n_2 \to n} & \les N_2^{-\frac 12+\frac 32\g +}, \label{A11}\\
\sup_{t\in \R}\|h(t)\|_{n_1 \to  nn_2} & \les 
N_2^{-\frac 12+ \frac 32\g +}. 
\label{A21}
\end{align}
	
\end{lemma}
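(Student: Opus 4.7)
Both estimates are proved by Schur's test combined with a lattice-point counting bound. Since $|h(n,n_1,n_2,t)| \les N_2^{-1+\g}$ on the support of $h$ and the relation $n_2 = n-n_1$ determines any one of $\{n, n_1, n_2\}$ from the other two, in each of the two Schur products one factor is trivially $\les N_2^{-1+\g}$, so the whole task reduces to controlling a single lattice-point count.

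For \eqref{A11}, after bounding
\begin{align*}
\|h(t)\|^2_{n_1 n_2 \to n} \les \Big(\sup_{n_1,n_2}\sum_n|h|\Big)\Big(\sup_n\sum_{n_1,n_2}|h|\Big),
\end{align*}
the first factor is $\les N_2^{-1+\g}$ since $n = n_1+n_2$ is unique. For the second, I would fix $n$, substitute $n_1 = n-n_2$, and rewrite the modulation constraint as
\begin{align*}
-2\jb{n,n_2} + |n_2|^2 \pm |n_2| = O\big(N_2^{1+\g+}\big).
\end{align*}
Parametrizing $n_2 = a e_\parallel + b e_\perp$ with $e_\parallel = n/|n|$, completing the square gives $(a-|n|)^2 + b^2 = |n|^2 \mp |n_2| + O(N_2^{1+\g+})$, so $n_2$ lies in an annulus centered at $(|n|,0)$ of radius $\approx |n|$ and thickness $O(N_2^{1+\g+}/|n|) = O(N_2^{\g+})$, using $|n|\sim N_1 \ges N_2$. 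Intersecting this annulus with $\{|n_2|\sim N_2\}$ yields a thin band of width $O(N_2^{\g+})$ and arc length $O(N_2)$, which contains at most $\les N_2^{1+\g+}$ lattice points. Therefore
\begin{align*}
\|h(t)\|^2_{n_1 n_2 \to n} \les N_2^{-1+\g}\cdot N_2^{-1+\g}\cdot N_2^{1+\g+} = N_2^{-1+3\g+},
\end{align*}
which is \eqref{A11}.

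The proof of \eqref{A21} proceeds identically: the factor $\sup_{n,n_2}\sum_{n_1}|h| \les N_2^{-1+\g}$ is again immediate since $n_1 = n-n_2$ is unique, and for $\sup_{n_1}\sum_{n,n_2}|h|$ I would fix $n_1$ and count $n_2$ (with $n = n_1+n_2$ determined). Working now in the frame aligned with $e_\parallel = n_1/|n_1|$ produces the analogous quadratic constraint $(a+|n_1|)^2 + b^2 = |n_1|^2 \pm |n_2| + O(N_2^{1+\g+})$, and the same band-counting argument gives $\#\{n_2\} \les N_2^{1+\g+}$, from which \eqref{A21} follows.

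The main delicate point I anticipate is ensuring that the annulus thickness estimate $O(N_2^{1+\g+}/|n|)$ really does represent a genuine linearization in a normal direction: this requires $|a-|n||$ (respectively $|a+|n_1||$) to remain comparable to $|n|$ throughout the support, which is guaranteed by $|a|\le |n_2|\les N_2$ together with $|n|, |n_1|\ges N_2$. Notice that no random-tensor machinery is invoked here; all the gain beyond the trivial Hilbert-Schmidt bound comes from the deterministic modulation constraint combined with the $\jb{n_2}^{-1+\g}$ decay provided by the wave regularization.
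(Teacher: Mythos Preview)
Your proof is correct and follows essentially the same approach as the paper: Schur's test reduces both tensor norms to a single lattice-point count, which is then handled by the modulation constraint together with the localization $|n_2|\sim N_2$. The only differences are cosmetic---you count $n_2$ directly in an annulus centered at $(|n|,0)$ (respectively $(-|n_1|,0)$), whereas the paper counts $n_1$ in an annulus centered at the origin intersected with the ball $J_{1\ell_1}$, and splits explicitly into the cases $N_2^{1+\g+}\ll N_1$ and $N_2^{1+\g+}\ges N_1$; your uniform bound $N_2^{1+\g+}/|n|\les N_2^{\g+}$ absorbs both cases at once.
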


\begin{proof}
	We first prove \eqref{A11}. By the Schur's test, we have that
	\begin{align}
	\begin{split}
	\|h(t)\|_{n_1 n_2 \to n}^2  &\les \Big(\sup_n \sum_{n_1,n_2} |h_{n_1n_2n}(t)| \Big) \Big(\sup_{n_1,n_2} \sum_{n} |h_{n_1n_2n}(t)| \Big)\\
	& \les \sup_{\substack{n\in J_{2\l_2}}} \Big|\big\{(n_1,n_2): n = n_1 +n_2, \;  |\varphi (n_1,n_2,n )|=O(N_2^{1+\g+}), n_1\in J_{1\l_1}, |n_2|\sim N_2 \big\}\Big|\\
	& \times N_2^{-2+2\g} \sup_{n_1,n_2} \Big|\big\{n: n = n_1 + n_2,  \; |\varphi (n_1,n_2,n)|=O(N_2^{1+\g+}) \big\}\Big| .
	\label{s111}
	\end{split}
	\end{align}
	
	\noi
	We first consider the last factor in \eqref{s111}. Since $n$ is uniquely determined by $n_1$ and $n_2$,  the last factor in \eqref{s111} can be bounded by $1$:
	\begin{align}
	\sup_{n_1,n_2} |\{n: n = n_1 + n_2,  \; |\varphi (n_1,n_2,n)|=O(N_2^{1+\g+})  \}| \les 1.
	\label{s121}
	\end{align}
	
\noi	
As for the remaining part, we have
	\begin{align}
	& \sup_{\substack{n\in J_{2\l_2}}} \Big|\big\{(n_1,n_2): n = n_1 +n_2, \;  |\varphi (n_1,n_2,n )|=O(N_2^{1+\g+}),\; n_1\in J_{1\l_1},\; |n_2|\sim N_2 \big\}\Big| \notag \\
	&\les \sup_{\substack{n\in J_{2\l_2}}} \Big|\big\{n_1:  |n_1|^2 \pm |n- n_1| - |n|^2=  O(N_2^{1+\g+}),\; n_1\in J_{1\l_1},\; |n-n_1|\sim N_2 \big\}\Big| \notag \\
	& = \sup_{\substack{n\in J_{2\l_2}}} \# S_n.
	\label{s131}
	\end{align}
	
	\noi
	Let $n_1 \in S_n$. Then, we have  
	\begin{align*}
	|n_1|^2-|n|^2=O(N_2^{1+\g+} + N_2)=O(N_2^{1+\g+} )
	\end{align*}
	
\noi
Therefore, we have 
\begin{align}
\big| |n_1|-|n| \big| \les N_2^{1+\g+}N_1^{-1}.
\label{d1}
\end{align}
	
\noi
We now split the case into $N_2^{1+\g+} \ll N_1$ and $N_2^{1+\g+} \ges  N_1$.

\smallskip

\noi
{\bf Case 1:} $N_2^{1+\g+} \ll N_1$ 
\smallskip

\noi
From \eqref{d1}, in this case we have 
\begin{align*}
|n|-\eps \le|n_1|\le |n|+\eps
\end{align*}
	
\noi
for some $0<\eps\ll 1$. Hence, $n_1$ is contained in an annulus of thickness $\sim \eps$ and a ball of radius $\sim N_2$, which means that we have
\begin{align}
\sup_{\substack{n\in J_{2\l_2}}} \# S_n &\les \big| \{n_1:n_1 \in J_{1\l_1} \} \cap \{n_1: |n|-\eps \le |n_1|\le|n|+\eps \} \big| \notag \\
&\les  \eps N_2.
\label{s141}
\end{align}

\smallskip

\noi
{\bf Case 2:} $N_2^{1+\g+} \ges  N_1$ 
\smallskip

\noi
From \eqref{d1}, in this case we have 
\begin{align*}
|n|-cN_2^{\g+} \le|n_1|\le |n|+cN_2^{\g+}
\end{align*}
	
\noi
for some constants $c>0$. Since $|n|\sim N$ and $N\gg cN_2^{\g+}$ with $\g<1$, $n_1$ is contained in an annulus of thickness $\sim N_2^{\g+}$ and a ball of radius $\sim N_2$. Hence, we have
\begin{align}
\sup_{\substack{n\in J_{2\l_2}}} \# S_n &\les \big| \{n_1:n_1 \in J_{1\l_1} \} \cap \{n_1: |n|-N_2^{\g+} \le |n_1|\le|n|+N_2^{\g+} \} \big| \notag \\
&\les  N_2^{1+\g+}.
\label{y34} 
\end{align}

	

\noi
Hence, from \eqref{s111}, \eqref{s121}, \eqref{s131}, \eqref{s141}, and \eqref{y34}, we have 
\begin{align*}
\sup_{t\in \R}\| h(t) \|^2_{n_1n_2 \to n} \les N_2^{-2+2\g}N_2^{1+\g+}\les N_2^{-1+3\g+},
\end{align*}
	
\noi
which proves \eqref{A11}.
	
As for \eqref{A21}, by the Schur's test, we have that
\begin{align*}
\begin{split}
\|h(t)\|_{n_1 \to n  n_2}^2 & \les  \Big(\sup_{n_1} \sum_{n,n_2} |h_{n_1n_2n}(t)| \Big) \Big(\sup_{n,n_2} \sum_{\substack{n_1}} |h_{n_1n_2n}(t)| \Big) \\ 
& \les N_2^{-2+2\g} \sup_{n,n_2} \Big|\big\{n_1: n = n_1 +n_2 , \varphi (n_1,n_2, n) = O(N_2^{1+\g+})  \big\}\Big|\\
\hphantom{X}&\times \sup_{\substack{n_1\in J_{1\l_1}}} \Big|\big\{(n, n_2): n = n_1 + n_2,\; \varphi (n_1,n_2,n) = O(N_2^{1+\g+}),\; n\in J_{2\l_2},\; |n_2| \sim N_2 \big\}\Big|\\
\end{split}
\end{align*}
	
\noi
Since $n_1$ is uniquely determined by $n$ and $n_2$, 
the first factor can be bounded by 1.
As for the remaining part, we have
\begin{align*}
& \sup_{\substack{n_1\in J_{1\l_1}}} \Big|\big\{(n,n_2): n = n_1 +n_2, \;  |\varphi (n_1,n_2,n )|=O(N_2^{1+\g+}),\; n\in J_{2\l_2},\; |n_2|\sim N_2 \big\}\Big| \notag \\
&\les \sup_{\substack{n_1\in J_{1\l_1}}} \Big|\big\{n:  |n_1|^2 \pm |n- n_1| - |n|^2=  O(N_2^{1+\g+}),\; n\in J_{2\l_2},\; |n-n_1|\sim N_2 \big\}\Big| \notag \\
& = \sup_{\substack{n_1\in J_{1\l_1}}} \# S_{n_1}.
\end{align*}

\noi
As for the counting estimate $\# S_{n_1}$, thanks to the symmetry, we can proceed as in the case \eqref{s131}, which implies
\begin{align*}
\begin{split}
\sup_{t\in \R}\| h(t) \|^2_{n_1 \to nn_2} \les N_2^{-2+2\g}N_2^{1+\g+}\les N_2^{-1+3\g+}.
\end{split}
\end{align*}
	
\noi
This proves \eqref{A21}.
	
\end{proof}

\section{Global well-posedness and invariance of the Gibbs measure}
\label{SEC:INV}
In this section, we extend the local solutions constructed in Theorem \ref{THM:2} to global solutions and prove invariance of the Gibbs measure \eqref{Gibbs4} under the flow of the
renormalized Zakharov-Yukawa system \eqref{Zak2}. We exploit the Bourgain’s invariant measure argument \cite{BO94, BO96, BT2, OTzW}.

\subsection{Invariance of the Gibbs measure under the truncated Zakharov-Yukawa system}
In this subsection, we present frequency-truncated systems and invariance of the Gibbs measure along the flow. For fixed $\eps>0$, $\mu_1 \otimes \mu_{1-\g} \otimes \mu_{-\g}$ is a measure on 
\begin{align*}
H^{-\eps}(\T^2)\times \vec H^{-\g-\eps}(\T^2)
\end{align*}   

\noi
where $\vec H^{-\g-\eps}(\T^2)=H^{-\g-\eps}(\T^2)\times H^{-1-\g-\eps}(\T^2)$.
Given $N\in \N$, we also define the finite-dimensional Gaussian measures $\muu_{\g,N}=(\vec \pi_N)_*(\mu_1 \otimes \mu_{1-\g} \otimes \mu_{-\g})$ on $\vec E_N=\text{span}\big\{(e^{in_1\cdot x }, e^{in_2\cdot x }, e^{in_3\cdot x }): |n_j|\le N,\, j=1,2,3 \big\}$ as the pushforward of $\mu_1 \otimes \mu_{1-\g} \otimes \mu_{-\g}$ under $\vec \pi_N$, where $\vec \pi_N$ is the Dirichlet projector onto the frequencies $\{(n_1,n_2,n_3): |n_j|\le N,\, j=1,2,3 \}$. 

Consider the frequency-truncated system of the renormalized Zakharov-Yukawa system \eqref{Zak2}:
\begin{align}
\begin{cases}
i \dt u^N +\Dl u^N = \pi_{N}\NN_S(u^N,w^N)  \\
\dt^2 w^N +(1- \Dl) w^N = \pi_N \NN_W(u^N,u^N)  \\
\big(u^N, w^N, \dt w^N \big)|_{t=0}=\big(\pi_N u_0, \pi_N w_0, \pi_N v_0 \big),
\label{truZak1}
\end{cases}
\end{align}

\noi
where $\NN_S$ and $\NN_W$ denote the nonlinearity defined in \eqref{bi0} and \eqref{Zak4} and $\big(\pi_N u_0, \pi_N w_0, \pi_N v_0 \big)\in \Vec E_N$. The truncated system \eqref{truZak1} is the finite-dimensional system \eqref{truZak1} of nonlinear ODEs on the Fourier coefficiets of $(u_N, w_N, \dt w_N)$. Therefore, we can conclude by the Cauchy-Lipschitz theorem
that the system of ODEs is locally well-posed. Furthermore, we can extend these solutions
globally-in-time since $\|   (u^N(t), w^N(t), \dt w^N(t))  \|_{H^{1}(\T^2)\times \vec H^{1-\g}(\T^2)}$ can be controlled uniformly in time (but not in $N$) by using the conservation of Hamiltonian and $L^2$-mass of the Schr\"odinger component: for any fixed $N \ge 1$, we have\footnote{Because of the focusing nature of the potential energy in the Hamiltonian, we cannot directly obtain the uniform (in $N$) estimate in the energy class $H^1(\T^2)\times \vec H^{1-\g}(\T^2)$ by only using the energy conservation. We also point out that the $L^2$-norm is only preserved for the component $u_N$, which means that there is no a priori uniform (in both $N$ and $t$) control of solutions $(u^N(t), w^N(t), \dt w^N(t))$ at the $L^2$-level. }
\begin{align*}
\|   (u^N(t), w^N(t), \dt w^N(t))  \|_{H^{1}(\T^2)\times \vec H^{1-\g}(\T^2)}^2 \les H(\pi_N u_0, \pi_N w_0, \pi_N v_0 \big)+N^2M(\pi_N u_0)
\end{align*}

\noi
uniformly in $t\in \R$, where we used the Sobolev and Young's inequality.
In this section, one of our goals is to establish the uniform (in $N$) control of solutions in the support of the Gibbs measure.

Let $\wt {\vec\Phi}_N(t)$ denote the flow map for \eqref{truZak1}. 
Let $d\wt{ \vec  \rho}_{N,\g}$ denote the finite dimensional Gibbs measure associated with the density: 
\begin{align*}
\begin{split}
d\wt{ \vec  \rho}_{N,\g}=Z_{N}^{-1}e^{-Q_N(u, w)} \ind_{ \{|\int_{\T^2} 
: | u^N|^2 :   dx| \le K\}} d\muu_{\g,N}(u,w,\dt w)
\end{split}
\end{align*}

\noi
where $Q_N(u,w)$ is the interaction potential defined in \eqref{poten} and $d\muu_{\g,N}=d(\mu_{1,N} \otimes \mu_{1-\g, N} \otimes \mu_{-\g, N})$ with $\mu_{s,N}:=(\pi_N)_*\mu_s$. 
From the conservation of Hamiltonian, $L^2$-mass of $u^N$ and the Liouville theorem, we can know that the truncated Gibbs measure $d\wt {\vec  \rho}_{N,\g}$ is an invariant measure under the truncated system $\wt {\vec \Phi}_N(t)$.
We also consider the extension of \eqref{truZak1} to infinite dimensions, where the higher modes evolve according to linear dynamics: 
\begin{align}
\begin{cases}
i \dt u^N +\Dl u^N = \pi_{N}\NN_S(\pi_Nu^N,\pi_Nw^N)  \\
\dt^2 w^N +(1- \Dl) w^N = \pi_N \NN_W(\pi_N u^N, \pi_N u^N)  \\
\big(u^N, w^N, \dt w^N \big)|_{t=0}=\big(u_0, w_0, v_0 \big),
\label{truZak2}
\end{cases}
\end{align}

\noi
where $\big(u_0, w_0, v_0 \big) \in H^{-\eps}(\T^2) \times \vec H^{-\g-\eps}(\T^2)$. In other words, $\eqref{truZak2}$ allows us to discuss the two decoupled flows, where the high frequency part evolves linearly and the low frequency part corresponds to the finite-dimensional system \eqref{truZak1} of nonlinear ODEs. 
Let $\vec \Phi_N(t)$ be the flow map for \eqref{truZak2}. Then, we have 
\begin{align*}
\vec \Phi_N(t)=\wt {\vec \Phi}_N(t)\vec \pi_N+\vec {S}(t) \vec \pi_N^\perp
\end{align*}

\noi
where $\vec \pi_N^\perp:=\Id-\vec \pi_N$. We denote by $\vec E_N^\perp$ the orthogonal complement of $\vec E_N$ in $H^{-\eps}(\T^2) \times \vec H^{-\g-\eps}(\T^2)$. Let $\vec \mu_{\g,N}^\perp$ be the Gaussian field on $\vec E_N^\perp$ i.e. $\muu_{\g,N}^\perp=(\vec \pi_N^\perp)_*(\mu_1 \otimes \mu_{1-\g} \otimes \mu_{-\g})$ on $\vec E_N^\perp$ as the pushforward of $\mu_1 \otimes \mu_{1-\g} \otimes \mu_{-\g}$ under $\vec \pi_N^\perp$.   
We define the truncated Gibbs measure $d\vec \rho_{\g,N}$ as follows:
\begin{align*}
d\vec \rho_{\g,N}=d\wt{ \vec  \rho}_{N,\g} \otimes d\vec \mu_{\g,N}^\perp.
\end{align*}

\noi
From the invariance of $d\wt {\vec \rho}_{\g,N}$ under the flow $\wt{\vec \Phi}_{N}(t)$ and the invariance of the Gaussian measures $d\vec \mu_{\g,N}^\perp$  under rotations $\vec S(t)$, we conclude the following invariance of $d\vec \rho_{\g,N}$ under the truncated system $\vec \Phi_N(t)$.
\begin{lemma}\label{LEM:truninva}
For each $t\in \R$, the Gibbs measure $d\vec \rho_{\g, N}$ is invariant under the flow map $\vec \Phi_N(t)$ on $H^{-\eps}(\T^2) \times \vec H^{-\g-\eps}(\T^2) $.
\end{lemma}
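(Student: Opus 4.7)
The plan is to exploit the product structure of the decomposed flow $\vec \Phi_N(t)=\wt{\vec \Phi}_N(t)\vec \pi_N+\vec S(t)\vec \pi_N^\perp$ and of the measure $d\vec\rho_{\g,N}=d\wt{\vec\rho}_{N,\g}\otimes d\vec\mu_{\g,N}^\perp$. Since the low- and high-frequency evolutions live on the orthogonal components $\vec E_N$ and $\vec E_N^\perp$ and the measure factors accordingly, invariance of $d\vec\rho_{\g,N}$ under $\vec\Phi_N(t)$ reduces to showing separately that (a) the truncated Gibbs measure $d\wt{\vec\rho}_{N,\g}$ is invariant under the finite-dimensional flow $\wt{\vec\Phi}_N(t)$ on $\vec E_N$, and (b) the Gaussian field $d\vec\mu_{\g,N}^\perp$ is invariant under the linear flow $\vec S(t)\vec \pi_N^\perp$ on $\vec E_N^\perp$.

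For part (b), $\vec S(t)$ restricted to $\vec E_N^\perp$ acts as the free Schr\"odinger/wave propagator, which in Fourier coefficients amounts to multiplication by unit complex numbers $e^{-it|n|^2}$ and the $2\times 2$ rotation matrix associated with $\cos(t\jb{n})$, $\sin(t\jb{n})$. Since the Gaussian measure $d\vec\mu_{\g,N}^\perp$ is a tensor product over Fourier modes of (complex) Gaussians whose covariances are radial functions of the coefficients, this rotational symmetry immediately yields $(\vec S(t))_* d\vec\mu_{\g,N}^\perp=d\vec\mu_{\g,N}^\perp$.

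For part (a), the strategy is Liouville's theorem combined with the conservation of the Hamiltonian $H$ (written in symplectic coordinates on $\vec E_N$) and of the Schr\"odinger $L^2$--mass $M(u^N)=\|u^N\|_{L^2}^2$ under the truncated flow \eqref{truZak1}. The truncated system is a Hamiltonian ODE on the finite-dimensional symplectic phase space $\vec E_N$ associated with $\pi_N$ applied to the Hamiltonian \eqref{Ham}, so the Lebesgue measure on $\vec E_N$ (expressed in the Fourier coefficient coordinates) is preserved. Both $H(\pi_N u,\pi_N w,\pi_N\dt w)$ and $M(\pi_N u)$ are conserved quantities for \eqref{truZak1}, and the Wick-ordered $L^2$-cutoff $\ind_{\{|\int_{\T^2}:\!|u^N|^2\!:\,dx|\leq K\}}$ is a function of $M(\pi_N u)$ (minus the deterministic constant $\s_N$), hence also conserved. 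Since the density of $d\wt{\vec\rho}_{N,\g}$ with respect to Lebesgue measure on $\vec E_N$ is a product of the Gaussian density (proportional to $e^{-\frac12\|(u,w,\dt w)\|_{H^1\times H^{1-\g}\times H^{-\g}}^2}$, which is essentially $e^{-H}$ up to the conserved nonlinear piece) and the conserved cutoff/interaction factors, the density is invariant along trajectories; together with volume preservation this gives $(\wt{\vec\Phi}_N(t))_*d\wt{\vec\rho}_{N,\g}=d\wt{\vec\rho}_{N,\g}$.

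Combining (a) and (b) through the product structure yields the claimed invariance; no analytic difficulty arises here because the truncation renders everything finite-dimensional and the only inputs are Liouville's theorem, the (standard) conservation laws for \eqref{truZak1}, and rotation-invariance of Gaussians.
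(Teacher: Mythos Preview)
Your proposal is correct and follows essentially the same approach as the paper: the paper likewise reduces to the product structure $\vec\Phi_N(t)=\wt{\vec\Phi}_N(t)\vec\pi_N+\vec S(t)\vec\pi_N^\perp$, invokes Liouville's theorem together with conservation of the Hamiltonian and the $L^2$-mass of $u^N$ for invariance of $d\wt{\vec\rho}_{N,\g}$ under $\wt{\vec\Phi}_N(t)$, and uses rotation-invariance of the Gaussian measure $d\vec\mu_{\g,N}^\perp$ under the linear flow $\vec S(t)$.
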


\subsection{Almost sure global well-posedness}
In this subsection, by using the invariance of the Gibbs measure for \eqref{truZak2} (Lemma \ref{LEM:truninva}) and a standard PDE approximation argument, we obtain the almost sure global well-posedness.

\begin{lemma}\label{LEM:truapprox}
There exist small $0<\eps<\eps_1\ll 1$ and $\be>0$ such that given any small $\kappa>0$ and $T>0$, there exists a measurable set $\Si_{\kappa, T}\subset H^{-\eps}(\T^2)\times \vec H^{-\g-\eps}(\T^2)$ such that \textup{(i)} $\vec \rho_{\g}(\Si_{\kappa,T}^c)<\kappa$ and \textup{(ii)} for any $(u_0,w_0, v_0)\in \Si_{\kappa, T}$, there exists a (unique) solution 
\begin{align*}
z^{S,\o} +X^{s,b}_S(T)  \subset   & C\big([-T, T ]; H^{-\eps}(\T^2)  \big)  \\ 
z^{W,\o} +X^{\l,b}_{W}(T) \subset   & C\big([-T, T ]; H^{-\g-\eps}(\T^2)  \big) \cap C^1\big([-T,T]; H^{-\g-1-\eps}(\T^2) \big)
\end{align*} 

\noi
to the renormalized Zakharov-Yukawa system \eqref{Zak2} with $(u,w,\dt w)|_{t=0}=(u_0,w_0,v_0)$ for some $s>0$ and $\l>0$ in Theorem \ref{THM:2}.
Furthermore, given any large $N\gg 1$, we have
\begin{align*}
&\Big\| \big(u(t), w(t), \dt w(t) \big)- \vec \Phi_N(t)(u_0,w_0,v_0) \Big \|_{C([-T,T]; H^{-\eps_1}(\T^2) \times \vec H^{-\g-\eps_1}(\T^2) )}\les C(\kappa,T)N^{-\beta},
\end{align*}

\noi
where $\vec \Phi_N(t)$ denotes the flow map for \eqref{truZak2}.
\end{lemma}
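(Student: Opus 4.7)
The plan is a direct adaptation of Bourgain's invariant measure scheme \cite{BO94, BO96} to our setting, using three ingredients already established in the paper: the almost sure local well-posedness of both \eqref{Zak2} and \eqref{truZak2} (following the proof of Theorem \ref{THM:2}), the invariance of the truncated Gibbs measure $d\vec\rho_{\g,N}$ under $\vec\Phi_N(t)$ (Lemma \ref{LEM:truninva}), and the uniform $L^p$-bound on the density of $d\vec\rho_\g$ with respect to $d\muu_\g$ together with its $L^p$-convergence to the truncated density (Theorem \ref{THM:1}).

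First, I would upgrade Theorem \ref{THM:2} to a \emph{one-step stability statement}: there exist $s,\l,b,\theta>0$ and, for each $A\ge 1$, a local existence time $\tau=\tau(A)$ such that, for data of size $\le A$ in $H^{-\eps}\times \vec H^{-\g-\eps}$ whose enhanced random data set satisfies the good-event bounds appearing in Lemmas \ref{LEM:zSzW}--\ref{LEM:RSzS}, both \eqref{Zak2} and the high-frequency-truncated equation \eqref{truZak2} admit solutions on $[-\tau,\tau]$ and
\begin{align*}
\big\|\vec\Phi(t)(u_0,w_0,v_0)-\vec\Phi_N(t)(u_0,w_0,v_0)\big\|_{C([-\tau,\tau];\,H^{-\eps_1}\times \vec H^{-\g-\eps_1})} \les N^{-\theta}.
\end{align*}
This is obtained by running the contraction argument of Subsection \ref{SUBSEC:proof2} simultaneously for the two flows and taking the difference in the slightly rougher space $H^{-\eps_1}$, using $\|\pi_N^\perp f\|_{H^{-\eps_1}}\les N^{-(\eps_1-\eps)}\|f\|_{H^{-\eps}}$ on the random linear data and the same smoothing on the remainder equations; the bilinear and random-tensor estimates of Sections \ref{SEC:ga12} and \ref{SEC:rant} are applied to the difference rather than to the individual factors.

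Next, I would iterate. Partition $[-T,T]$ into $J\sim T/\tau$ subintervals of length $\tau$ and, for each $-J\le j\le J$, let $G_N$ be the good-data set from Step~1 and define
\begin{align*}
B_{N,\kappa,T} \;=\; \bigcup_{j=-J}^{J}\vec\Phi_N(j\tau)^{-1}\big(G_N^{\,c}\big).
\end{align*}
The good event in Step~1 is complementary to an event of probability $\le e^{-1/\tau^{c'}}$ under $d\muu_\g$, and combining a standard Chebyshev bound on the data norm with the probabilistic estimates in Sections \ref{SUBSEC:bis}--\ref{SUBSEC:biw} gives $\muu_{\g,N}(G_N^{\,c})\le e^{-c/\tau^c}$, whence $\vec\rho_{\g,N}(G_N^{\,c})\lesssim e^{-c/\tau^c}$ by the uniform $L^p$-bound \eqref{unexp1}. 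Using the invariance of $d\vec\rho_{\g,N}$ under $\vec\Phi_N(j\tau)$ (Lemma \ref{LEM:truninva}) and a union bound,
\begin{align*}
\vec\rho_{\g,N}(B_{N,\kappa,T}) \;\le\; (2J+1)\,\vec\rho_{\g,N}(G_N^{\,c}) \;\les\; \tfrac{T}{\tau}\,e^{-c/\tau^c},
\end{align*}
which is less than $\kappa/2$ upon choosing $\tau=\tau(\kappa,T)$ small. The convergence \eqref{c11} and the uniform $L^p$-bound on the densities transfer this to $\vec\rho_{\g}(B_{N,\kappa,T})<\kappa$ for all sufficiently large $N$. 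Setting $\Si_{\kappa,T}:=B_{N_0,\kappa,T}^{\,c}$ for a large enough $N_0$ and concatenating the one-step estimates over the $J$ subintervals then produces a solution to \eqref{Zak2} on $[-T,T]$ with $\|\vec\Phi(t)-\vec\Phi_N(t)\|_{H^{-\eps_1}\times \vec H^{-\g-\eps_1}}\le J\cdot C\,N^{-\theta}\le C(\kappa,T)N^{-\be}$ for any $\be<\theta$, after a harmless reduction of the exponent to absorb the polynomial factor $J$.

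The main obstacle is the one-step approximation bound in the first step: one must verify that \emph{every} case of the multilinear analysis in Subsections \ref{SUBSEC:bis} and \ref{SUBSEC:biw}, including the delicate low-modulation resonant sub-cases handled via the operator-norm/random-tensor bounds of Section \ref{SEC:rant}, retains a polynomial gain $N^{-\theta}$ when one of the enhanced data factors is replaced by $\pi_N^\perp$ applied to it. Since in Theorem \ref{THM:2} the parameters $(s,\l)$ are taken strictly inside the admissible ranges $s<\tfrac14-\tfrac\g2$ and $\l<1-2\g$, there is room to spend a small amount of regularity on $\pi_N^\perp$; the same Schur-type counting and random-tensor contraction bounds then yield the required $N^{-\theta}$ factor, with all probability bounds of the form $e^{-1/\dl^c}$ preserved so that the union bound over the $J$ subintervals remains effective.
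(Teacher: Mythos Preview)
Your proposal is correct and follows essentially the same Bourgain invariant-measure scheme as the paper, which in fact only gives a sketch and defers the standard iteration to \cite{BO94, BO96, BT2, Richards0, Richards}; your outline supplies exactly those details (one-step stability with an $N^{-\theta}$ gain from $\pi_N^\perp$, iteration over $\sim T/\tau$ subintervals using the invariance of $d\vec\rho_{\g,N}$, and the measure transfer via \eqref{unexp1}--\eqref{c11}). One small point: defining $\Si_{\kappa,T}:=B_{N_0,\kappa,T}^{\,c}$ for a single $N_0$ gives the global solution, but to obtain the approximation bound for \emph{all} large $N$ as stated, you should either note that the good-event $G_N$ coming from Theorem~\ref{THM:2} is in fact $N$-independent (the probabilistic bounds on the enhanced data do not see the truncation), or else take $\Si_{\kappa,T}=\bigcap_{N\ge N_0}B_{N,\kappa,T}^{\,c}$ with a summable choice of $\kappa_N$.
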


\begin{proof}
Once we have almost sure local well-posedness (Theorem \ref{THM:2}), the proof of Lemma \ref{LEM:truapprox}
is by now standard. In the following, we only sketch key parts of the argument and refer to \cite{BO94, BO96, BT2, Richards0, Richards} for further details.

It is convenient to reduce the Zakharov-Yukawa system \eqref{Zak4} to a first-order system
by setting $ w_{\pm}:=w \pm i\jb{\nb}^{-1} \dt w $ as in the proof of Theorem \ref{THM:2}; see also Subsection \ref{SUBSEC:reduce}. An observation is that convergence properties of $w_{\pm}$ can be directly converted to convergence properties of $w$ by taking the real part; $w=\Re w_{\pm} $. In the following, we drop the $\pm$ signs from $w_{\pm}$ and work with one $w_{+}$ or $w_{-}$ since there is no role of $\pm$.

For $M\ge N \ge 1$, we can write
\begin{align}
(u^M,w^M)-(u^N,w^N)&= (\pi_Mu^M-\pi_Nu^N, \pi_Mw^M-\pi_N w^N) \notag \\
&\hphantom{X}+(\pi^\perp_Mu^M, \pi^\perp_M w^M)-(\pi^\perp_N u^N, \pi^\perp_N w^N).
\label{decomp4}
\end{align}

\noi
The convergence of 
\begin{align*}
(\pi_Mu^M- \pi_Me^{it\Dl}u_0^\o)&-(\pi_Nu^N-\pi_Ne^{it\Dl}u_0^\o) \too 0\\
(\pi_Mw^M- \pi_Me^{it\jb{\nb}}w_0^\o)&-(\pi_Nw^N-\pi_Ne^{it\jb{\nb}}w_0^\o)\too 0
\end{align*}

\noi
can be shown exactly as in the proof of Theorem \ref{THM:2}, locally in time, i.e. in $ X_S^{s, \frac 12+}(\dl) \times X_{W_\pm}^{\l, \frac 12+}(\dl) \subset C\big([-\dl,\dl]; H^{s}(\T^2) \times  H^{\l}(\T^2) \big) $ for some $s>0$ and $\l>0$, which implies the following convergence:
\begin{align*}
(\pi_Mu^M-\pi_Nu^N, \pi_Mw^M-\pi_N w^N) \too 0  \quad \text{as} \quad N \to \infty
\end{align*}

\noi
in $C\big([-\dl,\dl]; H^{-\eps_1}(\T^2) \times  H^{-\g-\eps_1}(\T^2) \big)$ 
since we also have 
\begin{align*}
( \pi_Me^{it\Dl}u_0^\o-\pi_Ne^{it\Dl}u_0^\o, \pi_Me^{it\jb{\nb} }w_0^\o-\pi_Ne^{it\jb{\nb}}w_0^\o )  \too 0  \quad \text{as} \quad N \to \infty
\end{align*}

\noi
in $C\big([-\dl,\dl]; H^{-\eps_1}(\T^2) \times  H^{-\g-\eps_1}(\T^2) \big)$. 

On the other hand, the second and third terms in \eqref{decomp4} decay like $N^{-\beta}$ for some $\be>0$ thanks to the high frequency projections.
The remaining part of the argument leading to the proof of Lemma \ref{LEM:truapprox} is contained in \cite{BO94, BO96, BT2, Richards0, Richards}. In particular, see the proof of Proposition 3.5 in \cite{Richards0} for details in a setting analogous to our work.



\end{proof}

Once we obtain Lemma \ref{LEM:truapprox}, the almost sure global well-posedness follows from the Borel-Cantelli lemma. Given $\kappa>0$, let $T_j=2^{j}$ and $\kappa_j=\frac {\kappa}{2^j}$, $j\in \N$. By exploiting Lemma \ref{LEM:truapprox}, we can construct a set $\Si_{\kappa_j,T_j}$ and set
\begin{align}
\Si_\kappa:=\bigcap_{j=1}^\infty \Si_{\kappa_j,T_j}.
\label{sikap}
\end{align}

\noi
Then, we have $\vec\rho_\g(\Si_\kappa^c)<\kappa$ and for any $(u_0,w_0,v_0) \in \Si_{\kappa}$, there exists a unique global-in-time solution to the renormalized Zakharov-Yukawa system \eqref{Zak2} with $(u,w,\dt w)|_{t=0}=(u_0,w_0,v_0)$. Finally, we set
\begin{align}
\Si:=\bigcup_{n=1}^\infty \Si_{\frac 1n}.
\label{fullpro1}
\end{align}

\noi
Then, we have $\vec \rho_\g(\Si^c)=0$ and for any $(u_0,w_0,v_0) \in \Si$, there exists a unique global-in-time solution to the renormalized Zakharov-Yukawa system \eqref{Zak2} with $(u,w,\dt w)|_{t=0}=(u_0,w_0,v_0)$, which means that we prove almost sure global well-posedness.

\subsection{Invariance of the Gibbs measure}
Let $\vec \Phi(t)$ be the flow map for the renormalized Zakharov-Yukawa system \eqref{Zak2} defined on the set $\Si$ of full probability constructed in \eqref{fullpro1}. The main goal of this subsection is to establish the invariance of the Gibbs measure along the flow $\vec \Phi(t)$:
\begin{align}
\int_{\Si}F(\vec \Phi(t)(u,w,v) )d\vec \rho_\g(u,w,v)=\int_{\Si} F(u,w,v) d\vec \rho_\g(u,w,v)
\label{Invar1}
\end{align} 

\noi
for any $F \in L^1(H^{-\eps}(\T^2)\times \vec H^{-\g-\eps}(\T^2), d\vec \rho_\g)$ and any $t\in \R$. From a density argument, it is enough to show \eqref{Invar1} for continuous and bounded $F$.
Fix $t\in \R$. Note that 
\begin{align}
&\bigg| \int_{\Si}F(\vec \Phi(t)(u,w,v) )d\vec \rho_\g(u,w,v)- \int_{\Si} F(u,w,v) d\vec \rho_\g(u,w,v) \bigg| \notag \\
&\le \bigg| \int_{\Si}F(\vec \Phi(t)(u,w,v) )d\vec \rho_\g(u,w,v)- \int_{\Si} F(\Phi(t)(u,w,v) ) d\vec \rho_{\g,N}(u,w,v) \bigg| \notag \\
&\hphantom{X}+\bigg| \int_{\Si} F(\Phi(t)(u,w,v) ) d\vec \rho_{\g,N}(u,w,v)- \int_{\Si} F(\Phi_N(t)(u,w,v) d\vec \rho_{\g,N}(u,w,v) \bigg| \notag  \\
&\hphantom{X}+\bigg| \int_{\Si} F(\Phi_N(t)(u,w,v) d\vec \rho_{\g,N}(u,w,v)- \int_{\Si} F(u,w,v) d\vec \rho_{\g,N}(u,w,v) \bigg| \notag \\
&\hphantom{X}+\bigg|\int_{\Si} F(u,w,v) d\vec \rho_{\g,N}(u,w,v)- \int_{\Si} F(u,w,v) d\vec \rho_\g(u,w,v) \bigg| \notag \\
&=I_N+II_N+III_N+IV_N.
\label{INVASOG}
\end{align}

\noi
From Lemma \ref{LEM:truninva}, we have
\begin{align}
\int_{\Si}F(\vec \Phi_N(t)(u,w,v) )d\vec \rho_{\g,N}(u,w,v)=\int_{\Si} F(u,w,v) d\vec \rho_{\g,N}(u,w,v),
\label{Invar12}
\end{align}

\noi
which implies
\begin{align*}
III_N=0. 
\end{align*}

\noi
Thanks to Theorem \ref{THM:1} (especially, \eqref{c11}), we have \footnote{In fact, we have the total variation convergence of measures.} that $d\rhoo_{\g,N}$ converges weakly to $ d\rhoo_{\g}$, which implies
\begin{align*}
I_N+IV_N \too 0 \qquad \text{as} \qquad N\to \infty.
\end{align*}

\noi
Let $\dl>0$. Then, the boundedness of $F$ and the total variation convergence of $d\rhoo_{\g,N}$ to $d\rhoo_\g$ imply that for any sufficiently small $\kappa>0$ and sufficiently large $N \gg 1$, we have 
\begin{align}
\Bigg| \int_{\Si_\kappa^c}F(\vec \Phi(t)(u,w,v) )d\vec \rho_{\g,N}(u,w,v) \Bigg|+\Bigg| \int_{\Si^c_\kappa}F(\vec \Phi_N(t)(u,w,v) )d\vec \rho_{\g,N}(u,w,v) \Bigg| <\dl,
\label{Inva13}
\end{align}

\noi
where $\Si_\kappa$ is defined in \eqref{sikap}. Let us fix one such $\kappa>0$. Then, from  \eqref{INVASOG} and \eqref{Inva13}, we have
\begin{align}
II_N &\le \dl+\int_{\Si_\kappa} \big| F(\vec \Phi(t)(u,w,v) )- F(\vec \Phi_N(t)(u,w,v) ) \big| d(\vec \rho_{\g,N}-\vec \rho_\g)(u,w,v)   \notag \\
&\hphantom{X}+\int_{\Si_\kappa} \big| F(\vec \Phi(t)(u,w,v) )- F(\vec \Phi_N(t)(u,w,v) ) \big| d\vec \rho_\g(u,w,v).  
\label{Inva14}
\end{align}

\noi
Thanks to the boundedness of $F$ and the total variation convergence of $d\rhoo_{\g,N}$ to $d\rhoo_\g$, we have 
\begin{align}
\int_{\Si_\kappa} \big| F(\vec \Phi(t)(u,w,v) )- F(\vec \Phi_N(t)(u,w,v) ) \big| d(\vec \rho_{\g,N}-\vec \rho_\g)(u,w,v) \too 0
\label{TVTVcon}
\end{align}

\noi
as $N \to \infty$. From Lemma \ref{LEM:truapprox}, we have
\begin{align}
\| \vec \Phi(t)(u,w\,v)- \vec \Phi_N(t)(u,w, v) \|_{H^{-\eps}(\T^2) \times \vec H^{-\g-\eps}(\T^2)  } \le C(\kappa, t) N^{-\beta}
\label{APPINVA}
\end{align}

\noi
for any $(u,w,v)\in \Si_\kappa$ and sufficiently large $N \gg 1$. Hence, from \eqref{APPINVA}, the continuity of $F$ and the dominated convergence theorem, we have
\begin{align}
\int_{\Si_\kappa} \big| F(\vec \Phi(t)(u,w,v) )- F(\vec \Phi_N(t)(u,w,v) ) \big| d\vec \rho_\g(u,w,v) \too 0 
\label{dominated10}
\end{align}

\noi
as $N\to \infty$. From \eqref{Inva13}, \eqref{Inva14}, \eqref{TVTVcon}, \eqref{dominated10}, and taking $\dl \to 0$, we have
\begin{align*}
II_N \too 0 \qquad \text{as} \qquad N\to \infty,
\end{align*}

\noi
which proves \eqref{Invar1}.

\appendix

\section{Hilbert-Schmidt norm approach}
\label{SEC:A}

In this appendix, we give a brief discussion on the Hilbert-Schmidt norm approach
and compare its result with when using the operator norm bound with the random tensor theory. 
In particular, we handle 
the kernel (random) matrix   by using
the Hilbert-Schmidt norm with the Wiener chaos estimate (Lemma~\ref{LEM:multigauss}). 
In the following, we look into Subcase 2.b in Lemma \ref{LEM:RszW}, which gave the restriction $\g<\frac 13$ in the operator norm approach with the random tensor estimates. We will see that by using the Hilbert-Schmidt norm approach, Lemma \ref{LEM:RszW} is no longer satisfied for any $\g \ge 0$, which shows that the approach with the random tensor theory is essential to obatin the main result. Let us first recall Subcase 2.b in Lemma \ref{LEM:RszW}:

\smallskip
\noi
{\bf  Subcase 2.b in Lemma \ref{LEM:RszW}:} $L_{\max}\les N_2^{1+\g+}$ (low modulation)  and $N_1 \sim N \ges N_2$ (resonant interaction). 

\smallskip
\noi
This time we try to prove the bilinear estimate \eqref{APPbi} with the Hilbert-Schmidt norm approach.
When $N_1\gg N_2$, we first wirte $\{|n_1| \sim N_1 \}=\bigcup_{\ell_1} J_{1\ell_1}$ and $\{ |n| \sim N \}=\bigcup_{\ell_2}J_{2\ell_2}$, where $J_{1,\ell_1}$ and $J_{2,\ell_2}$ are balls of radius $\sim N_2$, we can decompose $\ft {P_{N_1}R^S}$ and $\ft {P_{N}v^S}$ as
\begin{align*}
\ft {P_{N_1}R^S}=\sum_{\ell_1} \ft { P_{N_1,\ell_1}R^S } \qquad \text{and} \qquad \ft {   P_{N}v^S }=\sum_{\ell_2} \ft { P_{N,\ell_2}v^S }
\end{align*} 

\noi
where $\ft {P_{N_1,\ell_1}R^S }(n_1,t)=\ind_{J_{1\ell_1}}(n_1) \ft { P_{N_1}R^S }(n_1,t) $ 
and $\ft {P_{N,\ell_2}v^S }(n,t)=\ind_{J_{2\ell_2}}(n) \ft { P_{N}v^S }(n,t) $. Given $n_1 \in J_{1\ell_1}$ for some $\ell_1$, there exists $O(1)$ many possible values for $\ell_2=\ell_2(\ell_1)$ such that $n \in J_{2\ell_2}$ under $n_1+n_2=n$. Notice that the number of possible values of $\l_2$ is independent of $\l_1$. 

From the Cauchy-Schwarz inequality, we have
\begin{align}
&\bigg|\int_{\R}\int_{\T^2} \jb{\nb}^s(P_{N_1,\l_1}R^S P_{N_2}z^W) P_{N,\l_2}v^S dxdt\notag\bigg|\\ 
&=\bigg| \sum_{\substack{ n \in J_{2\l_2}}} \int_\R \jb{n}^s\ft{ P_{N,\l_2} v^S}(n,t) \sum_{\substack{n_1\in J_{1\l_1}\\ n_1+n_2=n,\; |n_2|\sim N_2}} \ft{P_{N_1,\l_1} R^S}(n_1,t) \jb{n_2}^{-1+\g} e^{-it|n_2|}h_{n_2}(\o)\eta_\dl(t) dt \bigg|\notag \\
&\les  \bigg(\sum_{ n \in J_{2\l_2}   } \| \jb{n}^s\ft {P_{N,\l_2}v^S}(n,t)   \|_{L_t^2}^2  \bigg)^\frac 12
\bigg(  \sum_{n \in J_{2\l_2} } \Big\|  \sum_{\substack{n_1 \in J_{1\l_1}: n_1+n_2=n}} \s^\o(n,n_1,t) \eta_\dl(t) \ft{P_{N_1,\l_1}R^s}(n_1,t)     \Big  \|_{L_t^2}^2   \bigg)^\frac 12 \notag \\
&\sim N^s \| P_{N,\l_2} v^S  \|_{X_S^{0,0}}  \bigg(  \sum_{n \in J_{2\l_2} } \Big\|  \sum_{\substack{n_1 \in J_{1\l_1}: n_1+n_2=n}} \s^\o(n,n_1,t) \eta_\dl(t) \ft{P_{N_1,\l_1}R^s}(n_1,t)     \Big  \|_{L_t^2}^2   \bigg)^\frac 12,
\label{c1}
\end{align}

\noi
where the random matrix $\s^\o(n,n_1,t)$ is given by
\begin{align*}
\begin{split}
\s^\o(n,n_1,t)=
\begin{cases}
\frac {e^{-it|n_2|}h_{n_2}(\o)}{\jb{n_2}^{1-\g}}, \quad &\text{if} \quad  |n_1|^2\pm |n-n_1|-|n|^2=O(N_2^{1+\g+}), \; n_1+n_2=n, \; |n_2|\sim N_2\\
0,  \quad &\text{otherwise}.
\end{cases}
\end{split}
\end{align*}

\noi
Then, from Lemma \ref{LEM:matrix}, we have
\begin{align}
&\sum_{n \in J_{2\l_2} } \Big\|  \sum_{\substack{n_1 \in J_{1\l_1}: n_1+n_2=n}} \s^\o(n,n_1,t) \eta_\dl(t) \ft{P_{N_1,\l_1}R^s}(n_1,t)     \Big  \|_{L_t^2}^2  \notag \\
&\les \sup_{t\in \R}\Bigg[  \underbrace{\max_{\substack{n\in J_{2\l_2} }}\sum_{n_1\in S_{n}} |\s^\o(n,n_1,t)|^2}_{\text{diagonal term}}+\underbrace{\bigg( \sum_{\substack{n_1 \neq n_1' }}  \Big|\sum_{\substack{n\in \Z^2 }}  \s^\o(n,n_1,t) \cj{\s^\o}(n, n_1',t)  \Big|^2    \bigg)^\frac 12}_{\text{non-diagonal term}}\Bigg]  \|P_{N_1,\l_1}R^S \|_{X_S^{0,0}}^2,
\label{c2}
\end{align}

\noi
where
\begin{align}
S_{n}=\big\{ n_1 \in J_{1\l_1}: |n_1|^2\pm |n-n_1|-|n|^2=O(N_2^{1+\g+}),\; |n-n_1|\sim N_2 \big\}
\label{Sn} 
\end{align}

\noi
We now split the case into $N_2^{1+\g+} \ll N_1$ and $N_2^{1+\g+} \ges  N_1$.

\smallskip

\noi
{\bf Case 1:} $N_2^{1+\g+} \ll N_1$ 
\smallskip

\noi
In this case, from \eqref{Sn} we have 
\begin{align*}
|n|-\eps \le|n_1|\le |n|+\eps
\end{align*}
	
\noi
for some $0<\eps\ll 1$. Hence, $n_1$ is contained in an annulus of thickness $\sim \eps$ and a ball of radius $\sim N_2$. Hence, we have
\begin{align}
\sup_{\substack{n\in J_{2\l_2}}} \# S_n &\les \sup_{\substack{n\in J_{2\l_2}}} \big| \{n_1:n_1 \in J_{1\l_1} \} \cap \{n_1: |n|-\eps \le |n_1|\le|n|+\eps \} \big| \notag \\
&\les  \eps N_2.
\label{App1}
\end{align}

\smallskip

\noi
{\bf Case 2:} $N_2^{1+\g+} \ges  N_1$ 
\smallskip

\noi
In this case, from \eqref{Sn} we have 
\begin{align*}
|n|-cN_2^{\g+} \le|n_1|\le |n|+cN_2^{\g+}
\end{align*}
	
\noi
for some constants $c>0$. Since $|n|\sim N$ and $N\gg cN_2^{\g+}$ with $\g<1$, $n_1$ is contained in an annulus of thickness $\sim N_2^{\g+}$ and a ball of radius $\sim N_2$. Hence, we have
\begin{align}
\sup_{\substack{n\in J_{2\l_2}}} \# S_n &\les \sup_{\substack{n\in J_{2\l_2}}} \big| \{n_1:n_1 \in J_{1\l_1} \} \cap \{n_1: |n|-N_2^{\g+} \le |n_1|\le|n|+N_2^{\g+} \} \big| \notag \\
&\les  N_2^{1+\g+}.
\label{App2}
\end{align}

\noi
By counting the number of lattice points as in \eqref{App1} and \eqref{App2}, the diagonal term can be estimated as follows:
\begin{align}
\sup_{\substack{n\in J_{2\l_2} }}\sum_{n_1\in S_{n}} |\s^\o(n,n_1,t)|^2 &\les N_2^{-2+2\g+} \sup_{\substack{n\in J_{2\l_2} }}\#S_{n}\notag\\
& \les N_2^{-2+2\g+} N_2^{1+\g+}= N_2^{-1+3\g+} 
\label{c3}
\end{align} 

\noi
outside an exceptional set of probability $< e^{-\frac 1{\dl^c}}$, where we used Lemma \ref{LEM:prob} in the first step of \eqref{c3}.

Next, we  consider the non-diagonal term. Note that  
\begin{align}
&\bigg( \sum_{\substack{n_1 \neq n_1' }}  \Big|\sum_{\substack{n\in \Z^2 }}  \s^\o(n,n_1,t) \cj{\s^\o}(n, n_1',t)  \Big|^2       \bigg)^\frac 12 \notag\\
&= \bigg( \sum_{\substack{n_1 \neq n_1' }}  \Big|       \sum_{\substack{n \in \Z^2 :\\|n_1|^2\pm |n-n_1|-|n|^2=O(N_2^{1+\g+}), \\ |n_1'|^2\pm |n-n_1'|-|n|^2=O(N_2^{1+\g+}) } }a_{n,n_1,n_1'}\cdot h_{n-n_1}(\o) \cj{h}_{n-n_1'}(\o)  \Big|^2  \bigg)^\frac 12 
\label{RszW}
\end{align}

\noi
where 
\begin{align*}
a_{n,n_1,n_1'}=\jb{n-n_1}^{-1+\g}\jb{n-n_1'}^{-1+\g}\ind_{ \big\{ |n-n_1|\sim N_2, \; |n-n_1'|\sim N_2 \big\}}
\end{align*}
From Lemma \ref{LEM:multigauss} and using the independence of $\{h_n \}$ with the condition $n_1 \neq n_1'$, we have
\begin{align}
&\Big|        \sum_{\substack{n \in \Z^2 :\\|n_1|^2\pm |n-n_1|-|n|^2=O(N_2^{2\g+})\\ |n_1'|^2\pm |n-n_1'|-|n|^2=O(N_2^{2\g+}) } } a_{n,n_1,n_1'}\cdot h_{n-n_1}(\o) \cj{h}_{n-n_1'}(\o)  \Big| \notag \\
&\les \dl^{0-} N_2^{0+}
\Bigg(\E_\PP \bigg| 
 \sum_{\substack{n \in \Z^2 :\\|n_1|^2\pm |n-n_1|-|n|^2=O(N_2^{2\g+})\\ |n_1'|^2\pm |n-n_1'|-|n|^2=O(N_2^{1+\g+}) } } a_{n,n_1,n_1'}\cdot  h_{n-n_1}(\o) \cj{h}_{n-n_1'}(\o)      \bigg|^2 \Bigg)^\frac 12 \notag\\
&\les \dl^{0-} N_2^{0+}  \Big( \sum_{\substack{n \in \Z^2:\\|n_1|^2\pm |n-n_1|-|n|^2=O(N_2^{1+\g+})\\ |n_1'|^2\pm |n-n_1'|-|n|^2=O(N_2^{1+\g+}) } } |a_{n,n_1,n_1'}|^2    \Big)^\frac 12.
\label{RszW1}
\end{align}

\noi
Hence, from \eqref{RszW}, \eqref{RszW1}, and counting the number of lattice points as in \eqref{App1} and \eqref{App2}, we have
\begin{align}
&\sum_{\substack{n_1 \neq n_1' }}  \Big|\sum_{\substack{n\in \Z^2 }}  \s^\o(n,n_1,t) \cj{\s^\o}(n, n_1',t)  \Big|^2  \notag \\
&\les N_2^{-4+4\g+} \# \Big\{ (n_1,n_1',n): |n_1|^2\pm |n-n_1|-|n|^2=O(N_2^{1+\g+}), \; n_1 \in J_{1\l_1}, \; n\in J_{2\l_2}, \; |n-n_1|\sim N_2 \notag \\
&\hphantom{XXXXXXXXXXXXX}\hspace{1mm} |n_1'|^2\pm |n-n_1'|-|n|^2=O(N_2^{1+\g+}),\; n_1'\in J_{1\l_1},\; |n-n_1'|\sim N_2  \Big \}\notag \\
&\les N_2^{-4+4\g+} \sum_{n_1'\in J_{1\l_1}} \hspace{2mm} \sum_{\substack{n_1\in J_{1\l_1}\\ |n_1|^2-|n_1'|^2=O(N_2^{1+\g+}) }} \hspace{2mm} \sum_{\substack{n\in J_{2\l_2}\\ |n_1|^2 -|n|^2=O(N_2^{1+\g+}) }}\notag \\
&\les N_2^{-4+4\g+} N_2^2 N_2^{1+\g+}N_2^{1+\g+} \sim N_2^{6\g+}.
\label{S}
\end{align}


\noi
Therefore, by combining \eqref{c1}, \eqref{c2}, \eqref{c3}, and \eqref{S}, we have 
\begin{align*}
&\bigg|\int_{\R}\int_{\T^2} \jb{\nb}^s(P_{N_1,\l_1}R^S P_{N_2}z^W) P_{N,\l_2}v^S dxdt\notag\bigg|\\
&\les  \sup_{t\in \R}\Bigg[  \underbrace{\max_{\substack{n\in J_{2\l_2} }}\sum_{n_1\in S_{n}} |\s^\o(n,n_1,t)|^2}_{\text{diagonal term}}+\underbrace{\bigg( \sum_{\substack{n_1 \neq n_1' }}  \Big|\sum_{\substack{n\in \Z^2 }}  \s^\o(n,n_1,t) \cj{\s^\o}(n, n_1',t)  \Big|^2    \bigg)^\frac 12}_{\text{non-diagonal term}}\Bigg]^\frac 12\\
& \hphantom{XXXX}  \times \|P_{N_1,\l_1}R^S \|_{X_S^{s,0}} \|P_{N,\l_2} v^S \|_{X_S^{0,0}}\\
&\les \big(N_2^{-\frac 12+\frac 32\g+}+ N_2^{\frac 23\g+}\big) \|   P_{N_1,\l_1}R^S  \|_{X_S^{s,0}} \|P_{N,\l_2} v^S \|_{X_S^{0,0}}.
\end{align*}

\noi
Therefore, even if $\g=0$, we cannot perform the dyadic summation over $N_1 \sim N \ges N_2$,
which shows that Lemma \ref{LEM:RszW} cannot be proven by using the Hilbert-Schmidt norm approach with the lattice counting method used in this paper for any $\g \ge 0$. In other words, the operator norm approach with the random tensor theory is essential to obtain the main result.


\begin{ackno}\rm
The author would like to kindly thank his Ph.D. advisor Tadahiro Oh for suggesting the problem and continued support. 
The author is grateful to Yuzhao Wang and Younes Zine for helpful discussions. In addition,  the author would like to appreciate a number of comments  given by the anonymous referee,
which improved the presentation of the paper.
K.S.~was partially supported by National Research Foundation of Korea (grant NRF-2019R1A5A1028324).	
\end{ackno}

\end{document}